\newcommand{\tomemail}{\href{mailto:tom.bachmann@zoho.com}{tom.bachmann@zoho.com}}
\newtheorem{proposition}{Proposition}[section]
\newtheorem{corollary}[proposition]{Corollary}
\newtheorem{lemma}[proposition]{Lemma}
\newtheorem{theorem}[proposition]{Theorem}
\newtheorem*{conjecture*}{Conjecture}
\newtheorem*{theorem*}{Theorem}
\newtheorem*{corollary*}{Corollary}
\newtheorem*{proposition*}{Proposition}
\newtheorem*{lemma*}{Lemma}
\theoremstyle{definition}
\newtheorem{definition}[proposition]{Definition}
\newtheorem{construction}[proposition]{Construction}
\newtheorem*{definition*}{Definition}
\newtheorem*{construction*}{Construction}
\theoremstyle{remark}
\newtheorem{remark}[proposition]{Remark}
\newtheorem*{remark*}{Remark}
\newtheorem{example}[proposition]{Example}
\newtheorem*{example*}{Example}
\newcommand{\id}{\operatorname{id}}
\newcommand{\Z}{\mathbb{Z}}
\newcommand{\N}{\mathbb{N}}
\newcommand{\Q}{\mathbb{Q}}
\newcommand{\F}{\mathbb{F}}
\let\scr=\mathcal
\let\bb=\mathbb
\newcommand{\Gm}{{\mathbb{G}_m}}
\def\A{\bb A}
\def\P{\bb P}
\def\R{\bb R}
\newcommand{\1}{\mathbbm{1}}
\newcommand{\eff}{{\text{eff}}}
\newcommand{\veff}{{\text{veff}}}
\newcommand{\SH}{\mathcal{SH}}
\DeclareMathOperator*{\colim}{colim}
\let\lim=\relax
\DeclareMathOperator*{\lim}{lim}
\def\Map{\mathrm{Map}}
\def\CAlg{\mathrm{CAlg}}
\def\PSh{\mathcal{P}}
\def\Span{\mathrm{Span}}
\def\Cat{\mathcal{C}\mathrm{at}{}}
\def\Spc{\mathcal{S}\mathrm{pc}{}}
\def\Fin{\cat F\mathrm{in}}
\def\Fun{\mathrm{Fun}}
\newcommand{\wequi}{\simeq}
\newcommand{\Mod}{\mathcal{M}\mathrm{od}}
\def\adj{\rightleftarrows}
\DeclareRobustCommand{\ul}{\underline}
\def\op{\mathrm{op}}
\let\cat=\mathrm
\def\Sm{{\cat{S}\mathrm{m}}}
\def\Nis{\mathrm{Nis}}
\def\ph{\mathord-}
\newcommand{\fib}{\operatorname{fib}}
\newcommand{\cof}{\operatorname{cof}}
\newcommand{\sslash}{/\mkern-6mu/}
\theoremstyle{plain}
\newtheorem{introtheorem}{Theorem}
\crefname{theorem}{Theorem}{Theorems}
\crefname{introtheorem}{Theorem}{Theorems}
\crefname{lemma}{Lemma}{Lemmas}
\crefname{construction}{Construction}{Constructions}
\crefname{proposition}{Proposition}{Propositions}
\newcommand{\Sp}{\operatorname{Sp}}
\newcommand{\Pro}{\operatorname{Pro}}
\def\Grp{\mathrm{Grp}}
\def\Grpd{\mathrm{Grpd}}
\def\cart{\mathrm{cart}}
\def\epi{\mathrm{epi}}
\def\sld{\mathrm{sld}}
\newcommand{\comp}{{{\kern -.5pt}\wedge}}
\newcommand{\Tot}{\operatorname{Tot}}
\newcommand{\B}{\mathrm{B}}
\DeclareMathOperator{\mat}{mat}
\DeclareMathOperator{\CB}{CB}
\newcommand{\NB}[1]{}
\newcommand{\todolater}[1]{}
\renewcommand{\todo}[1]{}
\newcommand{\todoklaus}[1]{}
\newcommand{\todotom}[1]{}
\newcommand{\NB}[1]{\todo[color=gray!40]{#1}}
\newcommand{\todolater}[1]{\todo[color=brown!40]{#1}}
\newcommand{\todoklaus}[1]{\todo[color=blue]{#1}}
\newcommand{\todotom}[1]{\todo[color=green]{#1}}
\newcommand{\axiom}[3]{\item[(#1)] \hypertarget{axiom:#2}{#3}}
\newcommand{\axiomref}[1]{\hyperlink{axiom:#1}{(#1)}}
\def\MAP{\mathrm{MAP}}
\def\ADJ{\scr{ADJ}}
\def\inj{\mathrm{inj}}
\def\surj{\mathrm{surj}}
\def\Shv{\mathrm{Shv}}
\def\Cube{\mathrm{Cube}}
\newcommand\noloc{%
  \nobreak
  \mspace{6mu plus 1mu}
  {:}
  \nonscript\mkern-\thinmuskip
  \mathpunct{}
  \mspace{2mu}
}
\title{Monadic resolutions for generalized spaces}
\date{\today}
\author{Tom Bachmann}
\email{\tomemail}
\urladdr{\href{http://www.tom-bachmann.com}{www.tom-bachmann.com}}
\author{Anton Engelmann}
\email{\href{mailto:anton.engelmann@uni-mainz.de}{anton.engelmann@uni-mainz.de}}
\urladdr{\href{https://anton-engelmann.com}{www.anton-engelmann.com}}
\author{Klaus Mattis}
\email{\href{mailt:klaus.mattis@uni-mainz.de}{klaus.mattis@uni-mainz.de}}
\urladdr{\href{http://www.klaus-mattis.com}{www.klaus-mattis.com}}
\begin{document}

\begin{abstract}
  We extend the work of Bousfield and Kan \cite{BousfieldKan:HomotopyLimitsCompLoc} on monadic resolutions of spaces to $\infty$-topoi, with applications to genuine $G$-equivariant spaces ($G$ a finite group) and motivic spaces over a perfect field.
  In particular, we give a proof of the principal fibration lemma in this context.
  We apply the principal fibration lemma to prove convergence of several kinds of monadic resolutions in unstable equivariant and motivic homotopy theory.
  For example, we show that, over an algebraically closed field, the unstable Adams--Novikov spectral sequence (i.e., the monadic resolution corresponding to the algebraic cobordism spectrum $\mathrm{MGL}$) converges for all nilpotent, connected, $2$-effective motivic spaces.
\end{abstract}
\maketitle
\tableofcontents
\newpage

\iftoggle{final}{}{
\listoftodos\relax
\newpage
}

\section{Introduction}
Unstable homotopy theory is concerned with the study of spaces and their maps.
Because spaces are nonlinear and wildly complicated, a standard strategy is to pass to linear invariants that are easier to compute and compare, such as homology.
An evident question arises, which is \emph{how much information is lost in this process?}, or \emph{how much of a space can we recover from its homology?}
When taking into account enough structure on the homology, there is a natural ``best possible approximation'' we can extract from the homology, known as the \emph{Bousfield--Kan completion} (of a space at a homology theory).
In their remarkable work \cite{BousfieldKan:HomotopyLimitsCompLoc}, Bousfield and Kan identified essentially two situations in which the Bousfield--Kan completion can be described somewhat explicitly:
\begin{theorem*}[Bousfield--Kan]
  Let $X \in \Spc_*$ be a pointed nilpotent space. 
  \begin{itemize}
    \item If $p$ is a prime and the homology theory is $H\F_p$, then the corresponding Bousfield--Kan completion coincides with the unstable $p$-completion.
    \item If $S \subset \Z$ is a multiplicative subset and the homology theory is $H\Z[S^{-1}]$, then the Bousfield--Kan completion coincides with the unstable $S$-localization.
  \end{itemize}
For example, the Bousfield--Kan completion of any nilpotent space at $H\Z$ recovers the space itself.
\end{theorem*}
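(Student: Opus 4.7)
The plan is to build up a pointed nilpotent space $X$ from Eilenberg--MacLane spaces via a principal refinement of its Postnikov tower and to propagate the statement layer by layer using the principal fibration lemma advertised in the abstract. Concretely, such an $X$ is an inverse limit of a tower $\cdots\to X_n\to X_{n-1}\to\cdots\to\ast$ in which each map is a principal fibration with fiber some $K(A_i,k_i)$. Both the unstable $p$-completion and the $S$-localization commute with such towers under the standard connectivity bounds, so once the theorem is known for each $X_n$ it follows for $X$, and an induction over $n$ reduces everything to the case $X=K(A,n)$.

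To settle the base case I would compute the monadic cobar resolution $R^{\bullet+1}\otimes X$ directly. In the $S$-local situation $R=H\Z[S^{-1}]$ is flat over $\Z$ and all higher $\mathrm{Tor}$'s vanish, so the cobar construction on $K(A,n)$ degenerates and its totalization identifies with $K(A[S^{-1}],n)$, which is the $S$-localization of $K(A,n)$. In the mod-$p$ situation one analyses the Bousfield--Kan spectral sequence of the cosimplicial space $H\F_p^{\bullet+1}\otimes K(A,n)$ and identifies the totalization with $K(A^\wedge_p,n)$, where $A^\wedge_p$ is the derived $p$-completion; for nilpotent abelian $A$ this coincides with the usual unstable $p$-completion of $K(A,n)$.

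With the base case in hand, the inductive step is carried out by applying the principal fibration lemma to each stage $K(A_i,k_i)\to X_i\to X_{i-1}$: once the BK completion at $R$ has been identified with the $p$-completion (respectively $S$-localization) on both fiber and base, the lemma forces the same identification on the total space, closing the induction and giving the theorem for $X=\lim_n X_n$.

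The main obstacle is of course the principal fibration lemma itself. Unlike for general fibrations, principality provides enough control over the Serre-type spectral sequence to guarantee that the associated Bousfield--Kan spectral sequence for the cosimplicial resolution converges; but establishing this cleanly at the level of $\infty$-topoi---rather than merely for classical pointed spaces---requires a careful analysis of the cosimplicial $\Tot$ tower and of Mittag--Leffler-type conditions on its homotopy groups. Accordingly this is the technical heart of the paper and, as the abstract signals, is treated as a main result in its own right before being applied here.
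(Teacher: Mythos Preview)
Your overall strategy---reduce to Eilenberg--MacLane spaces via a principalized Postnikov tower, handle the tower and fibrations using the principal fibration lemma, and treat the base case separately---matches the paper's approach exactly. The principal fibration lemma is indeed the technical heart, and your description of its role is accurate.

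However, your treatment of the base case $X = K(A,n)$ contains a real gap. In the $S$-local situation, the claim that ``$H\Z[S^{-1}]$ is flat, so the cobar degenerates and the totalization is $K(A[S^{-1}],n)$'' does not work: each level $(RL)^{k+1}K(A,n)$ is a very complicated infinite loop space (via stable splitting, it involves iterated smash powers), and no flatness statement about $\mathrm{Tor}$ groups makes this collapse. The totalization \emph{is} $K(S^{-1}A,n)$, but for a different reason. The paper's argument is: first show that $\hat L_A \Omega^\infty E \simeq \Omega^\infty S^{-1}E$ for $E$ connective (this is part (2) of the theorem, proved via the fiber lemma for $\hat L_A$); then replace $K(A,n)$ by $\hat L_A K(A,n) \simeq K(S^{-1}A,n)$; finally observe that $S^{-1}A$ is a $\pi_0(A)$-module, hence an $A$-module, so $K(S^{-1}A,n)$ lies in the image of $R$ and the cobar construction on it is \emph{split}. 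Splitness, not a Tor computation, is what trivializes the totalization.

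The mod-$p$ base case is handled by a different trick. Rather than analyzing the Bousfield--Kan spectral sequence directly, the paper assembles the spaces $FK(A,n+i)$ (where $F = \Tot T^\bullet$) into an $\Omega$-spectrum $G$, using that $F$ preserves loops by the principal fibration lemma. One then shows that the map $\Sigma^n HA \to G$ becomes an equivalence after taking the fiber of multiplication by $n$: indeed, $\mathrm{fib}(n\colon K(A,n)\to K(A,n))$ is built from $K(A/n,\ast)$ and $K(A[n],\ast)$, which are in the image of $R$ (being $\ul\pi_0(A)$-modules), so the cobar is split on them. This shows $\Sigma^n HA \to G$ is an $n$-equivalence, hence $\hat L_A K(A,n) \simeq FK(A,n)$.
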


The aim of this article is to provide a treatment of the ideas of Bousfield--Kan in modern language, and apply them to other kinds of spaces.
In particular, we will prove a Bousfield--Kan completion theorem for categories of sheaves as well as genuine equivariant and motivic spaces.

\subsection{Digression: categorical formulation}
Let \[ L \colon \scr C \adj \scr D \noloc R \] be an adjunction between presentable $\infty$-categories.
Suppose we attempt to study the $\infty$-category $\scr C$ through the lens of the functor $L$.
A first observation is that if $X \in \scr C$, then the object $LX \in \scr D$ carries additional structure, e.g., the (adjunction) map $L(RL)X \wequi (LR)LX \to LX$.
In a more structured way, we can observe that $C \coloneqq LR \colon \scr D \to \scr D$ is a comonad, and $LX$ is a coalgebra for the comonad $C$ (see \cite[the dual of Proposition 4.7.3.3]{higheralgebra} and the discussion thereafter).
A natural question would be to ask if $X$ can be recovered from $LX$ with this additional structure.
It is clear that this is not possible in general, as we now explain.
An $L$-equivalence in $\scr C$ is a map inverted by $L$.
These form a strongly saturated class of small generation, and so, there is an associated Bousfield localization functor $\hat L\colon \scr C \to \scr C$ \cite[§5.5.4]{highertopoi}.
We can thus factor our starting adjunction as \[ \scr C \adj \hat L \scr C \adj \mathrm{Coalg}_C(\scr D) \adj \scr D, \] where the first adjunction is a Bousfield localization and the third is a forgetful/cofree adjunction.
Clearly the best we can hope for is that the middle adjunction is fully faithful, perhaps on some suitable subcategory of $\hat L \scr C$.

\begin{remark}
Note that the composite \[ \scr C \xrightarrow{\hat L} \mathrm{Coalg}_C(\scr D) \to \scr C \] (where the second functor is right adjoint to the first) is equivalent to the \emph{cobar construction} (also called the \emph{Bousfield--Kan completion}) \[ X \mapsto \CB(X) := \lim_{[n] \in \Delta} (RL)^{n+1} X. \]
By construction, this object is $L$-local and receives a map from $X$.
So we can rephrase our initial question as follows: for which $X \in \scr C$ is the canonical map \[ \hat L X \to \CB(X) \] an equivalence?
\end{remark}

\subsection{Results}
Our results fit into the categorical framework of the previous section.
Typically, in addition to $\scr C \adj \scr D$ we pick a ring $A \in \CAlg(\scr D)$ and consider the composite adjunction \[ \scr C \adj \Mod_A(\scr D). \]
We denote the resulting Bousfield localization by $\hat L_A$.
Our goal is to exhibit (ideally many) examples of $X \in \scr C$ such that $\hat L_A$ coincides with the Bousfield--Kan completion of $X$ (for the above adjunction).
Moreover, we seek to identify $\hat L_A$ in more familiar terms.
In order to do this, we prove results about Bousfield--Kan completions in a relatively general context.
For a summary, see \cref{subsec:proof-ingredients}.
We then apply these general results in settings of interest, and combine them with additional assumptions and techniques specific to the settings, to obtain useful criteria in these specific situations.
Here we concentrate on explaining the latter.

\subsubsection{Sheaves}
Our first application is the case where $\scr C=\scr X$ is an appropriate $\infty$-topos and $\scr D = \Sp(\scr X)$ is its stabilization.
Recall that in this case there are notions of \emph{unstable $n$-completion} and \emph{unstable $S$-localization}, see \cref{sec:unstable-n-comp}, \cite[Section 3]{mattis2024unstablepcompletionmotivichomotopy} and \cite[Section 3]{mattis2024unstablearithmeticfracturesquares}.
Generalizing the situation for classical spaces, these are given by Bousfield localization endofunctors \[ L_S, L_n\colon \scr X \to \scr X. \]
We can only understand these functors to a reasonable degree on \emph{nilpotent sheaves}, i.e., sheaves built out of infinite loop sheaves in a controlled manner \cite[Appendix A.2]{mattis2024unstablepcompletionmotivichomotopy}.

\begin{introtheorem}[Sheaves, {\cref{thm:sheaves:main-thm}}] \label{thm:main:sheaves}
Let $\scr X$ be an $\infty$-topos admitting a locally finite-dimensional cover \cite[Definition 5.1]{mattis2024unstablearithmeticfracturesquares} (see 
\cref{rmk:locally-finite-dimensional-examples} for examples),
let $A \in \CAlg(\Sp(\scr X)_{\ge 0})$, and let $X \in \scr X_*$ be nilpotent.
Then, depending on $\ul\pi_0(A)$ we can identify $\hat{L}_A X$ as:
\begin{center}
\renewcommand{\arraystretch}{1.2}
\begin{tabular}{l|c}
$\ul\pi_0(A)$ & Localization functor $\hat L_A$ \\
\hline
$S^{-1}\ul\Z \subset \ul\Q$ & $L_S X$ \\
$\ul{\Z}/n$ & $L_n X$
\end{tabular}
\end{center}
In these cases, we can moreover compute the localization of $X$ with respect to $A$ by the totalization formula
\begin{equation*}
  \hat L_A X \wequi \tau_{\ge 1} \lim_{\Delta} (\Omega^\infty A \otimes \Sigma^\infty)^{\bullet+1}(X).
\end{equation*}

\end{introtheorem}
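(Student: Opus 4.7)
The plan is to reduce, via the nilpotent tower and the principal fibration lemma stated earlier, to the case where $X$ is an Eilenberg--MacLane sheaf, and then verify both the identification and the totalization formula directly.

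First I would compare $\hat L_A$ to the unstable localizations $L_S$ and $L_n$. Since $A$ is connective, the stable homotopy sheaves of $A \otimes \Sigma^\infty_+ Y$ are modules over $\ul\pi_0(A)$, so an $A$-equivalence on stabilizations is the same as an $\ul\pi_0(A)$-homology equivalence. By the theory of unstable $n$-completion and $S$-localization recalled in \cref{sec:unstable-n-comp}, on pointed nilpotent sheaves this latter condition is precisely an $L_S$-equivalence in the first case and an $L_n$-equivalence in the second. This yields canonical comparison maps $L_S X \to \hat L_A X$ and $L_n X \to \hat L_A X$; it remains to show each is an equivalence and to identify the target with the cosimplicial totalization.

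To this end, I would write $X$ as the limit of its nilpotent tower, whose successive fibers are Eilenberg--MacLane sheaves $K(\ul M, m)$, and invoke the principal fibration lemma: both the cobar construction $\CB$ and the unstable localizations $L_S$, $L_n$ preserve principal fibrations in the appropriate range. This reduces the problem to $X = K(\ul M, m)$, where the totalization becomes, after stabilization, the cobar resolution of $H\ul M$ by $A$-modules. The inverse limit of this resolution computes the $A$-localization of $\Sigma^\infty K(\ul M, m)$, which for $\ul\pi_0(A) = S^{-1}\ul\Z$ (resp.\ $\ul\Z/n$) coincides with $\Sigma^\infty L_S K(\ul M, m)$ (resp.\ $\Sigma^\infty L_n K(\ul M, m)$) by the explicit description of these localizations on Eilenberg--MacLane sheaves. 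Applying $\tau_{\ge 1} \Omega^\infty$ recovers the pointed unstable object.

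The main obstacle is convergence of both the cosimplicial totalization and the inverse limit along the nilpotent tower in the $\infty$-topos setting; for general sheaves such iterated limits can be badly behaved, and the classical Bousfield--Kan spectral sequence arguments need a finiteness input. This is exactly where the hypothesis of a locally finite-dimensional cover enters: it gives uniform cohomological dimension bounds on stalks/cover, forcing the higher inverse limit terms to vanish outside a controllable range and allowing the convergence arguments to run. The $\tau_{\ge 1}$ truncation on the outside is a technical correction accounting for the fact that $\Omega^\infty$ need not preserve basepoints on the nose at the cosimplicial level; it would be handled by carefully tracking connectivities in the cobar tower.
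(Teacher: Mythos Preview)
Your overall architecture matches the paper's: identify $\hat L_A$ with $L_S$ or $L_n$ on nilpotent objects, then pass through a principalized Postnikov tower (using the principal fibration lemma and the locally finite-dimensional cover for convergence of the tower limit) to reduce to Eilenberg--Mac Lane sheaves $K(\ul M, m)$.

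The gap is in your endgame at the Eilenberg--Mac Lane level. You assert that for $X = K(\ul M, m)$, ``the totalization becomes, after stabilization, the cobar resolution of $H\ul M$ by $A$-modules.'' This is not justified and in fact does not hold in any straightforward sense: the monad $RL = \Omega^\infty(A \otimes \Sigma^\infty(-))$ is highly nonlinear, so the terms $(RL)^{n+1}K(\ul M, m)$ are \emph{not} of the form $\Omega^\infty(A^{\otimes n+1} \otimes \Sigma^m H\ul M)$. Each pass through $\Sigma^\infty \Omega^\infty$ introduces additional summands, and there is no mechanism to collapse these to the stable cobar resolution.

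The paper's actual base-case argument is different and relies on a splitting observation you do not mention. Both $\hat L_A$ and $\Tot T^\bullet_A$ invert $L$-equivalences, so one may replace $X$ by $\hat L_A X$. In the case $\ul\pi_0(A) = S^{-1}\ul\Z$, one has $\hat L_A K(\ul M, m) \simeq \Omega^\infty \Sigma^m S^{-1}\ul M$, and $S^{-1}\ul M$ is a $\ul\pi_0(A)$-module, hence an $A$-module; thus $\hat L_A X$ lies in the image of $R$, and by \cref{lem:adjunction:split} the cosimplicial object $(RL)^{\bullet+1}(RE)$ is \emph{split}, so its totalization is $RE$ on the nose. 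In the case $\ul\pi_0(A) = \ul\Z/n$ the localized Eilenberg--Mac Lane sheaf is not directly an $A$-module, and the paper instead assembles the totalizations $F\Omega^\infty \Sigma^{k+i} M$ into a spectrum $G(M)$, checks that $\Sigma^k M \to G(M)$ is an equivalence when $M$ is an $A$-module (again via splitting), and deduces from this that it is always an $n$-equivalence. Your sketch omits this splitting mechanism entirely, and without it the base case does not close.
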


\subsubsection{Equivariant spaces}
Our second application is to genuine equivariant homotopy theory.
For a finite group $G$, we denote by $\Spc(\B G) \wequi \PSh_\Sigma(\Fin_G)$ the $\infty$-category of genuine $G$-spaces, and by $\SH(\B G)$ its stabilization at representation spheres.
See \cref{subsec:equivariant} for more details.
\begin{introtheorem}[Equivariant homotopy theory, {\cref{thm:equivariant:main-thm,thm:equivariant:Z:loc:main-thm,thm:equivariant:Z:completion:main-thm}}] \label{thm:main:equivariant}
Let $G$ be a finite group, $A \in \CAlg(\SH(\B G)_{\ge 0})$, and $X \in \Spc(\B G)_*$ a nilpotent and $(1,G)$-connective (see \cref{def:equivariant:n-G-connective}) $G$-space.
Then, depending on $\ul\pi_0(A)$ we can identify $\hat{L}_A X$ as:
  \begin{center}
  \renewcommand{\arraystretch}{1.2}
  \begin{tabular}{l|c|c}
  $\ul\pi_0(A)$ & Additional assumptions & Localization functor $\hat L_A$ \\
  \hline
  $S^{-1}\ul\pi_0(\1)$ & & $L_S X$ \\
  $\ul\pi_0(\1)/n$ & & $L_n X$ \\
  $S^{-1}\ul\Z$ & $|G| \in S$ & $i_\ast L_S i^\ast X$ \\
  $\ul{\Z}/p$ & $G$ a finite $p$-group & $L_p X$
  \end{tabular}
  \end{center}
In these cases, we can moreover compute the localization of $X$ with respect to $A$ by the totalization formula
\begin{equation*}
  \hat L_A X \wequi \lim_{\Delta} (\Omega^\infty A \otimes \Sigma^\infty)^{\bullet+1}(X).
\end{equation*}
\end{introtheorem}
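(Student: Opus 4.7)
The plan is to derive the theorem from the general Bousfield--Kan completion framework developed earlier in the paper. In all four cases the common ingredient is that, under the stated hypotheses on $X$, the cobar construction $\lim_{\Delta} (\Omega^\infty A \otimes \Sigma^\infty)^{\bullet+1}(X)$ converges and computes the $A$-localization $\hat L_A X$. I would first establish this convergence by applying the principal fibration lemma to the adjunction $\Sigma^\infty \otimes \Omega^\infty A \colon \Spc(\B G)_* \adj \Mod_A(\SH(\B G))$, producing a tower of principal fibrations whose successive fibers are $A$-localized equivariant Eilenberg--Mac Lane spaces. The nilpotency hypothesis controls the Postnikov-like filtration in the argument, while $(1,G)$-connectivity ensures the tower starts in connected degrees and the fibers are simply connected at every fixed point. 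Since $\Spc(\B G) \wequi \PSh_\Sigma(\Fin_G)$ has a locally finite-dimensional cover, the dimension-theoretic hypotheses of the general machinery are met.

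The identification task is then case-by-case, and I would rely on the general principle that, for connective $A$, Bousfield localization of nilpotent connective spaces at $A$ depends only on $\ul\pi_0(A)$ viewed as an equivariant Eilenberg--Mac Lane spectrum. For the first two rows this is essentially direct: if $\ul\pi_0(A) = S^{-1}\ul\pi_0(\1)$ (respectively $\ul\pi_0(\1)/n$), then an $A$-equivalence of nilpotent spaces is precisely a map inducing an $S$-local (respectively mod-$n$) isomorphism on all homotopy Mackey functors, matching the definitions of $L_S$ and $L_n$ transferred from \cref{thm:main:sheaves} via the presentation $\Spc(\B G) \wequi \PSh_\Sigma(\Fin_G)$.

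For the third row, the hypothesis $|G| \in S$ is what makes the Burnside ring split: after inverting $|G|$, the natural ring map $\ul\pi_0(\1) \to \ul\Z$ becomes a summand projection, collapsing $S^{-1}\ul\Z$-localization onto the \emph{trivial-action} component. Concretely, one checks that $i^*$ preserves $A$-equivalences and that $i_*$ is fully faithful on the resulting local objects, yielding $\hat L_A X \wequi i_* L_S i^* X$; compatibility with the cobar construction follows from $i_*$ being a right adjoint.

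The main obstacle is the fourth row, the genuine equivariant $p$-completion for $p$-groups. The difficulty is that, even for a $p$-group, $\ul\pi_0(\1)/p$ is not $\ul\Z/p$: by Dress, the maximal ideals of the Burnside ring above $(p)$ correspond to all subgroups of $G$, so naive $\ul\Z/p$-localization threatens to discard information at non-trivial subgroups. The input that rescues the statement is the classical fact, peculiar to $p$-groups, that $I(G)$-adic and $p$-adic completions agree on finitely generated $\ul\pi_0(\1)$-modules, a consequence of the Atiyah--Segal/tom Dieck theory. Using this, I would show that $\hat L_{\ul\Z/p}$ and $L_p$ invert the same class of maps between nilpotent $(1,G)$-connective spaces by testing on generalized equivariant Eilenberg--Mac Lane spaces $K(\ul M, n)$ for $\ul M$ a $p$-complete Mackey functor, analyzing both functors through the Postnikov tower provided by the principal fibration lemma. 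I expect this fourth case to absorb most of the proof's depth.
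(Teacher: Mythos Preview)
Your overall strategy is correct and matches the paper's approach: verify the axioms \axiomref{C}, \axiomref{M}, \axiomref{S} for the adjunction, apply the principal fibration lemma to climb a Postnikov-type tower, and identify $\hat L_A$ case-by-case (your treatment of rows 3 and 4, via the idempotent splitting of the Burnside ring when $|G|\in S$ and via the coincidence of $I$-adic and $p$-adic completion for $p$-groups, is essentially what the paper does). However, there are two genuinely missing technical ingredients that the paper works fairly hard to supply, and without which the argument does not go through.

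First, the standard $t$-structure on $\SH(\B G)$ does \emph{not} satisfy axiom \axiomref{S2}. By the tom Dieck splitting, $\ul\pi_1(\Sigma^\infty\Omega^\infty E)(*)$ picks up contributions from $\ul\pi_1(E)(G/H)_{WH}$ for all proper $H$, so $\Sigma^\infty\Omega^\infty E\to E$ is essentially never an isomorphism on $\ul\pi_1$. The paper fixes this by introducing a shifted $t$-structure whose connective part consists of the \emph{$(0,G)$-connective} spectra (\cref{def:equivariant:n-G-connective}), and shows (\cref{lem:equivariant:connectivity}(5), \cref{lemm:2-t-structures}, \cref{lem:equivariant:adjunction-axioms}) that for this $t$-structure the axioms hold. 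This is precisely why the hypothesis ``$(1,G)$-connective'' appears: it is what places $X$ in $\scr X^\sld_{*,\ge 1}$ for the solid $t$-structure, not merely ``connected at every fixed point.'' Your proposal treats $(1,G)$-connectivity as a mild connectivity hypothesis rather than as the device that makes \axiomref{S2} work.

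Second, your reduction ``test on $K(\ul M,n)$ for $\ul M$ a Mackey functor'' skips a real step. The layers of the Postnikov tower of $X$ are Eilenberg--Mac Lane spaces for \emph{coefficient systems}, not Mackey functors, and such a $K(C,n)$ is an infinite loop space of a genuine $G$-spectrum only when $C$ extends to a Mackey functor. The paper handles this with \cref{lem:equivariant:resolution-of-coeff-sys}: every coefficient system has a finite functorial resolution by Mackey functors, and moreover the resolution preserves $(2,G)$-connectivity (\cref{rmk:equivariant:Mackey-coeff}), so one can apply the principal fibration lemma again along the resolution to reduce to the Mackey case. Only then does the split-cosimplicial argument (\cref{lem:adjunction:split}) apply. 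You should insert both of these steps; the rest of your outline is fine.
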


\subsubsection{Motivic spaces}
Our final application is to motivic homotopy theory.
We fix a perfect field $k$ and denote by $\Spc(k)$ the $\infty$-category of motivic spaces over $k$ \cite{Morel:Theoriehomotopique,MorelVoevodsky:a1ofschemes}.
Denote its stabilization at $\P^1$ by $\SH(k)$.
See \cref{subsec:motivic} for more details.

\begin{introtheorem}[Motivic homotopy theory, {\cref{thm:motivic:milnor-witt-localization,thm:motivic:milnor-witt-completion,thm:motivic:milnor-localization,thm:motivic:milnor-completion}}]
Let $k$ be a field, $A \in \CAlg(\SH(k)^\veff)$, and $X \in \Spc(k)_*$ a resolvable motivic space (see \cref{def:motivic:resolvable},
e.g.\ any nilpotent $2$-effective motivic space, cf.\ \cref{rmk:motivic:resolvable}).
Let $L$ be given as follows (depending on $\ul\pi_0(A)_*$):
  \begin{center}
  \renewcommand{\arraystretch}{1.2}
  \begin{tabular}{l|c|c}
  $\ul\pi_0(A)$ & Additional assumptions & Localization functor $L$ \\
  \hline
  $S^{-1}\ul\pi_0(\1)_*$ & & $L_S$ \\
  $\ul\pi_0(\1)_*/n$ & & $L_n$ \\
  $S^{-1}(\ul\pi_0(\1)_*/\eta)$ & $\mathrm{cd}_2(k)<\infty$ & $L_S$ \\
  $\ul\pi_0(\1)_*/(n,\eta)$ & $\mathrm{cd}_2(k)<\infty$ & $L_n$
  \end{tabular}
  \end{center}
Then in these cases we can identify the Bousfield--Kan completion of $X$ as
\begin{equation*}
  LX \wequi \tau_{\ge 1} \lim_{\Delta} (\Omega^\infty A \otimes \Sigma^\infty)^{\bullet+1}(X).
\end{equation*}
Moreover, in the two cases where $L=L_S$, also $LX \wequi \hat L_A X$ (for $X$ resolvable).
\end{introtheorem}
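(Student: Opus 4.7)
The plan is to deduce the motivic statement from the general sheaf-theoretic framework, in particular \cref{thm:main:sheaves}, applied to the motivic $\infty$-topos $\Spc(k)$. Over a perfect field $k$, this topos admits a locally finite-dimensional cover via the Nisnevich site on smooth $k$-schemes, so the hypotheses of \cref{thm:main:sheaves} are available. The notion of a resolvable motivic space in \cref{def:motivic:resolvable} is exactly what we need to feed into the general machine: it encodes nilpotence together with the extra $2$-effectivity/$\eta$-control needed because $\Spc(k)$ has two competing stabilizations, namely the ordinary one $\Sp(\Spc(k))$ and the $\P^1$-stabilization $\SH(k)$. The passage between them is through the very effective part $\SH(k)^\veff$, which sits inside $\Sp(\Spc(k))$ via $\Omega^\infty_{\P^1}$; this is how an $A \in \CAlg(\SH(k)^\veff)$ gives rise to an adjunction $\Spc(k) \adj \Mod_A(\SH(k))$ whose Bousfield--Kan cobar is precisely the totalization appearing in the theorem.

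In the first two cases, where $\ul\pi_0(A)_*$ is $S^{-1}\ul\pi_0(\1)_*$ or $\ul\pi_0(\1)_*/n$, the argument is a direct transcription of the proof of \cref{thm:main:sheaves}, with Milnor--Witt $K$-theory $\ul\pi_0(\1)_*$ playing the role that $\ul\Z$ played in the abstract sheaf setting. The key input, the principal fibration lemma, is stated at the level of $\infty$-topoi, so it applies unchanged; the only new ingredient is that one must verify that $S$-localization (respectively $n$-completion) interacts correctly with the $\P^1$-stable rather than $S^1$-stable framework. This is where $2$-effectivity in the definition of resolvable ensures that unstable Postnikov towers assembled from motivic Eilenberg--MacLane objects stay within reach of the totalization.

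The main obstacle is the last two cases, where $\eta$ has been killed in $\ul\pi_0(A)$. Here the Milnor--Witt structure collapses to Milnor $K$-theory, and the $\eta$-torsion that distinguishes $\SH(k)$ from $\Sp(\Spc(k))$ now becomes an obstruction to convergence. I would handle this by splitting the analysis along the $\eta$-completion/$\eta$-periodic fracture: the $\eta$-periodic part is controlled by real étale cohomology, so the hypothesis $\mathrm{cd}_2(k) < \infty$ guarantees that this piece collapses in a bounded range of degrees and hence does not obstruct the totalization. On the $\eta$-complete side, one again reduces to \cref{thm:main:sheaves} after replacing Milnor--Witt by Milnor $K$-theory on the Nisnevich site. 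The role of $\mathrm{cd}_2(k)<\infty$ is precisely to upgrade the principal fibration lemma in the presence of $\eta$-torsion, by bounding the finite-dimensionality needed for convergence.

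Finally, to get the sharper statement $\hat L_A X \simeq L_S X$ in the two $L_S$ cases, one verifies that $L_S$-local resolvable motivic spaces are already $A$-local, so that the Bousfield localization $\hat L_A$ does not strictly refine $L_S$. This comes down to showing that, for such $X$, the unit $X \to (\Omega^\infty A \otimes \Sigma^\infty) X$ is already an $L_S$-equivalence, which follows from the computation of $\ul\pi_0(A)_*$ combined with the convergence of the totalization established in the previous steps. The $L_n$ cases do not admit this improvement, since $n$-completion is genuinely strictly finer than taking the cobar with $A$.
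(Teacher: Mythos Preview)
Your plan has a fundamental gap at the very first step: $\Spc(k)$ is \emph{not} an $\infty$-topos, so you cannot apply \cref{thm:main:sheaves} to it. The paper says this explicitly and works instead with the ambient $\infty$-topos $\scr X = \Shv_\Nis(\Sm_k)$, of which $\Spc(k)$ is a localization. Even after making this correction, \cref{thm:main:sheaves} still does not apply: that theorem requires $A \in \CAlg(\Sp(\scr X)_{\ge 0})$, whereas here $A$ lives in $\SH(k)^\veff$, which is not the stabilization of $\scr X$. The adjunction $\scr X_* \adj \Mod_A(\SH(k))$ is a composite through $\Spc(k)_*$ and $\SH(k)$, and none of the axioms \axiomref{C}, \axiomref{M}, \axiomref{S} are automatic. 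The paper verifies them by hand, and crucially does so for a \emph{non-standard} t-structure $\scr D^{\nu\sld}_{\ge 0} = \Sigma^{\nu,\nu}\Mod_A(\SH(k)^\veff)$; the ordinary homotopy t-structure fails \axiomref{S2} for the same tom-Dieck-style reason as in the equivariant case. This is also why the effectivity hypothesis in ``resolvable'' matters: it is what puts the layers of the generalized Postnikov tower into $\scr X^{1\sld}_{*,\ge 1}$, which is where the principal fibration lemma applies.

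Your handling of the Milnor (mod $\eta$) cases is also off. The paper does not run an $\eta$-complete/$\eta$-periodic fracture. Under $\mathrm{cd}_2(k)<\infty$ it proves directly (\cref{lem:motivic:eta-complete}) that every very effective spectrum is already $\eta$-complete, so the stable localization $L_A^{st}$ collapses to $S^{-1}(\ph)$ or $(\ph)^\comp_n$ with no $\eta$-completion appearing. The remaining difficulty is unstable: after reducing to $X = \Omega^\infty E$ with $E$ very $1$-effective, one cannot simply invoke $A$-module structure on the layers. The paper splits $E$ into a $2$-periodic part (where $\eta$ acts by zero, so layers become $A$-modules via $\ul\pi_0^\eff(A)$) and a $2$-power-torsion part, and for the latter uses the slice filtration together with Levine's convergence theorem to show that the contribution of $f_n E$ vanishes in the pro-limit (\cref{lem:motivic:infty-effective-vanishing}). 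None of this is visible in your sketch.
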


\begin{remark}
Contrary to the situations in \cref{thm:main:sheaves,thm:main:equivariant} or when $L=L_S$, in the completion situation, i.e., $L=L_n$, we are unable to identify the homological localization $\hat L_A$.
The issue is that, in contrast with $L_n$, it is not clear that if $X$ is $\hat{L}_A$-local, then also $\tau_{\ge 1} X$ is.
In other words, in contrast with the usual completion, it is not clear that discrete objects are $\hat{L}_A$-local.
See also \cite[Conjecture 5.24]{mattis2024unstablepcompletionmotivichomotopy} for further discussion.
\end{remark}

\subsection{Application: unstable Adams spectral sequences}
The cosimplical object $\CB^\bullet$ obtained by the monadic resolution functor yields an unstable spectral sequence,
which is a generalization of the classical unstable Adams spectral sequence,
internal to the $\infty$-topos $\scr X$.

\begin{proposition*}[{Déglise--Pawar}] \label{lem:spectral-sequence:spectral-sequence-for-homological-setup}
    Let $L \colon \scr X_* \adj \scr D \noloc R$ be an adjunction
    and $X \in \scr X_\ast$.
    Then for $F_n = \fib(\mathrm{Tot}^n \CB^\bullet X \to \mathrm{Tot}^{n-1} \CB^\bullet X)$, there is an unstable spectral sequence
    \begin{equation*}
        E^{r,s}_1 = \ul{\pi}_{s-r} (F_r) \Rightarrow \ul{\pi}_{s-r} (\Tot \CB^\bullet X). 
    \end{equation*}
\end{proposition*}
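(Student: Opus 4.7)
The plan is to produce the spectral sequence from the tower of partial totalizations $\{\Tot^n \CB^\bullet X\}_{n \geq 0}$ whose limit is $\Tot \CB^\bullet X$, by forming an exact couple of sheaves of homotopy groups in the $\infty$-topos $\scr X$.

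First, I would record the defining fiber sequences of layers
\[ F_n \to \Tot^n \CB^\bullet X \to \Tot^{n-1} \CB^\bullet X. \]
Applying the sheaves-of-homotopy-groups functor $\ul{\pi}_\ast$ yields a long exact sequence of sheaves for each $n$. These long exact sequences assemble into a bigraded exact couple with
\[ D^{r,s} = \ul{\pi}_{s-r}(\Tot^r \CB^\bullet X), \qquad E^{r,s} = \ul{\pi}_{s-r}(F_r), \]
where the structure maps come from $\Tot^r \to \Tot^{r-1}$ and the connecting homomorphisms of the long exact sequences. The spectral sequence associated to this exact couple has the stated $E_1$-page. The abutment $\ul{\pi}_{s-r}(\Tot \CB^\bullet X)$ arises (up to the usual $\lim^1$-subtleties) from the inverse system of the $D^{r,s}$ along the tower.

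The only real delicacy is the fringed nature of the spectral sequence: in low degrees $s-r$ the sheaves $\ul{\pi}_{s-r}$ are merely pointed, respectively group-valued but not abelian, so one has to treat the lower corner with care exactly as in the classical Bousfield--Kan unstable spectral sequence. Since the statement only asserts existence with the stated $E_1$-page and abutment, I would invoke the construction of Déglise--Pawar directly to handle these low-degree considerations rather than redo the fringe analysis from scratch; the $\infty$-topos setting does not introduce any new difficulty, since sheaves of homotopy groups behave formally just like their classical counterparts with respect to fiber sequences.
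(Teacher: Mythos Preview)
Your proposal is correct and takes essentially the same approach as the paper: both apply the Déglise--Pawar construction to the tower $(\Tot^n \CB^\bullet X)_n$, and the paper's proof consists entirely of the citation you invoke at the end. You simply spell out the exact-couple mechanism that their result packages.
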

\begin{proof}
    This follows from \cite[Examples 2.32 and 2.37]{DeglisePawar:SpectralSequences} for the tower $(\Tot^n \CB^\bullet)_n$.
\end{proof}
Since we were able to identify the object $\Tot \CB^\bullet X$ in many cases, this spectral sequence
takes a more digestible form, as recorded in the following theorem.
\begin{introtheorem}
  The following unstable spectral sequences exist:
  \begin{enumerate}
    \item Let $\scr X$ be an $\infty$-topos admitting a locally finite-dimensional cover, $A \in \CAlg(\Sp(\scr X)_{\ge 0})$ and $X \in \scr X_*$ be nilpotent.
    \begin{center}
    \renewcommand{\arraystretch}{1.2}
    \begin{tabular}{l|l}
      $\ul\pi_0(A)$ & Spectral Sequence \\
      \hline
      $S^{-1}\ul\Z \subset \ul\Q$ & $E^{r,s}_1 = \ul{\pi}_{s-r}(F_r) \Rightarrow \ul{\pi}_{s-r} (L_S X)$ \\
      $\ul{\Z}/n$ & $E^{r,s}_1 = \ul{\pi}_{s-r}(F_r) \Rightarrow \ul{\pi}_{s-r} (L_n X)$
    \end{tabular}
    \end{center}
    \item Let $G$ be a finite group, $A \in \CAlg(\SH(\B G)_{\ge 0})$, $X \in \Spc(\B G)_*$ be nilpotent and $(1,G)$-connective.
      \begin{center}
      \renewcommand{\arraystretch}{1.2}
      \begin{tabular}{l|l|l}
      $\ul\pi_0(A)$ & Additional assumptions & Spectral Sequence \\
      \hline
      $S^{-1}\ul\pi_0(\1)$ & & $E^{r,s}_1 = \ul{\pi}_{s-r}(F_r) \Rightarrow \ul{\pi}_{s-r} (L_S X)$ \\
      $\ul\pi_0(\1)/n$ & & $E^{r,s}_1 = \ul{\pi}_{s-r}(F_r) \Rightarrow \ul{\pi}_{s-r} (L_n X)$ \\
      $S^{-1}\ul\Z$ & $|G| \in S$ & $E^{r,s}_1 = \ul{\pi}_{s-r}(F_r) \Rightarrow \ul{\pi}_{s-r} (i_\ast L_S i^\ast X)$ \\
      $\ul{\Z}/p$ & $G$ a finite $p$-group & $E^{r,s}_1 = \ul{\pi}_{s-r}(F_r) \Rightarrow \ul{\pi}_{s-r} (L_p X)$
      \end{tabular}
      \end{center}
    \item Let $k$ be a field, $A \in \CAlg(\SH(k)^\veff)$ and $X \in \Spc(k)_*$ be resolvable.
    \begin{center}
    \renewcommand{\arraystretch}{1.2}
    \begin{tabular}{l|l|l}
    $\ul\pi_0(A)$ & Additional assumptions & Spectral Sequence \\
    \hline
    $S^{-1}\ul\pi_0(\1)_*$ & & $E^{r,s}_1 = \ul{\pi}_{s-r}(F_r) \Rightarrow \ul{\pi}_{s-r} (L_S X)$ \\
    $\ul\pi_0(\1)_*/n$ & & $E^{r,s}_1 = \ul{\pi}_{s-r}(F_r) \Rightarrow \ul{\pi}_{s-r} (L_n X)$ \\
    $S^{-1}(\ul\pi_0(\1)_*/\eta)$ & $\mathrm{cd}_2(k)<\infty$ & $E^{r,s}_1 = \ul{\pi}_{s-r}(F_r) \Rightarrow \ul{\pi}_{s-r} (L_S X)$ \\
    $\ul\pi_0(\1)_*/(n,\eta)$ & $\mathrm{cd}_2(k)<\infty$ & $E^{r,s}_1 = \ul{\pi}_{s-r}(F_r) \Rightarrow \ul{\pi}_{s-r} (L_n X)$
    \end{tabular}
    \end{center}
  \end{enumerate}
\end{introtheorem}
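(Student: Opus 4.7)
The plan is to combine the Déglise--Pawar proposition above with the identifications of $\Tot\CB^\bullet X$ established in \cref{thm:main:sheaves} (sheaves), \cref{thm:main:equivariant} (equivariant), and the motivic theorem. For each row of each of the three tables, we choose the adjunction $\scr X_* \adj \Mod_A(\scr D)$ induced by the ring $A$ occurring in that row and apply the Déglise--Pawar proposition to it; no further input is needed to obtain a spectral sequence
\begin{equation*}
  E^{r,s}_1 = \ul\pi_{s-r}(F_r) \Rightarrow \ul\pi_{s-r}(\Tot\CB^\bullet X).
\end{equation*}
It then suffices to identify the abutment with the one listed in each row.

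In the equivariant setting, \cref{thm:main:equivariant} provides an on-the-nose identification $\Tot\CB^\bullet X \wequi \hat L_A X$, together with an identification of $\hat L_A X$ as $L_S X$, $L_n X$, $i_\ast L_S i^\ast X$, or $L_p X$ depending on $\ul\pi_0(A)$. Substituting gives precisely the four equivariant spectral sequences.

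For the sheaf and motivic rows, the identification has the form $\tau_{\ge 1}\Tot\CB^\bullet X \wequi LX$ with $L = L_S$ or $L_n$ (in either the homological or the $\eta$-quotient flavor in the motivic case). In positive degrees $s-r \ge 1$, the connective cover has no effect on $\ul\pi_{s-r}$, so the abutment is $\ul\pi_{s-r}(LX)$ exactly as stated; and this is the range in which an unstable spectral sequence of this type carries information. One therefore reads off the four sheaf/motivic spectral sequences from the respective main theorems.

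The only foreseeable obstacle is organizational: one must match, row by row, the hypotheses on $k$, $G$, $\scr X$, $A$, and $X$ with those of the corresponding main theorem, and confirm that the adjunction used in the Déglise--Pawar proposition is the one underlying that row. Once this bookkeeping is done, no new argument is required, since convergence of the tower $(\Tot^n \CB^\bullet X)_n$ to $\Tot\CB^\bullet X$ in the sense of \cite{DeglisePawar:SpectralSequences} is formal.
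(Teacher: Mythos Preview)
Your proposal is correct and matches the paper's approach exactly: the paper presents this theorem as an immediate consequence of the D\'eglise--Pawar proposition combined with the identifications of $\Tot\CB^\bullet X$ from the main theorems, and does not supply any additional argument. Your observation about the $\tau_{\ge 1}$ in the sheaf and motivic cases (namely that it is invisible on $\ul\pi_{s-r}$ for $s-r\ge 1$, which is the meaningful range for an unstable spectral sequence) is the only point requiring comment, and you handle it correctly.
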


\subsection{Ingredients of the proof} \label{subsec:proof-ingredients}
We consider a fairly general adjunction \[ L \colon \scr X_* \adj \scr D \noloc R. \]
To make progress, we shall assume that $\scr X$ is an $\infty$-topos.
The case of sheaves is immediately applicable, as is the equivariant case because genuine $G$-spaces form a presheaf $\infty$-topos.
With slightly more effort, using that motivic spaces are constructed as a localization of an $\infty$-topos, these will also fall into this setup.

In order to understand the Bousfield--Kan completion $\lim_\Delta (RL)^{\bullet+1} X$, we need to look at a more refined object.
Namely, we want to consider the tower of partial totalizations as a \emph{pro-object} 
\[ T_A^\bullet(X) = \lim_{\Delta^{\le \bullet}} c(RL)^{\bullet+1}(X) \in \Pro(\scr X). \]
As pioneered in work of Bousfield, Farjoun and others (see e.g. \cite{dror1973pro,Sullivan:Genetics,grossman1975homotopy,EdwardsHastings:Cech,ArtinMazur:EtaleHomotopy,bousfield1987homology}), pro-spaces behave like spaces in many ways, and similarly the $\infty$-category $\Pro(\scr X)$ behaves in many ways like an $\infty$-topos.
It turns out that the category behaves even more like an $\infty$-topos when considering a localization, namely $\Pro(\scr X_{<\infty})$.
Here $\scr X_{<\infty} \subset \scr X$ is the subcategory of truncated objects.

\begin{example}
One way in which $\Pro(\scr X_{<\infty})$ behaves more like an $\infty$-topos than $\Pro(\scr X)$ is that in the former $\infty$-category, geometric realizations are universal (i.e. stable under pullback), whereas in the latter they are (in general) not.
\end{example}

\begin{remark}
One advantage of working with pro-objects over totalizations is that the implicit infinite limit in a totalization does not interact well with connectivity in the case of $\infty$-topoi of positive homotopy dimension.
In fact, in the main example we have in mind (i.e. motivic spaces), the underlying $\infty$-topos has infinite homotopy dimension, and so even sequential inverse limits can drop connectivity arbitrarily.

Another advantage is that one can prove that the pro-object $T_A^\bullet$ is compatible with geometric realizations (and thus principal fibrations) under relatively weak assumptions.
\end{remark}

These properties will be made precise in \cref{sec:proobjects}.
As we will see below, a major point will be the study of group actions in $\Pro(\scr X_{<\infty})$.
The key takeaway is the comparison of $\Pro(\Grp(\scr X_{< \infty}))$ and $\Grp(\Pro(\scr X_{< \infty}))$ which are \emph{not} equivalent, see \cref{ex:counterexample-levelwise-group}.
Nonetheless, the canonical comparison functor $\Pro(\Grp(\scr X_{< \infty})) \to \Grp(\Pro(\scr X_{< \infty}))$ is fully faithful, and if $G \in \Grp(\Pro(\scr X_{< \infty}))$ is pro-connected, 
then, surprisingly, it lies in the essential image of this functor, see \cref{theorem:connected-levelwise}.
For those groups in the essential image (which we will call \emph{levelwise groups}), one can lift the $\infty$-bundle theory of \cite{Nikolaus_2014}
to the $\infty$-category of pro-truncated objects. Namely, we obtain the following theorem:
\begin{introtheorem}[{\cref{prop:groupoids,prop:pro-actions,ex:pro-actions}}]
  Let $G \in \Grp(\Pro(\scr X_{< \infty}))$ be a levelwise group.
  Then $\Omega \B G \wequi G$.
  Moreover, there is a canonical equivalence between maps $f \colon X \to \B G$ 
  and objects $F$ with a $G$-action, given by sending $f$ to $\fib(f)$ 
  with the induced action by $G = \Omega \B G$.
  If $F$ is an object with a $G$-action, we get a fiber sequence 
  \begin{equation*}
    F \to F \sslash G \to \B G.
  \end{equation*}
\end{introtheorem}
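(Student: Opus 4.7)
The plan is to bootstrap Nikolaus's classifying-object machinery from the $\infty$-topos $\scr X$ to the pro-truncated setting $\Pro(\scr X_{<\infty})$, with the levelwise hypothesis serving as the bridge. Since $G$ is a levelwise group, by \cref{theorem:connected-levelwise} we may write $G \wequi \lim_{i \in I} G_i$ as a cofiltered pro-limit with each $G_i \in \Grp(\scr X_{<\infty})$. Each $G_i$ being $n_i$-truncated, its classifying object $\B G_i \in \scr X$ is $(n_i+1)$-truncated, hence lies in $\scr X_{<\infty}$. I would then \emph{define} $\B G := \lim_{i \in I} \B G_i$ in $\Pro(\scr X_{<\infty})$.

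The first claim $\Omega \B G \wequi G$ is then formal. The loop functor $\Omega$ is computed by a finite limit (the pullback of $\ast \to \B G \leftarrow \ast$), and finite limits commute with cofiltered pro-limits in $\Pro(\scr X_{<\infty})$. So one gets $\Omega \B G \wequi \lim_i \Omega \B G_i \wequi \lim_i G_i \wequi G$, where the middle equivalence is Nikolaus's theorem applied internally to $\scr X$.

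For the classification of maps $f \colon X \to \B G$, I would assign to $f$ its \v{C}ech nerve, a groupoid object in $\Pro(\scr X_{<\infty})$ whose $[1]$-term $X \times_{\B G} X$ identifies with $G \times \fib(f)$ via $\Omega \B G \wequi G$, equipping $\fib(f)$ with the desired $G$-action. In the reverse direction, a $G$-action on $F$ yields a bar construction $\B(F,G,\ast)$, which should be verified to map to $\B G$ with fiber $F$. The equivalence of categories now follows by combining the proposition on groupoid objects (\cref{prop:groupoids}), which identifies groupoid objects in $\Pro(\scr X_{<\infty})$ with the structure on fibers, with the identification of pro-actions as levelwise actions (\cref{prop:pro-actions}) so that everything reduces, via the cofiltered limit, to Nikolaus's theorem in $\scr X$. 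The fiber sequence $F \to F \sslash G \to \B G$ from \cref{ex:pro-actions} is then the assertion that, under this correspondence, the total space $X$ of the classifying map is the homotopy quotient $F \sslash G$ computed as the geometric realization of the bar construction.

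The principal obstacle is that $\Pro(\scr X_{<\infty})$ is \emph{not} an $\infty$-topos, so one cannot invoke Nikolaus's classification directly, nor can one expect actions of a general group object in $\Pro(\scr X_{<\infty})$ to come from levelwise actions (cf.\ \cref{ex:counterexample-levelwise-group}). The levelwise hypothesis is therefore essential: it guarantees both that $\B G$ admits a levelwise description and that $G$-actions correspond to pro-systems of $G_i$-actions, so that the effectivity of \v{C}ech nerves and the delooping theorem can be imported from the $\infty$-topos $\scr X$ one level at a time. The universality of geometric realizations in $\Pro(\scr X_{<\infty})$ is what ensures this pointwise construction assembles coherently into the desired classification.
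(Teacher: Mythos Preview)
Your argument for $\Omega \B G \wequi G$ is correct and matches the paper: write $G$ levelwise, use that $\B$ commutes with cofiltered limits (because geometric realizations do in $\Pro(\scr X_{<\infty})$) and that $\Omega$ commutes with all limits, then invoke $\Omega \B G_i \wequi G_i$ in $\scr X$. One small point: you cite \cref{theorem:connected-levelwise} for the levelwise presentation, but that theorem says pro-connected groups are levelwise; here $G$ is already assumed levelwise, so the presentation is by definition.

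The gap is in the equivalence of categories, specifically the unit direction. Given a $G$-action on $F$ (encoded as a cartesian simplicial object over $\B_\bullet G$), you need to show that $F \to F\sslash G \to \B G$ is a fiber sequence, i.e.\ that $\fib(F\sslash G \to \B G) \wequi F$. Your plan is to reduce this, via cofiltered limits, to the $\infty$-topos $\scr X$. But this requires knowing that a $G$-action in $\Pro(\scr X_{<\infty})$ can be presented as a pro-system of $G_i$-actions on objects $F_i \in \scr X_{<\infty}$. Actions are encoded by $\Delta^\op$-indexed cartesian diagrams, and $\Delta^\op$ is an \emph{infinite} category, so the standard equivalence $\Pro(\Fun(I,\scr C)) \wequi \Fun(I,\Pro(\scr C))$ for finite $I$ does not apply. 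You acknowledge the obstacle but do not say how to overcome it; citing \cref{prop:pro-actions} at this point is circular, since that proposition \emph{is} the equivalence you are trying to establish.

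The paper's approach to the unit sidesteps this entirely. It first treats the case of a \emph{free} action $G \times X$: here $|G \times X| \wequi s_\sharp X$ for $s\colon \ast \to \B G$, so the fiber is $s^* s_\sharp X \wequi \Omega\B G \times X \wequi G \times X$ by the already-proven loop identity. Then it observes that the forgetful functor from $G$-actions to $\Pro(\scr X_{<\infty})$ is monadic (conservative and preserving geometric realizations), so every action is a geometric realization of free ones; since both $|\ph|$ and the fiber functor preserve geometric realizations (the latter by universality of geometric realizations in $\Pro(\scr X_{<\infty})$), the general case follows. This monadicity-plus-free-actions maneuver is the missing idea in your sketch.
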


Once this machinery is set up, we proceed to investigate the functor $\tau_{<\infty} T^\bullet_A$ (i.e. the reflection of $T^\bullet_A$ into $\Pro(\scr X_{<\infty})$).
Our main result about it is a version of Bousfield and Kan's principal fibration lemma,
whose proof will concern the first half of this article:
\begin{introtheorem} [{\cref{thm:fiber-sequences}}]
  The functor $\tau_{<\infty} T^\bullet_A \colon \scr X_* \to \Pro(\scr X_{*,<\infty})$ preserves 
  those fiber sequences $F \to E \to B$ where $B$ is simply connected and $F$ is connected, and $LF, L\Omega B$ satisfy certain connectivity assumptions (see \cref{sec:setup} for more details).
\end{introtheorem}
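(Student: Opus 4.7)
The plan is to reduce preservation of the fiber sequence to a statement about principal $\Omega B$-bundles, and then invoke the classifying space machinery for levelwise groups in $\Pro(\scr X_{\ast, <\infty})$ established earlier in the paper. Since $B$ is simply connected, we may identify $B \wequi \B\Omega B$, and by the $\infty$-bundle theory for $\scr X_\ast$ the map $E \to B$ is equivalent to the projection $F \sslash \Omega B \to \B\Omega B$ arising from the induced $\Omega B$-action on $F$. So it suffices to show that $\tau_{<\infty} T^\bullet_A$ carries this particular principal fibration to a fiber sequence of pro-truncated objects under the stated connectivity conditions on $LF$ and $L\Omega B$.

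First I would analyze how $\tau_{<\infty} T^\bullet_A$ interacts with the two-sided bar constructions $B_\bullet(\ast, \Omega B, F)$ and $B_\bullet(\ast, \Omega B, \ast)$, whose geometric realizations compute $E$ and $B$ respectively. The cosimplicial object $(RL)^{\bullet+1}$ is built from right adjoints, so the partial totalizations $\Tot^{\le n}$ do not \emph{a priori} commute with these colimits in $\scr X_\ast$. However, as emphasized in the introduction, in $\Pro(\scr X_{\ast,<\infty})$ geometric realizations become universal, and the pro-object $T^\bullet_A$ is compatible with them under mild hypotheses. The strategy is to show that in this pro-truncated setting the tower $T^\bullet_A$ commutes with the geometric realization of these specific bar constructions, using the connectivity assumptions on $LF$ and $L\Omega B$ to guarantee that, in each fixed truncation degree, only finitely many bar simplices contribute and the tower converges.

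Next I would upgrade this to a group-theoretic statement. Since $T^\bullet_A$ is built from right adjoints it preserves products and thus sends the group object $\Omega B$ to a group in $\Pro(\scr X_{\ast, <\infty})$. The hypothesis that $B$ is simply connected forces $\tau_{<\infty} T^\bullet_A \Omega B$ to be pro-connected, so by the connected-implies-levelwise comparison (\cref{theorem:connected-levelwise}) this group is in fact in the essential image of $\Pro(\Grp(\scr X_{<\infty}))\to\Grp(\Pro(\scr X_{<\infty}))$, i.e., a levelwise group. Combining the classifying space theorem for levelwise groups with the bar-construction computation of the previous step then yields the desired fiber sequence
\begin{equation*}
    \tau_{<\infty} T^\bullet_A F \to \tau_{<\infty} T^\bullet_A E \to \tau_{<\infty} T^\bullet_A B,
\end{equation*}
identifying the right-hand map with the classifying map of the induced $\tau_{<\infty} T^\bullet_A \Omega B$-action on $\tau_{<\infty} T^\bullet_A F$.

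The main obstacle will be the first step, the compatibility of $\tau_{<\infty} T^\bullet_A$ with the geometric realization of the two-sided bar construction, which is the pro-truncated analogue of Bousfield--Kan's classical principal fibration estimate. The key input is expected to be a Bousfield--Kan-style connectivity estimate controlling how quickly the tower $T^\bullet_A$ converges on $k$-truncated targets, together with the connectivity hypotheses on $LF$ and $L\Omega B$ ensuring that the bar construction can be truncated to finitely many simplicial degrees modulo $k$-truncation. This is exactly where it is essential to work in $\Pro(\scr X_{\ast, <\infty})$ rather than in $\scr X_\ast$ directly, since in an $\infty$-topos of positive homotopy dimension even sequential inverse limits can drop connectivity arbitrarily, and so the truncated totalization would not otherwise be controllable.
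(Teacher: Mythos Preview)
Your overall architecture matches the paper's proof closely: rewrite $E\to B$ as the principal fibration $F\sslash\Omega B\to \B\Omega B$, show $\tau_{<\infty}T^\bullet$ respects the relevant products and geometric realizations, observe that $\tau_{<\infty}T^\bullet\Omega B$ is pro-connected and hence a levelwise group by \cref{theorem:connected-levelwise}, and then apply the principal fibration theory for levelwise groups (\cref{prop:pro-actions}, \cref{ex:pro-actions}). That is exactly how the paper proceeds.

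Two of your supporting claims, however, are not right as stated. First, ``$T^\bullet_A$ is built from right adjoints so it preserves products'' is false: each level of the cobar object is $(RL)^{k+1}$, and $L$ does \emph{not} preserve products (e.g.\ $L(X\times Y)\wequi LX\oplus LY\oplus L(X\wedge Y)$ by the stable splitting). Product preservation for the \emph{pro-object} $T^\bullet$ is a genuine theorem (\cref{prop:products}); the paper proves it by constructing a bi-augmented bicosimplicial object out of $\CB^\bullet(X\times Y)$ and $\CB^\bullet X\times\CB^\bullet Y$ and showing that both restrictions to rows and columns are split, using the stable splitting and \cref{lem:adjunction:split}. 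Second, your mechanism for geometric realization compatibility---``in each fixed truncation degree only finitely many bar simplices contribute''---is not how the paper argues and would be delicate to make precise. Instead the paper shows that each individual $T^n$ already preserves sifted colimits of connected solid objects (\cref{prop:Tn-sifted-colim}), by induction on $n$ using the fiber sequence $\Omega^nC^n\to T^n\to T^{n-1}$, the identification $C^n\wequi RS^nL$, the connectivity estimate for $S^n$ (\cref{lemm:Sn-conn}), and crucially axiom \axiomref{C} that $R$ preserves sifted colimits of solid connective objects. One then passes to $\tau_{<\infty}T^\bullet=\lim_n cT^n$ using that geometric realizations commute with cofiltered limits in $\Pro(\scr X_{<\infty})$ (\cref{prop:pro-truncated-geometric-realization}). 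The connectivity hypotheses on $LF$ and $L\Omega B$ enter by guaranteeing that all terms $\Omega B^n\times F^\epsilon$ of the bar construction lie in $\scr X^\sld_{*,\ge1}$ (\cref{ex:solid-products}), so that these sifted-colimit and connectivity results apply; they are not used to truncate the bar construction.
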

This theorem will allow us to induct on the principalized Postnikov tower of any good enough nilpotent sheaves 
in $\scr X_*$, thus essentially reducing the convergence question to the case of an infinite loop sheaf.

\subsection{Organization}
In \cref{sec:proobjects}, we establish properties of pro-truncated objects in $\infty$-topoi alluded to above, in particular the principal fibration theory.
Then in \cref{sec:setup} we introduce notation and common assumptions for the rest of the article, most importantly, the adjunction $\scr X \adj \scr D$ and the axioms \axiomref{C}, \axiomref{M} and \axiomref{S}.
\cref{subsec:locn} investigates compatibility of the localization functor $\hat L$ with fiber sequences.
In a sense our main results are contained in \cref{sec:BK}: here we prove that, under a certain fairly general list of assumptions, the Bousfield--Kan completion functor $\tau_{<\infty} \Tot^\bullet \CB$ preserves principal fibrations.
We then come to \cref{sec:applications}, where we establish our applications to sheaves, equivariant spaces and motivic spaces.
We conclude with some appendices: in \cref{sec:appendix:coherence-data} we recall how to extract (co)simplicial data out of adjunctions, in \cref{sec:stable-localization} we prove a well-known fact about the stable analog of Bousfield--Kan completion, in \cref{sec:unstable-n-comp} we extend the unstable $p$-completion theory of \cite{mattis2024unstablepcompletionmotivichomotopy} to unstable $n$-completion ($n$ not necessarily prime), and in \cref{sec:slice-convergence} we establish a minor extension of Levine's convergence theorem in motivic homotopy theory.

\subsection{Notation and conventions}
We freely use the language of $\infty$-categories as set out in \cite{highertopoi,higheralgebra}. A \emph{space} will be an object 
of the $\infty$-category $\Spc$ of spaces/homotopy types/$\infty$-groupoids/anima, and a \emph{sheaf} will be an object of an $\infty$-topos.
Here is a table of some of the notation that we use.
\begin{center}
\begin{longtable}{l|l|l}
notation & meaning & reference/definition \\
\hline
$\Pro(\scr C)$ & $\infty$-category of pro-objects & \cref{subsec:pro} \\
$c$ & constant pro-object functor & \cref{subsec:pro} \\
$\mat X$ & materialization of a pro-object & \cref{subsec:pro} \\
$\scr X_{<\infty}$ & truncated objects & \cref{subsec:trunc} \\
$\tau_{<\infty}$ & pro-truncation & \cref{subsec:trunc} \\
$\Grpd(\scr C)$ & $\infty$-category of groupoid objects & \cref{subsec:groupoids} \\
$\Grp(\scr C)$ & $\infty$-category of group objects & \cref{subsec:groupoids} \\
$L, R, \scr D^\sld$ & standard setup & \cref{sec:setup} \\
$\hat{L}$ & homological localization functor & \cref{subsec:locn} \\
$\CB^\bullet$ & cobar resolution & \cref{subsec:setup} \\
$T^\bullet$ & monadic resolution tower & \cref{subsec:setup} \\
$C^n$ & total fiber of the codegeneracy cube & \cref{subsec:connectivity} \\
$S^n$ & total fiber of the face cube & \cref{subsec:connectivity} \\
$L_n$ & unstable $n$-completion functor & \cite[\S3]{mattis2024unstablepcompletionmotivichomotopy}, \cref{sec:unstable-n-comp} \\
$L_S$ & unstable $S$-localization functor & \cite[\S3]{mattis2024unstablearithmeticfracturesquares} \\
$\Spc(\B G)$ & genuine $G$-spaces & $\PSh_\Sigma(\Fin_G)$ \\
$\SH^{S^1}(\B G)$ & $G$-equivariant $S^1$-spectra & $\Sp(\Spc(\B G))$ \\
$\SH(\B G)$ & genuine $G$-spectra & \cref{subsec:equivariant} \\
$\ul\pi_i E$ & Mackey functor valued homotopy objects & \cref{subsec:equivariant} \\
$\ul A$ & Burnside ring Mackey functor & e.g.\ \cite[§V.9]{LewisMaySteinbergerMcClure:EquivariantHomotopyTheory} \\
$\Sm_k$ & smooth quasi-compact $k$-schemes & \\
$\Spc(k)$ & motivic spaces & \cref{subsec:motivic}\\
$\SH^{S^1}(k)$ & motivic $S^1$-spectra & $\Sp(\Spc(k))$ \\
$\SH(k)$ & motivic spectra & \cref{subsec:motivic}\\
$\Spc(k)_{\ge (\nu, \nu)}$ & $\nu$-effective motivic spaces & \cite[§3]{asok2024p1stabilizationunstablemotivichomotopy}, \cref{rmk:L-pres-eff}\\
$\ul\pi_i(X)_*$ & stable motivic homotopy sheaves & \cref{rmk:t-structures-SH} \\
$\SH(k)^\heartsuit$ & heart of the homotopy t-structure & \cref{rmk:t-structures-SH} \\
$\SH(k)^{\veff}$ & very effective motivic spectra & \cref{rmk:t-structures-SH} \\
$\ul\pi^\eff_i(X)$ & stable effective motivic homotopy sheaves & \cref{rmk:t-structures-SH} \\
$\SH(k)^{\eff\heartsuit}$ & heart of the effective t-structure & \cref{rmk:t-structures-SH} \\
$f_n, s_n$ & effective cover and slice functors & \cref{rmk:slices} \\
$\ul{K}^{MW}$ & Milnor--Witt K-theory homotopy module & \\
$\ul{K}^{M}$ & Milnor K-theory homotopy module & \\
$\eta$ & motivic Hopf map & \\
\end{longtable}
\end{center}

\subsection{Acknowledgments}
We are greatly indebted to Mike Hopkins for providing invaluable support at several stages of this project.
During its completion, we benefited from conversations with many people, among them Aravind Asok, Robert Burklund, Emmanuel Farjoun, Marc Hoyois, Lorenzo Mantovani and Georg Tamme.

The authors acknowledge support by the Deutsche Forschungsgemeinschaft
(DFG, German Research Foundation) through the Collaborative Research Center 
TRR 326 \emph{Geometry and Arithmetic of Uniformized Structures}, project number 444845124.

\section{Homotopy theory in pro-topoi} \label{sec:proobjects}
Throughout we let $\scr X$ be an $\infty$-topos \cite[Definition 6.1.0.4]{highertopoi}.
In this section we will show that the $\infty$-category $\Pro(\scr X_{<\infty})$ of pro-truncated objects in $\scr X$ behaves in some ways like an $\infty$-topos.
In particular, we will study the relationship between groupoids and (some kind of) epimorphisms in $\Pro(\scr X_{<\infty})$ (see \cref{prop:groupoids}), and between group actions and maps to classifying spaces (see \cref{prop:pro-actions}).
This culminates in our replacement for principal fibration theory in $\Pro(\scr X_{<\infty})$: for a pro-connected group $G \in \Grp(\Pro(\scr X_{<\infty}))$ there is an equivalence between $G$-torsors and maps to $\B G$, given as usual by taking $G$-orbits respectively taking fibers (see \cref{ex:pro-actions} and \cref{theorem:connected-levelwise}).

\subsection{Pro-objects} \label{subsec:pro}
Given a possibly large $\infty$-category $\scr C$, we denote by $\Pro(\scr C)$ the $\infty$-category of pro-objects in $\scr C$ \cite[\S 0.11.6]{barwick2018exodromy}.
One possible definition is $\Pro(\scr C) = \mathrm{Ind}(\scr C^\op)^\op$.
This comes with the \emph{constant object} functor \[c \colon \scr C \to \Pro(\scr C), \] characterizing $\Pro(\scr C)$ via a universal property: $\Pro(\scr C)$ has cofiltered limits and any functor out of $\scr C$ into an $\infty$-category with cofiltered limits extends uniquely along $c$ to a functor out of $\Pro(\scr C)$ preserving cofiltered limits.
We note the following further facts:
\begin{itemize}
\item $c$ preserves finite limits (see the dual of \cite[Proposition 5.3.5.14]{highertopoi}).
\item The association $\scr C \mapsto \Pro(\scr C)$ canonically assembles into a functor \[ \Pro(\ph) \colon \Cat_\infty \to \Cat_\infty. \]
\item Any object in $\Pro(\scr C)$ can be presented as a cofiltered limit of constant objects.
\item $c$ is fully faithful.
\item If $\scr C$ has cofiltered limits, then $c$ has a right adjoint $\mat \colon \Pro(\scr C) \to \scr C$, given by $\mat \lim_i c X_i \wequi \lim_i X_i$.
\end{itemize}
From this we can deduce that there is an equivalence $\Fun(I, \Pro(\scr C)) \wequi \Pro(\Fun(I, \scr C))$ for any finite poset $I$ \cite[Proposition 5.3.5.15]{highertopoi}.
It follows, for example, that $\Pro(\scr C)_* \wequi \Pro(\scr C_*)$.
However, this fails for infinite diagrams: in general, the canonical functor \[ \Pro(\Fun(I, \scr C)) \to \Fun(I, \Pro(\scr C)) \] is neither fully faithful nor essentially surjective.
We will revisit this theme several times in what follows.

\subsection{Pro-truncated objects} \label{subsec:trunc}
We write $\scr X_{<\infty} \subset \scr X$ for the subcategory of truncated objects, and $\Pro(\scr X_{<\infty}) \subset \Pro(\scr X)$ for the subcategory of pro-truncated objects.
The latter inclusion has a left adjoint \[ \tau_{<\infty} = \lim_n \Pro(\tau_{\le n})\colon \Pro(\scr X) \to \Pro(\scr X_{<\infty}) \] called protruncation; see \cite[\S 4.1.2]{barwick2018exodromy}.
It is pro-induced by the functor $\scr X \to \Pro(\scr X_{<\infty})$ given by $X \mapsto \lim_n c \tau_{\le n} X$.
It preserves all limits \cite[Proposition 3.9]{haine2022fundamentalfibersequenceetale} (the reference 
only proves it for the $\infty$-topos of spaces, but the proof works for an arbitrary $\infty$-topos).

Recall the notion of almost finite colimits from \cite[Definition 1.9]{haine2024profinitecompletionsproducts}
(and in particular the notion of $n$-colimit-cofinal functors, cf.\ \cite[Definition 1.7]{haine2024profinitecompletionsproducts}).
Let us give an example of almost finite colimits:
\begin{lemma}
  Geometric realizations of both simplicial and semisimplicial objects are almost finite.
\end{lemma}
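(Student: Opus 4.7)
The plan is to exhibit, for each $n \ge 0$, a finite subcategory of $\Delta^{op}$ (respectively $\Delta_{inj}^{op}$) whose inclusion is $n$-colimit-cofinal in the sense of \cite[Definition 1.7]{haine2024profinitecompletionsproducts}. The natural candidates are the $n$-skeletal subcategories $\Delta_{\le n}^{op}$ and $\Delta_{inj, \le n}^{op}$ spanned by the objects $[0], \ldots, [n]$; both are finite.

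First I would treat the semisimplicial case. Given any target $\infty$-category $\scr C$ with the relevant colimits and any $X_\bullet \colon \Delta_{inj}^{op} \to \scr C$, I want to show that the canonical comparison
\[ \colim_{\Delta_{inj, \le n}^{op}} X_\bullet \longrightarrow \colim_{\Delta_{inj}^{op}} X_\bullet \]
is an equivalence after $n$-truncation. The standard tool is the skeletal filtration of the realization: the successive cofibers for $k > n$ are built by pushouts along the boundary inclusions $\partial \Delta^k \to \Delta^k$, and are therefore $k$-connected. Inductively, the map from the $n$-skeletal realization to the full realization is at least $(n+1)$-connected, which suffices after $n$-truncation.

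For the simplicial case I would proceed similarly, using the skeletal filtration of simplicial objects controlled by the latching objects. The presence of degeneracies adds some bookkeeping: one needs that the $k$-th latching map contributes only cells of dimension exactly $k$, but the resulting connectivity estimate is the same, giving $n$-colimit-cofinality of $\Delta_{\le n}^{op} \hookrightarrow \Delta^{op}$ (possibly after adjusting $n$ by a harmless constant, which does not affect almost finiteness).

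The main technical obstacle is verifying that the bound on the truncation level is tight enough and independent of $\scr C$: the connectivity argument above is most transparent in $\Spc$, so I would reduce the general case to spaces by testing against corepresentable functors on $\scr C$, which preserve the relevant colimits and jointly detect $n$-truncations. Once this Yoneda-style reduction is in place, the classical statement that $|\mathrm{sk}_n X_\bullet| \to |X_\bullet|$ is highly connected transports directly, and almost finiteness of both kinds of geometric realization falls out.
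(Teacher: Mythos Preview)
Your overall strategy is sound but differs from the paper's in a key simplification, and your proposed reduction step has a real problem.

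The paper's proof is much shorter: rather than handling $\Delta^{\op}$ and $\Delta^{\inj,\op}$ separately, it observes that the inclusion $\Delta^{\inj,\op} \hookrightarrow \Delta^{\op}$ is already (fully) colimit-cofinal \cite[Lemma 6.5.3.7]{highertopoi}, so it suffices to find finite $n$-colimit-cofinal approximations of $\Delta^{\inj,\op}$ alone; the composite $\Delta^{\inj,\op}_{\le n} \hookrightarrow \Delta^{\inj,\op} \hookrightarrow \Delta^{\op}$ then serves for both. For the semisimplicial step it simply cites \cite[Example 6.14]{du2023reshapinglimitdiagramscofinality}. Your skeletal-filtration argument for the semisimplicial case is essentially a proof of that cited fact, so that part is fine; but your separate treatment of $\Delta^{\op}$ via latching objects is unnecessary work, and the hedge ``possibly after adjusting $n$ by a harmless constant'' suggests you have not pinned down the bound there.

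The genuine gap is your final paragraph. Corepresentable functors $\Map(c,-)\colon \scr C \to \Spc$ preserve \emph{limits}, not colimits, so you cannot use them to transport the connectivity estimate from $\Spc$ to a general target $\scr C$. Fortunately the reduction is not needed: $n$-colimit-cofinality in the sense of \cite[Definition 1.7]{haine2024profinitecompletionsproducts} is a condition on the functor of indexing categories (equivalently a connectivity condition on the slice categories, checkable entirely in spaces), not something to be verified target-by-target. Once established for diagrams of spaces it holds automatically. So drop the Yoneda manoeuvre and either check the slice condition directly or, as the paper does, cite the reference and use cofinality of $\Delta^{\inj,\op} \hookrightarrow \Delta^{\op}$ to handle the simplicial case for free.
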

\begin{proof}
  We have to show that both $\Delta^\op$ and $\Delta^{\inj,\op}$ have $n$-colimit-cofinal approximations by a finite category.
  Since the inclusion $\Delta^{\inj,\op} \hookrightarrow \Delta^\op$ is colimit-cofinal (see \cite[Lemma 6.5.3.7]{highertopoi}),
  it suffices to prove this for $\Delta^{\inj,\op}$.
  For any $n$ consider the functor 
  $\Delta^{\inj,\op}_{\le n} \to \Delta^{\inj,\op}$.
  It is clear that $\Delta^{\inj,\op}_{\le n}$ is a finite category,
  and the functor is $n$-colimit-cofinal by \cite[Example 6.14]{du2023reshapinglimitdiagramscofinality}.
\end{proof}
The first part of the next proposition was proven by Haine for the $\infty$-category of spaces, cf.\ \cite[Proposition 1.17]{haine2024profinitecompletionsproducts},
and is the main motivation for considering the notion of almost finite colimits.

\begin{proposition}[Haine] \label{prop:pro-truncated-geometric-realization}
Almost finite colimits in $\Pro(\scr X_{<\infty})$ are universal and commute with cofiltered limits.
In particular geometric realizations (of either simplicial or semisimplicial objects) in $\Pro(\scr X_{<\infty})$ are universal and commute with cofiltered limits.
\end{proposition}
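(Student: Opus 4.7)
The plan is to combine two reductions: from $\Pro(\scr X_{<\infty})$ down to each $\Pro(\scr X_{\le n})$ via pro-truncation, and from almost finite to genuinely finite colimits via the $n$-colimit-cofinal approximations. The backbone is the identification $\Pro(\scr X_{<\infty}) \wequi \lim_n \Pro(\scr X_{\le n})$ along the truncation transition functors, under which pullbacks, cofiltered limits, and colimits of diagrams valued in pro-truncated objects are computed componentwise. Thus it suffices to prove the statement in each $\Pro(\scr X_{\le n})$ separately, for almost finite shapes.

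First I would handle the finite-colimit case in $\Pro(\scr X_{\le n})$ via the standard pro-levelwise formula. Given a finite diagram $D \colon I \to \Pro(\scr X_{\le n})$, reindex to present $D \wequi \lim_j c D_j$ with $D_j \colon I \to \scr X_{\le n}$ and $j$ cofiltered; then $\colim_I D \wequi \lim_j c(\colim_I D_j)$ exists because $\scr X_{\le n}$ is an $n$-topos, hence admits universal finite colimits. Commutation of such finite colimits with cofiltered limits in $\Pro(\scr X_{\le n})$ follows immediately from this formula by Fubini. Universality requires a touch more care: one shows that a base-change map $Y \to \colim_I D$ can, after further cofinal reindexing, be lifted to a map of pro-systems so that pullback is computed pro-levelwise, where it then distributes over the finite colimit using universality in $\scr X_{\le n}$.

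Second, I would promote the finite case to the almost finite case using the $n$-colimit-cofinal finite approximation $F_n \colon I_n \to I$ (cf.\ \cite[Definition 1.7]{haine2024profinitecompletionsproducts}). For any diagram $D \colon I \to \Pro(\scr X_{\le n})$ the natural map $\colim_{I_n} D \circ F_n \to \colim_I D$ is an equivalence, because $\Pro(\scr X_{\le n})$ is built from $n$-truncated data and $n$-colimit-cofinal functors induce equivalences on colimits into such contexts (this is applied pro-levelwise, where it is exactly Haine's statement). Universality and cofiltered-limit commutation for the almost finite colimit are then inherited from the finite colimit over $I_n$ established in the previous step. Assembling over all $n$ via the limit presentation yields both assertions of the proposition, and the "in particular" follows by applying the preceding lemma to $I = \Delta^{\op}$ and $I = \Delta^{\inj,\op}$.

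The main obstacle is the universality step in $\Pro(\scr X_{\le n})$: one must argue that a pullback of a finite colimit of pro-objects along an arbitrary map of pro-objects can be presented pro-levelwise so that one can invoke universality of finite colimits in $\scr X_{\le n}$ termwise. This is the standard subtlety of reindexing in $\Pro$, and would presumably be handled exactly as in Haine's argument for spaces in \cite{haine2024profinitecompletionsproducts}, with only cosmetic changes to replace $\Spc_{<\infty}$ by $\scr X_{<\infty}$; once this is in hand, the rest of the proof is essentially formal.
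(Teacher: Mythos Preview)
Your approach to the cofiltered-limit commutation is essentially the same as the paper's and is fine: reduce via $\Pro(\tau_{\le n})$ to $\Pro(\scr X_{\le n})$, where almost finite colimits become finite, and finite colimits commute with cofiltered limits in any pro-category.

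The gap is in the universality half. Your ``backbone'' claim that pullbacks in $\Pro(\scr X_{<\infty}) \wequi \lim_n \Pro(\scr X_{\le n})$ are computed componentwise is false: the transition functors $\Pro(\tau_{\le n})$ do \emph{not} preserve pullbacks, because truncation does not preserve pullbacks (e.g.\ $\tau_{\le 0}(\ast \times_{S^2} \ast) \not\wequi \ast \times_{\tau_{\le 0}S^2} \ast$). So even after you establish universality of finite colimits in each $\Pro(\scr X_{\le n})$ separately, you cannot simply assemble the conclusion over $n$; the comparison map you need to test in $\Pro(\scr X_{<\infty})$ involves a pullback, and applying $\Pro(\tau_{\le n})$ to it does not land in the analogous comparison map in $\Pro(\scr X_{\le n})$.

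The paper circumvents this with the key \cref{lem:almost-finite:truncation-pullback}: the pullback functor $\Fun(\Lambda^2_2,\Pro(\scr X)) \to \Pro(\scr X)$ sends $\Pro(\tau_{\le n+1})$-equivalences to $\Pro(\tau_{\le n})$-equivalences. This ``shift by one'' is exactly what is needed: combined with the joint conservativity of the $\Pro(\tau_{\le n})$ (\cref{lem:pro-objects:pro-truncations-conservativity}), it reduces the universality check to $\Pro(\scr X_{\le n+1})$, where almost finite colimits are finite and universality holds by \cite[Lemma 1.1]{haine2024profinitecompletionsproducts}. You correctly identify that the delicate point is universality, but you locate the obstacle inside $\Pro(\scr X_{\le n})$ rather than in the passage back to $\Pro(\scr X_{<\infty})$, and the missing ingredient is precisely this truncation-shift lemma for pullbacks.
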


In order to prove this, we will use two further properties of $\Pro(\tau_{\le n})$.

\begin{lemma} \label{lem:pro-objects:pro-truncations-conservativity}
    Let $f \colon X \to Y$ be a morphism in $\Pro(\scr X)$.
    Then $\tau_{<\infty} f$ is an equivalence if and only if $\Pro(\tau_{\le n})(f)$ is an equivalence for all $n$.
\end{lemma}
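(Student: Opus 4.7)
The \emph{if} direction is immediate: the formula $\tau_{<\infty} = \lim_n \Pro(\tau_{\le n})$ recalled just above the lemma exhibits $\tau_{<\infty} f$ as a cofiltered limit of the morphisms $\Pro(\tau_{\le n}) f$, so if each of those is an equivalence then so is $\tau_{<\infty} f$.

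For the converse, my plan is to show that for every $n$ the unit $\eta_X \colon X \to \tau_{<\infty} X$ of the reflection becomes an equivalence after applying $\Pro(\tau_{\le n})$. Granting this, naturality of $\eta$ in $f$ yields a commutative square whose horizontal arrows are $\Pro(\tau_{\le n}) f$ and $\Pro(\tau_{\le n})(\tau_{<\infty} f)$ and whose vertical arrows $\Pro(\tau_{\le n}) \eta_X$ and $\Pro(\tau_{\le n}) \eta_Y$ are equivalences. It follows immediately that $\Pro(\tau_{\le n}) f$ is an equivalence whenever $\tau_{<\infty} f$ is.

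The core computation rests on two ingredients. First, the functor $\Pro(\tau_{\le n}) \colon \Pro(\scr X) \to \Pro(\scr X)$ preserves cofiltered limits, directly from the universal property defining pro-induced functors. Second, there are natural equivalences $\tau_{\le n} \circ \tau_{\le m} \wequi \tau_{\le \min(n,m)}$ of endofunctors of $\scr X$, and hence on pro-objects. Combining these, one computes
\[ \Pro(\tau_{\le n})(\tau_{<\infty} X) \wequi \lim_m \Pro(\tau_{\le n} \circ \tau_{\le m})(X) \wequi \lim_m \Pro(\tau_{\le \min(n,m)})(X), \]
and this last cofiltered limit is cofinally constant on the cofinal subsystem $\{m \ge n\} \subset \N^{\op}$, where every transition map is the identity, with common value $\Pro(\tau_{\le n}) X$. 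A direct unwinding of the universal property of the limit then identifies the map $\Pro(\tau_{\le n}) \eta_X$ with the resulting canonical equivalence.

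I do not anticipate any serious difficulty: the argument is a formal unwinding of the definition of $\tau_{<\infty}$ as an iterated truncation, combined with the standard cofinality of $\{m \ge n\} \hookrightarrow \N$ in $\N^{\op}$. The one step requiring minor care is the diagram chase identifying $\Pro(\tau_{\le n}) \eta_X$ with the equivalence constructed above, but this follows from the compatibility of $\eta_X$ with the projections out of $\tau_{<\infty} X = \lim_m \Pro(\tau_{\le m}) X$ to each $\Pro(\tau_{\le m}) X$.
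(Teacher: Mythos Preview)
Your proposal is correct and follows essentially the same approach as the paper: both directions rest on the two facts $\tau_{<\infty} \wequi \lim_n \Pro(\tau_{\le n})$ and $\Pro(\tau_{\le n}) \circ \tau_{<\infty} \wequi \Pro(\tau_{\le n})$. The only difference is in how the second identity is justified: you compute it explicitly via the limit formula and cofinality, whereas the paper observes more conceptually that it follows from the chain of reflective inclusions $\Pro(\scr X_{\le n}) \subset \Pro(\scr X_{<\infty}) \subset \Pro(\scr X)$, so that the composite of the two left adjoints (reflections) must agree with the left adjoint to the composite inclusion.
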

\begin{proof}
This is immediate from the facts that $\tau_{<\infty} \wequi \lim_n \Pro(\tau_{\le n})$ and $\Pro(\tau_{\le n}) \wequi \Pro(\tau_{\le n}) \circ \tau_{<\infty}$.
(The latter holds because $\Pro(\scr X_{\le n}) \subset \Pro(\scr X_{<\infty}) \subset \Pro(\scr X)$ and the truncations are left adjoints to the appropriate inclusions.)
\end{proof}

The following lemma is an extension of \cite[Proposition 4.13]{DevalapurkarHaine} to pro-objects in $\infty$-topoi.
\begin{lemma} \label{lem:almost-finite:truncation-pullback}
    Let $\scr X$ be an $\infty$-topos.
    The pullback functor
    \[ \Fun(\Lambda_2^2, \Pro(\scr X)) \to \Pro(\scr X) \] sends $\Pro(\tau_{\le n+1})$-equivalences to $\Pro(\tau_{\le n})$-equivalences.
\end{lemma}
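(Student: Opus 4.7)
The plan is to reduce to the corresponding statement inside $\scr X$, and then deduce it from standard connectivity estimates in the $\infty$-topos.

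First, I use the equivalence $\Fun(\Lambda_2^2, \Pro(\scr X)) \wequi \Pro(\Fun(\Lambda_2^2, \scr X))$ from \cite[Proposition 5.3.5.15]{highertopoi}. Under this, the pullback functor identifies with $\Pro(p)$, where $p \colon \Fun(\Lambda_2^2, \scr X) \to \scr X$ is the pullback in $\scr X$, and componentwise $\Pro(\tau_{\le n+1})$-equivalences identify with $\Pro(\Fun(\Lambda_2^2, \tau_{\le n+1}))$-equivalences. Thus it suffices to show that $p$ sends componentwise $\tau_{\le n+1}$-equivalences to $\tau_{\le n}$-equivalences in $\scr X$: the universal property of Bousfield localization then yields a factorization $\tau_{\le n} \circ p \wequi \bar p \circ \Fun(\Lambda_2^2, \tau_{\le n+1})$, and applying $\Pro$ delivers the desired compatibility of $\Pro(p)$ with $\Pro(\tau_{\le n})$.

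It remains to prove the statement in $\scr X$. Recall that in an $\infty$-topos a morphism is a $\tau_{\le m}$-equivalence iff it is $(m+1)$-connective. Given a map of cospans $(A \to B \leftarrow C) \to (A' \to B' \leftarrow C')$ with each component $(n+2)$-connective, I factor the induced map on pullbacks as
\[ A \times_B C \to A \times_{B'} C \to A' \times_{B'} C \to A' \times_{B'} C'. \]
The last two arrows are base changes of $A \to A'$ and $C \to C'$, hence remain $(n+2)$-connective by pullback stability of connective maps. The key observation for the first arrow is that it is a pullback of the diagonal $\Delta \colon B \to B \times_{B'} B$ of $B \to B'$: indeed, $A \times_B C$ is the pullback of $\Delta$ along the canonical map $A \times_{B'} C \to B \times_{B'} B$, $(a, c) \mapsto (f(a), h(c))$. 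Since the diagonal of a $k$-connective map is $(k-1)$-connective, this first arrow is $(n+1)$-connective, and therefore so is the composite.

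The main obstacle is the reduction step: one must carefully track the equivalence $\Fun(\Lambda_2^2, \Pro(\scr X)) \wequi \Pro(\Fun(\Lambda_2^2, \scr X))$ and its interaction with componentwise protruncation, and verify that the universal-property argument producing $\bar p$ is robust enough in the $\infty$-categorical setting. The $\scr X$-level argument is essentially the $\infty$-topos lift of \cite[Proposition 4.13]{DevalapurkarHaine}, with the (co)fiber-sequence manipulations in spaces replaced by the intrinsic connectivity results valid in any $\infty$-topos.
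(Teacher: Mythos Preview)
Your reduction step and the connectivity argument are both sound, but there is a genuine gap in between: the claimed equivalence ``a morphism is a $\tau_{\le m}$-equivalence iff it is $(m+1)$-connective'' is false. Only the implication $(m+1)$-connective $\Rightarrow$ $\tau_{\le m}$-equivalence holds. For the converse, consider $* \to K(\Z, m+1)$ in $\Spc$: both source and target have trivial $\tau_{\le m}$, so this is a $\tau_{\le m}$-equivalence, but its fiber $K(\Z, m)$ is only $m$-connective, so the map is not $(m+1)$-connective. Consequently, your factorization argument only establishes the weaker statement that $p$ sends componentwise $(n+2)$-connective maps to $(n+1)$-connective maps, which does not directly cover all componentwise $\tau_{\le n+1}$-equivalences.

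The fix is short and is precisely what the paper does: insert a $2$-out-of-$3$ reduction. Given any map of cospans that is a componentwise $\tau_{\le n+1}$-equivalence, compare both source and target to the map into their common componentwise $(n+1)$-truncation. The unit maps $A \to \tau_{\le n+1}A$, etc., \emph{are} $(n+2)$-connective, so your factorization argument applies to these, showing that both comparison maps on pullbacks are $(n+1)$-connective and hence $\tau_{\le n}$-equivalences; the original map on pullbacks is then a $\tau_{\le n}$-equivalence by $2$-out-of-$3$. The paper carries out exactly this reduction (phrased at the pro-level, then reducing to constant objects via cofiltered limits) and cites \cite[Proposition 4.13]{DevalapurkarHaine} for the resulting $\scr X$-level statement, which is what your connectivity argument proves directly.
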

\begin{proof}
    By 2-out-of-3, it suffices to show that the $n$-truncated pullback functor inverts the map of spans $(U \to V \leftarrow W) \to (\Pro(\tau_{\le n+1})(U) \to \Pro(\tau_{\le n+1})(V) \leftarrow \Pro(\tau_{\le n+1})(W))$.
    Since all functors in sight preserve cofiltered limits, this reduces to the case where $U,V,W$ are constant, which is a special case of \cite[Proposition 4.13]{DevalapurkarHaine}.
\end{proof}

Also note the following general fact about categories of pro-objects:
\begin{proposition} \label{lem:almost-finite:cofiltered-finite-commutes}
  Let $\scr C$ be any $\infty$-category with finite colimits.
  Then finite colimits commute with cofiltered limits in $\Pro(\scr C)$.
\end{proposition}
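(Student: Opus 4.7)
The plan is to dualize the statement. Since $\Pro(\scr C) = \Ind(\scr C^\op)^\op$, cofiltered limits in $\Pro(\scr C)$ correspond to filtered colimits in $\Ind(\scr C^\op)$, and finite colimits in $\Pro(\scr C)$ correspond to finite limits in $\Ind(\scr C^\op)$. Thus the claim reduces to the following well-known fact about Ind-completions: if $\scr D$ is an $\infty$-category with finite limits, then in $\Ind(\scr D)$ filtered colimits commute with finite limits. Here we apply this to $\scr D = \scr C^\op$, which inherits finite limits from the finite colimits in $\scr C$.

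To establish the Ind-version, I would use the description of $\Ind(\scr D)$ as the full subcategory of $\Fun(\scr D^\op, \Spc)$ spanned by those functors that preserve finite limits (see \cite[Proposition 5.3.5.4]{highertopoi}, applied to $\scr C = \scr D^\op$). In the ambient presheaf $\infty$-category, both filtered colimits and finite limits are computed pointwise, and in $\Spc$ filtered colimits commute with finite limits by \cite[Proposition 5.3.3.3]{highertopoi}. The subcategory of finite-limit-preserving functors is closed under filtered colimits in the presheaf category precisely because of this commutation in $\Spc$, and it is closed under finite limits for formal reasons. Consequently both operations in $\Ind(\scr D)$ are computed pointwise in $\Spc$, so they commute there as well.

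The only mild subtlety is that $\scr C$ is not assumed small, so one has to be slightly careful about the existence of $\Pro(\scr C)$ and the cited references (which are usually stated for small inputs). I would address this by restricting, for any given finite diagram $K \to \Pro(\scr C)$ and cofiltered diagram indexing objects of $\Pro(\scr C)^K$, to the small full subcategory of $\scr C$ generated by the objects that appear in these diagrams and closed under finite colimits; this subcategory still has finite colimits, the diagrams in question still live in its pro-category, and the formation of (co)limits commutes with the fully faithful inclusion. I do not anticipate a genuine obstacle — the argument is entirely formal once the $\mathrm{Pro}/\mathrm{Ind}$ duality and the standard Ind-completion fact are invoked.
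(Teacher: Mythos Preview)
Your proposal is correct and is essentially the same argument as the paper's proof, just phrased through the explicit dualization $\Pro(\scr C)=\Ind(\scr C^\op)^\op$ rather than directly via mapping spaces. The paper observes that the functors $\Map_{\Pro(\scr C)}(\ph, cX)$ for $X \in \scr C$ are jointly conservative, take cofiltered limits to filtered colimits and finite colimits to finite limits, and then invokes \cite[Proposition 5.3.3.3]{highertopoi} for $\Spc$; unwinding the identification $\Pro(\scr C)^\op \simeq \Ind(\scr C^\op) \subset \Fun(\scr C,\Spc)$, this is exactly your pointwise argument in the presheaf category.
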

\begin{proof}
  Mapping to objects $cX$ with $X \in \scr C$ is jointly conservative,
  and turns cofiltered limits to filtered colimits (and finite colimits to finite limits).
  Hence, the claim follows from the corresponding dual result about spaces, 
  cf.\ \cite[Proposition 5.3.3.3]{highertopoi}.
\end{proof}

\begin{proof}[Proof of \cref{prop:pro-truncated-geometric-realization}]
  We first show that almost finite colimits commute with cofiltered limits in $\Pro(\scr X_{<\infty})$, which is slightly easier.
  Using \cref{lem:pro-objects:pro-truncations-conservativity} and the fact that $\Pro(\tau_{\le n})$ preserves colimits (being a left adjoint) and cofiltered limits, it suffices to prove the same statement about $\Pro(\scr X_{\le n})$.
  But almost finite colimits in $\Pro(\scr X_{\le n})$ are just finite colimits, and finite colimits always commute with cofiltered limits in categories of pro-objects by \cref{lem:almost-finite:cofiltered-finite-commutes}. 

  Now we show that almost finite colimits are universal in $\Pro(\scr X_{<\infty})$.
  Using \cref{lem:pro-objects:pro-truncations-conservativity,lem:almost-finite:truncation-pullback} we reduce to proving the same statement for $\Pro(\scr X_{\le n+1})$.
  But almost finite colimits in this category are finite colimits, so the result holds by \cite[Lemma 1.1]{haine2024profinitecompletionsproducts}.
\end{proof}

\subsection{Epimorphisms and connected pro-objects} \label{sec:epi}

Write $\Fun(\Delta, \scr X)^\epi \subset \Fun(\Delta, \scr X)$ for the full subcategory spanned by those morphisms 
that are effective epimorphisms \cite[the discussion after Corollary 6.2.3.5]{highertopoi}.

\begin{definition}
  We say that $f \in \Fun(\Delta^1, \Pro(\scr X))$ is a \emph{levelwise effective epimorphism}
  if under the canonical equivalence $\Pro(\Fun(\Delta^1, \scr X)) \wequi \Fun(\Delta^1, \Pro(\scr X))$
  the morphism $f$ corresponds to an object in the full subcategory $\Pro(\Fun(\Delta^1, \scr X)^\epi)$.

  We write $\Fun(\Delta^1, \Pro(\scr X))^\epi \subset \Fun(\Delta^1, \Pro(\scr X))$
  for the full subcategory of levelwise effective epimorphisms,
  so that there is a canonical equivalence $\Fun(\Delta^1, \Pro(\scr X))^\epi \wequi \Pro(\Fun(\Delta^1, \scr X)^\epi)$.

  We say that $f \in \Fun(\Delta^1, \Pro(\scr X_{<\infty}))$ is a \emph{levelwise effective epimorphism}
  if it is one considered as an object of $\Fun(\Delta^1, \Pro(\scr X))$, i.e., if it can be presented as a cofiltered limit of effective epimorphisms between truncated objects (use that $\tau_{\le n}$ preserves effective epimorphisms and hence $\tau_{<\infty}$ preserves levelwise effective epimorphisms).
\end{definition}

\begin{remark}\label{rmk:effective-epi:levelwise-surjective}
  If $f \colon X \to Y$ is a morphism in $\scr X$,
  then $f$ is an effective epimorphism if and only if $\pi_0(f)$ is surjective
  (as a map of sheaves, i.e., an epimorphism in the $1$-category $\scr X_{\le 0}$),
  essentially by \cite[Corollary 6.2.3.5]{highertopoi}.
  In particular, we see that $g \colon U \to V$ in $\Pro(\scr X)$ 
  is a levelwise effective epimorphism if and only if $\Pro(\pi_0)(g)$
  is a levelwise surjection.
\end{remark}

\begin{definition}
  We say that an object $X \in \Pro(\scr X)_*$ is \emph{pro-connected} 
  if the map $* \to X$ is a levelwise effective epimorphism.
  We write $\Pro(\scr X)_{*,\ge 1} \subset \Pro(\scr X)_*$ for the full subcategory of pro-connected objects.
\end{definition}

\begin{lemma}
  Let $X \in \Pro(\scr X)_*$ be an object.
  $X$ is pro-connected if and only if $X$ lives in the full subcategory 
  $\Pro(\scr X_{*,\ge 1}) \subseteq \Pro(\scr X_*) \wequi \Pro(\scr X)_*$.
\end{lemma}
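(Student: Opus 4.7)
The plan is to prove the two directions separately, using a fiber-sequence argument combined with a reduction to pro-discrete objects for the non-trivial direction.

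For the easy direction, suppose $X \in \Pro(\scr X_{*, \ge 1})$ and present $X \wequi \lim_i c X_i$ with each $X_i \in \scr X_{*, \ge 1}$. Under the identification $\Pro(\Fun(\Delta^1, \scr X)) \wequi \Fun(\Delta^1, \Pro(\scr X))$, the arrow $* \to X$ is presented by the cofiltered diagram $(* \to X_i)_i$, and each $* \to X_i$ is an effective epimorphism because $X_i$ is pointed connected. Hence $* \to X \in \Pro(\Fun(\Delta^1, \scr X)^\epi)$, i.e., $X$ is pro-connected.

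For the converse, I would use that the inclusion $\scr X_{*, \ge 1} \hookrightarrow \scr X_*$ admits a right adjoint $X \mapsto X^\circ \coloneqq \fib(X \to \pi_0 X)$: any pointed map from a pointed connected object factors through $X^\circ$ because the composite to $\pi_0 X$ factors through $\pi_0$ of the source, which is $*$. Applying $\Pro(-)$ yields a fully faithful coreflection $\Pro(\scr X_{*, \ge 1}) \hookrightarrow \Pro(\scr X_*)$ with right adjoint $\Pro((-)^\circ)$; so an object $X \in \Pro(\scr X_*)$ belongs to $\Pro(\scr X_{*, \ge 1})$ if and only if the counit $\Pro((-)^\circ)(X) \to X$ is an equivalence. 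Picking any presentation $X \wequi \lim_i c X_i$ in $\Pro(\scr X_*)$, the termwise fiber sequences $X_i^\circ \to X_i \to \pi_0 X_i$ assemble (using that finite limits commute with cofiltered limits in $\Pro(\scr X)$) into a fiber sequence $\Pro((-)^\circ)(X) \to X \to \Pro(\pi_0)(X)$ in $\Pro(\scr X)_*$, so the counit is an equivalence exactly when $\Pro(\pi_0)(X) \wequi *$.

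It therefore suffices to deduce $\Pro(\pi_0)(X) \wequi *$ from the pro-connectedness of $X$. Since $\pi_0$ preserves effective epimorphisms, \cref{rmk:effective-epi:levelwise-surjective} implies that $* \to \Pro(\pi_0)(X)$ is a levelwise effective epimorphism in $\Pro(\scr X_{\le 0})$. This reduces the proof to the following auxiliary statement, which I expect to be the main obstacle: for any $A \in \Pro(\scr X_{\le 0})_*$, if $* \to A$ is a levelwise effective epimorphism then $A \wequi *$. I would verify it by choosing a cofiltered presentation of $* \to A$ as a limit of surjections $Y_j \twoheadrightarrow Z_j$ between discrete objects (effective epimorphisms in the $1$-topos $\scr X_{\le 0}$ being ordinary surjections), and using that $\lim_j c Y_j \wequi *$ in $\Pro(\scr X_{\le 0})$ to arrange cofinally that $Y_{j'} \to Y_j$ factors through a global section; surjectivity of $Y_{j'} \twoheadrightarrow Z_{j'}$ then forces $Z_{j'} \to Z_j$ to factor through a global section as well (its image being that of $Y_{j'} \to Y_j \to Z_j$), giving $\lim_j c Z_j \wequi *$ in $\Pro(\scr X_{\le 0})$ and hence $A \wequi *$.
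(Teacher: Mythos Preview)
Your argument is correct, but it takes a genuinely different route from the paper's. The paper's proof for the hard direction is a one-step colimit construction: writing $* \to X$ as $\lim_i c(A_i \to X_i)$ with each $A_i \to X_i$ an effective epimorphism, it sets $X_i' \coloneqq X_i \amalg_{A_i} *$, observes that $* \to X_i'$ is an effective epimorphism (cobase change of one), so $X_i'$ is connected, and then uses that finite colimits commute with cofiltered limits in $\Pro(\scr X)$ to get $\lim_i cX_i' \wequi X \amalg_* * \wequi X$. Your approach is instead limit-flavored: you pass through the coreflection $\Pro((-)^\circ)$ to reduce to the discrete statement $\Pro(\pi_0)(X) \wequi *$, and then verify this by an essentially-constant-pro-object argument in the $1$-category $\Pro(\scr X_{\le 0})$. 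The paper's proof is shorter and uses only one nontrivial fact (the colimit/cofiltered-limit commutation, which it needs elsewhere anyway); your proof has the virtue of isolating the discrete core of the statement, at the cost of more bookkeeping --- supplying compatible basepoints on the $Y_j$ and $Z_j$ from the equivalence $\lim_j cY_j \wequi *$, and checking that the induced factorizations go through the \emph{basepoint} of $Z_j$ (so that they assemble into the inverse of $A \to *$). All of that works as you sketch, but it is worth making explicit.
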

\begin{proof}
  Suppose first that $X \in \Pro(\scr X_{*,\ge 1})$.
  Then there exists a cofiltered limit presentation $X = \lim_i c X_i$,
  with $X_i$ connected.
  In particular, $* \to X_i$ is an effective epimorphism for all $i$,
  which shows that $* \to X$ is a levelwise effective epimorphism.

  Suppose on the other hand that $* \to X$ is a levelwise effective epimorphism.
  Thus, this map can be written as a cofiltered limit of effective epimorphisms
  $A_i \to X_i$ in $\scr X_*$.
  Set $X_i' = X_i \amalg_{A_i} *$.
  Then since $c$ preserves colimits and finite colimits commute with cofiltered limits in pro-objects by \cref{lem:almost-finite:cofiltered-finite-commutes}, we find \[ \lim_i cX_i' \wequi \lim_i cX_i \amalg_{\lim_i cA_i} * \wequi \lim_i cX_i \wequi X. \]
  (Indeed $\lim_i cA_i = *$ by assumption.)
  By construction $X_i'$ is connected
  (as effective epimorphisms are stable under cobasechange, cf.\ \cite[Corollary 6.5.1.17 with $n = 0$]{highertopoi}), concluding the proof.
\end{proof}

\subsection{Groupoids} \label{subsec:groupoids}
Let $\scr C$ be either $\scr X$, $\Pro(\scr X)$, or $\Pro(\scr X_{<\infty})$,
so that we can speak about the $\infty$-category $\Fun(\Delta^1, \scr C)^\epi$,
cf.\ \cref{sec:epi}.
By a groupoid in $\scr C$ we mean an object of $\Fun(\Delta^\op, \scr C)$ satisfying the Segal condition \cite[Proposition 6.1.2.6 (4'')]{highertopoi}.
Left Kan extending to $\Delta^\op_+$ and restricting to $\Delta^\op_{+, \le 0} \wequi \Delta^1$ yields a functor $|\ph|$, participating in an adjunction \[ |\ph|\colon \Fun(\Delta^\op, \scr C) \adj \Fun(\Delta^1, \scr C)\noloc N. \]
By construction, the right adjoint is given by right Kan extending to $\Delta^\op_+$ and then restricting to $\Delta^\op$.
It is clear that the image of $N$ is contained in $\Grpd(\scr C)$.
If $\scr C = \scr X$, then the image of $|\ph|$ consists of effective epimorphisms,
cf.\ \cite[Lemma 6.2.3.13]{highertopoi} (using that each of the morphisms $X_n \to |X_\bullet|$ factors over $X_0$).

\begin{lemma}
If $\scr C = \Pro(\scr X)$ or $\scr C = \Pro(\scr X_{<\infty})$, then the image of $|\ph|$ consists of levelwise effective epimorphisms.
\end{lemma}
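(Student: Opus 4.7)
The plan is to reduce the statement to a question about coequalizers in the $1$-category $\Pro(\scr X_{\le 0})$, where the required epimorphism property is automatic. First, the case $\scr C = \Pro(\scr X_{<\infty})$ reduces to $\scr C = \Pro(\scr X)$: for a simplicial object $X_\bullet$ in $\Pro(\scr X_{<\infty})$, the left adjoint $\tau_{<\infty}$ satisfies $|X_\bullet|^{\Pro(\scr X_{<\infty})} \wequi \tau_{<\infty}|X_\bullet|^{\Pro(\scr X)}$, and $\tau_{<\infty}$ already preserves levelwise effective epimorphisms (as noted in the definition).

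For $\Pro(\scr X)$, I would apply the criterion of \cref{rmk:effective-epi:levelwise-surjective}: the map $X_0 \to |X_\bullet|$ is a levelwise effective epimorphism iff $\Pro(\pi_0)$ of it is a levelwise surjection of sheaves. The functor $\Pro(\pi_0)$ is left adjoint to the inclusion $\Pro(\scr X_{\le 0}) \hookrightarrow \Pro(\scr X)$ (since $\Pro$ preserves adjunctions), so it preserves colimits, giving $\Pro(\pi_0)|X_\bullet| \wequi |\Pro(\pi_0)X_\bullet|$ computed in $\Pro(\scr X_{\le 0})$.

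Since $\Pro(\scr X_{\le 0})$ is a $1$-category, the realization of any simplicial object is simply the coequalizer of its two face maps $X_1 \rightrightarrows X_0$. Using that $\Fun(\{\bullet \rightrightarrows \bullet\}, \Pro(\scr X_{\le 0})) \wequi \Pro(\Fun(\{\bullet \rightrightarrows \bullet\}, \scr X_{\le 0}))$ (the indexing category being finite), one may write $\Pro(\pi_0)X_1 \rightrightarrows \Pro(\pi_0)X_0$ as $\lim_i c(B_i \rightrightarrows A_i)$ for a cofiltered system in $\scr X_{\le 0}$. Combined with \cref{lem:almost-finite:cofiltered-finite-commutes}, the realization becomes $\lim_i c\,\mathrm{coeq}(B_i \rightrightarrows A_i)$, and the map of interest is identified with the cofiltered limit of the coequalizer maps $A_i \to \mathrm{coeq}(B_i \rightrightarrows A_i)$. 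Each such map is an epimorphism (hence a surjection of sheaves in $\scr X_{\le 0}$), so the whole map is a levelwise surjection, finishing the argument.

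The main obstacle is that $\Fun(\Delta^\op, \Pro(\scr X))$ is strictly larger than $\Pro(\Fun(\Delta^\op, \scr X))$, so one cannot present a simplicial object in $\Pro(\scr X)$ as a pro-system of simplicial objects in $\scr X$. Reducing to $\Pro(\scr X_{\le 0})$ first sidesteps this: in a $1$-category, the simplicial colimit collapses to a coequalizer, which involves only the \emph{finite} diagram $\{\bullet \rightrightarrows \bullet\}$, and this finite diagram does admit a cofiltered pro-presentation.
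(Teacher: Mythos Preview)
Your proof is correct and follows essentially the same strategy as the paper's: reduce via \cref{rmk:effective-epi:levelwise-surjective} to a statement about $\Pro(\pi_0)$, observe that in the $1$-category $\Pro(\scr X_{\le 0})$ the geometric realization collapses to a finite colimit (a coequalizer, equivalently a $\Delta^{\inj,\op}_{\le 1}$-colimit), present this finite diagram as a cofiltered limit using $\Pro(\Fun(I,\scr X)) \wequi \Fun(I,\Pro(\scr X))$ for finite $I$, and conclude levelwise. The only cosmetic differences are that the paper treats both cases $\Pro(\scr X)$ and $\Pro(\scr X_{<\infty})$ uniformly (your separate reduction via $\tau_{<\infty}$ is unnecessary but harmless), and that the paper presents the finite diagram in $\Pro(\scr X)$ and invokes \cite[Lemma 6.2.3.13]{highertopoi} levelwise, whereas you first pass to $\Pro(\scr X_{\le 0})$ and use that coequalizer maps are epimorphisms.
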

\begin{proof}
By \cref{rmk:effective-epi:levelwise-surjective}, we have to see that for any $X_\bullet \in \Grpd(\scr C)$
the map $\Pro(\pi_0)(X_0) \to \Pro(\pi_0)(|X_\bullet|)$ 
is a levelwise surjection. 
Now, $\Pro(\pi_0)(|X_\bullet|) \wequi \Pro(\pi_0)(\colim_{\Delta^{\inj,\op}_{\le 1}} X_\bullet)$
by \cite[Lemma 1.2.4.17]{higheralgebra} and \cite[Proposition A.1]{hesselholt2023diracgeometryicommutative} 
(since $\Pro(\pi_0)$ commutes with the colimit, and $\Pro(\scr X_{\le 0})$ is a $1$-category), 
and hence the question depends only on the finite diagram $X_\bullet|_{\Delta^{\inj,\op}_{\le 1}}$.
The result now follows using the equivalence $\Pro(\Fun(\Delta^{\inj,\op}_{\le 1}, \scr X)) \wequi \Fun(\Delta^{\inj,\op}_{\le 1}, \Pro(\scr X))$, and by using \cite[Lemma 6.2.3.13]{highertopoi} on each level. 
\end{proof}

We write $\Grp(\scr C) \subset \Grpd(\scr C)$ for the full subcategory 
of \emph{group objects}, i.e.\ those $U_\bullet$ such that $U_0 = *$.

\begin{proposition} \label{prop:groupoids}
\begin{enumerate}
\item We have an equivalence $\Fun(\Delta^1, \Pro(\scr X_{<\infty}))^\epi \wequi \Pro(\Grpd(\scr X_{<\infty}))$,
  and thus $\Pro(\scr X_{<\infty})_{*,\ge 1} \wequi \Pro(\Grp(\scr X_{<\infty}))$.
\item The composite \[ \Pro(\Grpd(\scr X_{<\infty})) \wequi \Fun(\Delta^1, \Pro(\scr X_{<\infty}))^\epi \xrightarrow{N} \Grpd(\Pro(\scr X_{<\infty})) \] is the canonical functor.
\item The functor $N\colon \Fun(\Delta^1, \Pro(\scr X_{<\infty}))^\epi \to \Grpd(\Pro(\scr X_{<\infty}))$ is fully faithful.
\end{enumerate}
(In fact the first two statements hold for all of $\Pro(\scr X)$.)
\end{proposition}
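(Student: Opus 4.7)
The plan is to derive the three statements from the Giraud-style $\infty$-topos identification $\Grpd(\scr X) \wequi \Fun(\Delta^1, \scr X)^\epi$ via pro-functoriality, using \cref{prop:pro-truncated-geometric-realization} as the key extra ingredient for part (3).

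For part (1), I would first restrict the topos-level equivalence to $\Grpd(\scr X_{<\infty}) \wequi \Fun(\Delta^1, \scr X_{<\infty})^\epi$. The \v{C}ech nerve direction is immediate since truncated objects are closed under finite limits. The realization direction uses the fiber sequence $F \to U_0 \to |U_\bullet|$ coming from $U_1 \wequi U_0 \times_{|U_\bullet|} U_0$: if $U_0, U_1$ are both $n$-truncated then so is the fiber $F$ of $U_1 \to U_0$, and the long exact sequence of homotopy sheaves forces $|U_\bullet|$ to be $(n+1)$-truncated. Applying $\Pro$ and combining with the finite-diagram identification $\Pro(\Fun(\Delta^1, \scr X_{<\infty})) \wequi \Fun(\Delta^1, \Pro(\scr X_{<\infty}))$ from \cref{subsec:pro} gives the desired equivalence, after checking that $\Pro(\Fun(\Delta^1, \scr X_{<\infty})^\epi)$ coincides with $\Fun(\Delta^1, \Pro(\scr X_{<\infty}))^\epi$; this can be seen by protruncating any presentation of a levelwise effective epimorphism. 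The unrestricted case over $\Pro(\scr X)$ is obtained by skipping the truncation step. The pro-connected corollary falls out as the pointed version with $U_0 = *$.

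For part (2), I would unwind the equivalence from part (1): a pro-groupoid $\lim_i c U_\bullet^i$ corresponds to the levelwise effective epimorphism $\lim_i c(U_0^i \to |U_\bullet^i|)$, and applying $N$ amounts to taking the \v{C}ech nerve, which is a finite-limit construction and hence commutes with both $c$ and cofiltered limits, recovering $\lim_i c U_\bullet^i$ — this is the image of the canonical comparison functor. For part (3), I would show that the counit of $|\ph| \dashv N$ is an equivalence on $\Fun(\Delta^1, \Pro(\scr X_{<\infty}))^\epi$. Presenting a levelwise effective epimorphism $f$ as $\lim_i c f_i$ with each $f_i\colon U_i \to V_i$ an effective epi between truncated objects, the same finite-limit argument gives $N f \wequi \lim_i c(N f_i)$; then \cref{prop:pro-truncated-geometric-realization} allows me to commute the geometric realization past the cofiltered limit, yielding $|N f| \wequi \lim_i c |N f_i| \wequi \lim_i c V_i$, which is the target of $f$ by the topos axiom applied to each $f_i$.

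The main obstacle is twofold. First, carefully tracking the pro-truncated restriction in part (1), particularly the fiber-sequence argument showing that realizations of truncated groupoids are truncated, and reconciling $\Pro(\Fun(\Delta^1, \scr X_{<\infty})^\epi)$ with $\Fun(\Delta^1, \Pro(\scr X_{<\infty}))^\epi$. Second, and more substantially, the essential use of \cref{prop:pro-truncated-geometric-realization} in part (3): this is precisely why the fully-faithfulness statement requires $\Pro(\scr X_{<\infty})$ rather than $\Pro(\scr X)$, since arbitrary geometric realizations in the latter do not commute with cofiltered limits.
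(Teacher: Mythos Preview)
Your proposal is correct and follows essentially the same approach as the paper: reduce (1) to the topos-level equivalence restricted to truncated objects, observe (2) by construction, and prove (3) by showing $|N(\ph)| \wequi \id$ via \cref{prop:pro-truncated-geometric-realization} after reducing to constant objects. The only minor difference is that the paper handles the truncatedness of $|U_\bullet|$ in (1) via a separate lemma (\cref{lemm:eff-epi-ntrunc}) on $n$-truncated effective epimorphisms rather than a direct long-exact-sequence argument, which avoids basepoint and connectedness subtleties in a general $\infty$-topos.
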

\begin{proof}
(1) The fact that $\Fun(\Delta^1, \Pro(\scr X))^\epi \wequi \Pro(\Grpd(\scr X))$ follows immediately 
since by definition $\Fun(\Delta^1, \Pro(\scr X))^\epi \wequi \Pro(\Fun(\Delta^1, \scr X)^\epi)$ and $\Fun(\Delta^1, \scr X)^\epi \wequi \Grpd(\scr X)$; for the latter see \cite[Theorem 6.1.0.6]{highertopoi}.

To get the result for pro-truncated objects,
it will suffice to show that the equivalence $\Fun(\Delta^1, \scr X)^\epi \wequi \Grpd(\scr X)$ 
restricts to an equivalence $\Fun(\Delta^1, \scr X_{< \infty})^\epi \wequi \Grpd(\scr X_{<\infty})$.
Let us first assume that $f \colon X \to Y$ is an effective epimorphism, such that both $X$ and $Y$ are in $\scr X_{<\infty}$, say in $\scr{X}_{\le M}$ 
for some $M \gg 0$.
Now, $N(f)$ is (the restriction of) the right Kan extension of $f$ to $\Delta_+$. As right Kan extensions  
are computed using limits, and $\scr X_{\le M} \subset \scr X$ is stable under limits, we see that $N(f) \in \Grpd(\scr X_{\le M}) \subseteq \Grpd(\scr X_{< \infty})$.
For the other direction, assume that $X_\bullet \in \Grpd(\scr X_{<\infty})$.
Pulling back $X_0 \to |X_\bullet|$ along itself yields $X_1 \to X_0$ (since $N(|X_\bullet|) \wequi X_\bullet$).
This map being truncated \cite[Lemma 5.5.6.14]{highertopoi}, we deduce that $X_0 \to |X_\bullet|$ is also truncated \cite[Proposition 6.2.3.17]{highertopoi}.
Thus, $|X_\bullet|$ is truncated by \cref{lemm:eff-epi-ntrunc} below.

That this equivalence restricts further to an equivalence $\Pro(\scr X_{<\infty})_{*,\ge 1} \wequi \Pro(\Grp(\scr X_{<\infty}))$ 
is clear.

(2) Holds by construction.

(3) We want to show that $|N(\ph)| \wequi \id$.
Since geometric realizations commute with cofiltered limits (\cref{prop:pro-truncated-geometric-realization}), so does the functor $|\ph|$ (and so does $N$, being a right adjoint).
Any object in $\Fun(\Delta^1, \Pro(\scr X_{<\infty}))^\epi$ being a cofiltered limit of effective epimorphisms in $\scr X_{<\infty}$, we reduce to such (constant) objects.
This case follows from the same result for $\scr X$, for which see \cite[Theorem 6.1.0.6]{highertopoi} (where $N$ is even an equivalence).
\end{proof}

We used the following fact, for which we could not locate a reference.
\begin{lemma} \label{lemm:eff-epi-ntrunc}
Let $\scr X$ be an $\infty$-topos and $f\colon X \to Y \in \scr X$ an $n$-truncated effective epimorphism.
Then $X$ is $(n+1)$-truncated if and only if $Y$ is $(n+1)$-truncated.
\end{lemma}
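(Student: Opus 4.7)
The plan is to split the biconditional into its two directions. The easy implication that $Y$ being $(n+1)$-truncated forces $X$ to be $(n+1)$-truncated follows because the composite $X \to Y \to 1$ is a composition of an $n$-truncated morphism with an $(n+1)$-truncated morphism, hence $(n+1)$-truncated.

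For the harder direction I would assume $X$ is $(n+1)$-truncated and aim to verify the equivalent statement that the loop object $\Omega_y Y \to T$ is $n$-truncated for every generalized point $y\colon T \to Y$. First I would reduce to the case where $y$ admits a lift to $X$: since the pullback $T \times_Y X \to T$ of $f$ is an effective epimorphism, and since $n$-truncated morphisms satisfy descent along effective epimorphisms (cf.\ \cite[Proposition 6.2.3.17]{highertopoi}), I may replace $T$ by this cover and assume that $y = f \circ \tilde y$ for some $\tilde y\colon T \to X$.

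Once the lift $\tilde y$ is available, I would work in the slice $\infty$-topos $\scr X/T$ and consider the pointed fiber sequence $\Omega_y(p_T^\ast Y) \to F \to p_T^\ast X$ with $F = X \times_Y T$, pointed via $\tilde y$. The middle term $F \to T$ is the pullback of $f$, hence $n$-truncated, and $p_T^\ast X$ is $(n+1)$-truncated because $X$ is. The long exact sequence of homotopy sheaves for this fiber sequence then forces $\ul\pi_i \Omega_y(p_T^\ast Y) = 0$ for $i > n$ at the identity basepoint, and since a loop object is a group object, translation identifies homotopy sheaves across components, so this vanishing propagates to every basepoint. Hence $\Omega_y Y \to T$ is $n$-truncated.

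The main obstacle is the bookkeeping around basepoints in the absence of global sections. One must verify both that descent along the effective epimorphism $T \times_Y X \to T$ genuinely reduces to the case where the basepoint factors through $X$, and that group-object translation legitimately upgrades vanishing of homotopy sheaves at the identity section to vanishing everywhere. Both are standard consequences of the descent theory for truncated morphisms in $\infty$-topoi, but require some care to articulate cleanly.
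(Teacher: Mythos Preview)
Your proof is correct, but the route differs substantially from the paper's. The paper establishes the hard direction by passing to a presheaf presentation $L\colon \PSh(\scr C) \adj \scr X \noloc R$: one takes the epi-mono factorization $RX \to Y_0 \to RY$ of $Rf$ in $\PSh(\scr C)$, observes that $RX \to Y_0$ is again an $n$-truncated effective epimorphism with $(n+1)$-truncated source, and then checks the statement pointwise in $\Spc$ via the classical long exact sequence; since $L$ preserves epi-mono factorizations and truncated objects, $Y \wequi LY_0$ is $(n+1)$-truncated.

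Your argument is entirely internal: you use the characterization of $(n+1)$-truncatedness via $n$-truncatedness of based loop objects at all generalized points, then reduce to points factoring through $X$ by descent of truncatedness along the effective epimorphism $T \times_Y X \to T$, and finally read off the vanishing from the long exact sequence of the rotated fiber sequence $\Omega_y(p_T^\ast Y) \to F \to p_T^\ast X$. The group-translation step handling non-identity basepoints of the loop object is a genuine point to note, and you flag it correctly.

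Both approaches ultimately invoke a long exact sequence, but yours avoids the external presentation machinery at the cost of the basepoint bookkeeping you mention. The paper's approach is perhaps more robust if one is uneasy about the criterion ``$(n+1)$-truncated iff all based loop objects are $n$-truncated'' in a general $\infty$-topos (which is true, but not stated in exactly this form in \cite{highertopoi}); your approach is more conceptual and stays inside $\scr X$.
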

\begin{proof}
If $Y$ is $(n+1)$-truncated then so is $X$ by \cite[Lemma 5.5.6.14]{highertopoi}.
We establish the converse.
Let $L\colon \PSh(\scr C) \adj \scr X\noloc R$ be a presentation of $\scr X$ as a left exact localization of a presheaf $\infty$-topos \cite[Definition 6.1.0.4]{highertopoi}.
Recall that both $R$ and $L$ preserve $i$-truncated morphisms and objects \cite[Proposition 5.5.6.16]{highertopoi}.
Write $RX \xrightarrow{f_0} Y_0 \to RY$ for the epi-mono factorization \cite[Example 5.2.8.16 for $n = -1$]{higheralgebra} of $Rf$.
Since $Rf$ is $n$-truncated and $Y_0 \to RY$ is $(-1)$-truncated (whence $n$-truncated), $RX \to Y_0$ is also $n$-truncated (apply \cite[Proposition 5.5.6.16]{highertopoi} in $\PSh(\scr C)_{/RY}$).
As $X \to Y$ is an effective epimorphism and $L$ preserves epi-mono factorizations, we see that $Lf_0 \wequi f$.
Using once more that $L$ preserves $i$-truncated maps, we see that we may replace $\scr X$ by $\PSh(\scr C)$ and thus reduce to $\scr X = \Spc$.
In this case the claim follows by examination of the long exact sequence of homotopy groups.
\end{proof}

Thanks to the above result, the following makes good sense.
\begin{definition}
We call objects in the essential image of the fully faithful functor $\Pro(\Grpd(\scr X_{<\infty})) \to \Grpd(\Pro(\scr X_{<\infty}))$ \emph{levelwise groupoids}.
\emph{Levelwise groups} are defined similarly.
\end{definition}

\begin{example} \label{ex:counterexample-levelwise-group}
For any group $G \in \Grp(\Pro(\scr X_{<\infty}))$, \cref{prop:groupoids} supplies us with a map $G \to \Omega \B G (:= N|G|)$, which is in fact the initial map to a levelwise group.
However, not all groups are levelwise groups, as the following example shows.
Consider the pro-set $G \coloneqq \lim_k cG_k$,
where $G_k \coloneqq [-\frac{1}{2^k}, \frac{1}{2^k}] \subset \mathbb{R}$ is a closed interval (considered as a set),
and the transition morphisms are given by inclusion.
We define a group structure on $G$ as the inverse limit of the maps $G_{k+1} \times G_{k+1} \to G_k$ sending $(x,y)$ to $x+y$ (where the addition is the addition from $\mathbb{R}$).
We claim that $G$ is not a levelwise group. For this, consider the initial map to a levelwise group 
$G \to \Omega \B G$; it suffices to show that this map is not an equivalence.
This can be checked after materializing.
The left-hand side yields $\mat (G) = \lim_k G_k = \cap_k G_k = \{0\}$.
It thus suffices to show that $\mat (\Omega \B G) = \Omega \mat(\B G) \neq *$.
In fact, $\pi_1(\mat(\B G)) \neq 0$, as we shall show now.
Denote by $B_{k,\bullet}$ the following inverse system of \emph{semi}-simplicial diagrams:
\begin{center}
  \begin{tikzcd}[column sep=1.5em]
    B_{k,\bullet} = (* &\ar[l, shift left] \ar[l, shift right] G_k &  \ar[l, shift left=5pt] \ar[l, shift right=5pt] \ar[l] G_{k+1} \times G_{k+1} & \ar[l, shift left] \ar[l, shift left=7pt] \ar[l, shift right] \ar[l, shift right=7pt] G_{k+2} \times G_{k+2} \times G_{k+2} \dots).
  \end{tikzcd}
\end{center}
Then $|\lim_k cB_{k,\bullet}| \wequi \B G$.
Using that in pro-truncated spaces cofiltered limits commute 
with geometric realizations (even of semisimplicial objects), and $c$ is a fully faithful left adjoint (with right adjoint the materialization) we deduce that $\mat (\B G) \wequi \lim_k \B G_k$, where $\B G_k := |B_{k,\bullet}|$.
Thus, $\pi_1$ of the materialization is nonzero if we can find a compatible family of nonzero elements in 
the $\pi_1(\B G_k)$. 
Indeed, such a family then defines a nonzero element of $\lim_k \pi_1(\B G_k)$,
and there is a surjection $\pi_1(\lim_k \B G_k) \to \lim_k \pi_1(\B G_k)$. 
For every $x \in G_k$ we get an element $[x] \in \pi_1 (\B G_k)$.
There is a canonical map $\B G_k \to \B\R$ (where we view $\R$ as a discrete group) which maps the loop 
$[x]$ to the corresponding loop $[x] \in \pi_1(\B\R)$, and thus those elements are all distinct (and in particular nonzero if $x$ is nonzero).
Now set $x_{n} = [\frac{1}{2^n}] \cdot \dots \cdot [\frac{1}{2^n}] \in \pi_1(\B G_{n})$,
the $2^n$-fold product (in $\pi_1$) of the loop represented by $\frac{1}{2^n}$.
Since in $\B G_{n-1}$ we impose the relation $[\frac{1}{2^n}] \cdot [\frac{1}{2^n}] = [\frac{1}{2^{n-1}}]$,
it follows that $x_{n}$ maps to $x_{n-1}$.
By the argument above, this compatible family is nonzero (as it gets mapped to $[1] \in \B\R$), thus proving the claim.
\end{example}

We have the following useful criterion for detecting levelwise groups.
\begin{theorem} \label{theorem:connected-levelwise}
Let $G \in \Grp(\Pro(\scr X_{< \infty}))$ be pro-connected, i.e., the unit $* \to G$ is a levelwise effective epimorphism.
Then $G$ is a levelwise group.
\end{theorem}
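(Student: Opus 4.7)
The plan is to present $G$ as the image of a pro-system of honest group objects in $\scr X_{<\infty}$ under the canonical embedding $\iota\colon \Pro(\Grp(\scr X_{<\infty})) \hookrightarrow \Grp(\Pro(\scr X_{<\infty}))$. Such a pro-system is produced by pro-applying $\Omega$ to a presentation of the classifying space $\B G := |G_\bullet|$, so the core work is to construct and then identify this candidate with $G$.

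First I would check that $\B G \in \Pro(\scr X_{<\infty})_*$ (pointed by $G_0 = *$) is pro-connected. By \cref{rmk:effective-epi:levelwise-surjective} it suffices to verify $\Pro(\pi_0)(\B G) \wequi *$. Since $\Pro(\pi_0)$ is a left adjoint and $\pi_0$ of a geometric realization depends only on the semisimplicial $1$-truncation (via \cite[Lemma 1.2.4.17]{higheralgebra} and \cite[Proposition A.1]{hesselholt2023diracgeometryicommutative}, exactly as in the proof of \cref{prop:groupoids}(1)), one computes $\Pro(\pi_0)(\B G) \wequi \mathrm{coeq}\bigl(\Pro(\pi_0)(G_1) \rightrightarrows \Pro(\pi_0)(G_0)\bigr)$, which is terminal because $G_0 = *$. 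The characterization of pro-connected pointed objects established earlier in \cref{sec:epi} then yields a presentation $\B G \wequi \lim_i c Y_i$ with each $Y_i \in \scr X_{*,\ge 1,<\infty}$. Setting $H_i := \Omega Y_i \in \Grp(\scr X_{<\infty})$ via the standard $\infty$-toposic equivalence $\scr X_{*,\ge 1} \wequi \Grp(\scr X)$, one assembles $H \in \Pro(\Grp(\scr X_{<\infty}))$, and since $\Omega$ commutes with cofiltered limits one obtains $\iota(H) \wequi \lim_i c \Omega Y_i \wequi \Omega \B G$ in $\Grp(\Pro(\scr X_{<\infty}))$.

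The main obstacle is the identification $G \wequi \iota(H) = \Omega \B G$ as group objects: \cref{ex:counterexample-levelwise-group} confirms this fails without pro-connectedness, so the hypothesis must enter decisively at this point. The natural comparison is the unit $\eta_G\colon G_\bullet \to N|G_\bullet| \wequi N(* \to \B G)$ of the $|-| \dashv N$ adjunction; applying $|-|$ to $\eta_G$ and composing with the counit equivalence $|N(f)| \wequi f$ from the proof of \cref{prop:groupoids}(3) returns the identity on $\B G$, so $|\eta_G|$ is already known to be an equivalence. I would then upgrade this by arguing $|-|$ is conservative on pro-connected group objects: pro-connectedness of $G$ transfers to that of $\Omega \B G$ via $\Pro(\pi_0)(\Omega \B G) \wequi \Pro(\pi_1)(\B G) \wequi \Pro(\pi_0)(G) = *$, so both $G$ and $\iota(H)$ are pro-connected groups with equivalent classifying spaces, and combining the identification $\Pro(\B) \wequi \B \circ \iota$ on pro-groups (via \cref{prop:pro-truncated-geometric-realization}) with \cref{prop:groupoids}(1) characterizes $\iota(H)$ as the unique such object. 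The hardest part is the conservativity statement itself; I would tackle it by induction along the Postnikov tower $\Pro(\tau_{\le n}) G$, using that $\scr X_{\le n}$ is an $(n+1)$-topos where the Segal coherences of a group structure become finite in nature, and then passing to the limit over $n$.
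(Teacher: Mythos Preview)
Your overall reduction is sound: showing that the unit $\eta_G\colon G \to N|G_\bullet| \wequi \Omega\B G$ is an equivalence is exactly what is needed, and you correctly observe that $\Omega\B G$ is always a levelwise group. However, there are genuine gaps. First, your justification that $\Omega\B G$ is pro-connected, via $\Pro(\pi_1)(\B G) \wequi \Pro(\pi_0)(G)$, is circular as written: in an $\infty$-topos that identification is proved \emph{through} $\Omega\B G \wequi G$, and you give no independent argument in $\Pro(\scr X_{<\infty})$. Second, and more importantly, the Postnikov induction you sketch for the conservativity of $|\ph|$ is too vague to evaluate. The slogan that ``Segal coherences become finite in $\scr X_{\le n}$'' does not by itself help---the counterexample of \cref{ex:counterexample-levelwise-group} already lives in $\Pro(\Spc_{\le 0})$---so mere finiteness of the truncation level is not what forces $G \wequi \Omega\B G$. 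The crux is precisely how pro-connectedness enters the inductive step, and this is not indicated.

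The paper bypasses both issues via an Eckmann--Hilton argument. Rather than comparing $G$ with $\Omega\B G$, it forms $\Omega G$ (looping $G$ at its unit). This carries \emph{two} commuting group structures: the loop-space structure, which is automatically levelwise since $\Omega$ is a finite limit and hence commutes with cofiltered limits, and the structure inherited from $G$. Thus $\Omega G \in \Grp(\Pro(\Grp(\scr X_{<\infty}))) \subset \Grp(\Grp(\Pro(\scr X_{<\infty})))$. The paper then proves a lemma (\cref{lem:two-bar-constructions}) stating that for double groups in any category where geometric realizations commute with finite products, the two bar constructions $\Grp(\B_{\scr C})$ and $\B_{\Grp(\scr C)}$ agree. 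Applied here, taking $\B$ in the outer direction lands in $\Pro(\Grp(\scr X_{<\infty}))$ by construction, while the lemma identifies the result with $\Grp(\B\Omega)(G)$; since each $G^n$ is pro-connected, \cref{prop:groupoids}(1) gives $\B\Omega(G^n) \wequi G^n$ and hence $\Grp(\B\Omega)(G) \wequi G$. No Postnikov tower is needed, and the pro-connectedness of $\Omega\B G$ is never invoked.
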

For the proof, we need the following identification of the $\scr E_1$-structures 
on the two different bar constructions one can perform on an $\scr E_2$-group,
for which we were unable to locate a proof in the literature.
For any $\infty$-category $\scr C$ with finite products and geometric realizations,
and any $G \in \Grp(\scr C)$, write $\B_\bullet G \in \Fun(\Delta^\op, \scr C)$ for the corresponding simplicial object,
and $\B_{\scr C}G \coloneqq \B G \coloneqq |\B_\bullet G| \in \scr C$ for the geometric realization.

\begin{proposition} \label{lem:two-bar-constructions}
  Let $\scr C$ be an $\infty$-category with finite products and all colimits,
  such that geometric realizations commute with finite products (e.g.\ $\scr X$ or $\Pro(\scr X_{< \infty})$).
  There is an equivalence of functors
  $\Grp(\B_{\scr C}) \wequi \B_{\Grp(\scr C)} \colon \Grp(\Grp(\scr C)) \to \Grp(\scr C)$.
\end{proposition}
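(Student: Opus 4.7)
The plan is to unfold both functors via the bi-simplicial description and invoke Dunn's additivity to identify them with a common bar construction.

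First, I would unfold $G \in \Grp(\Grp(\scr C))$ as a bi-simplicial object $G_{\bullet,\bullet} \colon (\Delta \times \Delta)^{\op} \to \scr C$ satisfying reducedness $G_{0,m} = G_{n,0} = *$ and the Segal condition in each variable separately. Our hypothesis that geometric realizations commute with finite products in $\scr C$ yields two consequences: on one hand, $\B_{\scr C} \colon \Grp(\scr C) \to \scr C$ preserves finite products (and the terminal object), so it induces $\Grp(\B_{\scr C})$; on the other hand, the Segal and reducedness conditions in $\Fun(\Delta^{\op}, \scr C)$ are preserved by levelwise geometric realizations, so $\Grp(\scr C) \subseteq \Fun(\Delta^{\op}, \scr C)$ is closed under geometric realizations (and these are computed levelwise), and therefore $\B_{\Grp(\scr C)}$ can be computed levelwise as well. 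Under these identifications, $\Grp(\B_{\scr C})(G)$ is the simplicial object $[k] \mapsto |[m] \mapsto G_{k,m}|$, while $\B_{\Grp(\scr C)}(G)$ is $[k] \mapsto |[n] \mapsto G_{n,k}|$, both being reduced Segal (hence group) objects in $\scr C$.

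For the comparison, I would invoke Dunn additivity (\cite[Theorem 5.1.2.2]{higheralgebra}) to identify $\Grp(\Grp(\scr C)) \wequi \mathrm{Alg}_{\scr E_2}(\scr C)$. Under this identification, both $\Grp(\B_{\scr C})$ and $\B_{\Grp(\scr C)}$ correspond to the canonical bar functor $\mathrm{Alg}_{\scr E_2}(\scr C) \to \mathrm{Alg}_{\scr E_1}(\scr C) \wequi \Grp(\scr C)$ obtained by extracting one $\scr E_1$-factor from $\scr E_2 \wequi \scr E_1 \otimes \scr E_1$; these two a priori different choices of factor give equivalent outputs thanks to the $\Sigma_2$-symmetry swapping the two factors in $\scr E_2$.

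The main obstacle is tracking the coherence data through Dunn's identification, so as to verify rigorously that the two ``direction choices'' inside $\scr E_2$ really do correspond to $\Grp(\B_{\scr C})$ and $\B_{\Grp(\scr C)}$ respectively. A purely bisimplicial Fubini-style calculation could in principle be attempted directly on the presentations $[k] \mapsto |G_{k,\bullet}|$ and $[k] \mapsto |G_{\bullet,k}|$, but such an argument seems to require essentially the same coherence bookkeeping (hidden inside the interchange between the two simplicial structures on $G_{\bullet,\bullet}$), so routing through Dunn's theorem appears to be the cleanest path.
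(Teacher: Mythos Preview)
Your approach via Dunn additivity is genuinely different from the paper's, and while plausible, the obstacle you flag is real. The paper avoids Dunn entirely. Instead it first reduces to the case $\scr C = \scr X_*$ for an $\infty$-topos $\scr X$: embed a small $\scr C_0 \subset \scr C$ (closed under finite products) into $\PSh(\scr C_0)$ via Yoneda, and use that both $y$ and the colimit functor $\PSh(\scr C_0) \to \scr C$ preserve finite products, so an equivalence of the two functors on $\PSh(\scr C_0)_*$ pushes forward to $\scr C$. In the topos case, the paper observes (as you do) that the two functors are the two partial realizations of a bisimplicial object, so by Fubini $\B \circ \Grp(\B_{\scr C}) \wequi \B \circ \B_{\Grp(\scr C)}$; the key extra step is that $\B \colon \Grp(\scr X) \to \scr X_*$ is fully faithful \cite[Lemma 7.2.2.11]{highertopoi}, so one may cancel the outer $\B$.

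Your route has the conceptual appeal of packaging the interchange into Dunn's theorem, but the $\Sigma_2$-symmetry step deserves caution: the swap automorphism of $\scr E_2 \wequi \scr E_1 \otimes \scr E_1$ is \emph{not} homotopic to the identity as an operad map (it lies in the nontrivial component of $\mathrm{Aut}(\scr E_2) \simeq O(2)$), so the induced autoequivalence $\sigma^*$ of $\mathrm{Alg}_{\scr E_2}(\scr C)$ is not obviously the identity, and one must argue separately why the two bar functors nonetheless agree after composing with the forgetful functor to $\Grp(\scr C)$. The paper's argument sidesteps this entirely: the Fubini identity gives the agreement \emph{after one more $\B$}, and full faithfulness of $\B$ in a topos lets you strip it off. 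This is more elementary and self-contained, at the cost of the Yoneda reduction step.
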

\begin{proof}
Let $\scr C_0 \subset \scr C$ be a small full subcategory closed under finite products.
Consider the commutative diagram
\begin{equation*}
\begin{tikzcd}
\Grp(\Grp(\scr C_0)) \ar[r,"y"] & \Grp(\Grp(\PSh(\scr C_0))) \ar[d, shift left]\ar[d, shift right] \ar[r,"\colim"] & \Grp(\Grp(\scr C))\ar[d, shift left]\ar[d, shift right]  \\
& \Grp(\PSh(\scr C_0)) \ar[r,"\colim"] & \Grp(\scr C),
\end{tikzcd}
\end{equation*}
where $y \colon \scr C_0 \to \PSh(\scr C_0)$ and $\colim \colon \PSh(\scr C_0) \to \scr C$ preserve finite products (whence groups) and the vertical arrows are $\Grp(\B)$ respectively $\B_\Grp$.
An equivalence between the left hand vertical arrows thus yields an equivalence between the two composites $\Grp(\Grp(\scr C_0)) \to \Grp(\scr C)$.
Writing $\scr C$ as the filtered colimit of small full subcategories $\scr C_0$ closed under finite products, we can reduce from arbitrary $\scr C$ to $\scr C = \PSh(\scr C_0)$.
In fact, since $\Grp(\scr C) \wequi \Grp(\scr C_*)$, we may reduce further to $\scr C = \PSh(\scr C_0)_*$.
I.e., from now on we can (and will) assume that $\scr C = \scr X_*$ for an $\infty$-topos $\scr X$.

  Note that $\Grp(\Grp(\scr C)) \subset \Fun(\Delta^\op \times \Delta^\op, \scr C)$ and under this inclusion, the different directions in which we can take geometric realizations on a bisimplicial object
    correspond to the two functors $\Grp(\B_{\scr C})$ and $\B_{\Grp(\scr C)}$.
  In particular, since colimits commute, we see that \[ \B \circ \Grp(\B) \wequi \B \circ \B_{\Grp}\colon \Grp(\Grp(\scr C)) \to \scr C. \]
  The result follows since in an $\infty$-topos $\scr X$, the functor $\B\colon \Grp(\scr X_*) \wequi \Grp(\scr X) \to \scr X_*$ is fully faithful
  \cite[Lemma 7.2.2.11]{highertopoi}.
\end{proof}
\begin{proof}[Proof of \cref{theorem:connected-levelwise}]
  Consider the commutative diagram 
  \begin{center}
    \begin{tikzcd} 
      &\Grp(\Pro(\scr X_{<\infty})) \ar[dl, "\Grp(\Pro(\Omega))"'] \ar[d, "\Grp(\Omega)"] \\
      \Grp(\Pro(\Grp(\scr X_{<\infty}))) \ar[r] \ar[d, "\B"] & \Grp(\Grp(\Pro(\scr X_{<\infty}))) \ar[d, "\B"] \\
      \Pro(\Grp(\scr X_{<\infty})) \ar[r] &\Grp(\Pro(\scr X_{<\infty}))\rlap{,}
    \end{tikzcd}
  \end{center}
  where the horizontal functors forget that a group is levelwise.
  The square commutes by naturality of the classifying sheaf functor $\B$.
  The triangle commutes already before applying $\Grp(-)$:
  Indeed, all the involved functors preserve cofiltered limits,
  hence we may check this on constant objects, where it is clear.
  Let $G \in \Grp(\Pro(\scr X_{<\infty}))$ be pro-connected.
  Our goal is to show that $G$ 
  is in the essential image of the bottom horizontal functor.
  Chasing the commutative diagram, it suffices to show that the right-hand composite sends $G$ to itself.
  By \cref{lem:two-bar-constructions}, we see that 
  $\B (\Grp(\Omega) (G)) \wequi \Grp(\B \Omega)(G)$.
  The result follows since $\B\Omega X \wequi X$ for any pro-connected $X \in \Pro(\scr X_{<\infty})_*$;
  indeed this was shown in \cref{prop:groupoids}(1).
\end{proof}

\subsection{Group actions} \label{subsec:progroups}
Let $\scr C$ be an $\infty$-category with pullbacks and $I$-indexed colimits.
Given $B \in \Fun(I, \scr C)$, the functor $\colim_I\colon \Fun(I, \scr C)_{/B} \to \scr C_{/\colim_I B}$ admits a right adjoint $F\colon T \mapsto B \times_{d(\colim_I B)} d(T)$.
Here $d\colon \scr C \to \Fun(I, \scr C)$ is the diagonal functor.
By construction, $F$ lands in the subcategory $\Fun(I, \scr C)_{/B}^\cart \subset \Fun(I, \scr C)_{/B}$ of \emph{cartesian diagrams},
the full subcategory spanned by those natural transformations $\eta \colon B' \to B$ 
that are \emph{cartesian} in the sense of \cite[Definition 6.1.3.1]{highertopoi}.

Now we specialize to $\scr C = \Pro(\scr X_{<\infty})$ and $I = \Delta^\op$.
Let $G \in \Grp(\Pro(\scr X_{<\infty}))$.
Recall that then $\Fun(\Delta^\op, \Pro(\scr X_{<\infty}))_{/\B_\bullet G}^\cart$ can be seen as encoding objects with a $G$-action \cite[Definition 3.1]{Nikolaus_2014}.
We obtain an adjunction \[ |\ph|\colon \Fun(\Delta^\op, \Pro(\scr X_{<\infty}))_{/\B_\bullet G}^\cart \adj \Pro(\scr X_{<\infty})_{/\B G}\noloc F. \]
\vspace{-\abovedisplayskip}\vspace{-\topsep} %Hack so there is no absurdely big space
\begin{proposition} \label{prop:pro-actions}
If $G$ is a levelwise group, then the above adjunction is an adjoint equivalence.
\end{proposition}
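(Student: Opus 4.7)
The plan is to verify that the unit and counit of the displayed adjunction are equivalences on the restricted categories; note that the right adjoint $F$ tautologically lands in the subcategory of cartesian diagrams. For the counit at $X \to \B G$, one has $F(X) = \B_\bullet G \times_{d \B G} d X$, and I would compute using universality of almost finite colimits in $\Pro(\scr X_{<\infty})$ (\cref{prop:pro-truncated-geometric-realization}):
\begin{equation*}
|F(X)| \wequi |\B_\bullet G| \times_{\B G} X = \B G \times_{\B G} X = X,
\end{equation*}
so the counit is an equivalence.

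For the unit at a cartesian $\eta \colon B' \to \B_\bullet G$, the induced map $\eta \to F|\eta|$ is a morphism of cartesian diagrams over $\B_\bullet G$, so by the Segal and cartesian conditions it is determined by its component at $[0]$. Hence it suffices to prove that
\begin{equation*}
B'_0 \to * \times_{\B G} |B'| = \fib(|B'| \to \B G)
\end{equation*}
is an equivalence. The strategy is to invoke \cref{prop:groupoids}: since $G$ is a levelwise group, $\B G$ is pro-connected and $\B_\bullet G$ is a levelwise groupoid corresponding to the levelwise effective epimorphism $* \to \B G$. Granting the analogous statement for the cartesian pullback, namely that $B'_\bullet$ is itself a levelwise groupoid corresponding to some levelwise effective epimorphism $B'_0 \to |B'|$, the cartesian transformation $\eta$ translates under the equivalence of \cref{prop:groupoids} into a morphism of levelwise effective epimorphisms, and its cartesian property forces the associated square on total spaces and $0$-simplices to be a pullback, yielding $B'_0 \wequi * \times_{\B G} |B'|$ as required.

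The hard part will be the sub-claim that the cartesian pullback $B'_\bullet$ is levelwise. My approach is to exploit that both sides of the adjunction are continuous in the base: writing $G = \lim_i c G_i$ with $G_i \in \Grp(\scr X_{<\infty})$ and $\B G = \lim_i c \B G_i$, the slice $\Pro(\scr X_{<\infty})_{/\B G}$ and the category of cartesian diagrams over $\B_\bullet G$ both decompose as cofiltered limits over $i$ of the corresponding categories with constant base $c \B G_i$ or $c \B_\bullet G_i$. Since equivalences of categories are preserved under limits, it suffices to treat the constant case $G = c G_0$ with $G_0 \in \Grp(\scr X_{<\infty})$. There, the base $\B G_0$ is truncated, slices and cartesian-diagram categories should be comparable to their pro-extensions from $\scr X$, and the desired equivalence is the pro-extension of the classical equivalence inside the $\infty$-topos $\scr X$, which follows from every groupoid in $\scr X$ being effective \cite[Theorem 6.1.0.6]{highertopoi}.
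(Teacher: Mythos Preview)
Your counit argument is correct and in fact cleaner than the paper's: you invoke universality of geometric realizations (\cref{prop:pro-truncated-geometric-realization}) directly to get $|\B_\bullet G \times_{d\B G} dX| \wequi |\B_\bullet G| \times_{\B G} X \wequi X$, whereas the paper presents $\alpha = (X \to \B G)$ as a cofiltered limit of constant arrows with connected targets and reduces to the $\infty$-topos case levelwise.

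Your unit argument, however, has a genuine gap in the reduction step. The claimed decomposition of $\Fun(\Delta^\op, \Pro(\scr X_{<\infty}))^\cart_{/\B_\bullet G}$ as a cofiltered limit over $i$ of the analogous categories over $c\B_\bullet G_i$ is problematic: with postcomposition transitions, a diagram cartesian over $\B_\bullet G_i$ does not remain cartesian after postcomposing with $\B_\bullet G_i \to \B_\bullet G_j$, so the transition functors do not restrict to the cartesian subcategories; with basechange transitions, the limit of slices need not recover the slice over the limit. Even granting the reduction to constant $G = cG_0$, you must still show that every cartesian diagram over $c\B_\bullet G_0$ in $\Pro(\scr X_{<\infty})$ comes from $\Pro$ of cartesian diagrams in $\scr X_{<\infty}$. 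This is exactly the same flavor of rectification problem as in \cref{ex:counterexample-levelwise-group} (not every group in $\Pro$ is levelwise), and it is not automatic. In other words, the sub-claim that $B'_\bullet$ is a levelwise groupoid is precisely the hard point, and your proposed route does not establish it.

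The paper avoids this issue entirely by a monadicity argument. The forgetful functor $U\colon \Fun(\Delta^\op, \Pro(\scr X_{<\infty}))^\cart_{/\B_\bullet G} \to \Pro(\scr X_{<\infty})$ is conservative and preserves geometric realizations (the latter since geometric realizations commute with finite products in $\Pro(\scr X_{<\infty})$), hence is monadic, so every cartesian diagram is a geometric realization of free objects $G \times X$. For a free object one computes $F|G \times X|$ directly: its underlying object is $* \times_{\B G} X \wequi \Omega\B G \times X \wequi G \times X$, the last equivalence using \cref{prop:groupoids}, and this is where the levelwise hypothesis on $G$ is used. Since both $|\ph|$ and $F$ preserve geometric realizations (the latter again by universality, \cref{prop:pro-truncated-geometric-realization}), the unit is an equivalence in general. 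This argument never needs to show that an arbitrary cartesian $B'_\bullet$ is levelwise.
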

\begin{proof}
Let $\alpha = (X \to \B G) \in \Pro(\scr X_{<\infty})_{/\B G}$.
Then $\alpha = \lim_i c \alpha_i$ in $\Fun(\Delta^1, \Pro(\scr X_{<\infty}))$, where $\alpha_i\colon X_i \to B_i \in \scr X_{<\infty}$ with $B_i$ connected (where $\lim_i B_i \wequi \B G$).
Since geometric realizations commute with cofiltered limits in $\Pro(\scr X_{<\infty})$ 
(\cref{prop:pro-truncated-geometric-realization}) we find $|F\alpha| \wequi \tau_{<\infty} \lim_i c|F_i \alpha_i|$, 
where \[ |\ph|\colon \Fun(\Delta^\op, \scr X)^\cart_{/\B_\bullet \Omega B_i} \adj \scr X_{/B_i}\noloc F_i \] is the canonical adjoint equivalence 
\cite[Theorem 6.1.3.9 (3)]{highertopoi}.
It follows that the counit $|F\alpha| \to \alpha$ is an equivalence.

We now prove the same for the unit.
We first treat a special case.
By \cref{lemm:free-action} below, the forgetful functor $U\colon \Fun(\Delta^\op, \Pro(\scr X_{<\infty}))_{/\B_\bullet G}^\cart \to \Pro(\scr X_{<\infty})$ has a left adjoint, sending $X \in \Pro(\scr X_{<\infty})$ to $G \times X$.
Write $s\colon * \to \B G$ for the canonical map.
Comparing universal properties we see that $|G \times X| \wequi s_\sharp X$, and so the object underlying $F|G \times X|$ is $* \times_{\B G} X \wequi F\B G \times X$, which by \cref{prop:groupoids} is just $G \times X$.
It follows that the unit $G \times X \to F|G \times X| \wequi G \times X$ is homotopic to the identity map, and so in particular an equivalence.
To treat the general case, first observe that $U$ is conservative and preserves geometric realizations (since geometric realizations commute with finite products in $\Pro(\scr X_{<\infty})$),
hence is monadic, so any object in $\Fun(\Delta^\op, \Pro(\scr X_{<\infty}))_{/\B_\bullet G}^\cart$ can be written as a geometric realization of objects of the form $G \times X$, cf.\ the proof of \cite[Proposition 4.7.3.14]{higheralgebra}.
Since both $|\ph|$ and $F$ preserve geometric realizations (the former being a left adjoint, and the latter by \cref{prop:pro-truncated-geometric-realization}), we conclude.
\end{proof}

If $G$ is acting on an object $F \in \Pro(\scr X_{<\infty})$, by abuse of notation we also write $F$ for the associated 
cartesian diagram $F_\bullet \to \B_\bullet G$, and write $F \sslash G \coloneqq |F^\bullet|$ for the homotopy quotient of 
the group action. By construction, this comes equipped with maps $F \to F \sslash G \to \B G$.

\begin{example}\label{ex:pro-actions}
If $G$ is a levelwise group acting on $F \in \Pro(\scr X_{<\infty})$, then $F \to F \sslash G \to \B G$ is a fiber sequence.
(This is a reformulation of the fact that the counit of the above adjunction is an equivalence.)
\end{example}

We used the following fact, for which we could not find a reference.
\begin{lemma} \label{lemm:free-action}
Let $\scr C$ be an $\infty$-category with finite products and $G \in \Grp(\scr C)$.
The forgetful functor $U\colon \Fun(\Delta^\op, \scr C)^\cart_{/\B_\bullet G} \to \scr C$ has a left adjoint $F$ such that for $X \in \scr C$ the counit $X \to UFX$ is given by the canonical map $X \wequi X \times * \to X \times G$.
\end{lemma}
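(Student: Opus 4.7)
The plan is to construct $F$ explicitly via a ``free $G$-action'' construction and then verify the universal property by a direct mapping-space computation.

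First, I would build $F$ using the d\'ecalage of the bar construction. Set $E_\bullet G \coloneqq \B_{\bullet + 1} G$, obtained by precomposing $\B_\bullet G \colon \Delta^\op \to \scr C$ with the shift functor $[n] \mapsto [n+1]$; concretely $E_n G = G^{n+1}$, and the face maps of $\B_\bullet G$ forgotten by d\'ecalage reassemble into a canonical simplicial map $\pi \colon E_\bullet G \to \B_\bullet G$. The Segal (groupoid) condition on $\B_\bullet G$ is precisely the assertion that $\pi$ is a cartesian natural transformation. I would then define
\[ F(X) \coloneqq \bigl(X \times E_\bullet G \xrightarrow{\mathrm{pr}_2} E_\bullet G \xrightarrow{\pi} \B_\bullet G\bigr); \]
since products preserve pullback squares variable-wise, this remains cartesian over $\B_\bullet G$. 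In degree zero, $F(X)_0 = X \times G = UF(X)$, and the prescribed map $X \to X \times G$ is induced by the identity $e \colon * \to G$ of the group object $G$, giving the canonical factorization $X \wequi X \times * \to X \times G$.

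Second, I would verify the adjunction: for any cartesian $Y = (Y_\bullet \to \B_\bullet G)$, precomposition with the unit must induce an equivalence
\[ \Map_{\Fun(\Delta^\op, \scr C)^\cart_{/\B_\bullet G}}(F(X), Y) \xrightarrow{\simeq} \Map_\scr C(X, Y_0). \]
The key observation is that, under the standard identification of cartesian simplicial objects over the groupoid object $\B_\bullet G$ with ``$G$-actions'' (where the action is encoded by the face $d_1 \colon Y_1 = Y_0 \times G \to Y_0$), the object $F(X)$ corresponds to the free $G$-action on $X \times G$ by right multiplication on the second factor. The desired equivalence then reduces to the classical free-forget adjunction for $G$-actions, which is formal in any category with finite products.

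The main obstacle is executing this identification and hom-space computation rigorously in an arbitrary $\infty$-category with only finite products, which precludes appealing to the adjoint functor theorem or any presentability hypothesis. One must verify, essentially from the Segal and cartesian conditions alone, that a natural transformation between cartesian simplicial objects over $\B_\bullet G$ is determined in a hom-space-preserving way by its degree-zero component satisfying an equivariance constraint, and that for the free object $F(X)$ this constraint is vacuous. Although elementary, this verification requires careful bookkeeping with the explicit simplicial models; one may streamline it by first reducing to the restriction to $\Delta^\op_{\le 1}$ via the Segal condition.
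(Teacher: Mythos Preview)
Your explicit construction via d\'ecalage is correct and would work, but the paper takes a shortcut that sidesteps precisely the combinatorial verification you flag as the main obstacle. The paper's argument runs as follows: since the claimed formula $(FX)_n \simeq G^n \times X$ is expressed entirely in terms of finite products, and the Yoneda embedding $\scr C \hookrightarrow \PSh(\scr C)$ preserves finite products, it suffices to establish the adjunction after passing to $\PSh(\scr C)$. There one is in an $\infty$-topos, so the equivalence $\Fun(\Delta^\op, \scr C)^\cart_{/\B_\bullet G} \simeq \scr C_{/\B G}$ of \cite[Theorem 6.1.3.9(3)]{highertopoi} is available; under it $U$ identifies with pullback $s^*$ along $s \colon * \to \B G$, whose left adjoint $s_\sharp$ exists for general reasons, and then $UFX \simeq s^* s_\sharp X \simeq X \times_{\B G} * \simeq X \times G$ by a two-square pullback diagram.

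Your route has the virtue of being explicit and self-contained---one sees directly that $F$ is ``tensor with $E_\bullet G$''---but your honest acknowledgment that the mapping-space verification is delicate in a bare $\scr C$ with finite products is exactly why the paper prefers the Yoneda reduction: it trades the simplicial bookkeeping (showing that cartesian objects over $\B_\bullet G$ are determined by their degree-zero part plus an equivariance datum, in a hom-space-preserving way) for a one-line appeal to established $\infty$-topos theory. If you were to push your approach to completion, the cleanest way to verify the universal property would likely still be to embed into presheaves, at which point the two arguments converge.
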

\begin{proof}
Increasing the size of the universe if necessary, we may assume that $\scr C$ is small.
The claim implies that $(FX)_n \wequi G^n \times X$, so it suffices to prove it in the larger $\infty$-category $\PSh(\scr C)$.
We may thus assume that $\scr C$ is an $\infty$-topos.
Now $\Fun(\Delta^\op, \scr C)^\cart_{/\B_\bullet G} \wequi \scr C_{/\B G}$ \cite[Theorem 6.1.3.9 (3)]{highertopoi}.
Writing $s\colon * \to \B G$ for the canonical map, the functor $U$ identifies with $s^*$ and so the left adjoint $F$ is given by $s_\sharp$.
It follows that $UFX \wequi X \times_{\B G} *$.
The result follows by considering the following diagram of pullback squares
\begin{equation*}
\begin{CD}
UFX @>>> G @>>> * \\
@VVV   @VVV   @VVV \\
X @>>> * @>>> \B G.
\end{CD}
\end{equation*}
\end{proof}

\section{Setup for the rest of the article} \label{sec:setup}
Throughout this article, we will work with the data of an adjunction of $\infty$-categories \[ L_+\colon \scr X \adj \scr D\noloc R. \]
Here $\scr X$ is an $\infty$-topos, $\scr D$ is stable and presentably symmetric monoidal, and $L_+$ is symmetric monoidal.
We write $L\colon \scr X_* \to \scr D$ for the induced functor.
We assume given $\scr D^\sld_{\ge 0} \subset \scr D$, which is the non-negative part of a $t$-structure (thus $\scr D^\sld_{\ge 0}$ is closed under colimits and extensions in $\scr D$).
We denote by $\scr D^\sld \subset \scr D$ the localizing subcategory generated by $\scr D^\sld_{\ge 0}$ and call it the subcategory of \emph{solid objects}.
Objects in $\scr D^\sld_{\ge 0}$ are called \emph{solid connective}.
We are concerned with the following \emph{axioms}:
\begin{itemize}
  \axiom{C}{C}{The restricted functor $R\colon \scr D^\sld_{\ge 0} \to \scr X$ preserves sifted colimits.}
  \axiom{M}{M}{For $X \in \scr X_*$ and $E \in \scr D^\sld_{\ge 0}$ we have $LX \otimes E \in \scr D^\sld_{\ge 0}$.}
  \axiom{S}{S}{\begin{itemize}
  \axiom{1}{S1}{We have $LR(\scr D^{\sld}_{\ge 0}) \subset \scr D^{\sld}_{\ge 0}$.}
  \axiom{2}{S2}{For $E \in \scr D^\sld_{\ge 1}$ the canonical map $\pi_1 LRE \to \pi_1 E$ is an isomorphism. (Here $\pi_1$ refers to the $t$-structure specified by $\scr D^\sld_{\ge 0}$.)}
  \axiom{3}{S3}{$R\colon \scr D^\sld_{\ge 0} \to \scr X$ is conservative.}
  \end{itemize}}   
\end{itemize}

For convenience, we denote by $\scr X^\sld_{*,\ge n} \subset \scr X_*$ the subcategory of objects which are $n$-connective and such that $LX \in \scr D^\sld_{\ge n}$.

\subsection{First consequences}
The following result is standard:
\begin{lemma}[Stable splitting] \label{lem:stable-splitting}
  Let $X, Y \in \scr{X}_*$.
  Then there is a split cofiber sequence 
  \begin{equation*}
    LX \oplus LY \to L(X \times Y) \to L(X \wedge Y),
  \end{equation*}
  natural in $X$ and $Y$.
\end{lemma}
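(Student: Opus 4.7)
The plan is to apply $L$ to the defining cofiber sequence of the smash product. In $\scr X_*$, we have the natural cofiber sequence $X \vee Y \to X \times Y \to X \wedge Y$ in which the first map is the canonical inclusion of the wedge into the product. Applying $L$ produces a cofiber sequence in $\scr D$; the key inputs are that $L \colon \scr X_* \to \scr D$ (obtained from the symmetric monoidal left adjoint $L_+$ by cofibering out the unit of $\scr D$) preserves colimits, and that coproducts in the stable $\infty$-category $\scr D$ agree with direct sums, giving $L(X \vee Y) \simeq LX \oplus LY$. This already yields a cofiber sequence $LX \oplus LY \to L(X \times Y) \to L(X \wedge Y)$ of the desired shape.

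To exhibit the splitting, I use the projections $p_X \colon X \times Y \to X$ and $p_Y \colon X \times Y \to Y$ in $\scr X_*$; applying $L$ and combining them yields a map $L(X \times Y) \to LX \oplus LY$ that retracts the inclusion. Indeed, the composite is the $2 \times 2$ matrix determined by the four composites of the form $X$ or $Y \hookrightarrow X \vee Y \to X \times Y \to X$ or $Y$ in $\scr X_*$: the diagonal composites are identities, while the off-diagonal composites factor through the basepoint and hence become null after applying $L$. A retraction of the first map in a cofiber sequence in a stable $\infty$-category forces the sequence to split.

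Naturality in $X$ and $Y$ is automatic because every step of the construction is functorial. I do not foresee any real obstacle; the only point worth emphasizing is that $L$ sends the wedge $X \vee Y$ to $LX \oplus LY$, which reduces (using that $L_+$ preserves colimits and that $L(Z) = \cof(\1_{\scr D} \to L_+Z)$) to a routine pushout computation in the stable category $\scr D$.
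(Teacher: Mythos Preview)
Your proposal is correct and takes essentially the same approach as the paper: apply $L$ to the cofiber sequence $X \vee Y \to X \times Y \to X \wedge Y$, identify $L(X \vee Y) \simeq LX \oplus LY$, and split the first map using the projections. The paper's proof is a one-line sketch of exactly this argument; you have simply filled in the routine details.
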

\begin{proof}
  Apply $L$ to the cofiber sequence $X \vee Y \to X \times Y \to X \wedge Y$,
  and use that $L(X \vee Y) \wequi LX \oplus LY$ to construct a splitting of the inclusion.
\end{proof}
\begin{lemma}\label{lem:bar-construction-stable}
  The functor $R \colon \scr D \to \scr X$ factors 
  canonically over $\Grp(\scr X)$.
  Moreover, if axiom \axiomref{C} is satisfied, then for every $E \in \scr D^\sld_{\ge 0}$, there is an equivalence 
  $\B R E \wequi R \Sigma E$.
\end{lemma}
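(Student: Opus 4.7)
The statement has two parts, and I would handle them separately, with the first being essentially formal and the second being the substantive content.

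For the first part, the factorization through $\Grp(\scr X)$ is a direct consequence of stability. Since $\scr D$ is stable (hence additive and pointed), every object $E \in \scr D$ is canonically a group object: the equivalence $\Omega \Sigma \wequi \id$ on $\scr D$ realizes $E$ as the loops on $\Sigma E$, and more robustly $\scr D \wequi \Grp(\scr D)$ as $\infty$-categories. Since $R$ is a right adjoint it preserves finite limits, hence sends (simplicial) group objects to (simplicial) group objects. Precomposing the induced functor $\Grp(\scr D) \to \Grp(\scr X)$ with the equivalence $\scr D \wequi \Grp(\scr D)$ yields the desired lift $\scr D \to \Grp(\scr X)$ whose composite with the forgetful functor $\Grp(\scr X) \to \scr X$ recovers $R$.

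For the second part, the strategy is to realize $\Sigma E$ as an internal bar construction in $\scr D$ and then transport it through $R$. Concretely, for any $E \in \scr D$ viewed as a group object, we have $\Sigma E \wequi |\B_\bullet^{\scr D} E|$, where $\B_\bullet^{\scr D} E \in \Fun(\Delta^\op, \scr D)$ is the standard bar simplicial object with $n$-th term $E^{\times n}$ (this is the content of $\B\Omega \wequi \id$ on connected objects of $\scr D$, which in a stable category applies to all objects). Now, applying $R$ levelwise, I would argue:
\begin{itemize}
\item Each $E^{\times n}$ lies in $\scr D^\sld_{\ge 0}$, since $\scr D^\sld_{\ge 0}$ is closed under colimits and stability identifies finite products with finite coproducts.
\item $R$ commutes with finite products (being a right adjoint), so $R(E^{\times n}) \wequi (RE)^{\times n}$ coherently in $n \in \Delta^\op$; hence $R \B_\bullet^{\scr D} E \wequi \B_\bullet^{\scr X} RE$ as simplicial objects in $\scr X$, where $\B_\bullet^{\scr X} RE$ is the bar construction of the group object $RE \in \Grp(\scr X)$ from the first part.
\item Since $\scr D^\sld_{\ge 0}$ is closed under extensions and colimits in $\scr D$ and all the simplicial terms $E^{\times n}$ lie in it, the geometric realization $\Sigma E \wequi |\B_\bullet^{\scr D} E|$ is computed inside $\scr D^\sld_{\ge 0}$. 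By axiom \axiomref{C}, $R$ preserves this sifted colimit.
\end{itemize}
Chaining these gives $R \Sigma E \wequi R|\B_\bullet^{\scr D} E| \wequi |R\B_\bullet^{\scr D} E| \wequi |\B_\bullet^{\scr X} RE| \wequi \B RE$, as desired.

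\textbf{Main obstacle.} The formal manipulations are routine; the one place that requires care is the coherent identification $R \B_\bullet^{\scr D} E \wequi \B_\bullet^{\scr X} RE$ as simplicial objects rather than merely levelwise. This is because the bar construction is not just a collection of products but a functor out of $\Delta^\op$ built from the group structure, and I need the factorization $\scr D \to \Grp(\scr X)$ produced in the first part to be the one encoding this simplicial structure. Equivalently, I am using that the two constructions ``form bar'' and ``apply $R$'' commute because both only use finite products and the group structure, which $R$ preserves. Once that coherent identification is in place, the rest is an application of axiom \axiomref{C}.
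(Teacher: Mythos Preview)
Your proposal is correct and follows essentially the same approach as the paper: use $\scr D \wequi \Grp(\scr D)$ plus the finite-limit preservation of $R$ for the factorization, then identify $\Sigma E$ with the geometric realization of the bar simplicial object $E^{\oplus \bullet}$ and commute $R$ past this sifted colimit via axiom \axiomref{C}. The paper's proof is slightly terser (it does not explicitly flag the coherent simplicial identification you worry about), but the argument is the same.
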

\begin{proof}
  Since $\scr D$ is stable, we have $\Grp(\scr D) \wequi \scr D$,
  and since $R$ preserves products, we get an induced functor 
  $\scr D \wequi \Grp(\scr D) \to \Grp(\scr X)$ that refines $R$.
  For the second statement, note that 
  $\Sigma E$ can be computed as the 
  geometric realization of the simplicial object 
  \begin{center}
  \begin{tikzcd}[column sep=1.5em]
    0 &\ar[l, shift left] \ar[l, shift right] E &  \ar[l, shift left=5pt] \ar[l, shift right=5pt] \ar[l] E \oplus E & \ar[l, shift left] \ar[l, shift left=7pt] \ar[l, shift right] \ar[l, shift right=7pt] \cdots,
  \end{tikzcd}
  \end{center}
  see e.g.\ \cite[Lemma 2.7]{bachmann2021norms}.
  But under the canonical equivalence $\Grp(\scr D) \wequi \scr D$,
  this is just the bar construction of the group structure on $E$.
  Now, since $R$ commutes with sifted colimits of objects in $\scr D^\sld_{\ge 0}$ by assumption,
  and $E$ (and thus also $E^{\oplus n}$) is in $\scr D^\sld_{\ge 0}$,
  the result follows.
\end{proof}
\begin{remark} \label{rem:conenctivity-geom-real}
  We will frequently use the following statement:
  Suppose that $X_\bullet$ is a simplicial object in a stable $\infty$-category $\scr E$ with a t-structure.
  If there exists an $n$ such that $X_0 \in \scr E_{\ge n+1}$ and $X_i \in \scr E_{\ge n}$ for all $i > 0$,
  then $|X_\bullet| \in \scr E_{\ge n+1}$.
  Indeed, this follows immediately from \cite[Proposition 1.2.4.5]{higheralgebra}.
\end{remark}
\begin{lemma} \label{lemm:first-consequences}
\begin{enumerate}
\item Axiom \axiomref{C} implies that $R\scr D^\sld_{\ge n} \subset \scr X_{*,\ge n}$.
\item Axioms \axiomref{C} and \axiomref{S1} together imply that $R\scr D^\sld_{\ge n} \subset \scr X_{*,\ge n}^\sld$.
\item Axiom \axiomref{M} implies that for $X \in \scr X_{*,\ge n}$ and $E \in \scr D^\sld_{\ge m}$ we have $LX \otimes E \in \scr D^\sld_{\ge n+m}$.
\item Axioms \axiomref{C}, \axiomref{S1} and \axiomref{S3} together imply that every object in $\scr D^\sld_{\ge n}$ can be written as a sifted colimit of objects of the form $LX$, for $X \in \scr X^\sld_{*,\ge n}$.
\item Axioms \axiomref{C}, \axiomref{S} and \axiomref{M} together imply that for $E \in \scr D^\sld_{\ge n}$ with $n \ge 1$ we have $\fib(LRE \to E) \in \scr D^\sld_{\ge n+1}$.
\end{enumerate}
\end{lemma}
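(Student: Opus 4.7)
My plan is to prove the five items in the order (3), then (1) and (2) jointly, then (4), and finally (5), which respects the logical dependencies; only (5) is genuinely intricate.

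For (3) I will induct on $n$. The base $n=0$ reduces to axiom \axiomref{M} after writing $E \simeq \Sigma^m E'$ with $E' \in \scr D^\sld_{\ge 0}$. For the step, any $X \in \scr X_{*, \ge n+1}$ is connected, so in the $\infty$-topos $\scr X$ we have $X \simeq \B\Omega X \simeq |(\Omega X)^{\times \bullet}|$; applying $L(\ph) \otimes E$, which commutes with geometric realizations as a composite of colimit-preserving functors, the $0$-simplex is $L(\ast) \otimes E = 0$, while every positive simplex lies in $\scr D^\sld_{\ge n+m}$ by combining the stable splitting of \cref{lem:stable-splitting} with iterated applications of the inductive hypothesis to $\Omega X \in \scr X_{*, \ge n}$. \cref{rem:conenctivity-geom-real} then upgrades the realization's connectivity by one to the desired $n+1+m$.

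For (1) and (2) I will induct jointly on $n$. The bases $n = 0$ are immediate and \axiomref{S1}, respectively. Given $E \in \scr D^\sld_{\ge n+1}$, set $E' = \Omega E \in \scr D^\sld_{\ge n}$; then \cref{lem:bar-construction-stable} gives $RE \simeq \B RE'$. The inductive hypothesis yields $RE' \in \scr X_{*, \ge n}$, and $\B RE'$ is then $(n+1)$-connective via the standard identity $\pi_i \B G \cong \pi_{i-1} G$, proving (1). For (2) I identify $LRE \simeq L\B RE' \simeq |L(RE')^{\times \bullet}|$; the $0$-simplex is $0$, and for $k \ge 1$ the stable splitting expresses $L((RE')^{\times k})$ as a direct sum of terms $(LRE')^{\otimes |S|}$ (using the symmetric monoidality $L((RE')^{\wedge s}) \simeq (LRE')^{\otimes s}$), each of which lies in $\scr D^\sld_{\ge n}$ by the inductive hypothesis on (2) combined with iterated (3). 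Then \cref{rem:conenctivity-geom-real} concludes. Item (4) is then monadic descent: by \axiomref{S1} the adjunction restricts to $L\colon \scr X_* \adj \scr D^\sld_{\ge 0} \noloc R$, and axioms \axiomref{C} and \axiomref{S3} make the restricted $R$ sifted-colimit-preserving and conservative, so Barr--Beck gives $E \simeq |(LR)^{\bullet+1} E|$ for $E \in \scr D^\sld_{\ge 0}$. This displays any $E \in \scr D^\sld_{\ge n}$ as a geometric realization of $LX_k$'s with $X_k := R(LR)^k E \in \scr X_{*, \ge n}^\sld$, using (1), (2), and the observation that (2) forces $LR$ to preserve $\scr D^\sld_{\ge n}$.

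The substantive item is (5), which I will prove by induction on $n$. The base $n = 1$ is exactly \axiomref{S2}: by (2), $LRE \in \scr D^\sld_{\ge 1}$, whence $F := \fib(LRE \to E) \in \scr D^\sld_{\ge 1}$ by the long exact sequence, and $\pi_1 F = \ker(\pi_1 LRE \to \pi_1 E) = 0$ by \axiomref{S2}. For the step, I will write $E \simeq \Sigma E'$ with $E' \in \scr D^\sld_{\ge n}$, so that $LRE \simeq L\B RE' \simeq |L(RE')^{\times \bullet}|$ and $E \simeq |(E')^{\oplus \bullet}|$, and the counit realizes the levelwise counits $L((RE')^{\times k}) \to (E')^{\oplus k}$. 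The key input is that, via stable splitting together with naturality of the counit along the product projections $(RE')^{\times k} \to RE'$ (and the fact, traceable to \cref{lem:stable-splitting}, that $L$ of a product projection vanishes on every higher-smash summand), the levelwise counit is block diagonal: on each of the $k$ singleton summands $L((RE')^{\wedge \{i\}}) \simeq LRE'$ it restricts to $\epsilon \colon LRE' \to E'$, and it vanishes on every summand with $|S| \ge 2$. Thus the levelwise fiber decomposes as $\fib(\epsilon)^{\oplus k} \oplus \bigoplus_{|S| \ge 2} (LRE')^{\otimes |S|}$; the first summand is $(n+1)$-connective by the inductive hypothesis, and each higher tensor power lies in $\scr D^\sld_{\ge |S|n} \subseteq \scr D^\sld_{\ge n+1}$ by iterated (3). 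Since fibers commute with geometric realizations in the stable category $\scr D$ (being shifted cofibers) and the $0$-simplex fiber vanishes, \cref{rem:conenctivity-geom-real} upgrades the realization's connectivity by one to yield $F \in \scr D^\sld_{\ge n+2}$. The step I expect to spend the most care on is the block-diagonal identification of the levelwise counit; once that is in hand, the connectivity bookkeeping is routine.
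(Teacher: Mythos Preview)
Your arguments for (1)--(3) are correct and close to the paper's (your (3) runs a bar-resolution induction where the paper instead appeals to sifted-colimit generation of $\scr X_{*,\ge n}$ by $n$-fold suspensions, but both are fine). In (4) there is a slip: \axiomref{S1} does not assert $L(\scr X_*) \subset \scr D^\sld_{\ge 0}$---in the motivic application with $\nu \ge 1$ this inclusion fails---so the adjunction does not restrict to $\scr X_* \adj \scr D^\sld_{\ge 0}$ as you claim. The repair is to restrict to $\scr X^\sld_{*,\ge n} \adj \scr D^\sld_{\ge n}$ instead (where $L$ lands in the target by the very definition of $\scr X^\sld_{*,\ge n}$ and $R$ does by (2)); this is what the paper does, and your conclusion for (4) then goes through unchanged.

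The substantive gap is the base case $n=1$ of (5). From the long exact sequence
\[
\pi_2 LRE \to \pi_2 E \to \pi_1 F \to \pi_1 LRE \to \pi_1 E,
\]
\axiomref{S2} makes the rightmost map an isomorphism, so $\pi_1 F$ maps to zero in $\pi_1 LRE$; but $\pi_1 F$ still receives $\operatorname{coker}(\pi_2 LRE \to \pi_2 E)$, and nothing in the axioms forces $\pi_2 LRE \to \pi_2 E$ to be surjective for arbitrary $E \in \scr D^\sld_{\ge 1}$. Your identification $\pi_1 F = \ker(\pi_1 LRE \to \pi_1 E)$ is therefore unjustified. The paper's remedy is precisely to invoke (4): the functor $E \mapsto \fib(LRE \to E)$ preserves sifted colimits on $\scr D^\sld_{\ge 1}$ (by \axiomref{C} and stability), and $\scr D^\sld_{\ge 2}$ is closed under sifted colimits, so it suffices to check $F \in \scr D^\sld_{\ge 2}$ when $E = LX$ with $X \in \scr X^\sld_{*,\ge 1}$. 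There the counit $LRLX \to LX$ is split by $L$ of the unit, hence surjective on every $\pi_i$, and the long exact sequence then does give $\pi_1 F = 0$. Your inductive step for (5) is essentially the paper's and is correct once this base case is repaired.
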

\begin{proof}
(1) We prove the claim by induction on $n$, the case $n=0$ being vacuous.
Now let $E \in \scr D^\sld_{\ge n}$ with $RE \in \scr X_{*,\ge n}$; it suffices to show that $R\Sigma E \in \scr X_{*,\ge n +1}$.
But $R\Sigma E \wequi \B RE$ by \cref{lem:bar-construction-stable}.
The result follows from the fact that the bar construction takes $n$-connective objects to $(n+1)$-connective objects; see \cite[Remarks 5.2.2.19 and 5.2.6.17]{higheralgebra}.

(2) Arguing inductively again, suppose $E \in \scr D^\sld_{\ge n}$ with $LRE \in \scr D^\sld_{\ge n}$.
Then $LR\Sigma E$ is the geometric realization of a simplicial object $LR S_\bullet$ with $LR S_i = LR(E^{\oplus i}) \in \scr D_{\ge n}^\sld$.
Since $LR S_0 = 0 \in \scr D^\sld_{\ge n+1}$, it follows that $|LR S_\bullet| \in \scr D^\sld_{\ge n+1}$ as needed (\cref{rem:conenctivity-geom-real}).

(3) This is clear if $X \wequi \Sigma^n X'$.
It follows in general since $\scr X_{*,\ge n}$ is generated under (sifted) colimits by objects of this form,
see for example \cite[Proposition 2.39]{mattis2025etale}.

(4) Note that by definition and (2) the adjunction $L \dashv R$
restricts to an adjunction $L \colon \scr X_{*,\ge n}^\sld \adj \scr D_{\ge n}^\sld \noloc R$.
Hence, the result follows from the proof of \cite[Proposition 4.7.3.14]{higheralgebra} 
if we can show that this induced adjunction is monadic.
For this, it suffices to show that $R \colon \scr D_{\ge n}^\sld \to \scr X_{*,\ge n}^\sld$ is conservative 
and preserves sifted colimits. Conservativity follows immediately from \axiomref{S3}.
Since both $\scr D_{\ge n}^\sld \subset \scr D_{\ge 0}^\sld$ 
and $\scr X_{*,\ge n}^\sld \subset \scr X_*$ are closed under (sifted) colimits 
(for the first, note that this is true for any t-structure, and for the second use 
that both $n$-connective objects, and objects that under $L$ map to $\scr D_{\ge n}^\sld$ are closed under 
colimits),
this is an immediate consequence of \axiomref{C}.

(5)
We first treat the case $n=1$.
The functor $\scr D^\sld_{\ge 1} \to \scr D^\sld_{\ge 1}$, $E \mapsto \fib(LRE \to E)$ (which is well-defined by \axiomref{S2} and (2)) preserves sifted colimits by \axiomref{C}.
Thus, to prove that it takes values in $\scr D^\sld_{\ge 2}$, by (4) we need to check this only for objects of the form $E=LX$, where $X \in \scr X^\sld_{*,\ge 1}$.
In this case the map $LRE \to E$ is split and so induces an epimorphism on $\pi_2$, as needed.
Now we prove the case of general $n$ by induction.
Thus assume given $E \in \scr D^\sld_{\ge n}$ with $\fib(LRE \to E) \in \scr D^\sld_{\ge n+1}$.
Consider again the simplicial object $S_\bullet$ from (2).
We note that \[ LR(E \oplus E) \wequi L(RE \times RE) \wequi LRE \oplus (LRE \otimes LRE) \oplus LRE, \] 
(using \cref{lem:stable-splitting}) with $LRE \otimes LRE \in \scr D^\sld_{\ge n+1}$ (use (1), (3) and recall $n \ge 1$), and similarly for the higher terms.
It follows that the fiber of the canonical map $|LRS_\bullet| \wequi LR\Sigma E \to \Sigma E \wequi |S_\bullet|$ is the geometric realization of a simplicial object with entries $\fib(LR S_\bullet \to S_\bullet)$, i.e., a sum of ``cross terms'' (like $LRE \otimes LRE$) and copies of $\fib(LRE \to E)$. 
Both of these lie in $\scr D_{\ge n+1}^\sld$, and the degree zero term vanishes, whence by \cref{rem:conenctivity-geom-real} the geometric realization itself lies in $\scr D_{\ge n+2}^\sld$.
This proves the claim.
\end{proof}

\begin{proposition} \label{prop:stability-consequence}
Axiom \axiomref{M} implies the following.
Let $F \to E \to B$ be a fiber sequence, $B$ connected and $F \in \scr X^\sld_{*,\ge n}$.
Then $\fib(LE \to LB) \in \scr D^\sld_{\ge n}$.
In particular, if $F, B \in \scr X^\sld_{*,\ge n}$ then also $E \in \scr X^\sld_{*,\ge n}$.
\end{proposition}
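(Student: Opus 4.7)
The plan is to reduce to a levelwise statement by presenting $E$ as a simplicial colimit over $B$. Since $B$ is connected, $* \to B$ is an effective epimorphism, so by descent in the $\infty$-topos $\scr X$ we have $B \wequi |(\Omega B)^\bullet|$ (the Čech nerve of $* \to B$). By universality of colimits in $\scr X_*$, pulling back $E \to B$ along this presentation yields $E \wequi |F \times (\Omega B)^\bullet|$.

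Applying the colimit-preserving functor $L$ and using that in the stable $\infty$-category $\scr D$ fibers commute with geometric realizations (since cofibers do and the two differ by $\Omega$), we obtain
\[ \fib(LE \to LB) \wequi \big|\fib\!\big(L(F \times (\Omega B)^\bullet) \to L((\Omega B)^\bullet)\big)\big|. \]
At each simplicial level $n$, the stable splitting (\cref{lem:stable-splitting}) gives $L(F \times Y) \wequi LF \oplus LY \oplus L(F \wedge Y)$ with $Y = (\Omega B)^n$, and the projection $F \times Y \to Y$ admits a natural section via the basepoint of $F$; thus the induced map is a split epimorphism and
\[ \fib(L(F \times Y_n) \to LY_n) \wequi LF \oplus L(F \wedge Y_n). \]

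It remains to verify that each term lies in $\scr D^\sld_{\ge n}$. The summand $LF$ is in $\scr D^\sld_{\ge n}$ by hypothesis. For the second summand, symmetric monoidality of $L$ gives $L(F \wedge Y_n) \wequi LY_n \otimes LF$, and since $LF \in \scr D^\sld_{\ge n}$, \cref{lemm:first-consequences}(3) (taking $X = Y_n$ as a $0$-connective pointed object and $E = LF$) shows this tensor product lies in $\scr D^\sld_{\ge n}$. Because $\scr D^\sld_{\ge n}$ is closed under colimits (being the connective part of a shifted $t$-structure), the geometric realization $\fib(LE \to LB)$ lies in $\scr D^\sld_{\ge n}$.

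For the final clause, assume additionally $B \in \scr X^\sld_{*,\ge n}$. The long exact sequence of homotopy sheaves for the fiber sequence $F \to E \to B$ (using $B$ connected and both $F,B$ being $n$-connective) gives that $E$ is $n$-connective. The fiber sequence $\fib(LE \to LB) \to LE \to LB$ in $\scr D$, together with closure of $\scr D^\sld_{\ge n}$ under extensions, then yields $LE \in \scr D^\sld_{\ge n}$, so $E \in \scr X^\sld_{*,\ge n}$. The main point requiring care is the compatibility of the stable splitting with the simplicial structure, which is why we only extract the levelwise consequence (that each fiber lies in $\scr D^\sld_{\ge n}$) rather than splitting the whole simplicial map.
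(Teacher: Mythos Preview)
Your proof is correct and takes essentially the same approach as the paper's own proof: both present $E \to B$ as the geometric realization of the projections $F \times (\Omega B)^\bullet \to (\Omega B)^\bullet$, compute the levelwise fiber via the stable splitting as $LF \oplus (LF \otimes LX)$, and conclude by axiom \axiomref{M} and closure of $\scr D^\sld_{\ge n}$ under colimits. The only cosmetic difference is that you route the connectivity of $LF \otimes LY_n$ through \cref{lemm:first-consequences}(3), whereas the paper appeals to \axiomref{M} directly (which amounts to the same shift argument), and you spell out the ``in particular'' clause more explicitly.
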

\begin{proof}
Writing $E \wequi F \sslash \Omega B$ and $B \wequi * \sslash \Omega B$, we see that $E \to B$ is a colimit of maps of the form $\mathrm{pr_X} \colon F \times X \to X$.
It will be enough to show that $\fib(L(F \times X) \to LX) \in \scr D^\sld_{\ge n}$.
But by \cref{lem:stable-splitting}, $L(F \times X) \wequi LF \oplus LX \oplus (LF \otimes LX)$, so the fiber is $LF \oplus (LF \otimes LX)$, which lies in $\scr D^\sld_{\ge n}$ by \axiomref{M}.
\end{proof}

\begin{example} \label{ex:solid-products}
As a particular case of the last part of \cref{prop:stability-consequence}, we see that if $F, B \in \scr X^\sld_{*,\ge n}$ then also $F \times B \in \scr X^\sld_{*,\ge n}$.
\end{example}

\section{Homological localization} \label{subsec:locn}
The left adjoint functor $L_+\colon \scr X \to \scr D$ induces a Bousfield localization 
$\hat L\colon \scr X \to \scr X$ such that for a morphism $\alpha$ in 
$\scr X$, $L_+\scr \alpha$ is an equivalence if and only if $\hat L \alpha$ is.
This exists by the general theory of Bousfield 
localizations in presentable $\infty$-categories, as outlined in \cite[§5.5.4]{highertopoi}.
We say that $X \in \scr X$ is \emph{$L$-complete} if it is a local object for $\hat{L}$,
and that a morphism $f \colon X \to Y$ in $\scr X$ is an \emph{$L$-equivalence} 
if $Lf$ (or equivalently $\hat{L}f$) is an equivalence.

\begin{remark} \label{rem:locn:L-plus-eq-iff-L-eq}
Note that $\hat L$ preserves the terminal object and hence defines an endofunctor of $\scr X_*$.
Moreover, $\hat L|_{\scr X_*}$ is the Bousfield localization functor corresponding to $L\colon \scr X_* \to \scr D$.
(Indeed given a morphism $\alpha$ in $\scr X_*$, we have $L(\alpha) \oplus L(\id_{S^0}) \wequi L_+(\alpha)$, so that $L(\alpha)$ is an equivalence if and only if $L_+(\alpha)$ is.)
\end{remark}

\begin{remark}
The functor $\hat L$ preserves finite products, since $L_+$ is symmetric monoidal.
\end{remark}

\begin{theorem} [Bousfield--Kan fiber lemma] \label{thm:locn}
Let $L_+\colon \scr X \to \scr D$ be a left adjoint, symmetric monoidal functor from an $\infty$-topos to a presentably  symmetric monoidal, stable $\infty$-category.
Assume that $\scr D$ has a left-separated $t$-structure (i.e., has no nonzero $\infty$-connective objects) compatible with the symmetric monoidal structure such that $L_+(\scr X) \subset \scr D_{\ge 0}$.

Let $F \to E \to B \in \scr X_*$ be a fiber sequence.
Assume that $B$ and $\hat L B$ are simply connected.
Then $\hat L F \to \hat L E \to \hat L B$ is a fiber sequence.
\end{theorem}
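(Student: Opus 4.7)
The strategy is to identify both $\hat L B$ and $\hat L E$ through the principal-fibration description of $E\to B$, so that the desired fiber sequence is produced by the standard principal fibration theorem in the $\infty$-topos $\scr X$ applied to the group $\hat L\Omega B$ acting on $\hat L F$.

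First, set $P=\fib(\hat L E\to \hat L B)$. As the inclusion of $L$-complete objects into $\scr X$ creates limits, $P$ is $L$-complete, so it suffices to show that the canonical map $\hat L F\to P$ is an equivalence in $\scr X$. Since $B$ is connected, $B\wequi \B\Omega B=|\B_\bullet\Omega B|$, and the fiber sequence $F\to E\to B$ is presented as $E\wequi F\sslash\Omega B=|\B_\bullet\Omega B\times^{\Omega B} F|$. The functor $\hat L$, being the unstable Bousfield localization of the symmetric monoidal left adjoint $L_+$, preserves finite products (hence group objects and their actions) and turns colimits in $\scr X$ into colimits in the reflective subcategory of $L$-complete objects. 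This gives
\[\hat L B\wequi \hat L\B(\hat L\Omega B)\quad\text{and}\quad \hat L E\wequi \hat L\bigl(\hat L F\sslash \hat L\Omega B\bigr).\]

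Next, since $\hat L B$ is $L$-complete and $\Omega$ is a limit, $\Omega\hat L B$ is $L$-complete. Using that $\hat L B$ is simply connected, $\Omega\hat L B$ is connected and $\hat L B\wequi \B\Omega\hat L B$. Combined with the natural comparison $\hat L\Omega B\to \Omega\hat L B$, a bootstrap between the two bar descriptions of $\hat L B$ yields $\hat L\Omega B\wequi \Omega\hat L B$ and that $\B\hat L\Omega B$ is already $L$-complete. Consequently
\[\hat L B\wequi \B(\hat L\Omega B)\quad\text{and}\quad \hat L E\wequi \hat L F\sslash \hat L\Omega B.\]

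Finally, the $\infty$-topos principal fibration theorem says that for a group object $G$ acting on $X$, the sequence $X\to X\sslash G\to \B G$ is a fiber sequence. Applied to $\hat L\Omega B$ acting on $\hat L F$, this reads as the desired $\hat L F\to \hat L E\to \hat L B$.

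The main obstacle is the bootstrap in the third paragraph: showing that $\hat L\Omega B\wequi \Omega\hat L B$ and that $\B(\hat L\Omega B)$ is itself $L$-complete. Both simply-connectedness hypotheses, as well as the left-separatedness of the $t$-structure on $\scr D$ compatible with $\otimes$ and with $L_+(\scr X)\subseteq\scr D_{\ge 0}$, enter here: after translating via $L_+$ into $\scr D$, simple connectivity of $B$ yields $L_+\Omega B\in\scr D_{\ge 1}$, and left-separatedness forces convergence of the resulting bar/Postnikov filtration, upgrading the $L$-equivalence $\hat L\Omega B\to\Omega\hat L B$ to an equivalence in $\scr X$ and securing $L$-completeness of $\B(\hat L\Omega B)$.
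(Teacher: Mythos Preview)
Your outline has the right overall architecture---reduce to the principal fibration picture and establish $\hat L\Omega B\wequi\Omega\hat L B$---but there are two genuine gaps.

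First, the ``bootstrap'' in your third paragraph is not an argument: it is precisely the hard part of the proof, and you have only named the inputs, not used them. The paper devotes roughly half its proof to this step. Concretely, one must show that the $L$-equivalence $\Omega B\to\Omega\hat L B$ becomes an actual equivalence after applying $L$. The paper does this by applying $L$ to the bar constructions $\B_\bullet\Omega B\to\B_\bullet\Omega\hat L B$, using the stable splitting to control $L\cof(X^{\times k}\to Y^{\times k})$ in terms of $L\cof(X\to Y)$, and then running a connectivity bootstrap via the associated spectral sequence. The point is that simple connectedness of $B$ and $\hat L B$ gives $L\cof(\Omega B\to\Omega\hat L B)\in\scr D_{\ge 1}$ to start, and an inductive argument using the bar spectral sequence pushes this to $\scr D_{\ge n}$ for all $n$; left-separatedness then forces it to vanish. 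Your sentence ``left-separatedness forces convergence of the resulting bar/Postnikov filtration'' is a description of what one hopes happens, not a proof.

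Second, even granting $\hat L\Omega B\wequi\Omega\hat L B$, your deduction $\hat L E\wequi \hat L F\sslash\hat L\Omega B$ does not follow. You correctly obtain $\hat L E\wequi\hat L(\hat L F\sslash\hat L\Omega B)$, but dropping the outer $\hat L$ requires $\hat L F\sslash\hat L\Omega B$ to already be $L$-complete, and $L$-complete objects are not in general closed under colimits or extensions in an $\infty$-topos. Knowing that $\B\hat L\Omega B\wequi\hat L B$ is $L$-complete does not help: to compare $\hat L F\sslash\hat L\Omega B$ with $\hat L E$ over $\hat L B$ you would need to compare their fibers, which is exactly the statement you are trying to prove. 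The paper avoids this circularity by transferring the problem to $\scr D$: setting $\hat F=\fib(\hat L E\to\hat L B)$, one shows $L_+F\to L_+\hat F$ is an equivalence directly. The key observation is that $G=L_+\Omega B$ is an augmented $\scr E_1$-algebra acting on $\cof(L_+F\to L_+\hat F)$ with contractible quotient, and since $\Omega B$ is connected one has $\pi_0 G\wequi\pi_0\1$, so a connectivity bootstrap in $\scr D$ shows the cofiber is $\infty$-connective, hence zero.
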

\begin{proof}
We shall first prove the result under the additional assumption that $\hat L \Omega B \wequi \Omega \hat L B$.
Set $\hat F\coloneqq \fib(\hat L E \to \hat L B)$.
This is $L$-complete; it thus suffices to show that the canonical map $F \to \hat F$ is an $L$-equivalence, or equivalently an $L_+$-equivalence (\cref{rem:locn:L-plus-eq-iff-L-eq}), 
i.e., that $L_+F \to L_+\hat F$ is an equivalence.
Recall that $F$ has an action by $\Omega B$ such that $E \wequi F \sslash \Omega B$ \cite[Proposition 3.8]{Nikolaus_2014}, and similarly $\hat{L} E \wequi \hat{F} \sslash \Omega \hat{L} B$.
Moreover, since $F \to \hat F$ arises from a map of fiber sequences, this map intertwines the actions along $\Omega B \to \Omega \hat{L} B$.
Since the latter map is in fact an $L$-equivalence (by our additional assumption) and $L_+$ is a symmetric monoidal functor, we obtain an augmented $\scr E_1$-algebra $G=L_+\Omega B$ acting on $L_+ F \to L_+ \hat F$, with quotients given by $L_+ E$ and $L_+ \hat L E \wequi L_+ E$.
In particular, $G$ acts on $\cof(L_+ F \to L_+ \hat F)$ with contractible quotient.
Writing the quotient of the group action as geometric realization, we find that
\begin{center}
  \begin{tikzcd}[column sep=1.5em]
    0 = \bigg|\cof(L_+ F \to L_+ \hat F) &\ar[l, shift left] \ar[l, shift right] \cof(L_+ F \to L_+ \hat F) \otimes G &  \ar[l, shift left=5pt] \ar[l, shift right=5pt] \ar[l] \cof(L_+ F \to L_+ \hat F) \otimes G \otimes G & \ar[l, shift left] \ar[l, shift left=7pt] \ar[l, shift right] \ar[l, shift right=7pt] \cdots \bigg|.
  \end{tikzcd}
\end{center}
We will inductively show that $\cof(L_+F \to L_+\hat F)$ 
is $n$-connective for every $n$; the case $n = 0$ holds trivially. So assume we know that $\cof(L_+F \to L_+\hat F)$ 
is $n$-connective (and hence so are the other objects of the geometric realization).
It now follows from \cite[Proposition 1.2.4.5]{higheralgebra} 
that 
\begin{equation*}
  0 = \pi_n(0) \wequi \mathrm{Coeq}\left(\pi_n \cof(L_+F \to L_+\hat F) \otimes \pi_0(G) \rightrightarrows \pi_n \cof(L_+F \to L_+\hat F)\right).
\end{equation*}
Since by assumption $\Omega B$ is connected, $\pi_0(G) = \pi_0(L_+\Omega B) \wequi \pi_0(\1)$, which implies that the two maps in the coequalizer are equal (in fact, they are isomorphic to the maps obtained by an analogous diagram where we act with $\1$ using the unit map $\1 \to G$, i.e., the trivial action).
Thus, $0 = \pi_n\cof(L_+F \to L_+\hat F)$, concluding the induction.
We showed that $\cof(L_+F \to L_+\hat F)$ is $\infty$-connective, hence zero by assumption, as desired.

It remains to prove that $\hat L \Omega B \wequi \Omega \hat L B$, or equivalently,
since $\Omega \hat L B$ is $L$-complete, that the canonical map $\Omega B \to \Omega \hat L B$ 
is an $L$-equivalence.
Since $B$ and $\hat{L}B$ are connected, $\B \Omega B \wequi B$ and
$\B \Omega \hat L B \wequi \hat{L} B$.
Since $B \to \hat{L}B$ is an $L$-equivalence by construction,
we see that $\Omega B \to \Omega \hat L B$ becomes an $L$-equivalence after applying the bar construction.
Write $X \coloneqq \Omega B$, and $Y \coloneqq \Omega \hat{L} B$.
Thus, since $L$ commutes with colimits, we see that
\begin{center}
  \begin{tikzcd}[column sep=1.5em]
    0 = \left|L\cof(B^\bullet X \to B^\bullet Y)\right| = \bigg|0 &\ar[l, shift left] \ar[l, shift right] L\cof(X \to Y) &  \ar[l, shift left=5pt] \ar[l, shift right=5pt] \ar[l] L\cof(X \times X \to Y \times Y) & \ar[l, shift left] \ar[l, shift left=7pt] \ar[l, shift right] \ar[l, shift right=7pt] \cdots \bigg|.
  \end{tikzcd}
\end{center}
By assumption both $X$ and $Y$ are connected,
whence $L\cof(X \to Y) \in \scr D_{\ge 1}$.
We will inductively show that $L\cof(X \to Y) \in \scr D_{\ge n}$,
which implies the claim since $\scr D$ is left-separated.
So assume that $L\cof(X \to Y) \in \scr D_{\ge n}$ for some $n \ge 1$.
For $k \ge 2$, the $k$-th term of the above geometric realization is also in $\scr D_{\ge n}$:
Indeed, the $k$-th term is given by $L\cof(X^{\times k} \to Y^{\times k})$,
which by stable splitting, \cref{lem:stable-splitting}, is the direct sum of the following objects: 
\begin{itemize}
  \item $L\cof(X \to Y)$,
  \item $L\cof(X^{\times k-1} \to Y^{\times k -1})$, and
  \item $L\cof(X \wedge X^{\times k-1} \to Y \wedge Y^{\times k-1})$.
\end{itemize}
The first object is in $\scr D_{\ge n}$ by assumption, 
and the second object by induction on $k$ (or the assumption, if $k=2$).
The third object is even in $\scr D_{\ge n+1}$: applying the octahedral axiom to $X \wedge X^{\times k-1} \to Y \wedge X^{\times k-1} \to Y \wedge Y^{\times k-1}$ we obtain a cofiber sequence
\begin{equation*}
  L \cof(X \to Y) \otimes L X^{\times k-1} \to L\cof(X \wedge X^{\times k-1} \to Y \wedge Y^{\times k-1}) \to LY \otimes L\cof(X^{\times k-1} \to Y^{\times k-1})
\end{equation*}
where the outer terms are both in $\scr D_{\ge n+1}$ (by what we already concluded above and using that $L X^{\times k-1}$ and $LY$ are in $\scr D_{\ge 1}$),
and thus the same is true for the middle term.

Our next goal is to identify the three maps $L\cof(X \times X \to Y \times Y) \to L\cof(X \to Y)$. 
Since $L$ commutes with the cofiber, and the stable splitting is functorial,
it will suffice to analyze the three maps $L(X \times X) \to LX$ (and for $Y$, which is analogous).
By definition of the bar construction, these maps are given by 
$L(\mathrm{pr}_1)$, $L(\mathrm{pr}_2)$ and $L(m)$, where $m \colon X \times X \to X$ is the multiplication map on the loop space.
We claim that under the stable splitting equivalence, the following diagrams commute:
\begin{center}
  \begin{tikzcd}
    & L(X \times X) \ar[dl, "L(\mathrm{pr}_1)"'] \ar[dr, "L(\mathrm{pr}_2)"] \ar[d, "\wequi"] && L(X \times X) \ar[d, "\wequi"] \ar[drr, "L(m)"]\\
    LX & LX \oplus LX \oplus L(X \wedge X) \ar[l, "{(\id, 0, 0)}"] \ar[r, "{(0, \id, 0)}"'] &LX &LX \oplus LX \oplus L(X \wedge X) \ar[rr, "{(\id, \id, \widetilde{m})}"'] &&LX.
  \end{tikzcd}
\end{center}
Here, $\widetilde{m}$ is defined as the third component of the induced map.
For the left diagram, this is true by construction.
For the right diagram, it is only necessary to identify $L(m)$ on the first two components.
Since the first inclusion $LX \to L(X \times X)$ is induced by the map $X = X \vee * \to X \times X$,
and since $m$ is the multiplication of a group object where the unit is given by $* \to X$,
it follows that the composition $X \to X \times X \xrightarrow{m} X$ is homotopic to the identity,
even before applying $L$. The same argument works for the second component.

In particular, under the stable splitting equivalence,
we see that $L(\mathrm{pr}_1) - L(m) + L(\mathrm{pr}_2) \wequi -\widetilde{m} \circ \mathrm{pr}_3$.

Consider now the associated spectral sequence of the above simplicial object $C_\bullet$,
as explained in \cite[§1.2.4]{higheralgebra}. It has signature 
\begin{equation*}
  E_1^{p,q} = H_p(\mathrm{DK}^{\mathrm{un}}(\pi_q(C_\bullet))) \implies \pi_{p+q} |C_\bullet| = 0,
\end{equation*}
where $\mathrm{DK}^{\mathrm{un}}$ denotes the associated unnormalized chain complex of a simplicial object in an abelian category.
By the identification of the maps in the simplicial object done above, we see that for every $q$ the associated unnormalized chain complex 
looks as follows:
\begin{equation*}
  0 \xleftarrow{0} \pi_q(L\cof(X \to Y)) \xleftarrow{-\pi_q(\cof(\widetilde{m}_{X} \to \widetilde{m}_{Y})) \mathrm{pr}_3} \pi_q(L\cof(X \to Y))^{\oplus 2} \oplus \pi_q(L\cof(X \wedge X \to Y \wedge Y)) \leftarrow \cdots.
\end{equation*}
Note that we use here that the map $X \to Y$ is a map of loop spaces, so that we get a map 
$\widetilde{m}_X \to \widetilde{m}_Y$. In particular, 
\begin{equation*}
  H_1(\mathrm{DK}^{\mathrm{un}}(\pi_q(C_\bullet))) \wequi \mathrm{coker}\left(\pi_q(\cof(\widetilde{m}_X \to \widetilde{m}_Y)) \colon \pi_q(L\cof(X \wedge X \to Y \wedge Y) \to \pi_q(L\cof(X \to Y)))\right).
\end{equation*}
By the connectivity estimates done above, we see that $\pi_{n} L\cof(X \wedge X \to Y \wedge Y) = 0$,
whence $E_1^{1,n+1} = H_1(\mathrm{DK}^{\mathrm{un}}(\pi_{n+1}(C_\bullet))) \wequi \pi_n(L\cof(X \to Y))$.
By inspection, no differential goes in or out of this group,
therefore it survives to the $E_\infty$-page.
Since we know that the spectral sequence converges to $0$,
we get that $\pi_n(L\cof(X \to Y)) = 0$.
This concludes our induction.
\end{proof}

\begin{remark}
The above proof can be simplified somewhat by using Bar-Cobar duality.
Specifically:\NB{provide references?}
\begin{enumerate}
\item The functor $(\ph)\sslash G = (\ph) \otimes_G \1$ upgrades to an equivalence between $G$-modules and $\B G$-comodules, thus is conservative.
\item The functor $(\ph) \mapsto B(\ph) \wequi \1 \otimes_{(\ph)} \1$ upgrades to an equivalence between augmented connected $\scr E_1$-algebras and coaugmented simply connected $\scr E_1$-coalgebras, so is conservative as well.
\end{enumerate}
We included direct arguments in our special case to keep the proof more self-contained.
\end{remark}

\begin{remark}
The assumption that $\hat L B$ is simply connected often does not apply for $\infty$-topoi of positive homotopy dimension.
This restricts the usefulness of the above fiber lemma.
However, it can be utilized to prove more general fiber lemmas by reducing to the case of a presheaf $\infty$-topos; 
see \cite[Proposition 3.19]{mattis2024unstablepcompletionmotivichomotopy} and \cite[Proposition 2.8]{mattis2025etale}.
Moreover, for rationalization it is always true (at least for $\infty$-topoi with enough points) 
that $\hat L B$ is still simply connected, cf.\ \cite[Proposition 3.12]{mattis2024unstablearithmeticfracturesquares}.
\end{remark}

\section{Bousfield--Kan completion} \label{sec:BK}
The main goal of the section is to prove a generalized version of Bousfield and Kan's Principal Fibration Lemma in our more general context (see \cref{sec:setup}).
This is achieved in \cref{thm:fiber-sequences} which states that, assuming the axioms \axiomref{C}, \axiomref{M}, and \axiomref{S}, the generalized Bousfield--Kan completion functor preserves certain fiber sequences.

We begin the section by recalling how to define the functor of pro-objects $T^\bullet \colon \scr X_* \to \Pro(\scr X_*)$ associated to the adjunction $L \dashv R$.
Our next order of business is to show that $T^\bullet$ preserves finite products (\cref{prop:products}).
So far we have not used any axioms.
Making heavy use of the axioms, we next show that $\tau_{<\infty}T^\bullet$ preserves geometric realizations of connected solid object (\cref{cor:geometric-realn}) and that each of the functors $T^n$ preserves connected solid objects (\cref{cor:connected}).
The main theorem follows fairly straightforwardly from this.

\subsection{Setup} \label{subsec:setup}
Out of the adjunction $L\colon \scr X_* \adj \scr D\noloc R$ we construct an augmented cosimplicial endofunctor 
$\CB^\bullet_+\colon \scr X_* \to \Fun(\Delta_+, \scr X_*)$, see \cref{lem:adjunction:cobar-construction}.
Informally, $\CB^\bullet_+$ looks like 
\begin{center}
    \begin{tikzcd}[column sep=2em]
        \id \ar[r] &RL \ar[r, shift left] \ar[r, shift right] &(RL)^2 \ar[r, shift left=5pt] \ar[r] \ar[r, shift right=5pt] &(RL)^3 \cdots.
    \end{tikzcd}
\end{center}
Taking partial totalizations, we obtain 
\[ T^n = \Tot^n \CB^\bullet = \lim_{\Delta_{\le n}} \CB^\bullet \colon \scr X_* \to \scr X_* \] assembling into a tower \[ \dots \to T^2 \to T^1 \to T^0. \]
We also denote by $T^\bullet$ the associated pro-object,
i.e., $T^\bullet = \lim_n cT^n = \lim_{\Delta} c\CB^\bullet \colon \scr X_* \to \Pro(\scr X_*)$,
which comes equipped with an augmentation $c \to T^\bullet$.

\subsection{Products}
\begin{proposition} \label{prop:products}
The functor $T^\bullet\colon \scr X_* \to \Pro(\scr X_*)$ preserves finite products.
\end{proposition}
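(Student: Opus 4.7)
The plan is to construct the natural comparison map and analyze a cosimplicial fiber in $\Pro(\scr X_*)$.

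First, I would set up the map. The projections $X \times Y \to X$ and $X \times Y \to Y$ induce a cosimplicial map $\pi^\bullet \colon \CB^\bullet(X \times Y) \to \CB^\bullet(X) \times \CB^\bullet(Y)$ in $\scr X_*$. Since finite products commute with arbitrary limits, we have $T^\bullet(X) \times T^\bullet(Y) \wequi \lim_\Delta c(\CB^\bullet(X) \times \CB^\bullet(Y))$ in $\Pro(\scr X_*)$, and applying $\lim_\Delta c$ to $\pi^\bullet$ yields the natural comparison $\phi \colon T^\bullet(X \times Y) \to T^\bullet(X) \times T^\bullet(Y)$. The goal is to show $\phi$ is an equivalence.

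Second, since $\lim_\Delta c$ preserves fiber sequences (as a limit), it suffices to show that the cosimplicial fiber $F^\bullet = \fib(\pi^\bullet) \in \Fun(\Delta, \scr X_*)$ satisfies $\lim_\Delta c F^\bullet \wequi c(\ast)$ in $\Pro(\scr X_*)$. Here one uses the stable splitting (\cref{lem:stable-splitting}), $L(X \times Y) \wequi LX \oplus LY \oplus L(X \wedge Y)$, together with the fact that $R$ preserves products, to decompose $\CB^n(X \times Y)$ iteratively. In particular $F^0 \wequi RL(X \wedge Y)$, and more generally each $F^n$ is a finite product of cross-term factors of the form $RL$ applied to iterated smash products involving $X$, $Y$, and their $RL$-iterates. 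The augmented cosimplicial object is augmented by $\ast = \fib(X \times Y \xrightarrow{\id} X \times Y)$.

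Third, the crux is to exhibit a contracting structure on the augmented cosimplicial object $(\ast \to F^\bullet)$ which forces $\lim_\Delta c F^\bullet \wequi c(\ast)$ (and in fact gives $\Tot^n F^\bullet \wequi \ast$ for each finite $n$, so the conclusion holds even before passing to pro-objects). The candidate extra codegeneracy arises from the monad multiplication $\mu \colon RL \circ RL \to RL$: on each cross-term summand, suitable combinations of $\mu$ with the projections coming from stable splittings provide natural retractions of the coface map $d^0$, and one assembles these into a system of extra codegeneracies on $F^\bullet$ compatible with the augmentation by $\ast$.

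The main obstacle will be the combinatorics of the final step. The iterated stable splittings produce many cross-term summands at each cosimplicial level, and one must carefully track their compatibility with the cosimplicial coface and codegeneracy maps (coming from $\eta$ and $\mu$), verifying that the proposed extra codegeneracies satisfy all simplicial identities. An alternative approach worth exploring would be to identify $T^\bullet$ more abstractly as a reflective localization onto an appropriate subcategory of $\Pro(\scr X_*)$ closed under finite products, from which product preservation would follow formally; but since the paper emphasizes that this proposition precedes any use of the axioms \axiomref{C}, \axiomref{M}, \axiomref{S}, the combinatorial approach above seems more likely to be what is intended.
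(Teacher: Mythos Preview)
Your approach has a real gap, and the paper takes a substantially different route.

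The specific problem is your third step. The monad multiplication $\mu = R\epsilon L$ already furnishes the \emph{existing} codegeneracies of $\CB^\bullet$; the identity $s^0 d^0 = \id$ that you describe is just one of the cosimplicial identities, not a contracting structure. An \emph{extra} codegeneracy (i.e.\ a splitting of the augmented object) arises only when one can peel off an outermost $L$ or $R$ via the counit --- this is precisely the content of \cref{lem:adjunction:split}, which says $L\CB^\bullet_+$ and $\CB^\bullet_+ R$ are split. While each individual $F^n$ does land in the image of $R$ (because the stable splitting makes $\pi^n$ a split projection in $\scr D$ before applying $R$), the cosimplicial object $F^\bullet$ is \emph{not} of the form $\CB^\bullet R(\ph)$, so \cref{lem:adjunction:split} does not apply to it, and there is no evident reason the augmented object $(* \to F^\bullet)$ should be split. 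The combinatorial obstacle you anticipate is exactly this missing structure, and I do not see how to supply it along the fiber route.

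The paper sidesteps fibers entirely via a Fubini-type span argument. One builds a bicosimplicial object $V^{\bullet,\bullet}_+$ with $V^{m,n} = (RL)^{m+1}\bigl((RL)^{n+1}X \times (RL)^{n+1}Y\bigr)$, whose augmentation column is $\CB^\bullet(X\times Y)$ and whose augmentation row is $\CB^\bullet X \times \CB^\bullet Y$. This yields a span
\[
T^\bullet(X\times Y) \;\wequi\; \lim_\Delta cV^{\bullet,-1} \;\longrightarrow\; \lim_{\Delta\times\Delta} cV^{\bullet,\bullet} \;\longleftarrow\; \lim_\Delta cV^{-1,\bullet} \;\wequi\; T^\bullet X \times T^\bullet Y,
\]
and both arrows are equivalences because for every fixed $n \ge 0$ the objects $V^{n,\bullet}$ and $V^{\bullet,n}$ are split: the first because the stable splitting rewrites it as $(RL)^nR$ applied to a sum of terms of the form $L\CB^\bullet_+(\ph)$ and tensor products thereof (all split by \cref{lem:adjunction:split}), the second because $(RL)^{n+1}X \times (RL)^{n+1}Y \wequi R(\ldots)$ and $\CB^\bullet_+ R$ is split. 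Thus the whole argument reduces to two clean invocations of \cref{lem:adjunction:split} together with the stable splitting, with no tracking of cross-term summands through cosimplicial identities.
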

\begin{proof}
As $T^\bullet$ preserves the terminal object, it suffices to show that $T^\bullet$ preserves binary products.
Let $X, Y \in \scr X_*$.
Consider the augmented cosimplicial object
\begin{equation*}
    W^\bullet_+ \colon \Delta_+ \xrightarrow{((\CB)^\bullet_+)^{2}} \Fun(\scr X_*,\scr X_*)^{2} \xrightarrow{(\mathrm{ev}_X, \mathrm{ev}_Y)} \scr X_*^2 \xrightarrow{- \times -} \scr X_*.
\end{equation*}
Out of this, we can construct a double augmented cosimplicial object as follows:
\begin{equation*}
    V^{\bullet,\bullet}_+ \colon \Delta_+ \times \Delta_+ \xrightarrow{\CB^\bullet_+ \times W^\bullet_+} \Fun(\scr X_*,\scr X_*) \times \scr X_* \xrightarrow{\operatorname{eval}} \scr X_*.
\end{equation*}
Informally, $V^{\bullet,\bullet}_+$ looks like the following picture (where we do not draw the codegeneracy maps):
\begin{center}
    \begin{tikzcd}[column sep=0.8em]
        \vdots &\vdots &\vdots &\\
        (RL)^2(X \times Y) \ar[u, shift left=5pt] \ar[u] \ar[u, shift right=5pt] \ar[r] &(RL)^2(RLX \times RLY) \ar[u, shift left=5pt] \ar[u] \ar[u, shift right=5pt] \ar[r, shift left] \ar[r, shift right] &(RL)^2((RL)^2X \times (RL)^2Y) \ar[u, shift left=5pt] \ar[u] \ar[u, shift right=5pt] \ar[r, shift left=5pt] \ar[r] \ar[r, shift right=5pt] &\cdots\\
        RL(X \times Y) \ar[u, shift left] \ar[u, shift right] \ar[r] &RL(RLX \times RLY) \ar[u, shift left] \ar[u, shift right] \ar[r, shift left] \ar[r, shift right] &RL((RL)^2X \times (RL)^2Y) \ar[u, shift left] \ar[u, shift right] \ar[r, shift left=5pt] \ar[r] \ar[r, shift right=5pt] &\cdots\\
        X \times Y \ar[u] \ar[r] &RLX \times RLY \ar[u] \ar[r, shift left] \ar[r, shift right] &(RL)^2X \times (RL)^2Y \ar[u] \ar[r, shift left=5pt] \ar[r] \ar[r, shift right=5pt] &\cdots
    \end{tikzcd}
\end{center}
By definition we have
    \begin{equation*}
        \lim_{\Delta} cV^{\bullet, -1} 
        = \lim_{\Delta} c\CB^\bullet(X \times Y) 
        = T^\bullet (X \times Y).
    \end{equation*}
    Similarly, we have the following equivalence:
    \begin{align*}
        \lim_{\Delta} cV^{-1, \bullet}
        &= \lim_{\Delta} \operatorname{eval}(c, W^\bullet) \\
        &= \lim_{\Delta} c(\CB^\bullet(X) \times \CB^\bullet(Y)) \\
        &\wequi \lim_{\Delta} c\CB^\bullet(X) \times \lim_{\Delta} c\CB^\bullet(Y) \\
        &= T^\bullet(X) \times T^\bullet(Y),
    \end{align*}
    where we used that $c$ commutes with finite limits and that $\Delta$ is a cosifted category.
    Hence, we get a span
    \begin{equation*}
        T^\bullet(X \times Y)
        \wequi \lim_{\Delta} cV^{\bullet, -1} 
        \to \lim_{\Delta \times \Delta} cV^{\bullet, \bullet} 
        \leftarrow \lim_{\Delta} cV^{-1, \bullet} 
        \wequi T^{\bullet}(X) \times T^{\bullet}(Y).
    \end{equation*}
    The proposition follows if we can show that the arrows are equivalences.
    For this, it suffices to show that for every $n \ge 0$ 
    both $V^{n,\bullet}$ and $V^{\bullet,n}$ are split cosimplicial objects, cf.\ \cref{rmk:adjunction:split-limit}.
  Unwinding the definitions, we see that 
    \begin{equation*}
        V^{n, \bullet}_+ \wequi (RL)^{n+1}((\CB)^\bullet_+(X) \times (\CB)^\bullet_+(Y)).
    \end{equation*}
    This in turn is equivalent to 
    \begin{equation*}
        (RL)^{n}R\left(L(\CB)^\bullet_+(X) \oplus L(\CB)^\bullet_+(Y) \oplus \left(L(\CB)^\bullet_+(X) \otimes L(\CB)^\bullet_+(Y)\right)\right),
    \end{equation*}
    using \cref{lem:stable-splitting}.
    But now both $L(\CB)^\bullet_+(X)$ and $L(\CB)^\bullet_+(Y)$ are split by 
    \cref{lem:adjunction:split}, and so are their sums and tensor products.
    Similarly, we see that 
    \begin{equation*}
        V^{\bullet, n}_+ \wequi (\CB)^\bullet_+\left((RL)^{n+1}(X) \times (RL)^{n+1}(Y)\right) \wequi (\CB)^\bullet_+\left(R(L(RL)^{n}(X) \times L(RL)^{n}(Y))\right),
    \end{equation*}
    since $R$ commutes with products as it is a right adjoint.
    This is also split by \cref{lem:adjunction:split}.
\end{proof}

\subsection{Connectivity} \label{subsec:connectivity}
From now on to the end of the section, we will assume the axioms \axiomref{C}, \axiomref{M}, and \axiomref{S}.
We shall be utilizing the \emph{codegeneracy cubes} and the \emph{face cubes}, whose total fibers we denote respectively by \[ C^n \colon \scr X_* \to \scr X_* \quad\text{and}\quad S^n \colon \scr D \to \scr D. \]
Informally speaking, the codegeneracy cubes are made out of the codegeneracy maps in the cosimplicial object $\CB^\bullet$, whereas the face cubes are made out of the face maps of the simplicial object extracted from the adjunction $L \dashv R$.
For a careful definition of both cubes see \cref{def:adjunction:codegen-cube}.
We use two main facts about these cubes:
\begin{itemize}
\item We have \newline
  \begin{minipage}{\dimexpr\linewidth-\leftmargini\relax}
    \begin{equation}\tag{Cube1}\label{eq:CRSL}
      C^n \wequi R S^n L, 
    \end{equation}
  \end{minipage} 
  \newline\newline
  i.e., the codegeneracy maps in the cobar construction for $X$ are just obtained by applying $R$ to the face maps of the bar construction for $LX$.
  See \cref{lem:adjunction:cube-relation} for a reference.
\item We have the fiber sequence 
  \newline
  \begin{minipage}{\dimexpr\linewidth-\leftmargini\relax}
    \begin{equation}\tag{Cube2}\label{eq:C-T}
      \Omega^n C^n \to T^n \to T^{n-1}, 
    \end{equation}
  \end{minipage} 
  \newline\newline i.e., $C^n$ relates to an inductive construction of $T^\bullet$.
  (This follows from \cite[Corollary 1.2.4.18]{higheralgebra} together with the pointwise formula for right Kan extensions. Note that the stability assumption in the statement of \emph{loc.~cit.~}is not actually used in the proof, just pointedness.)
\end{itemize}

\begin{lemma} \label{lemm:Sn-conn}
The functor $S^n \colon \scr D^\sld_{\ge 1} \to \scr D^\sld$ sends $i$-connected maps to $(n+i)$-connected maps.
\end{lemma}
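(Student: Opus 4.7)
My plan is to induct on $n$. The base case $n=0$ is trivial since $S^0 \wequi \id$. The key reduction is to first establish the iterative identification $S^n E \wequi S^{n-1}(S^1 E)$ for every $n \ge 1$ and $E \in \scr D$, obtained by splitting the face $n$-cube along its last coordinate direction: the ``initial slice'' is the face $(n-1)$-cube of $LRE$, the ``terminal slice'' is the face $(n-1)$-cube of $E$, and the natural transformation between them is given levelwise by the counit $\epsilon\colon LR \to \id$. Taking fibers pointwise produces a new $(n-1)$-cube whose vertex at $S' \subseteq \{1,\ldots,n-1\}$ is $(LR)^{n-1-|S'|}\fib(\epsilon_E) = (LR)^{n-1-|S'|} S^1 E$; this is precisely the face $(n-1)$-cube of $S^1 E$, whose total fiber is $S^{n-1}(S^1 E)$.

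For the case $n=1$, I would exploit that $S^1 \wequi \fib(\epsilon\colon LR \to \id)$ is the fiber of a natural transformation of exact endofunctors of the stable $\infty$-category $\scr D$, hence itself exact. Thus for every map $f\colon E \to F$ one has $\fib(S^1 f) \wequi S^1(\fib f)$. If $f$ is $i$-connected in $\scr D^\sld_{\ge 1}$ with $i \ge 1$, then $\fib f \in \scr D^\sld_{\ge i}$ (using that $\scr D^\sld$ is closed under fibers), and \cref{lemm:first-consequences}(5) supplies $S^1(\fib f) \in \scr D^\sld_{\ge i+1}$. For $i=0$ the fiber $\fib f$ need only lie in $\scr D^\sld_{\ge 0}$; one then argues instead via the fiber sequence $S^1(\fib f) \to S^1 E \to S^1 F$ together with the a priori bound $S^1 E, S^1 F \in \scr D^\sld_{\ge 2}$ (again \cref{lemm:first-consequences}(5)), which yields $S^1(\fib f) \in \scr D^\sld_{\ge 1}$. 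Note that in either case $S^1$ sends $\scr D^\sld_{\ge 1}$ into $\scr D^\sld_{\ge 2}$, which keeps us within the domain of the induction.

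With these two pieces in place the induction is immediate: assuming the result for $n-1$, the identity $S^n f \wequi S^{n-1}(S^1 f)$ exhibits $S^n f$ as the image under $S^{n-1}$ of an $(i+1)$-connected map between objects of $\scr D^\sld_{\ge 2} \subseteq \scr D^\sld_{\ge 1}$, hence $((n-1)+(i+1)) = (n+i)$-connected by hypothesis.

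I expect the main technical point to be verifying the iterative splitting $S^n \wequi S^{n-1} \circ S^1$: this requires carefully unpacking the combinatorics of the face cube from \cref{def:adjunction:codegen-cube} and matching the edge maps with the counit applied at the appropriate position. Once this is in hand, the connectivity estimates follow formally from the exactness of $S^1$ and \cref{lemm:first-consequences}(5).
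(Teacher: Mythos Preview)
Your argument has a genuine gap: the claim that $LR$ is an exact endofunctor of $\scr D$ is false. While $R\colon \scr D \to \scr X_*$ preserves limits (being a right adjoint), the functor $L\colon \scr X_* \to \scr D$ is a left adjoint out of a \emph{non-stable} category and does not preserve fibers. In fact $LR$ is not even additive: the stable splitting (\cref{lem:stable-splitting}) gives $LR(E \oplus F) \wequi LRE \oplus LRF \oplus L(RE \wedge RF)$, and the cross term is nonzero in general. Consequently your key identity $S^n \wequi S^{n-1}\circ S^1$ breaks down. For $n=2$, \cref{lem:adjunction:cube-tofib-relation} gives $S^2(E) = \fib\big(\fib(LRLRE \to LRE) \to \fib(LRE \to E)\big)$, whereas $S^1(S^1E) = \fib\big(LR(\fib(LRE\to E)) \to \fib(LRE\to E)\big)$; the failure of $LR$ to commute with the inner fiber means these differ. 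Your pointwise-fiber description of the split cube makes exactly this mistake: the vertex $\fib\big((LR)^k(LRE)\to (LR)^k(E)\big)$ is \emph{not} $(LR)^k(S^1E)$.

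This non-exactness is precisely why the paper's proof is more elaborate. The paper runs a two-statement induction with $(A_n)$ (the desired map statement) and the weaker object-level statement $(B_n)$, and in the step $(B_n)\Rightarrow(A_n)$ it cannot simply pass to fibers. Instead it rewrites the target $Y=\cof(F\to X)$ as a geometric realization, uses that $S^n$ preserves \emph{sifted colimits} of solid connective objects (a consequence of axiom \axiomref{C}, which replaces the unavailable exactness), and then controls the resulting cross terms via the auxiliary functor $S^n(A,B) \coloneqq \fib\big(S^n(A\oplus B) \to S^n A \oplus S^n B\big)$. Your approach would work verbatim if $\scr X$ were stable, but in the unstable setting the cross terms are unavoidable and must be estimated directly.
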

\begin{proof}
Let $(A_n)$ be the desired statement about $S^n$, let $(B_n)$ be the (a priori weaker) statement that $S^n$ sends $\scr D^\sld_{\ge i}$ to $\scr D^\sld_{\ge i+n}$ (for $i \ge 1$).
We shall prove that $(B_n)$ implies $(A_n)$ and $(A_n)$ implies $(B_{n+1})$.
Since $S^0 = \id$, both $(A_0)$ and $(B_0)$ are trivial; hence by induction we will prove what we want.

$(A_n) \Rightarrow (B_{n+1})$:
Let $E \in \scr D^\sld_{\ge i}$.
The map $LRE \to E$ has fiber in $\scr D^\sld_{\ge i+1}$, by \cref{lemm:first-consequences}(5).
Hence, by $(A_n)$, the fiber of $S^n(LRE) \to S^n(E)$ lies in $\scr D^\sld_{\ge i+1+n}$.
But this fiber is $S^{n+1}(E)$ by \cref{lem:adjunction:cube-tofib-relation}, proving the claim.

$(B_n) \Rightarrow (A_n)$:
Let $F \to X \to Y$ be a fiber sequence in $\scr D$ with $X, Y \in \scr D^\sld_{\ge 1}$ and $F \in \scr D^\sld_{\ge i}$.
We write $Y \wequi \cof(F \to X)$ as the geometric realization of the simplicial object $Y_\bullet$ with entries $Y_t = X \oplus F^{\oplus t}$.
Since $S^n$ preserves sifted colimits, we find that $\fib(S^n(X) \to S^n(Y))$ is the geometric realization of the simplicial object $F_\bullet$, with $F_t = \fib(S^n(X) \to S^n(X \oplus F^{\oplus t}))$.
Note that $S^n(A \oplus B) \wequi S^n(A) \oplus S^n(B) \oplus S^n(A,B)$, defining a new functor $S^n(\ph,\ph)$.
In terms of this we have $F_t \wequi \Omega S^n(F^{\oplus t}) \oplus \Omega S^n(X, F^{\oplus t})$.
In particular $F_0 = 0$.
By $(B_n)$ we know that $S^n(F^{\oplus t}) \in \scr D^\sld_{\ge n+i}$.
Using \cref{rem:conenctivity-geom-real}, it will thus suffice to show that $S^n(X, F^{\oplus t}) \in \scr D^\sld_{\ge n+i}$.
We shall prove more generally that for $Z \in \scr D^\sld_{\ge i}$ we have $S^n(X, Z) \in \scr D^\sld_{\ge n+i}$.
When $i=1$ this follows from $(B_n)$ ($S^n(X,Z)$ being a summand of $S^n(X \oplus Z)$).
Inductively, assume the claim proved for $i$ and let $\Sigma Z \in \scr D^\sld_{\ge i+1}$.
Then $S^n(X, \Sigma Z) \wequi |S^n(X, Z^{\oplus \bullet})|$ (as $S^n(X, \ph)$ preserves sifted colimits, being a summand of $S^n(X \oplus \ph)$).
Since each term in the simplicial object lies in $\scr D^\sld_{\ge i}$ (by induction) and the zeroth term is zero, the claim follows (again by \cref{rem:conenctivity-geom-real}).
\end{proof}

\begin{proposition} \label{prop:Tn-conn}
Let $F \to X \to Y \in \scr X_*$ be a fiber sequence with $X, Y \in \scr X^\sld_{*,\ge 1}$ and $F \in \scr X^\sld_{*,\ge i}$ (with $i \ge 1$).
Then \[ \fib(T^n(X) \to T^n(Y)) \in \scr X^\sld_{*,\ge i}. \]
\end{proposition}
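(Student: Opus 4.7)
The plan is to induct on $n$. The tower structure $\dots \to T^2 \to T^1 \to T^0$ together with the fiber sequence \eqref{eq:C-T}, namely $\Omega^n C^n \to T^n \to T^{n-1}$, reduces the inductive step to understanding how the codegeneracy functor $C^n$ interacts with the given fiber sequence. The key observation, via \eqref{eq:CRSL}, is that $C^n \wequi R S^n L$, which funnels everything through the already-established connectivity estimate \cref{lemm:Sn-conn} on the face-cube functor $S^n$.

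Setting $F_n \coloneqq \fib(T^n X \to T^n Y)$, the base case $n=0$ reads $F_0 \wequi R\fib(LX \to LY)$, because $R$ preserves limits. Since $F \in \scr X^\sld_{*,\ge i}$ and $Y$ is connected (being in $\scr X^\sld_{*,\ge 1}$), \cref{prop:stability-consequence} gives $\fib(LX \to LY) \in \scr D^\sld_{\ge i}$, and then \cref{lemm:first-consequences}(2) yields $F_0 \in \scr X^\sld_{*,\ge i}$.

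For the inductive step, taking fibers throughout \eqref{eq:C-T} produces the fiber sequence
\begin{equation*}
  \Omega^n R \fib\bigl(S^n LX \to S^n LY\bigr) \to F_n \to F_{n-1}.
\end{equation*}
Here $F_{n-1} \in \scr X^\sld_{*,\ge i}$ by induction. For the left-hand term, note that $LX, LY \in \scr D^\sld_{\ge 1}$ and, again by \cref{prop:stability-consequence}, the fiber of $LX \to LY$ lies in $\scr D^\sld_{\ge i}$; hence \cref{lemm:Sn-conn} gives $\fib(S^n LX \to S^n LY) \in \scr D^\sld_{\ge n+i}$. Applying \cref{lemm:first-consequences}(2) and then looping $n$ times puts $\Omega^n R \fib(S^n LX \to S^n LY)$ into $\scr X^\sld_{*,\ge i}$.

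It remains to verify the two conditions defining $\scr X^\sld_{*,\ge i}$ for $F_n$. Ordinary $i$-connectivity is automatic since both the fiber and base of the above fiber sequence are $i$-connective and $i \ge 1$. For the condition $L F_n \in \scr D^\sld_{\ge i}$, we apply \cref{prop:stability-consequence} once more: $F_{n-1}$ is connected (as $i \ge 1$), the fiber $\Omega^n R \fib(S^n LX \to S^n LY)$ lies in $\scr X^\sld_{*,\ge i}$, so $\fib(LF_n \to LF_{n-1}) \in \scr D^\sld_{\ge i}$, and combining with the inductive hypothesis $LF_{n-1} \in \scr D^\sld_{\ge i}$ closes the induction. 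The main conceptual obstacle—and the reason for the double invocation of \cref{prop:stability-consequence}—is that $L$ does not preserve fiber sequences, so one cannot simply apply $L$ to the fiber sequence defining $F_n$; instead one must exploit axiom \axiomref{M} twice, once to transfer connectivity from $\scr X$ to $\scr D$ on the input side, and once to bootstrap solidity of $F_n$ from that of its fiber and base.
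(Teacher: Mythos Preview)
Your proof is correct and follows essentially the same approach as the paper's: induct on $n$ via the fiber sequence \eqref{eq:C-T}, reduce to the behavior of $\Omega^n C^n$ on the fiber sequence, and then use \eqref{eq:CRSL} together with \cref{lemm:Sn-conn} and \cref{prop:stability-consequence} to handle that case. The paper's argument is terser but structurally identical; your final paragraph spelling out the two conditions defining $\scr X^\sld_{*,\ge i}$ separately is in fact just the proof of the last sentence of \cref{prop:stability-consequence}, which you could cite directly (with $A \to F_n \to F_{n-1}$ and $A, F_{n-1} \in \scr X^\sld_{*,\ge i}$) to conclude $F_n \in \scr X^\sld_{*,\ge i}$ in one step. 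One small phrasing point: ``applying \cref{lemm:first-consequences}(2) and then looping $n$ times'' is cleanest if read as $\Omega^n R = R\Omega^n$ first (so you loop in $\scr D$, where connectivity shifts are automatic), then apply $R$; looping directly in $\scr X$ does not obviously preserve solidity for arbitrary objects, though it does here since your object is in the image of $R$.
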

\begin{proof}
Recall the fiber sequence \eqref{eq:C-T}.
By induction using \cref{prop:stability-consequence}, we thus reduce to showing that the functor $\Omega^nC^n$ has the desired property.
Now $\Omega^nC^n \wequi R\Omega^nS^nL$ by \eqref{eq:CRSL}, so by \cref{lemm:Sn-conn} it is enough to show that $\fib(LX \to LY) \in \scr D^\sld_{\ge i}$.
This is \cref{prop:stability-consequence} again.
\end{proof}

\begin{corollary} \label{cor:connected}
The functor $T^n$ preserves $\scr X^\sld_{*,\ge i}$ (for $i \ge 1$).
\end{corollary}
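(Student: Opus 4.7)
The plan is to deduce \cref{cor:connected} directly from \cref{prop:Tn-conn} by specializing to a trivial fiber sequence. Concretely, given $X \in \scr X^\sld_{*,\ge i}$ with $i \ge 1$, I would apply \cref{prop:Tn-conn} to the fiber sequence $X \to X \to *$, where the fiber is $F = X$ itself. Since $i \ge 1$, the hypothesis $X, * \in \scr X^\sld_{*,\ge 1}$ is satisfied ($*$ trivially, and $X$ because $\scr X^\sld_{*,\ge i} \subset \scr X^\sld_{*,\ge 1}$). The proposition then yields $\fib(T^n(X) \to T^n(*)) \in \scr X^\sld_{*,\ge i}$.

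To finish, I would verify the auxiliary claim that $T^n(*) \wequi *$. The functor $L\colon \scr X_* \to \scr D$ is reduced: indeed, $L_+$ is symmetric monoidal and so preserves the monoidal unit, which sends the basepoint of $\scr X_*$ to $0 \in \scr D$. Since $R$ is a right adjoint it preserves terminal objects, so $R(0) \wequi *$ and, inductively, $(RL)^k(*) \wequi *$ for all $k \ge 0$. The cosimplicial object $\CB^\bullet(*)$ is therefore levelwise equivalent to the terminal object, and its partial totalizations $T^n(*)$ are all $*$. Substituting this back, the fiber $\fib(T^n(X) \to T^n(*))$ is just $T^n(X)$, which is therefore in $\scr X^\sld_{*,\ge i}$ as desired.

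I do not expect any substantive obstacle: the essential content has been absorbed into \cref{prop:Tn-conn} (which in turn builds on \eqref{eq:C-T} and the connectivity estimate for $S^n$), leaving only the verification that $L$ is reduced and the bookkeeping of terminal objects.
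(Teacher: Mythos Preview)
Your proof is correct and follows essentially the same approach as the paper: both apply \cref{prop:Tn-conn} to the fiber sequence $X \to X \to *$ and use that $T^n(*) \wequi *$. The paper's proof simply asserts the latter fact as clear, whereas you spell out the (correct) verification that $L$ is reduced and hence $\CB^\bullet(*)$ is constant at $*$.
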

\begin{proof}
It is clear that $T^n(*)=*$, so this is the special case of \cref{prop:Tn-conn} applied to a map to the terminal object.
\end{proof}

\subsection{Geometric realizations}
\begin{proposition} \label{prop:Tn-sifted-colim}
The functor $T^n$ preserves sifted colimits of connected solid objects.
\end{proposition}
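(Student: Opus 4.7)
The plan is to factor $T^n$ through a finite limit taken in the \emph{stable} $\infty$-category $\scr D$, exploiting that in $\scr D$ finite limits commute with sifted colimits, whereas in $\scr X_*$ they do not in general. Since $R$ preserves all limits, I would define
\[ \tilde T^n \coloneqq \lim_{[k] \in \Delta_{\le n}} (LR)^k L \colon \scr X_* \to \scr D, \]
so that $T^n \wequi R \tilde T^n$. The argument then reduces to showing (a) $\tilde T^n(X) \in \scr D^\sld_{\ge 1}$ for $X \in \scr X^\sld_{*, \ge 1}$, and (b) $\tilde T^n$ preserves sifted colimits of connected solid objects. Combined with \axiomref{C}, these will yield the conclusion.

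For (a), I would induct on $n$, the base case $\tilde T^0 \wequi L$ being clear. For the inductive step, the same application of \cite[Corollary 1.2.4.18]{higheralgebra} that produced \eqref{eq:C-T}, now applied to the cosimplicial object $(LR)^\bullet L$ in $\scr D$, yields a fiber sequence $\Omega^n S^n(LX) \to \tilde T^n(X) \to \tilde T^{n-1}(X)$. Property $(B_n)$ established in the proof of \cref{lemm:Sn-conn} places $S^n(LX)$ in $\scr D^\sld_{\ge n+1}$, whence $\Omega^n S^n(LX) \in \scr D^\sld_{\ge 1}$; combined with the inductive hypothesis and the closure of $\scr D^\sld_{\ge 1}$ under extensions, we obtain $\tilde T^n(X) \in \scr D^\sld_{\ge 1}$.

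For (b), I would first verify that each functor $(LR)^k L$ preserves sifted colimits of connected solid objects: $L$ is a left adjoint and sends $\scr X^\sld_{*, \ge 1}$ into $\scr D^\sld_{\ge 1}$; axiom \axiomref{S1} together with \cref{lemm:first-consequences}(2) guarantees that each intermediate $(LR)^j LX$ lies in $\scr D^\sld_{\ge 1}$, so each subsequent $R$ is applied to a solid connective object and hence preserves the sifted colimit by axiom \axiomref{C}. Now $\tilde T^n$ is a finite limit of these functors valued in the stable presentable $\infty$-category $\scr D$, where finite limits commute with sifted colimits (every geometric realization in such a $\scr D$ is a filtered colimit of finite colimits, and finite colimits coincide with finite limits up to shift). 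This gives (b).

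The main obstacle is conceptual rather than computational: $T^n$ is a finite limit in $\scr X_*$, and finite limits in an $\infty$-topos do not commute with sifted colimits in general, so a direct induction on the fiber sequence \eqref{eq:C-T} would founder on the fact that sifted colimits do not preserve fibers in $\scr X_*$. The key idea is to recognize that by moving the limit across $R$ we land in the stable category $\scr D$, where this obstruction disappears. Putting everything together, for a sifted diagram $X_\bullet$ of connected solid objects,
\[ T^n(\colim X_\bullet) \wequi R \tilde T^n(\colim X_\bullet) \wequi R \colim \tilde T^n(X_\bullet) \wequi \colim R \tilde T^n(X_\bullet) \wequi \colim T^n(X_\bullet), \]
where the second equivalence is (b) and the third uses axiom \axiomref{C} applied to the diagram $\tilde T^n(X_\bullet)$, which lies in $\scr D^\sld_{\ge 0}$ by (a).
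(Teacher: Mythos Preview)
Your factorization $T^n \wequi R\tilde T^n$ is where the argument breaks. You are implicitly assuming that the cosimplicial object $\CB^\bullet \colon \Delta \to \Fun(\scr X_*,\scr X_*)$ lifts along $R\circ(\ph)$ to a cosimplicial object $D^\bullet \colon \Delta \to \Fun(\scr X_*,\scr D)$ with $D^k = (LR)^kL$; only then could you pull $R$ outside the finite limit. But this lift does not exist. Consider the two cofaces $\CB^0 = RL \rightrightarrows (RL)^2 = \CB^1$: one is $RL\eta = R(L\eta)$, which does lift, but the other is $\eta_{RL}$, the unit whiskered on the \emph{outside}. In the walking adjunction $\ADJ$ there is exactly one $2$-morphism $l \to lrl$, namely $l\cdot u$, and postcomposing with $r$ gives $r\cdot(l\cdot u) = RL\eta$, not $\eta_{RL}$. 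So $\eta_{RL}$ is not of the form $R(\alpha)$ for any $\alpha\colon L \to LRL$, and the diagram $\CB^\bullet|_{\Delta}$ does not factor through $R$. The same issue obstructs your claimed fiber sequence $\Omega^n S^n(LX) \to \tilde T^n(X) \to \tilde T^{n-1}(X)$: there is no cosimplicial object $(LR)^\bullet L$ in $\scr D$ to which \cite[Corollary 1.2.4.18]{higheralgebra} could be applied. (The cosimplicial object $L\CB^\bullet$ in $\scr D$ \emph{does} exist, but it is split by \cref{lem:adjunction:split}, so its partial totalizations are all just $L$ and carry no information about $T^n$.)

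The paper's proof takes a genuinely different route and never attempts to leave $\scr X_*$. It proceeds by induction on $n$ using the fiber sequence \eqref{eq:C-T} directly in $\scr X_*$. The two outer terms $\Omega^{n+1}C^{n+1} = R\Omega^{n+1}S^{n+1}L$ and $T^n$ are shown to preserve sifted colimits of connected solid objects (the former via axiom \axiomref{C} and \cref{lemm:Sn-conn}, the latter by induction). The crucial step is then the topos-theoretic fact \cite[Lemma 5.5.6.17]{higheralgebra} that in $\scr X$ a sifted colimit of fiber sequences with \emph{connected} base is again a fiber sequence; this is exactly what replaces your appeal to stability. Connectedness of the base $T^n(X_\alpha)$ is supplied by \cref{cor:connected}, so the pieces fit together.
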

\begin{proof}
We prove the claim by induction on $n$, the case $n=0$ being clear since $T^0=RL$ preserves sifted colimits of connective solid objects by axiom \axiomref{C}.
Let $X_\bullet \in \scr X^\sld_{*,\ge 1}$ be a sifted diagram.
By induction, we know that $|T^n(X_\bullet)| \wequi T^n(|X_\bullet|)$.
We have
\begin{align*}
  |\Omega^{n+1}C^{n+1}(X_\bullet)| & \stackrel{(1)}{\wequi} |R\Omega^{n+1}S^{n+1}(LX_\bullet)|  \stackrel{(2)}{\wequi} R|\Omega^{n+1}S^{n+1}(LX_\bullet)| \\
  & \stackrel{(3)}{\wequi} R\Omega^{n+1}S^{n+1}(L|X_\bullet|) \space \stackrel{(1)}{\wequi} \Omega^{n+1}C^{n+1}(|X_\bullet|),
\end{align*}
where (1) holds by \eqref{eq:CRSL}, (2) holds because $\Omega^{n+1}S^{n+1}$ preserves solid connected objects (\cref{lemm:Sn-conn}) and $R$ preserves sifted colimits of solid connected objects \axiomref{C}, and (3) holds because $\Omega^{n+1}S^{n+1}$ is a total fiber computed in a stable $\infty$-category and hence preserves all colimits that are preserved by $(LR)^k$,
in particular sifted colimits of solid connected objects.
Recall the fiber sequence \eqref{eq:C-T}.
Using that pullbacks preserve sifted colimits in $\scr X$ with connected base \cite[Lemma 5.5.6.17]{higheralgebra}, we see that $|\Omega^{n+1}C^{n+1}(X_\bullet)| \to |T^{n+1}(X_\bullet)| \to |T^n(X_\bullet)|$ is a fiber sequence.
We now have a morphism of fiber sequence
\begin{equation*}
\begin{CD}
|\Omega^{n+1}C^{n+1}(X_\bullet)| @>>> |T^{n+1}(X_\bullet)| @>>> |T^n(X_\bullet)| \\
@VVV  @VVV   @VVV \\
\Omega^{n+1}C^{n+1}(|X_\bullet|) @>>> T^{n+1}(|X_\bullet|) @>>> T^n(|X_\bullet|).
\end{CD}
\end{equation*}
The left- and right-hand maps are equivalence by what we have already said, and the base is connected, so we can 
conclude by using e.g.\ \cite[Proposition 3.8]{Nikolaus_2014}.
\end{proof}

\begin{corollary} \label{cor:geometric-realn}
The functor $\tau_{<\infty}T^\bullet$ preserves geometric realizations of connected solid objects.
\end{corollary}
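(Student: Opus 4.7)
The strategy is to unpack $\tau_{<\infty} T^\bullet$ as an iterated cofiltered limit in $\Pro(\scr X_{*,<\infty})$ and then exchange the order with the geometric realization, reducing to the previous proposition for each fixed partial totalization $T^n$. Recall that $\tau_{<\infty} \wequi \lim_m \Pro(\tau_{\le m})$ preserves all limits, and by definition $T^\bullet X = \lim_n c T^n X$. Hence for any $X \in \scr X_*$ we have a canonical identification
\begin{equation*}
  \tau_{<\infty} T^\bullet X \wequi \lim_n \lim_m c\, \tau_{\le m} T^n X \in \Pro(\scr X_{*,<\infty}).
\end{equation*}

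Now let $X_\bullet$ be a simplicial object in $\scr X^\sld_{*,\ge 1}$. By \cref{cor:connected} each $T^n X_\bullet$ lands in $\scr X^\sld_{*,\ge 1}$, and by \cref{prop:Tn-sifted-colim} each $T^n$ preserves the geometric realization, giving $T^n |X_\bullet| \wequi |T^n X_\bullet|$ in $\scr X_*$. Since $\tau_{\le m}$ is a left adjoint it commutes with colimits, so at each level
\begin{equation*}
  \tau_{\le m} T^n |X_\bullet| \wequi |\tau_{\le m} T^n X_\bullet|_{\scr X_{\le m}}.
\end{equation*}

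The next step is to commute $c$ past the geometric realization. The point is that $\scr X_{\le m} \subset \scr X$ is closed under all limits, so $\scr X_{\le m}$ has cofiltered limits; by the universal property of pro-objects recorded in \cref{subsec:pro}, the functor $c\colon \scr X_{\le m} \to \Pro(\scr X_{\le m})$ therefore admits a right adjoint $\mat$, hence preserves all colimits. Combined with the previous step this yields
\begin{equation*}
  c\, \tau_{\le m} T^n |X_\bullet| \wequi |c\, \tau_{\le m} T^n X_\bullet|_{\Pro(\scr X_{\le m})}.
\end{equation*}
Finally, by \cref{prop:pro-truncated-geometric-realization} geometric realizations commute with cofiltered limits in $\Pro(\scr X_{<\infty})$, so we may pull the double limit $\lim_{n,m}$ outside the geometric realization, arriving at
\begin{equation*}
  \tau_{<\infty} T^\bullet |X_\bullet| \wequi \lim_{n,m} |c\, \tau_{\le m} T^n X_\bullet| \wequi \Bigl|\lim_{n,m} c\, \tau_{\le m} T^n X_\bullet\Bigr| \wequi |\tau_{<\infty} T^\bullet X_\bullet|_{\Pro(\scr X_{*,<\infty})},
\end{equation*}
as desired. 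The only nonformal input beyond \cref{prop:Tn-sifted-colim} is the observation that $c$ preserves colimits once we restrict to $m$-truncated objects; everything else is bookkeeping with adjoints and the commutation result of \cref{prop:pro-truncated-geometric-realization}, so no genuine obstacle is expected.
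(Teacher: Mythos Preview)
Your approach is the same as the paper's in spirit: use \cref{prop:Tn-sifted-colim} to handle each partial totalization $T^n$, then invoke \cref{prop:pro-truncated-geometric-realization} to commute the geometric realization past the cofiltered limit defining the pro-object. However, your detour through the individual truncations $\tau_{\le m}$ introduces a small gap. In step~4 you compute the geometric realization in $\Pro(\scr X_{\le m})$, but in step~5 you apply \cref{prop:pro-truncated-geometric-realization}, which concerns $\Pro(\scr X_{<\infty})$. These two realizations need not agree: for a simplicial object $Y_\bullet$ in $\scr X_{\le m}$ one has $|cY_\bullet|_{\Pro(\scr X_{\le m})} \wequi c\,\tau_{\le m}|Y_\bullet|_{\scr X}$, whereas $|cY_\bullet|_{\Pro(\scr X_{<\infty})} \wequi \tau_{<\infty} c\,|Y_\bullet|_{\scr X}$, and these differ once $|Y_\bullet|_{\scr X}$ is not itself $m$-truncated. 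So the first equivalence in your final display is not what you established.

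The cleanest fix (and this is what the paper's one-line proof implicitly does) is to drop the inner limit over $m$ altogether. Both $c\colon \scr X \to \Pro(\scr X)$ and $\tau_{<\infty}\colon \Pro(\scr X) \to \Pro(\scr X_{<\infty})$ are left adjoints (the former has right adjoint $\mat$ since $\scr X$ has cofiltered limits, the latter is a reflection), so $\tau_{<\infty} c$ preserves all colimits. Combined with \cref{prop:Tn-sifted-colim} this gives $\tau_{<\infty} c\, T^n|X_\bullet| \wequi |\tau_{<\infty} c\, T^n X_\bullet|_{\Pro(\scr X_{<\infty})}$ for each $n$; then a single application of \cref{prop:pro-truncated-geometric-realization} (and the fact that $\tau_{<\infty}$ also preserves limits) lets you pass the $\lim_n$ inside. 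This is both shorter and avoids the ambiguity about where the realization lives.
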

\begin{proof}
Immediate from \cref{prop:Tn-sifted-colim,prop:pro-truncated-geometric-realization}.
\end{proof}

\subsection{Fiber sequences}
We now have everything to prove our version of the celebrated Principal Fibration Lemma 
of Bousfield and Kan \cite{BousfieldKan:HomotopyLimitsCompLoc}.
\begin{theorem} [Principal Fibration Lemma] \label{thm:fiber-sequences}
Assume axioms \axiomref{C}, \axiomref{M} and \axiomref{S}.

Let $F \to E \to B \in \scr X_*$ be a fiber sequence with $B$ simply connected and $F, \Omega B \in \scr X^\sld_{*,\ge 1}$.
Then $\tau_{<\infty} T^\bullet F \to \tau_{<\infty} T^\bullet E \to \tau_{<\infty} T^\bullet B$ is a fiber sequence.
\end{theorem}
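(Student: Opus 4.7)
The plan is to express the fiber sequence $F \to E \to B$ as coming from a group action bar construction, apply $T^\bullet$ degreewise, pass to $\Pro(\scr X_{*,<\infty})$, and then recognize the result as a fiber sequence via the principal fibration theory for pro-truncated objects developed in \cref{subsec:progroups}.

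Since $B$ is simply connected, $\Omega B$ is a connected group acting on $F$ with $E \wequi F \sslash \Omega B$ and $B \wequi \B \Omega B$. Let $F_\bullet^{\mathrm{bar}}, P_\bullet^{\mathrm{bar}} \in \Fun(\Delta^\op, \scr X_*)$ denote the usual bar constructions, with $F_n^{\mathrm{bar}} = F \times (\Omega B)^{\times n}$ and $P_n^{\mathrm{bar}} = (\Omega B)^{\times n}$, whose geometric realizations compute $E$ and $B$ respectively; moreover the simplicial map $F_\bullet^{\mathrm{bar}} \to P_\bullet^{\mathrm{bar}}$ is cartesian over $\B_\bullet \Omega B$ and encodes the $\Omega B$-action on $F$. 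By hypothesis, $F$ and $\Omega B$ lie in $\scr X^\sld_{*,\ge 1}$, hence by \cref{ex:solid-products} so do all terms $F_n^{\mathrm{bar}}$ and $P_n^{\mathrm{bar}}$.

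Next, apply $T^\bullet$ degreewise. Because $T^\bullet$ preserves finite products (\cref{prop:products}), we have natural equivalences $T^\bullet F_n^{\mathrm{bar}} \wequi T^\bullet F \times (T^\bullet \Omega B)^{\times n}$ and $T^\bullet P_n^{\mathrm{bar}} \wequi (T^\bullet \Omega B)^{\times n}$, so that $T^\bullet F_\bullet^{\mathrm{bar}}$ and $T^\bullet P_\bullet^{\mathrm{bar}}$ are precisely the bar constructions of the induced group object $T^\bullet \Omega B \in \Grp(\Pro(\scr X_*))$ acting on $T^\bullet F$. Now apply the pro-truncation $\tau_{<\infty}$. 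By \cref{cor:connected} each entry of these bar constructions is sent into pro-connected solid objects, and by \cref{cor:geometric-realn} we obtain
\begin{equation*}
\tau_{<\infty} T^\bullet E \wequi \bigl|\tau_{<\infty} T^\bullet F_\bullet^{\mathrm{bar}}\bigr|, \qquad \tau_{<\infty} T^\bullet B \wequi \bigl|\tau_{<\infty} T^\bullet P_\bullet^{\mathrm{bar}}\bigr|,
\end{equation*}
in $\Pro(\scr X_{*,<\infty})$.

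Finally, $\tau_{<\infty} T^\bullet \Omega B$ is a pro-connected group object in $\Pro(\scr X_{*,<\infty})$ (again by \cref{cor:connected}), hence a levelwise group by \cref{theorem:connected-levelwise}. The simplicial diagram $\tau_{<\infty} T^\bullet F_\bullet^{\mathrm{bar}} \to \tau_{<\infty} T^\bullet P_\bullet^{\mathrm{bar}}$ is cartesian (being a cofiltered limit of such after applying $\tau_{<\infty}\Pro(\ph)$), and thus defines a $\tau_{<\infty} T^\bullet \Omega B$-action on $\tau_{<\infty} T^\bullet F$. Applying \cref{prop:pro-actions} together with \cref{ex:pro-actions} yields the desired fiber sequence
\begin{equation*}
\tau_{<\infty} T^\bullet F \to \tau_{<\infty} T^\bullet F \sslash \tau_{<\infty} T^\bullet \Omega B \to \B\bigl(\tau_{<\infty} T^\bullet \Omega B\bigr),
\end{equation*}
and the two right-hand terms are identified with $\tau_{<\infty} T^\bullet E$ and $\tau_{<\infty} T^\bullet B$ by what we have just established.

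The main obstacle is step two: making rigorous that applying $T^\bullet$ degreewise to the bar construction is canonically identified with the bar construction of $T^\bullet\Omega B$. This is morally a consequence of \cref{prop:products}, but requires care to ensure the identification respects all face and degeneracy maps and the cartesian structure over $\B_\bullet\Omega B$ — essentially, one should upgrade \cref{prop:products} to the assertion that $T^\bullet$ induces a product-preserving (hence group-preserving, hence $\Grpd$-preserving) functor between the relevant $\infty$-categories, so that $T^\bullet$ of a bar construction is the bar construction of $T^\bullet$ applied to the group.
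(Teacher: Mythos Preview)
Your proposal is correct and follows essentially the same approach as the paper: express $E$ and $B$ as bar constructions for the $\Omega B$-action, use that $T^\bullet$ preserves finite products (\cref{prop:products}) so that the bar structure is inherited, apply \cref{cor:geometric-realn} to commute $\tau_{<\infty}T^\bullet$ past the geometric realizations (the terms being solid connected by \cref{ex:solid-products}), and conclude via \cref{theorem:connected-levelwise} and \cref{ex:pro-actions}. The ``main obstacle'' you flag is exactly the point the paper treats implicitly: since group objects and their actions are encoded by simplicial objects satisfying Segal (i.e.\ product) conditions, any finite-product-preserving functor such as $\tau_{<\infty}T^\bullet$ automatically sends bar constructions to bar constructions, so your suggested resolution is the correct one.
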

\begin{proof}
We know that $\Omega B$ is a group acting on $F$ with quotient $F \sslash \Omega B \wequi E$ \cite[Proposition 3.8]{Nikolaus_2014}.
By \cref{prop:products}, $\tau_{<\infty} T^\bullet \Omega B$ is a group acting on $\tau_{<\infty} T^\bullet F$.
Note that $\Omega B^n \times F^{\epsilon} \in \scr X^\sld_{*,\ge 1}$ for all $n, \epsilon \ge 0$ (\cref{ex:solid-products}).
Thus, \cref{cor:geometric-realn} implies that the quotient is \[ \tau_{<\infty} T^\bullet F \sslash \tau_{<\infty} T^\bullet \Omega B \wequi \tau_{<\infty} T^\bullet(F \sslash \Omega B) \wequi \tau_{<\infty} T^\bullet E. \]
Since $T^\bullet \Omega B$ is pro-connected by \cref{cor:connected}, it is a levelwise group by \cref{theorem:connected-levelwise}, and so by \cref{ex:pro-actions} we have a fiber sequence \[ \tau_{<\infty} T^\bullet F \to \tau_{<\infty} T^\bullet E \wequi \tau_{<\infty} T^\bullet F \sslash \tau_{<\infty} T^\bullet \Omega B \to \B \tau_{<\infty} T^\bullet \Omega B. \]
Using once more that $\tau_{<\infty} T^\bullet$ preserves geometric realizations and finite products (of connected solid objects) we learn that $\B \tau_{<\infty} T^\bullet \Omega B \wequi \tau_{<\infty} T^\bullet \B \Omega B$, which is the same as $\tau_{<\infty} T^\bullet B$, as needed.
\end{proof}

\section{Applications} \label{sec:applications}
In this final section we reap the fruits of the previous sections to prove generalized versions of Bousfield and Kan's \cite{BousfieldKan:HomotopyLimitsCompLoc} convergence of their completion functor.
We will start with an adjunction \[ \Sigma^\infty_+ \colon \scr C \adj \scr E \noloc \Omega^\infty, \] where $\scr C$ is some $\infty$-category of generalized spaces, and $\scr E$ is an associated $\infty$-category of spectra.
We will pick $A \in \CAlg(\scr E)$ and write $\scr C$ as a localization of an $\infty$-topos $\scr X$.
Putting $\scr D = \Mod_A(\scr E)$, we obtain a composed adjunction \[ \scr X \adj \scr C \adj \scr E \adj \scr D \] to which we shall apply the results of the previous sections.

There are four classes of examples for this broad framework that we have in mind.
In \cref{subsec:spaces} we consider the case $\scr X = \scr C = \Spc$ the usual $\infty$-category of spaces, and $\scr E = \Sp$ the usual $\infty$-category of spectra.
We thus provide an account of the original results of Bousfield and Kan.
We will assume that $A$ is connective.
The behavior of the completion is then strongly influenced by $\pi_0(A)$, which we shall assume to be either a subring of $\Q$ of the form $S^{-1}\Z$, or a quotient of $\Z$, whence $\Z/n$.
Depending on this, the Bousfield--Kan completion coincides in favorable cases (i.e., for nilpotent spaces) with either the unstable $S$-localization or the unstable $n$-completion.

Next in \cref{subsec:sheaves} we generalize somewhat, by allowing $\scr C=\scr X$ to be an $\infty$-topos satisfying some mild assumptions.
We still set $\scr E = \Sp(\scr X)$ and assume that $\ul\pi_0(A)$ is a constant sheaf $S^{-1}\ul\Z$ or $\ul\Z/n$.
Again we prove that the Bousfield--Kan completion (or more precisely, its connected cover) agrees (for nilpotent sheaves) with an unstable $S$-localization or $n$-completion functor.

Our next example, treated in \cref{subsec:equivariant}, is $\scr C = \Spc(\B G)$, i.e. the $\infty$-category of genuine $G$-equivariant spaces.
We only consider finite groups $G$.
We can set $\scr X = \scr C$, since $\Spc(\B G)$ is a presheaf topos.
However, we no longer set $\scr E = \Sp(\Spc(\B G))$.
Indeed, as the practice of genuine equivariant stable homotopy theory shows, a much more useful $\infty$-category of spectra is $\SH(\B G)$, obtained by inverting on $\Spc(\B G)_*$ all the representation spheres.
We again pick $A \in \SH(\B G)_{\ge 0}$.
But now $\ul\pi_0(\1_{\SH(\B G)})=\ul A$ (the Burnside ring Mackey functor) is no longer the constant presheaf $\ul{\Z}$, and so more options arise for $\ul\pi_0 A$.
We treat four cases, (1) $\ul\pi_0 A = S^{-1}\ul A$, (2) $\ul\pi_0 A = \ul A/n$, (3) $\ul\pi_0 A = S^{-1}\ul \Z$ and (4) $\ul\pi_0 A = \ul \Z/p$.
Cases (1) and (2) behave very similarly to the previous situations, and we call them \emph{Burnside resolutions}.
Cases (3) and (4) are more delicate.
We call them \emph{$\ul\Z$-resolutions}.

Our final example is the $\infty$-category of motivic spaces $\scr C = \Spc(k)$, where $k$ is a perfect field; see \cref{subsec:motivic}.
As in the equivariant case, the correct $\infty$-category of spectra is not the naive stabilization, but rather the $\infty$-category of motivic spectra $\scr E = \SH(k)$.
Moreover, in contrast to the previous three examples, $\scr C$ is not a topos, and so we must specify $\scr X = \Shv_\Nis(\Sm_k)$.
As in the sheaves case, $\scr X$ has positive (even infinite) homotopy dimension, so care has to be taken with completions and connectivity assumptions.
As in the equivariant case, $\ul\pi_0(\1_k)_*$ is more complicated than just $\ul\Z$, and so more options arise for $\ul\pi_0(A)_*$.
Here we treat the cases $\ul\pi_0(A)_* = S^{-1} \ul\pi_0(\1_k)_*$, $\ul\pi_0(A)_* = \ul\pi_0(\1_k)_*/n$ (which we call \emph{Milnor--Witt resolutions}) and $\ul\pi_0(A)_* = S^{-1} \ul\pi_0(\1_k)_*/\eta$, $\ul\pi_0(A)_* = \ul\pi_0(\1_k)_*/(n,\eta)$ (which we call \emph{Milnor resolutions}).

\subsection{Spaces} \label{subsec:spaces}
We set $\scr X = \Spc$.
In this subsection we explain how our results specialize to those of Bousfield--Kan \cite{BousfieldKan:HomotopyLimitsCompLoc}.
Our presentation is designed as a drop-in replacement for \emph{loc. cit.}

Let $A \in \CAlg(\Sp)_{\ge 0}$ such that $\pi_0(A)$ is either a subring of $\Q$ or a quotient of $\Z$, so that $\pi_0(A) \otimes_\Z \pi_0(A) \wequi \pi_0(A)$.
Set $\scr D = \Mod_A(\Sp)$, $\scr D^\sld_{\ge 0} = \scr D_{\ge 0}$ and $L_+ = A \otimes \Sigma^\infty_+$.
Hence, we have $\Spc_{*,\ge n}^\sld \supseteq \Spc_{*,\ge n}$ for all $n \ge 0$.
The axioms are readily verified.
(In fact, this is a special case of the situation of \cref{subsec:sheaves}, where we verify the axioms more fully in \cref{lem:sheaves:axioms}.)
In particular, for $X \in \scr D_{\ge 1}$ we have \[ \pi_1(LRX) \wequi \pi_1(X) \otimes_\Z \pi_0(A) \wequi \pi_1(X) \otimes_{\pi_0(A)} \pi_0(A) \otimes_\Z \pi_0(A) \wequi \pi_1(X), \] whence \axiomref{S2} is satisfied.
To emphasize the dependence on $A$, we denote the localization functor (in the sense of \cref{subsec:locn}) by $\hat L_A$ and the completion functor (in the sense of \cref{sec:BK}) by $T^\bullet_A$.

Let us recall the stable analog.
On $\Sp_{\ge 0}$ we can define a Bousfield localization $L_A^{st}$, where $L_A^{st}(\alpha)$ is an equivalence if and only if $\alpha \otimes A$ is one.
(In fact, for $E \in \Sp_{\ge 0}$, $L_A^{st} E$ coincides with the totalization of corresponding monadic resolution, see e.g.\ \cite[Theorem 1.0.4]{mantovani2021localizationscompletionsstableinftycategories}, but we do not need this.)
Its effect can be described explicitly, see \cref{lem:stable-bousfield}:
\begin{enumerate}
\item If $\pi_0 A = \Z/n$ then $L_A^{st} E \wequi E_n^\comp = \lim_i E/n^i$.
\item If $\pi_0 A = S^{-1} \Z \subset \Q$ then $L_A^{st} E \wequi S^{-1} E$ (so that $\pi_* (L_A^{st} E) \wequi \pi_* (E) \otimes \pi_0 (A)$).
\end{enumerate}

We can now state the main result of this section.
\begin{theorem}[{Bousfield--Kan {\cite{BousfieldKan:HomotopyLimitsCompLoc}}}] \label{thm:BK-spaces}
\begin{enumerate}
\item Let $X \in \Spc_*$ be connected and nilpotent (i.e.\ $\pi_1(X)$ is nilpotent,
  with nilpotent action on $\pi_n(X)$ for all $n \ge 2$).
  The canonical map $\hat L_A X \to \Tot T^\bullet_A X$ is an equivalence.
  In fact, if $\pi_0 (A) \subset \Q$ then already the canonical map $\tau_{<\infty} \hat L_A X \to \tau_{<\infty} T_A^\bullet X$ is an equivalence.
\item For $E \in \Sp_{\ge 1}$ the canonical map $\hat L_A \Omega^\infty E \to \Omega^\infty L_A^{st} E$ is an equivalence.
\item Let $F \to E \to B$ be a fiber sequence of pointed, connected, nilpotent spaces, with $B$ simply connected.
  Then $\hat L_A F \to \hat L_A E \to \hat L_A B$ is a fiber sequence.
\item Let $X_\bullet$ be a tower of pointed, connected, nilpotent spaces with $X = \lim_i X_i$.
  Assume that the connectivity of $X \to X_i$ tends to infinity with $i$.
  Then $\hat L_A X \wequi \lim_i \hat L_A X_i$.
\end{enumerate}
\end{theorem}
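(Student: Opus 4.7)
The plan is to prove the four parts in the bootstrapped order (2), (3), (1), (4). I would begin with (2). The target $\Omega^\infty L_A^{st} E$ is $\hat L_A$-local by adjunction: for any $Y \in \Spc_*$, $\Map(Y, \Omega^\infty L_A^{st} E) \wequi \Map(L_A^{st} \Sigma^\infty_+ Y, L_A^{st} E)$ depends only on the $L_A$-equivalence class of $Y$. The substantive content is that the unit $\Omega^\infty E \to \Omega^\infty L_A^{st} E$ is an $L_A$-equivalence. I would first verify this for Eilenberg--MacLane spectra $E = \Sigma^n HG$ with $n \ge 1$ by direct computation using the explicit descriptions of $L_A^{st}$ (localization with $S^{-1}\Z$ rationally, $\lim_i HG/n^i$ in the mod-$n$ case), and then climb the Postnikov tower of $E$ to reach general $E \in \Sp_{\ge 1}$, using the fiber-preservation (3) below only at stages whose base has already been handled.

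Next I would prove (3) by the Bousfield--Kan Fiber Lemma (Theorem 4.1). Its nontrivial hypothesis, that $\hat L_A B$ is simply connected, follows from (2) by Postnikov induction on $B$: each fiber $K(\pi_n B, n)$ with $n \ge 2$ has $\hat L_A K(\pi_n B, n) \wequi \Omega^\infty L_A^{st} \Sigma^n H\pi_n B$, which is $n$-connected since $L_A^{st}$ preserves connectivity of connective spectra in both cases. I would then turn to (1), inducting up the principal Postnikov tower of the connected nilpotent $X$ (using nilpotence to refine the $\pi_1$ stage into a finite tower of $K(\ph,1)$-principal fibrations). On one side, the Principal Fibration Lemma (Theorem 5.10, whose axioms \axiomref{C}, \axiomref{M}, \axiomref{S} must first be checked in this setup) gives that $\tau_{<\infty} T^\bullet_A$ preserves each principal fibration; on the other, (3) gives the analogous preservation by $\hat L_A$ in $\Spc_*$. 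By (2) and the observation that $\Omega^\infty L_A^{st} \Sigma^n H A_n$ is already truncated, the fibers in the two towers agree after $\tau_{<\infty}$. Induction yields $\tau_{<\infty} \hat L_A X \wequi \tau_{<\infty} T^\bullet_A X$; materializing the pro-object gives $\hat L_A X \wequi \Tot T^\bullet_A X$. In the rational case the pro-objects involved are already essentially constant (rational Eilenberg--MacLane spaces are truncated with vanishing $\lim^1$), giving the stronger pro-truncated statement.

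For (4), the target $\lim_i \hat L_A X_i$ is $\hat L_A$-local as a limit of locals, so it suffices to show $\hat L_A X \to \lim_i \hat L_A X_i$ is an $L_A$-equivalence. Using (1) to compute both sides via $T^\bullet_A$, together with the connectivity hypothesis on $X \to X_i$, a standard connectivity argument with $L$ (combined with the left-separation of the relevant $t$-structure on $\scr D$) yields the result.

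The main obstacle will be the Postnikov extension in (2): one must avoid circularity, since the fiber-preservation (3) used to climb the Postnikov tower of $E$ is itself proved using (2). I would resolve this by inducting on the number of nonvanishing homotopy groups of $E$, so that (3) is applied only to bases in the class where (2) already holds. A secondary difficulty is the pro-truncated bookkeeping in (1): the identification of $\hat L_A X$ with $\Tot T^\bullet_A X$ is proved first in $\Pro(\Spc_{*,<\infty})$ and must be materialized to descend to $\Spc_*$; controlling when $\tau_{<\infty}$ alone suffices --- i.e., when $\lim^1$ obstructions vanish --- explains the dichotomy between the rational case (no $\lim^1$) and the mod-$n$ case in the statement of (1).
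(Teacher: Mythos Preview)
Your overall bootstrapping strategy---prove (2), use it to feed the fiber lemma, induct up a principalized Postnikov tower to get (1), then deduce (3) and (4)---matches the paper's approach closely, and in the case $\pi_0 A \subset \Q$ your outline is essentially correct. The splitting argument you implicitly invoke there (replace $K(A_n,n_i)$ by $\hat L_A K(A_n,n_i) \wequi K(S^{-1}A_n,n_i)$, observe $S^{-1}A_n$ is a $\pi_0 A$-module and hence an $A$-module, so the cosimplicial object splits) is exactly what the paper does.

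The genuine gap is in the case $\pi_0 A = \Z/n$, at the sentence ``By (2) and the observation that $\Omega^\infty L_A^{st} \Sigma^n H A_n$ is already truncated, the fibers in the two towers agree after $\tau_{<\infty}$.'' Here you need to identify $\tau_{<\infty} T^\bullet_A K(B,m)$ with $\hat L_A K(B,m)$, and (2) only computes the latter. The splitting argument that works rationally fails: $L_A^{st}(\Sigma^m HB) = (\Sigma^m HB)^\comp_n$ is $n$-complete but typically \emph{not} an $A$-module (e.g.\ $\pi_m = \Z_n^\comp$ is not $\Z/n$-torsion), so the cobar resolution does not split. You have no direct handle on $T^\bullet_A K(B,m)$.

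The paper's fix is the following trick. Write $F := \Tot T^\bullet_A$. Since $F$ preserves loops (via \cref{thm:fiber-sequences}), the sequence $(F K(B,i+2))_{i\ge 0}$ assembles into an $\Omega$-spectrum $G$ with a map $f\colon \Sigma^2 HB \to G$. Now $F$ preserves the fiber sequence $F_i \to K(B,i+2) \xrightarrow{n} K(B,i+2)$, and also $F_i \to K(B/n,i+1) \to K(B[n],i+3)$; the key point is that $B/n$ and $B[n]$ \emph{are} $\Z/n$-modules, hence $A$-modules, so those cobar resolutions split and $F$ fixes $K(B/n,\ast)$ and $K(B[n],\ast)$. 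It follows that $f$ is an equivalence on fibers of multiplication by $n$, i.e.\ an $n$-equivalence, hence an $L_A^{st}$-equivalence. Combined with (2) this gives $\hat L_A K(B,i+2) \wequi F K(B,i+2)$. This is the missing idea; your identification of the ``main obstacle'' as the circularity between (2) and (3) underestimates where the real work lies.
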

Note that this result supplies us with a recipe to compute $\hat L_A X$ (or equivalently $\Tot T^\bullet_A$) for $X$ pointed, connected and nilpotent.
Indeed, $X$ being nilpotent, we can find fiber sequences $X_{i+1} \to X_i \to B_i$ where $B_i$ is a simply connected Eilenberg--Mac Lane space, the connectivity of the $B_i$ tends to infinity with $i$, and $X \wequi \lim_i X_i$.
By (2) we know $\hat L_A B_i$ (in particular we know its homotopy groups), whence by (3) we can inductively understand $\hat L_A X_i$.
Finally, by (4) we have $\hat L_A X \wequi \lim_i \hat L_A X_i$.
The following formulae can be inductively deduced from the stable situation:
\begin{example}
Let $\pi_0 A \subset \Q$ and $X$ be simply connected.
Then $\pi_* \hat L_A X \wequi \pi_* X \otimes \pi_0 A$.
In particular if $\pi_0 A = \Z$ then $\hat L_A X \wequi X$.
\end{example}
\begin{example}
Let $\pi_0 A = \Z/p$, $X$ simply connected and $\pi_n X$ finitely generated for all $n$.
Then $\pi_* \hat L_A X \wequi \pi_* X \otimes \Z_p^\comp$.
\end{example}

The proof runs slightly differently depending on if $\pi_0 A \subset \Q$ or not, with the former case being somewhat easier.
We treat this first.
\begin{proof}[Proof of \cref{thm:BK-spaces} when $\pi_0 A \subset \Q$.]
We first prove (2).
It is clear from the definitions that $\Omega^\infty L_A^{st} E$ is $L$-local.
It thus suffices to show that $\Omega^\infty E \to \Omega^\infty L_A^{st} E$ is an $L$-equivalence.
We can write $L_A^{st} E$ as the filtered colimit of a diagram of copies of $E$, with maps given by multiplication by elements in $\Z \cap \pi_0(A)^\times$.
It thus suffices to show: if $E \in \Sp_{\ge 1}$ and $n \in \Z \cap \pi_0(A)^\times$, then $L\Omega^\infty(E \xrightarrow{n} E)$ is an equivalence.
Observe that $L \Omega^\infty E \wequi \lim_i L \Omega^\infty(E_{\le i})$:
Indeed, the map $\Omega^\infty E \to \Omega^\infty(E_{\le i})$ is $(i+1)$-connective,
and $L$ preserves connectivity of maps (\cref{prop:stability-consequence}).
In particular, the map $L \Omega^\infty E \wequi \lim_i L \Omega^\infty(E_{\le i})$ has $\infty$-connective
fiber, and thus is an equivalence. Therefore, we are reduced to the case that $E$ is bounded.
Appeal to \cref{thm:locn} shows that given a fiber sequence $E_1 \to E_2 \to E_3$ with $E_3 \in \Sp_{\ge 2}$, the result for $E_1$ and $E_3$ implies the result for $E_2$.
This way, working through the principalized Postnikov tower, we reduce to the case where $E$ is concentrated in a single degree, whence, a connected $H\Z$-module.
This $\infty$-category is generated under sifted colimits by $E=\Sigma H\Z$ itself, and so we reduce to this single case.
Now $\Omega^\infty E \wequi S^1$ and $\Omega^{\infty}(n\colon E \to E)$ is indeed the degree $n$ map from $S^1$ to itself.
Upon stabilization, this corresponds to the multiplication by $n$ map from the semiadditive structure (as this might be checked on $\pi_1(\Sigma^\infty S^1) \wequi \pi_1(S^1)$ by the Hurewicz theorem), 
and so becomes inverted upon $\otimes A$, as needed.

\cref{thm:locn} now implies the special case of (3) when $B$ is a simply-connected Eilenberg--Mac Lane space.
From this we deduce by induction that if $X$ is pointed, connected, nilpotent and truncated, then $\pi_* \hat L_A X \wequi \pi_* X \otimes \pi_0 A$.

Now we show a special case of (4), namely, the one where there are fiber sequences $X_{i+1} \to X_i \to B_i$ with $B_i$ simply connected Eilenberg--Mac Lane spaces, with connectivity tending to infinity.
Setting $X' = \lim_i \hat L_A X_i$, the key point is to show that the connectivity of $X' \to \hat L_A X_i$ tends to infinity with $i$.
For this it suffices to show that the connectivity of $\hat L_A X_{i+1} \to \hat L_A X_i$ tends to infinity with $i$.
But we have the fiber sequences $\hat L_A X_{i+1} \to \hat L_A X_i \to \hat L_A B_i$ (\cref{thm:locn}), so the claim follows.

Now we prove (1) for arbitrary (pointed, connected, nilpotent) $X$.
Present $X \wequi \lim_i X_i$
with fiber sequences $X_{i+1} \to X_i \to B_i$ where $B_i$ is a simply-connected Eilenberg--Mac Lane space and the connectivity of the $B_i$ tends to infinity.
Since the connectivity of $X \to X_i$ tends to infinity with $i$, so does the connectivity of $T^\bullet X \to T^\bullet X_i$ by \cref{prop:Tn-conn}.
Consequently, $\tau_{<\infty} T^\bullet X \wequi \lim_i \tau_{<\infty} T^\bullet X_i$.
Similarly, $\tau_{<\infty} \hat{L}_A X \wequi \lim_i \hat{L}_A X_i$ (via the special case of (4) above), 
and so it will be enough to show that $\hat{L}_A X_i \to \tau_{<\infty} T^\bullet X_i$ is an equivalence (in $\Pro(\Spc_{<\infty})$).
Since the fiber sequences $X_{i+1} \to X_i \to B_i$ are preserved by $\tau_{<\infty} T^\bullet$ (\cref{thm:fiber-sequences})
and by $\hat{L}_A$ (via the special case of (3) above), we are reduced to proving the claim for the $B_i$.
But, $T^\bullet B_i \wequi T^\bullet \hat L_A B_i$.
Now note that $\hat L_A B_i \wequi \Omega^\infty \Sigma^k S^{-1} \pi_k(B_i)$ 
for some $k$. But $S^{-1} \pi_k(B_i)$ is a $(S^{-1}\Z = \pi_0(A))$-module,
and hence also admits the structure of an $A$-module.
Therefore, $\hat L_A B_i$ is in the image of $R$,
and thus the cosimplicial object is split by \cref{lem:adjunction:split}.

The general case of (3) is now an immediate consequence of \cref{thm:fiber-sequences} ($\tau_{<\infty} T^\bullet$ preserves fiber sequences), and the general case of (4) follows from \cref{prop:Tn-conn} ($T^\bullet$ preserves connectivity).
\end{proof}

\begin{proof}[Proof of \cref{thm:BK-spaces} when $\pi_0 A = \Z/n$.]
We again first prove (2).
To begin with, let $E \in \Sp_{\ge 2}$, and consider the fiber sequence $F \to E \to L_A^{st} E$.
Then $F$ is connected and $0 = L_A^{st} F = F_n^\comp$, so that multiplication by $n$ is an equivalence on $F$.
As before $\Omega^\infty L_A^{st} E$ is $L$-local, and so it suffices to prove that $\Omega^\infty E \to \Omega^\infty L_A^{st} E$ is an $L$-equivalence.
Writing $L_A^{st} E$ as the cofiber of $F \to E$, rewriting the cofiber as a geometric realization, using semiadditivity of $\Sp$ and the fact that $\Omega^\infty$ preserves sifted colimits, we find that $\Omega^\infty L_A^{st} E \wequi \Omega^\infty(E) \sslash \Omega^\infty(F)$.\NB{details? kind of a version of \cref{lem:bar-construction-stable}}
Hence, \[ L_+ \Omega^\infty L_A^{st} E \wequi L_+ \Omega^\infty(E) \sslash L_+ \Omega^\infty(F). \]
We have seen in the previous proof that multiplication by $n$ is invertible on $\Sigma^\infty \Omega^\infty F$, and thus $L \Omega^\infty F = 0$.
It follows that $L_+ \Omega^\infty F \wequi A$ and so $L_+ \Omega^\infty L_A^{st} E \wequi L_+ \Omega^\infty(E)$, as needed.
Now suppose that $E \in \Sp_{\ge 1}$.
Applying \cref{thm:locn} to the fiber sequence $\Omega^\infty E \to * \to \Omega^\infty \Sigma E$
(note that we already know that $\hat{L} \Omega^\infty \Sigma E \cong \Omega^\infty L^{st}_A E$ is simply connected) we deduce that \[ \hat L_A \Omega^\infty E \wequi \Omega \hat L_A \Omega^\infty \Sigma E \wequi \Omega \Omega^\infty L_A^{st} \Sigma E \wequi \Omega^\infty L_A^{st} E, \] as required.

As in the previous proof, this implies the special case of (3) when $B$ is an Eilenberg--Mac Lane space.
From this we deduce by induction that if $X$ is pointed, $d$-connected, nilpotent and truncated, then $\hat L_A X$ satisfies the same assumptions.

Using the exact same argument as in the previous proof yields the special case of (4).

Next we prove (1) in the special case where $X$ is a simply connected Eilenberg--Mac Lane space.
Denote the functor $\Tot T^\bullet$ by $F$.
Let $B$ be an abelian group.
For $i \ge 0$ we have $\Omega FK(B, i+3) \wequi FK(B,i+2)$, since $F$ preserves fiber sequences (\cref{thm:fiber-sequences}).
We thus obtain an $\Omega$-spectrum $G \coloneqq (FK(B,i+2))_{i \ge 0}$ as well as a map of $\Omega$-spectra $f\colon \Sigma^2 HB \to G$ (inducing in each level the map $K(B,i+2) \to FK(B,i+2)$).
We claim that the fiber of multiplication by $n$ on $f$ is an equivalence.
Indeed, by \cref{thm:fiber-sequences} again, the functor $F$ preserves the fiber sequences $F_i \to K(B, i+2) \xrightarrow{n} K(B,i+2)$ and $F_i \to K(B/n, i+1) \to K(B[n], i+3)$.
But both $B[n]$ and $B/n$ are $A$-modules, whence $FK(B/n,i+1) \wequi K(B/n,i+1)$ and $FK(B[n], i+3) \wequi K(B[n],i+3)$ by \cref{lem:adjunction:split}, 
from which we deduce $FF_i \wequi F_i$, proving the claim.
Consequently, the map $\Sigma^2 HB \to G$ is an $n$-equivalence, so an $L_A^{st}$-equivalence, whence by (2) we deduce that $\hat L_A K(B,i+2) \wequi \hat L_A FK(B,i+2)$.
Since $FK(B,i+2)$ is clearly $L$-local, we have proved $FK(B,i+2) \wequi \hat L_AK(B,i+2)$, for any $i \ge 0$.

Finally, we prove the general case of (1).
Writing $X = \lim_i X_i$ with $X_{i+1} \to X_i \to B_i$ and $B_i$ simply connected Eilenberg--Mac Lane spaces of connectivity tending to $\infty$, as in the previous proof we find that $FX \wequi \lim_i FX_i$, and there are fiber sequences $FX_{i+1} \to FX_i \to FB_i$.
Since the same holds for $\hat L_A$ in place of $F$ by the special cases of (3) and (4) we established, showing that $FX \wequi \hat L_A X$ reduces to showing that $FB_i \wequi \hat L_A B_i$.
This we treat just before.

The general cases of (3) and (4) follow as in the previous proof.
\end{proof}

\subsection{Sheaves} \label{subsec:sheaves}
In this subsection we show the analog of \cref{thm:BK-spaces} for (nice enough) $\infty$-topoi.
Let $\scr X$ be an $\infty$-topos which admits a locally finite-dimensional cover \cite[Definition 5.1]{mattis2024unstablearithmeticfracturesquares}.
In particular, $\scr X$ is Postnikov-complete \cite[Lemma 5.3]{mattis2024unstablearithmeticfracturesquares} and has enough points (this is part of the definition).

\begin{example} \label{rmk:locally-finite-dimensional-examples}
  There are two main examples of such $\scr X$:
  Any $\infty$-topos locally of homotopy dimension $\le N$ (e.g., a presheaf $\infty$-topos) 
  admits a locally finite-dimensional cover \cite[Example A.1]{mattis2024unstablearithmeticfracturesquares},
  and the Zariski, Nisnevich and étale $\infty$-topoi on smooth $X$-schemes, 
  where $X$ is some nice enough base scheme,
  cf.\ \cite[Proposition A.3]{mattis2024unstablearithmeticfracturesquares} 
  and \cite[Proposition 2.25]{mattis2025etale}.
  The existence of a locally finite-dimensional cover is a technical assumption 
  that guarantees that certain homological localization functors 
  (such as unstable $p$-completion) commute with the limit along (weak) Postnikov towers.
\end{example}
Let $A \in \CAlg(\Sp(\scr X))_{\ge 0}$ be a connective sheaf of commutative ring spectra,
such that $\ul\pi_0(A)$ is a constant sheaf of commutative rings 
with values either a subring of $\Q$ or a quotient of $\Z$.
As in the case of spaces we get $\ul\pi_0(A) \otimes_{\ul\Z} \ul\pi_0(A) \wequi \ul\pi_0(A)$.
Set $\scr D = \Mod_A(\Sp(\scr X))$, $\scr D^{\sld}_{\ge 0} = \scr D_{\ge 0}$ 
and $L_+ = A \otimes \Sigma^\infty_+$. 
Therefore, we also get $\scr X_{*,\ge n}^\sld \supseteq \scr X_{*,\ge n}$ for all $n \ge 0$.
Since $\scr X$ is Postnikov complete, it follows that the t-structure defined by $\scr D_{\ge 0}$ is left complete.
\begin{lemma} \label{lem:sheaves:axioms}
  The axioms \axiomref{C}, \axiomref{M} and \axiomref{S} hold for the adjunction $L \dashv R$.
\end{lemma}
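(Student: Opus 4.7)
The plan is to verify the five conditions \axiomref{C}, \axiomref{M}, \axiomref{S1}, \axiomref{S2}, \axiomref{S3} separately; with the standard $t$-structure $\scr D^\sld_{\ge 0} = \scr D_{\ge 0}$, four of them are soft consequences of formal properties of stabilizations of $\infty$-topoi, and the last one reduces to a sheafy Hurewicz argument combined with the algebraic identity $\ul\pi_0(A) \otimes_{\ul\Z} \ul\pi_0(A) \wequi \ul\pi_0(A)$.

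First, I would deal with the easy axioms. For \axiomref{C}, I would factor $R$ as
$\Mod_A(\Sp(\scr X))_{\ge 0} \to \Sp(\scr X)_{\ge 0} \xrightarrow{\Omega^\infty} \scr X_* \to \scr X$: the forgetful functor preserves all colimits, $\Omega^\infty$ preserves sifted colimits on connective spectra (it is monadic and the monad commutes with sifted colimits), and forgetting the basepoint in an $\infty$-topos preserves sifted colimits by universality. For \axiomref{M}, since $X \in \scr X_*$ is pointed, $\Sigma^\infty X \in \Sp(\scr X)_{\ge 0}$ and hence $LX = A \otimes \Sigma^\infty X$ is a connective $A$-module; the tensor product $\otimes_A$ of connective $A$-modules is connective. \axiomref{S1} is the same observation applied to $X = \Omega^\infty E$ for $E \in \scr D^\sld_{\ge 0}$. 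For \axiomref{S3}, $\Omega^\infty\colon \Sp(\scr X)_{\ge 0} \to \scr X_*$ is conservative because $\scr X$ is Postnikov-complete, and the forgetful $\Mod_A \to \Sp(\scr X)$ is conservative; the composition with $\scr X_* \to \scr X$ is also conservative.

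The main work is \axiomref{S2}. For $E \in \scr D^\sld_{\ge 1}$, set $X \coloneqq \Omega^\infty E$, which lies in $\scr X_{*, \ge 1}$. The sheaf-level Hurewicz theorem in $\Sp(\scr X)$ (valid because $\scr X$ is Postnikov-complete with enough points, so it can be checked on stalks) shows that the counit $\Sigma^\infty X \to E$ is an iso on $\ul\pi_1$. To compute $\ul\pi_1(LRE) = \ul\pi_1(A \otimes \Sigma^\infty X)$, I would use that $A$ is connective and $\Sigma^\infty X$ is $1$-connective, so the bottom of the Künneth/Atiyah–Hirzebruch-style spectral sequence yields $\ul\pi_1(A \otimes \Sigma^\infty X) \wequi \ul\pi_0(A) \otimes_{\ul\Z} \ul\pi_1(\Sigma^\infty X) \wequi \ul\pi_0(A) \otimes_{\ul\Z} \ul\pi_1(E)$. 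Under this identification the canonical map $\ul\pi_1(LRE) \to \ul\pi_1(E)$ is the $\ul\pi_0(A)$-module action on $\ul\pi_1(E)$, which factors as
\begin{equation*}
  \ul\pi_0(A) \otimes_{\ul\Z} \ul\pi_1(E) \twoheadrightarrow \ul\pi_0(A) \otimes_{\ul\pi_0(A)} \ul\pi_1(E) \wequi \ul\pi_1(E).
\end{equation*}
The first surjection is an isomorphism because $\ul\pi_0(A) \otimes_{\ul\Z} \ul\pi_0(A) \wequi \ul\pi_0(A)$ (equivalently, $\ul\pi_0(A)$ is solid over $\ul\Z$), so \axiomref{S2} follows.

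The main obstacle I anticipate is the clean sheaf-theoretic justification of the Hurewicz step and the $\pi_1$-computation of $A \otimes F$ in $\Sp(\scr X)$; both should be robust under the assumption that $\scr X$ admits a locally finite-dimensional cover, since one can then reduce to stalks (where they are standard facts about spectra). A small sub-point is that the spectral sequence computation is cleanest in the range $n=1$, which is exactly what we need and which avoids convergence subtleties.
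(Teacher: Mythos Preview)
Your proposal is correct and follows essentially the same approach as the paper: factor $R$ through the forgetful functor and $\Omega^\infty$ to verify \axiomref{C} and \axiomref{S3}, observe that $LX$ is connective for \axiomref{M} and \axiomref{S1}, and combine the sheafy Hurewicz isomorphism with the idempotence $\ul\pi_0(A)\otimes_{\ul\Z}\ul\pi_0(A)\wequi\ul\pi_0(A)$ for \axiomref{S2}. The only cosmetic differences are that the paper checks sifted-colimit preservation and conservativity of $\Omega^\infty$ on stalks (using that $\scr X$ has enough points) rather than via monadicity and Postnikov-completeness, and it writes the \axiomref{S2} computation directly as a chain of isomorphisms without naming the Künneth/spectral sequence input.
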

\begin{proof}
  The functor $R \colon \scr D_{\ge 0}^\sld \to \scr X$ is given by the composition $\Mod_A(\Sp(\scr X)_{\ge 0}) \to \Sp(\scr X)_{\ge 0} \to \scr X$;
  for the axioms \axiomref{C} and \axiomref{S3} it therefore suffices to show that both functors 
  preserve sifted colimits and are conservative. For the first this is \cite[Proposition 4.8.5.8 (4) and Corollary 4.2.3.7 (2)]{higheralgebra}. For the second, since we have enough points, 
  both statements can be checked on stalks, where the statements reduce to \cite[Corollary 5.2.6.27]{higheralgebra}.

  Axiom \axiomref{M} follows since already $LX \in \scr D_{\ge 0}^\sld$ for all $X \in \scr X$,
  and $\scr D_{\ge 0}^\sld$ is stable under tensor products. The same argument shows axiom \axiomref{S1}.

  For axiom \axiomref{S2}, let $E \in \scr D_{\ge 1}^\sld$.
  By the Hurewicz theorem we have $\ul\pi_1(\Sigma^\infty RE) \wequi \ul\pi_1(E)$ and thus \[ \ul\pi_1(LRE) \wequi \ul\pi_1(E) \otimes_{\ul\Z} \ul\pi_0(A) \wequi \ul\pi_1(E) \otimes_{\ul\pi_0(A)} \ul\pi_0(A) \otimes_{\ul\Z} \ul\pi_0(A) \wequi \ul\pi_1(E). \qedhere \]
\end{proof}

We can again identify the localization functors:
\begin{lemma} \label{lem:sheaves:functor-id}
Let $E \in \Sp(\scr X)_{\ge 0}$ and $X \in \scr X_*$ be nilpotent.
If $\ul\pi_0(A) = S^{-1}\ul\Z \subset \ul\Q$, then
    \begin{enumerate}
      \item $L_A^{st} E \wequi S^{-1} E$, and
      \item $\hat{L}_A X\wequi L_S X$, where $L_S$ is the unstable $S$-localization functor, cf.\ \cite[§3]{mattis2024unstablearithmeticfracturesquares}.
    \end{enumerate}

Analogously, if $\ul\pi_0(A) = \ul\Z/n$, then
    \begin{enumerate}
      \item $L_A^{st} E \wequi E^\comp_n = \lim_k E/n^k = \prod_{p|n} E^\comp_p$, and
      \item $\hat{L}_A X \wequi L_n X \wequi \prod_{p|n} L_p X$, the product over the unstable $p$-completion functors, cf.\ \cite[§3]{mattis2024unstablepcompletionmotivichomotopy}.
    \end{enumerate}
\end{lemma}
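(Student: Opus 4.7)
The plan is to adapt the strategy used in the proof of \cref{thm:BK-spaces} from spaces to the $\infty$-topos $\scr X$. The assumption that $\scr X$ admits a locally finite-dimensional cover ensures Postnikov completeness of $\scr X$, left completeness of the $t$-structure on $\Sp(\scr X)$, and makes available the unstable $S$-localization and $n$-completion theory of \cite[\S3]{mattis2024unstablearithmeticfracturesquares} and \cite[\S3]{mattis2024unstablepcompletionmotivichomotopy} (as extended to non-prime $n$ in \cref{sec:unstable-n-comp}).

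First I would settle the stable identifications (1a) and (2a) via \cref{lem:stable-bousfield} of \cref{sec:stable-localization}, which identifies the connective stable localization $L_A^{st} E$ purely in terms of $\ul\pi_0(A)$. The key point in case (1) is that each $s \in S$ acts invertibly on $\ul\pi_\ast(A)$ (since it does so on $\ul\pi_0(A) = S^{-1}\ul\Z$ and $A$ is a ring spectrum), and hence on $A$ itself by left completeness of the $t$-structure; consequently $S^{-1}E$ is $A$-local and $E \to S^{-1}E$ is an $A$-equivalence. Case (2) proceeds analogously to identify $L_A^{st} E$ with $E^\comp_n$.

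For the unstable identifications, I would induct along a principalized Postnikov tower of $X$, mirroring the proof of \cref{thm:BK-spaces}. Writing $X \wequi \lim_i X_i$ with fiber sequences $X_{i+1} \to X_i \to K(\ul M_i, n_i)$ where $n_i \ge 2$ and the connectivity of $X \to X_i$ tends to infinity, the principal fibration lemma \cref{thm:locn} yields that $\hat L_A$ preserves these fiber sequences (the bases and their $\hat L_A$-localizations are simply connected, by the stable identifications already established). The analogous fiber lemma and limit-compatibility for $L_S$ and $L_n$ are the content of \cite[\S3]{mattis2024unstablearithmeticfracturesquares} and \cite[\S3]{mattis2024unstablepcompletionmotivichomotopy} (and \cref{sec:unstable-n-comp}); the compatibility of $\hat L_A$ with the relevant inverse limits follows in turn from the connectivity control supplied by the locally-finite-dimensional-cover hypothesis. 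The problem thus reduces to identifying $\hat L_A K(\ul M, n)$ for $n \ge 2$.

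For this final step, I would mirror the two cases of the proof of \cref{thm:BK-spaces}(2) inside $\scr X_*$. The target sheaf $\Omega^\infty L^{st}_A \Sigma^n H \ul M$ lies in the essential image of $R$ and is therefore $L$-local. That the canonical map $K(\ul M, n) \to \Omega^\infty L^{st}_A \Sigma^n H \ul M$ is an $L$-equivalence is proved, in case (1), by writing $L^{st}_A$ as a filtered colimit over $S$-multiplication and reducing to the assertion that $L$ inverts multiplication by $s$ on each $K(\ul M, i)$; and in case (2) by the corresponding geometric-realization-and-semiadditivity argument, using \cref{thm:locn} to bootstrap from $\scr X^{\text{hi-conn}}_*$ down. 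Comparing with $L_S K(\ul M, n) \wequi K(S^{-1}\ul M, n)$ (resp.\ $L_n K(\ul M, n)$, cf.\ \cite[\S3]{mattis2024unstablepcompletionmotivichomotopy} and \cref{sec:unstable-n-comp}) completes the identification. The main obstacle will be the $n$-completion case on Eilenberg--Mac Lane sheaves, where the sequential limit defining unstable completion interacts delicately with connectivity in $\infty$-topoi of positive homotopy dimension; this is precisely why the locally-finite-dimensional-cover hypothesis is imposed throughout.
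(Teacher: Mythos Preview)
Your stable identifications via \cref{lem:stable-bousfield} match the paper exactly. For the unstable parts, however, the paper takes a substantially shorter route that avoids \cref{thm:locn} and the Postnikov induction entirely.

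In the case $\ul\pi_0(A)=\ul\Z/n$, the paper observes that by the stable identification a map $f$ is an $L$-equivalence if and only if $\Sigma^\infty_+ f$ is an $n$-equivalence, which is precisely the class of equivalences defining $L_n$ (\cref{def:unstable-n:def-n-equiv}). Hence $\hat L_A = L_n$ \emph{as localization functors on all of $\scr X_*$}, not merely on nilpotent sheaves; the decomposition $L_n X \wequi \prod_{p\mid n} L_p X$ for nilpotent $X$ then comes directly from \cref{lem:unstable-n:main-thm}. No fiber lemma, no induction.

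In the case $\ul\pi_0(A)=S^{-1}\ul\Z$, the paper first checks that $\hat L_A$ inverts the generating $S$-equivalences $U\times(S^n\wedge S^1)\xrightarrow{\id\times\id\wedge k} U\times(S^n\wedge S^1)$ (immediate from the stable splitting and invertibility of $k$ on $A$), producing a natural transformation $L_S\to\hat L_A$ which is automatically an $L$-equivalence. It then shows that for nilpotent $X$ the object $L_SX$ is $L$-local, because by \cite[Lemma A.15]{mattis2024unstablepcompletionmotivichomotopy} and \cite[Lemma 3.13, Lemma 3.18, Proposition 6.9]{mattis2024unstablearithmeticfracturesquares} it lies in the closure under limits of $\Omega^\infty S^{-1}\Sp(\scr X)_{\ge 2}$, which consists of $L$-local objects since $\Omega^\infty$ carries $L_A^{st}$-local objects to $L$-local ones.

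Your strategy, by contrast, reduces both sides along a principalized Postnikov tower and compares on Eilenberg--Mac Lane layers. This can be made to work, but note a subtle ordering issue: you invoke \cref{thm:locn} for the tower reduction, which requires $\hat L_A K(\ul M_i,n_i)$ to be simply connected---a fact you only establish \emph{after} identifying $\hat L_A$ on those layers. So the Eilenberg--Mac Lane step must come first, and must itself be proved without appealing to the tower reduction. Also, you will need a natural comparison map $L_S\to\hat L_A$ (or $L_n\to\hat L_A$) to propagate the layer-by-layer identification; this is not automatic from knowing the two functors agree on individual layers. The paper's approach sidesteps both issues by comparing the classes of equivalences directly.
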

\begin{proof}
  For the stable identifications see \cref{lem:stable-bousfield}. 
  We thus identify the unstable localization functors.

  Suppose that $\ul\pi_0(A) = S^{-1} \ul\Z$.
  We first construct a natural transformation $L_S \to \hat{L}_A$, and then show that 
  it is an equivalence on nilpotent sheaves.
  By definition, $L_S$ is the Bousfield localization at morphisms $f_{U,n,k} \colon U \times (S^n \wedge S^1) \xrightarrow{\id \times \id \wedge k} U \times (S^n \wedge S^1)$
  where $n \ge 0$, $U \in \scr X_*$, $k \in S$ and $k \colon S^1 \to S^1$ is multiplication by $k$.
  Hence, by the universal property, it suffices to show that $\hat{L}_A$ inverts those morphisms,
  i.e., that $L$ inverts those morphisms.
  Using the stable splitting, \cref{lem:stable-splitting}, we reduce to showing that $A \otimes \Sigma^\infty(S^n \wedge S^1 \xrightarrow{\id \wedge k} S^n \wedge S^1)$ 
  is an equivalence. This map is equivalent to $(A \xrightarrow{k} A) \otimes \Sigma^{n+1} \1$,
  which is an equivalence since $k \in S$.
  Thus, we get a natural transformation $\phi \colon L_S \to \hat{L}_A$,
  such that $\phi_X$ is a $L$-equivalence for all $X \in \scr X_*$.
  Let $X \in \scr X_*$ be nilpotent.
  In order to see that $\phi_X$ is an equivalence,
  it thus suffices to show that $L_S X$ is $L$-local.
  Since $X$ is nilpotent, working through the layers of a principalized Postnikov tower using \cite[Lemma A.15]{mattis2024unstablepcompletionmotivichomotopy} as well as \cite[Lemma 3.13, Lemma 3.18 and Proposition 6.9]{mattis2024unstablearithmeticfracturesquares}, we see that $L_S X$ lies in the closure under limits of $\Omega^\infty S^{-1} \Sp(\scr X)_{\ge 2}$.
  We conclude since the class of $L$-local objects is closed under limits, $\Omega^\infty$ converts $L_A^{st}$-local objects into $L$-local objects, and $\Omega^\infty S^{-1} \Sp(\scr X)_{\ge 2} = \Omega^\infty L_A^{st} \Sp(\scr X)_{\ge 2}$ (by (1)).
  
  Suppose now that $\ul\pi_0(A) = \ul\Z/n$. 
  Let $f \colon X \to Y \in \scr X$.
  By the stable identification, we know that $f$ 
  is an $L$-equivalence if and only if 
  $f$ is an $n$-equivalence in the sense of \cref{def:unstable-n:def-n-equiv}.
  Thus, we see that $\hat{L}_A \wequi L_n$,
  and the claim thus follows from \cref{lem:unstable-n:main-thm}.
\end{proof}

We need the following fact about highly connected towers:
\begin{lemma} \label{lem:sheaves:highly-connected}
  Let $X \in \scr X_*$ be a sheaf, and
  let $(X_i)_i$ be a highly connected tower in $\scr X_*$ (in the sense of \cite[Definition 6.1]{mattis2024unstablearithmeticfracturesquares},
  i.e., the connectivity of $X_{i+1} \to X_i$ tends to $\infty$)
  under $X$.
  Then $X \to \lim_i X_i$ is an equivalence if and only if 
  the connectivity of $X \to X_i$ tends to $\infty$.
\end{lemma}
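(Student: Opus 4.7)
The plan is to exploit two consequences of the locally finite-dimensional cover assumption on $\scr X$: first, that $\scr X$ is Postnikov-complete (so a map is an equivalence iff it induces equivalences on all truncations), and second, that $\scr X$ has enough points, so connectivity of maps can be checked stalkwise. The central technical ingredient will be the following claim: for any tower $(Y_j)_{j \ge 0}$ in $\scr X_*$ such that $Y_{j+1} \to Y_j$ is $n$-connective for every $j$, the canonical map $\lim_j Y_j \to Y_0$ is $n$-connective. Reducing to stalks and then to spaces, this is the classical fact that an inverse limit of a tower of $n$-connective maps between spaces is $n$-connective, provable via the Milnor fiber sequence or a direct long exact sequence argument (and this stalkwise reduction is exactly the reason for assuming a locally finite-dimensional cover).

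Set $Y \coloneqq \lim_i X_i$, and fix $k \ge 0$. By the highly connected hypothesis, choose $i_0$ so large that $X_{j+1} \to X_j$ is $(k+2)$-connective for all $j \ge i_0$. Applying the technical claim to the shifted tower $(X_{j})_{j \ge i_0}$ (whose limit is still $Y$), the projection $Y \to X_{i_0}$ is $(k+2)$-connective, so $\tau_{\le k+1}(Y) \to \tau_{\le k+1}(X_{i_0})$ is an equivalence.

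For the ``if'' direction, assume that the connectivity of $X \to X_i$ tends to $\infty$. Enlarging $i_0$ if necessary, we may also assume that $X \to X_{i_0}$ is $(k+2)$-connective, so that $\tau_{\le k+1}(X) \to \tau_{\le k+1}(X_{i_0})$ is an equivalence as well. The canonical triangle $X \to Y \to X_{i_0}$ commutes (by the universal property of the limit), and hence $\tau_{\le k+1}(X) \to \tau_{\le k+1}(Y)$ is an equivalence. Since this holds for every $k$, Postnikov-completeness of $\scr X$ forces $X \to Y$ to be an equivalence. For the converse direction, assume $X \wequi Y$. The technical claim gives that for any $n$ there exists $i_0$ such that $Y \to X_i$ is $n$-connective for every $i \ge i_0$; composing with the assumed equivalence, $X \to X_i$ is $n$-connective for $i \ge i_0$, so its connectivity tends to $\infty$.

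The main obstacle is the ``main technical step'' itself: one needs cofiltered limits in $\scr X$ to preserve the connectivity of a highly connected tower. The natural route is to check this on stalks, reducing to the classical statement in $\Spc$; this is legitimate because enough-points (part of the locally finite-dimensional cover assumption) detects $n$-connectivity, and stalks commute with the relevant cofiltered limits after truncation. All remaining steps are formal combinations of Postnikov-completeness and truncation bookkeeping.
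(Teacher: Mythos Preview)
Your overall strategy matches the paper's: both directions reduce to the claim that the canonical maps $\lim_j X_j \to X_i$ have connectivity tending to infinity, after which one concludes via Postnikov-completeness (you) or hypercompleteness together with $2$-out-of-$3$ (the paper). The difference lies entirely in how this ``technical claim'' is justified, and this is where your argument has a genuine gap.

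Your proposed justification is to check connectivity of $\lim_j Y_j \to Y_0$ on stalks and reduce to the classical Milnor-type statement in $\Spc$. But stalk functors $x^*\colon \scr X \to \Spc$ are the inverse-image parts of geometric morphisms: they are left exact left adjoints and hence preserve colimits and finite limits, but \emph{not} cofiltered limits. In particular $x^*(\lim_j Y_j)$ need not agree with $\lim_j x^*(Y_j)$, so you cannot transport the limit to $\Spc$ this way. The phrase ``stalks commute with the relevant cofiltered limits after truncation'' is not a valid repair: truncation $\tau_{\le m}$ is a left adjoint and does not commute with limits either, so there is no way to interchange the operations to get what you need. Having enough points detects connectivity, but it does not let you compute the connectivity of a limit stalkwise.

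This is exactly the subtle point, and it is precisely why the locally finite-dimensional cover hypothesis is needed beyond merely having enough points. The cover consists of functors $p_\alpha^*\colon \scr X \to \scr U_\alpha$ to $\infty$-topoi of bounded homotopy dimension which (unlike points) \emph{do} preserve limits; in each $\scr U_\alpha$ the Milnor-type argument goes through with a bounded connectivity drop depending on the homotopy dimension. The paper does not reprove this but defers to \cite[Corollary 6.6]{mattis2024unstablearithmeticfracturesquares} (the variant of \cite[Lemma 2.4]{mattis2025etale} adapted to the locally finite-dimensional setting). If you want a self-contained argument, you should use the cover rather than the points, and be prepared for a connectivity loss depending on the piece of the cover---which is harmless for the ``tends to $\infty$'' conclusion you actually need, but does mean your sharper claim ($n$-connective transition maps give an $n$-connective limit map) is not what one proves.
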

\begin{proof}
  Assume first that $X \wequi \lim_i X_i$.
  We want to argue as in \cite[Lemma 2.4]{mattis2025etale} that the connectivity 
  of $X \to X_i$ tends to $\infty$. Unfortunately, the reference requires $\scr X$ 
  to be locally of homotopy dimension $\le N$ for some $N$.
  The same proof works in our case, but using \cite[Corollary 6.6]{mattis2024unstablearithmeticfracturesquares}
  instead of \cite[Corollary 6.5]{mattis2024unstablearithmeticfracturesquares}.

  Suppose on the other hand that the connectivity of $X \to X_i$ tends to $\infty$.
  Write $Y \coloneqq \lim_i X_i$, so that there is a canonical map $X \to Y$.
  By the above, the connectivity of $Y \to X_i$ tends to $\infty$,
  hence, by two-out-of-three, the connectivity of $X \to Y$ tends to $\infty$.
  Since this is independent of $i$, we see that $X \to Y$ is $\infty$-connective,
  hence we conclude by hypercompleteness.
\end{proof}

\begin{theorem} \label{thm:sheaves:main-thm}
Let $\scr X$ be an $\infty$-topos which admits a locally finite-dimensional cover, and let $A \in \CAlg(\Sp(\scr X)_{\ge 0})$ with $\ul\pi_0 A$ either a constant subsheaf of $\Q$ or a constant quotient sheaf of $\Z$.
  \begin{enumerate}
    \item Let $X \in \scr X_*$ be nilpotent. The canonical map $\hat{L}_A X \to \tau_{\ge 1}\Tot T_A^\bullet X$ is an equivalence.
      In fact, if $\ul\pi_0(A) \subseteq \ul\Q$, then already the canonical map $\tau_{<\infty} \hat{L}_A X \to \tau_{< \infty} T^\bullet X$
      is an equivalence.
    \item For $E \in \Sp(\scr X)_{\ge 1}$ the canonical map $\hat{L}_A \Omega^\infty E \to \tau_{\ge 1} \Omega^\infty L^{st}_A E$
      is an equivalence. In fact, if $\ul\pi_0(A) \subseteq \ul\Q$, then already the canonical map $\hat{L}_A \Omega^\infty E \to \Omega^\infty L^{st}_A E$
      is an equivalence.
    \item Let $F \to E \to B$ be a fiber sequence of pointed nilpotent sheaves.
      Then the canonical map $\hat{L}_A F \to \tau_{\ge 1} \fib{(\hat{L}_A E \to \hat{L}_A B)}$ is an equivalence.
      In fact, if $\ul\pi_0(A) \subseteq \ul\Q$, then already $\hat{L}_A F \to \fib{(\hat{L}_A E \to \hat{L}_A B)}$ is an equivalence.
    \item Let $X_\bullet$ be a tower of pointed, nilpotent sheaves with $X = \lim_i X_i$.
      Assume that the tower is locally highly connected subordinate to some cover $\scr U$ (in the sense of \cite[Definition 6.1]{mattis2024unstablearithmeticfracturesquares}).
      Then the canonical map $\hat{L}_A X \to \lim_i \hat{L}_A X_i$ is an equivalence.
  \end{enumerate}
\end{theorem}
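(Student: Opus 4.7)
The plan is to port the proof of \cref{thm:BK-spaces} to the sheaf setting, using as input the verification of axioms \cref{lem:sheaves:axioms}, the identification of localizations \cref{lem:sheaves:functor-id}, the principal fibration lemma \cref{thm:fiber-sequences}, the stable fiber lemma \cref{thm:locn}, and the highly-connected-tower criterion \cref{lem:sheaves:highly-connected}. The extra technical complication is that $\scr X$ may have positive homotopy dimension, so cosimplicial and sequential inverse limits can drop connectivity; this is why $\tau_{\ge 1}$ appears in the $\ul\Z/n$ case.

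First, I would prove (2). In the case $\ul\pi_0(A) \subseteq \ul\Q$, the sheaf $\Omega^\infty L^{st}_A E$ is already $L$-local and connected (since $L^{st}_A E \wequi S^{-1} E$ stays in $\Sp(\scr X)_{\ge 1}$). To check that $\Omega^\infty E \to \Omega^\infty L^{st}_A E$ is an $L$-equivalence, I would follow the spaces argument: write $S^{-1} E$ as a filtered colimit along multiplication by $k \in S$, reduce to bounded $E$ by means of \cref{lem:sheaves:highly-connected}, then work through the principalized Postnikov tower of $E$ via \cref{thm:locn}, reducing to the single case $E = \Sigma H_\scr X M$ where the assertion is transparent on stalks. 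In the case $\ul\pi_0(A) = \ul\Z/n$, $\tau_{\ge 1} \Omega^\infty L^{st}_A E$ remains $L$-local (since $L$-locality is closed under the connected cover of pointed maps when the source and target are both bounded below), and the verification that $\hat L_A \Omega^\infty E$ maps equivalently to it proceeds by the same Postnikov strategy, combined with the identification $\hat L_A \wequi L_n$ from \cref{lem:sheaves:functor-id} and the technology of \cite[Section 3]{mattis2024unstablepcompletionmotivichomotopy}, where the locally finite-dimensional cover hypothesis controls the interaction of the defining limit with connectivity.

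From (2) and \cref{thm:locn}, the special case of (3) where $B$ is a simply connected Eilenberg--MacLane sheaf follows immediately. Iterating through a principalized Postnikov tower via \cite[Lemma A.15]{mattis2024unstablepcompletionmotivichomotopy}, (3) extends to truncated nilpotent $X$. For the Postnikov instance of (4) I would combine this with \cref{lem:sheaves:highly-connected}: the connectivity of $\hat L_A X_{i+1} \to \hat L_A X_i$ tends to $\infty$, and the hypercompleteness of $\scr X$ afforded by the locally finite-dimensional cover yields $\hat L_A X \wequi \lim_i \hat L_A X_i$.

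Now I would prove (1). Present the nilpotent sheaf $X$ as the limit of a principalized Postnikov tower $(X_i)$ with Eilenberg--MacLane layers $B_i$ of connectivity tending to $\infty$. By \cref{prop:Tn-conn} and \cref{lem:sheaves:highly-connected}, $\tau_{<\infty} T^\bullet X \wequi \lim_i \tau_{<\infty} T^\bullet X_i$, and analogously for $\hat L_A$ by the Postnikov case of (4). The fiber sequences $X_{i+1} \to X_i \to B_i$ are preserved by $\tau_{<\infty} T^\bullet$ (\cref{thm:fiber-sequences}) and by $\hat L_A$ (by the Eilenberg--MacLane case of (3)), so the equivalence reduces to the case of the Eilenberg--MacLane sheaves $B_i$. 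For these, $\hat L_A B_i$ is the connected cover of $\Omega^\infty E$ for an $A$-module spectrum $E$; thus $\hat L_A B_i$ lies essentially in the image of $R$, and \cref{lem:adjunction:split} shows that the cosimplicial object $\CB^\bullet(\hat L_A B_i)$ is split, yielding the identification. Applying $\tau_{\ge 1}$ absorbs the possible connectivity drop in the $\ul\Z/n$ case, while no such truncation is needed rationally. The general cases of (3) and (4) follow from (1) together with \cref{thm:fiber-sequences}, \cref{prop:Tn-conn}, and \cref{lem:sheaves:highly-connected}. The main obstacle is Step 1 in the $\ul\Z/n$ case, where one must control the interplay of the inverse limit defining $n$-completion, the functor $\Omega^\infty$, and connectivity; this crucially relies on the locally finite-dimensional cover hypothesis.
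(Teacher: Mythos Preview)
Your outline is essentially correct in the $\ul\pi_0(A) = S^{-1}\ul\Z$ case and matches the paper's strategy there (the paper simply cites \cite{mattis2024unstablearithmeticfracturesquares} for (2)--(4) rather than rederiving them, but your direct argument via \cref{thm:locn} is fine since $\hat L_A$ of a simply connected sheaf stays simply connected in the $S$-local setting).

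There is, however, a genuine gap in your treatment of (1) when $\ul\pi_0(A) = \ul\Z/n$. After reducing to an Eilenberg--Mac Lane sheaf $B = K(M,k)$, you assert that $\hat L_A B$ is the connected cover of $\Omega^\infty E$ for an $A$-module spectrum $E$, so that \cref{lem:adjunction:split} applies. This is false: by (2) one has $\hat L_A K(M,k) \simeq \tau_{\ge 1}\Omega^\infty (\Sigma^k M)^\comp_n$, but the $n$-completion $M^\comp_n$ is in general \emph{not} a $\ul\Z/n$-module (e.g.\ take $M = \ul\Z$, giving $\ul{\Z_n^\comp}$), so there is no reason for the cobar resolution to split. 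The paper's argument here is genuinely different: writing $F = \tau_{\ge 1}\Tot T^\bullet_A$, one assembles the $\Omega$-spectrum $G(M) = (F\Omega^\infty \Sigma^{k+i} M)_{i\ge 0}$ and shows that the canonical map $\Sigma^k M \to G(M)$ is an $n$-\emph{equivalence} (not an equivalence). The point is that the fiber of multiplication by $n$ on $\Sigma^k M$ is a finite extension of genuine $\ul\Z/n$-modules (namely $M/n$ and $M[n]$), and on those the split argument \emph{does} apply; since $G$ preserves fiber sequences, the $n$-equivalence follows. One then transports this back to $\Omega^\infty$ and uses that the target is $n$-complete.

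A secondary issue: your claim that the general cases of (3) and (4) follow from (1) together with \cref{thm:fiber-sequences} is not quite right. \cref{thm:fiber-sequences} requires the base $B$ to be simply connected, so it does not directly yield (3) for an arbitrary nilpotent base; and (4) is stated for \emph{locally} highly connected towers, which is strictly more general than what \cref{lem:sheaves:highly-connected} handles. The paper instead establishes (2)--(4) first, by appeal to the identification $\hat L_A = L_S$ resp.\ $L_n$ (\cref{lem:sheaves:functor-id}) and the corresponding results for those functors proved in \cite{mattis2024unstablearithmeticfracturesquares} and \cref{sec:unstable-n-comp}, and only then deduces (1).
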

\begin{proof}[Proof of \cref{thm:sheaves:main-thm} if $\ul\pi_0(A) = S^{-1}\ul\Z$]
  Under the identifications of $L_A^{st}$ and $\hat{L}_A$ from \cref{lem:sheaves:functor-id},
  (2), (3) and (4) follow from \cite[Lemma 3.18, Lemma 3.13 and Proposition 6.9]{mattis2024unstablearithmeticfracturesquares},
  respectively.

  Now we prove (1), so let $X \in \scr X_*$ be nilpotent. Write $F \coloneqq T^\bullet_A$.
  We can present $X \wequi \lim_i X_i$ with fiber sequences $X_{i+1} \to X_i \to B_i$ where $B_i$ is an Eilenberg--Mac Lane space and the connectivity of the $B_i$ tends to infinity,
  cf.\ \cite[Lemma A.15]{mattis2024unstablepcompletionmotivichomotopy}.
  By (3) and (4), the functor $\hat{L}_A$ preserves these fiber sequences and the limit appearing in the tower,
  and similarly $F$ preserves these fiber sequences by \cref{thm:fiber-sequences}.
  Noting that $(X_i)_i$ is a highly connected tower (under $X$) we deduce from  \cref{lem:sheaves:highly-connected} and  \cref{prop:Tn-conn} that the connectivity of $FX \to FX_i$ tends to infinity with $i$, even uniformly in the levels of the pro-object.
  By definition of $\tau_{<\infty}$, it follows that $\tau_{<\infty} FX \wequi \tau_{<\infty} \lim_i FX_i$.
  We have thus reduced to the case where $X = \Omega^\infty \Sigma^k M$ for some sheaf of abelian groups $M$ and $k \ge 2$.
  Since both sides invert $L$-equivalences, we can replace $X$ by $\hat{L}_A X$,
  and we have to see that $X \to \tau_{<\infty} FX$ is an equivalence.
  By (2) we know $X = \hat{L}_A \Omega^\infty \Sigma^k M \wequi \Omega^\infty \Sigma^k S^{-1}M$.
  The result follows from \cref{lem:adjunction:split}, since $S^{-1}M$ is an $(S^{-1}\ul\Z=\ul\pi_0A)$-module, whence an $A$-module.
\end{proof}
\begin{proof}[Proof of \cref{thm:sheaves:main-thm} if $\ul\pi_0(A) = \ul\Z/n$]
  We will use without mention the identifications of $L_A^{st}$ and $\hat{L}_A$ from \cref{lem:sheaves:functor-id}.
  Then (2) is \cref{lem:unstable-n:infinite-loop-space},
  (3) is \cref{lem:unstable-n:fiber-sequence},
  and (4) is \cref{lem:unstable-n:tower}.

  We now establish (1).
  Since $\tau_{\ge 1}\colon \scr X_* \to \scr X_{*,\ge 1}$ is a right adjoint it preserves limits.
  Similarly, the functor $\hat{L}_A$ restricts to $\scr X_{*,\ge 1}$ (this follows from \cite[Lemma 3.12]{mattis2024unstablepcompletionmotivichomotopy}), and the restricted functor preserves appropriate fiber sequences (by \cref{lem:unstable-n:fiber-sequence}).
  Thus, as in the proof for $\ul\pi_0(A) = S^{-1}\ul\pi_0(\1)$, we reduce to the case $X = \Omega^\infty \Sigma^k M$ an Eilenberg--Mac Lane sheaf, with $k \ge 2$.
  Write $F \coloneqq \tau_{\ge 1}\Tot T_A^\bullet = \tau_{\ge 1}\mat \tau_{<\infty} T_A^\bullet$.
  Then \[ G(M) := (F\Omega^\infty \Sigma^k M,  F \Omega^\infty \Sigma^{k+1} M, \dots) \in \Sp(\scr X_{*,\ge1}) \wequi \Sp(\scr X), \] since $F$ preserves loops as above.
  The functor $G$ also preserves fiber sequences of $2$-connective spectra.
  There is a canonical map $\Sigma^k M \to G(M)$, which is an equivalence whenever $M$ is a finite extension of $A$-modules, essentially by \cref{lem:adjunction:split}.
  It follows that $\Sigma^k M \to G(M)$ is an $n$-equivalence, i.e., induces an equivalence on fibers of multiplication by $n$.
  Since $\Sigma^k M$ is connected, this implies that also $\Sigma^k M \to \tau_{\ge 1}G(M)$ is an $n$-equivalence
  (this follows easily from e.g.\ \cite[Lemma 2.9 and Corollary 2.11]{mattis2024unstablepcompletionmotivichomotopy};
  note that in the proof there it is never used that $p$ is a prime).
  Hence, using \cref{lem:unstable-n:infinite-loop-space}, we see that \[ \Omega^\infty \Sigma^k M \to \Omega^\infty \tau_{\ge 1}G(M) \wequi F(\Omega^\infty \Sigma^k M) \] is also an $n$-equivalence.
  Since the target is $n$-complete by construction, this concludes the proof.
\end{proof}

\subsection{Equivariant spaces} \label{subsec:equivariant}
Let $G$ be a finite group.
We will now apply our machinery to the study of $G$-equivariant spaces, whose $\infty$-category we denote by $\Spc(\B G)$.
By Elmendorf's theorem \cite[Theorem 1]{Elmendorf:SytemsofFixedPoints} this is equivalent to an $\infty$-category of presheaves \[ \Spc(\B G) \wequi \PSh_\Sigma(\Fin_G). \]
Here $\Fin_G$ denotes the $1$-category of finite $G$-sets.
In particular, $\Spc(\B G)$ is a topos locally of homotopy dimension $0$, so various results of the previous sections apply.

An additional complication which arises is that the appropriate notion of stabilization in this situation is not the naive one, i.e., $\SH^{S^1}(\B G) \coloneqq \Sp(\Spc(\B G))$ only plays a minor role in the theory.
Instead, the category of \emph{genuine $G$-spectra} is obtained by inverting the regular representation sphere $S^G \in \Spc(\B G)_*$, (see \cite[§I.2]{LewisMaySteinbergerMcClure:EquivariantHomotopyTheory} or \cite[Definition 4.1]{Cnossen:TwistedAmbidexterityinEHT} for an $\infty$-categorical definition), \[ \SH(\B G) = \Spc(\B G)_*[(S^G)^{-1}]; \] this fits into the usual adjunction \[ \Sigma^\infty \colon \Spc(\B G)_* \adj \SH(\B G) \noloc \Omega^\infty. \]
A helpful alternative picture is the description as \emph{spectral Mackey functors} \cite{Guillou_2024,nardin2016parametrizedhighercategorytheory}, i.e., \[ \SH(\B G) \wequi \Fun^\times(\Span(\Fin_G)^\mathrm{op}, \Sp), \] where $\Span(\Fin_G)$ denotes the $(2,1)$-category of spans in finite $G$-sets \cite[Appendix C]{bachmann2021norms}.
The stabilization functor $\Spc(\B G)\to \SH(\B G)$ factors over $\SH^{S^1}(\B G)$, cf.\ \cite[Corollary 4.24]{robalo2013noncommutativemotivesiuniversal}.
This in turn yields an adjunction $\sigma^\infty \colon \SH^{S^1}(B G) \adj \SH(B G) \noloc \omega^\infty$.
Alternatively the adjunction is obtained from the canonical functor $\Fin_G \to \Span(\Fin_G)$ by passing to spectral presheaves via left Kan extension and restriction.
In particular, the left adjoint $\sigma^\infty$ preserves compactly generating families by \cite[Proposition 5.5.8.10(6)]{highertopoi} and construction, and hence $\omega^\infty$ is conservative and preserves all limits and colimits.

The above spectral Mackey functor description makes it clear, for example, that $\SH(\B G)$ has a $t$-structure with heart the abelian category of Mackey functors $\Fun^\times(\Span(\Fin_G)^\mathrm{op}, \mathrm{Ab})$.
Given $X \in \Spc(\B G)_*$ we write $\ul\pi_i(X) \in \Fun^\times(\Fin_G^\mathrm{op}, \mathrm{Set})$ for the zero-truncation of $\Omega^i X$, i.e., \[ \ul\pi_i(X)(T) \wequi [T_+ \wedge S^i, X]. \]
Similarly, given $E \in \SH(\B G)$ we denote by $\ul\pi_i(X) \in \Fun^\times(\Span(\Fin_G)^\mathrm{op}, \mathrm{Ab})$ the homotopy objects in the $t$-structure; one then has \[ \ul\pi_i E|_{\Fin_G} \wequi \ul\pi_0 \Omega^\infty \Sigma^{-i} E. \]

In order to apply our machinery in this situation, we need to construct a $t$-structure on $\SH(\B G)$ such that if $E$ is $1$-connective in the $t$-structure, then $\Sigma^\infty \Omega^\infty E \to E$ induces an isomorphism on the first homotopy object.
Note that the above $t$-structure (with homotopy objects $\ul\pi_i E$) will not do (unless $G=1$): by the tom Dieck splitting theorem \cite[Satz 2]{tomDieck:Orbittypen} we see that for $E \in \SH(\B G)$ with $\ul\pi_i E = 0$ for $i \le 0$ we have 
\[ \ul\pi_1(\Sigma^\infty \Omega^\infty E)(*) \wequi \bigoplus_{(H) \subset G} \ul\pi_1(E)(G/H)_{WH}. \]
Here, $WH \coloneqq N_G H/H$ denotes the Weyl group.
This cannot agree with $\ul\pi_1(E)(*)$ unless $\ul\pi_1(E)(G/H) = 0$ for all proper subgroups $H$.
Inspired by this, we make the following definition.

\begin{definition} \label{def:equivariant:n-G-connective}
For $H \le G$ we define $l(H)= \max\{r|H< H_1 < \cdots < H_r = G\}$,
by convention $l(G) = 0$.
Let $n \ge 0$.
We define $X \in \Spc(\B G)_*$ to be \emph{$(n,G)$-connective} if for every $H \le G$ and $i < n + l(H)$ we have $\ul\pi_i(X)(G/H) = *$.
Similarly, for $n \in \Z$ we call $E \in \SH(\B G)$ \emph{$(n,G)$-connective} if $\Omega^\infty \Sigma^i E$ is $(n+i, G)$-connective for all $i \ge -n$.
\end{definition}

\begin{remark}
In other words, we require that $\Map_*(G/H_+, X)$ is $(n+l(H))$-connective, where $l(H)$ is the maximal length of a proper chain of subgroups of $G$ starting at $H$.
Note that $l(H)$ is bounded by the number of prime factors (with multiplicity) of $[G:H]$, and so in particular by $[G:H]$ itself.
\end{remark}

\begin{example}
Since $\Map(S^0, S^G)$ is $1$-connective ($S^G$ decomposes as $S^1$ smash the reduced regular representation sphere) and $S^G|_{\Spc(BH)} \wequi (S^H)^{\wedge [G:H]}$ we see that $S^G$ is $(1,G)$-connective.
More generally for $X \in \Spc(\B G)_*$, and $n \ge 1$ $(S^G)^{\wedge n} \wedge X$ is $(n,G)$-connective.
\end{example}

\begin{lemma} \label{lem:equivariant:connectivity}
Let $X \in \Spc(\B G)_*$ and $E \in \SH(\B G)$ be $(n,G)$-connective ($n \ge 0$ for $X$).
\begin{enumerate}
\item $\Sigma^\infty X$ is $(n,G)$-connective.
\item $\Omega^\infty E$ is $(n,G)$-connective (provided $n \ge 0$).
\item $\tau_{\ge i} X, \tau_{\le i} X$ are $(n,G)$-connective for $i \ge 0$.
\item $\tau_{\ge i} E, \tau_{\le i} E$ are $(n,G)$-connective for $i \in \Z$.
\item Assuming $n \ge 1$, the fiber of $\Sigma^\infty \Omega^\infty E \to E$ is $(n+1,G)$-connective.
\end{enumerate}
\end{lemma}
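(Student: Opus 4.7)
The plan is first to replace the definition of $(n,G)$-connectivity by a level-wise homotopy group criterion. Unwinding the requirement that $\Omega^\infty \Sigma^i E$ be $(n+i,G)$-connective for every $i \ge -n$ shows that $E \in \SH(\B G)$ is $(n,G)$-connective if and only if $\ul\pi_m(E)(G/H) = 0$ for all $H \le G$ and all integers $m < n + l(H)$; similarly, $X \in \Spc(\B G)_*$ is $(n,G)$-connective iff $\ul\pi_j(X)(G/H) = *$ for all $0 \le j < n + l(H)$. The verification is elementary bookkeeping: under the substitution $m = j - i$, the ranges $i \ge -n$ and $0 \le j < n + i + l(H)$ trace out exactly $m < n + l(H)$.

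With this reformulation, items (2), (3) and (4) become formal. Item (2) is the $i = 0$ case of the definition. For (3) and (4), Elmendorf's theorem presents $\Spc(\B G) \wequi \PSh(\mathrm{Orb}_G)$ as a presheaf topos, and the standard $t$-structure on $\SH(\B G)$ is pointwise on spectral Mackey functors; hence both $\tau_{\ge i}$ and $\tau_{\le i}$ commute with evaluation at $G/H$. A case distinction on whether $j$ lies in the range where the truncation kills $\ul\pi_j$ or where it leaves it unchanged then deduces the level-wise criterion for each truncation from the hypothesis on $X$ (resp.\ $E$).

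For (1), I plan to apply the tom Dieck splitting
\[ (\Sigma^\infty X)(G/H) \wequi \bigvee_{(K) \le H} \Sigma^\infty\bigl(EW_H(K)_+ \wedge_{W_H(K)} X(G/K)\bigr), \]
together with the inequality $l(K) \ge l(H)$ for $K \le H$ (and $l(K) \ge 1 + l(H)$ when $K < H$, by concatenating the strict inclusion $K < H$ with a maximal chain from $H$ to $G$). Each summand is at least as connective as $X(G/K)$, which is $(n + l(K))$-connective (homotopy orbits and $\Sigma^\infty$ both preserve connectivity); taking the minimum over $K \le H$ then gives $(n + l(H))$-connectivity, as required.

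Item (5) is the most delicate and again uses tom Dieck at level $G/H$, decomposing the source of the counit as $A \oplus B$, where $A = \Sigma^\infty \Omega^\infty E(G/H)$ is the ($K=H$)-summand and $B$ is the sum of the ($K<H$)-summands. From the long exact sequence of the fiber sequence $F(G/H) \to A \oplus B \to E(G/H)$, the vanishing $\pi_m F(G/H) = 0$ for $m \le n + l(H)$ will follow from two inputs: (i) the classical stable Hurewicz theorem applied to $E(G/H) \in \Sp_{\ge n + l(H)}$ (connective since $n + l(H) \ge 1$), which forces $\pi_m A \to \pi_m E(G/H)$ to be iso for $m \le n + l(H)$ and surjective for $m = n + l(H) + 1$, provided the $A$-component of the counit is the classical one; and (ii) $\pi_m B = 0$ for $m \le n + l(H)$, since each $(K<H)$-summand has connectivity $\ge n + l(K) \ge n + l(H) + 1$ by the strict bound from (1). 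The principal obstacle is to justify (i): I plan to identify the $A$-component of the counit with the classical counit $\Sigma^\infty \Omega^\infty E(G/H) \to E(G/H)$ by appealing to naturality of the tom Dieck splitting with respect to $\Sigma^\infty \dashv \Omega^\infty$, equivalently to the observation that the unit $\Omega^\infty E(G/H) \to \Omega^\infty(\Sigma^\infty \Omega^\infty E)(G/H)$ factors through the identity wedge factor of $\Omega^\infty$ of the tom Dieck splitting.
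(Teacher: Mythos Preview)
Your approach is essentially the same as the paper's. The reformulation of $(n,G)$-connectivity via the pointwise vanishing criterion is correct and makes (2)--(4) transparent; the paper does the same, if more tersely. For (1), both you and the paper invoke the tom Dieck splitting together with $l(K) \ge l(H)$ for $K \le H$.

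For (5), your argument is actually \emph{more} careful than the paper's. The paper establishes surjectivity on $\pi_{n+1+l(H)}$ via the section (the unit after $\Omega^\infty$) and then writes down the tom Dieck decomposition of the source at degree $n+l(H)$, observing that the $(K<H)$-summands vanish; it then concludes without explicitly saying why the map is an isomorphism rather than merely a split surjection between abstractly isomorphic groups. What is implicitly being used is exactly the point you flag as the ``principal obstacle'': that the $(K=H)$-summand map to $E(G/H)$ is the non-equivariant counit. Your proposed justification via naturality of the tom Dieck splitting (equivalently: the $(K=H)$ inclusion $\Sigma^\infty(Y^H) \to (\Sigma^\infty_G Y)^H$ is the adjoint of $\eta^H$, so precomposing with $\epsilon^H$ and using the triangle identity yields the non-equivariant counit) is the right way to close this, and you should carry it out.
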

\begin{proof}
(2) follows immediately from the definition, and (3) and (4) are clear since $\tau_{\ge i}, \tau_{\le i}$ commute with fixed points (i.e. evaluating at $G/H$).

Let us prove (1).
Let $X \in \Spc(\B G)$ be $(n,G)$-connective and $H \le G$.
For every $i < n+l(H)$ there is an equivalence $\ul \pi_i(\Sigma^\infty X)(G/H) \wequi \bigoplus_{(H') \le H} \ul\pi_i(\Sigma^\infty(X(G/H')_{WH'}))$ by the tom Dieck splitting.
Since by assumption $X(G/H')$ is $(n+l(H'))$-connective and $\Sigma^\infty\colon \Spc \to \Sp$ preserves connectivity, (1) follows.

Now we prove (5).
Write $F \coloneqq \fib(\Sigma^\infty \Omega^\infty E \to E)$.
As $\Sigma^\infty \Omega^\infty E$ is $(n,G)$-connective by (1) and (2), it follows that $F$ is $(n-1,G)$-connective.
It is left to show, that the morphisms $\ul\pi_{i+l(H)}(\Sigma^\infty \Omega^\infty E)(G/H) \to \ul\pi_{i+l(H)}(E)(G/H)$ are isomorphisms for all $H \le G$ and $i=n$, and epimorphisms for $i=n+1$.
Since $n \ge 0$, we may as well check this after applying $\Omega^\infty$.
But then $\Sigma^\infty \Omega^\infty E \to E$ splits, so the surjectivity (at $i=n+1$) is clear.
Finally, by the tom Dieck splitting, we have that \[ \ul\pi_{n+l(H)}(\Sigma^\infty \Omega^\infty E)(G/H) \wequi \ul\pi_{n+l(H)}(E)(G/H) \oplus \bigoplus_{(H') < H}\ul\pi_{n+l(H)}(E)(G/H')_{WH'}. \]
Now for $H' < H$ we have $l(H') > l(H)$, and so $\ul\pi_{n+l(H)}(E)(G/H')_{WH'} = 0$.
This concludes the proof.
\end{proof}

Now let $A \in \CAlg(\SH(\B G)_{\ge 0})$ such that $\ul\pi_0 (A)$ is idempotent.
Furthermore, set $\scr X = \Spc(\B G)$, $\scr D = \Mod_A(\SH(\B G))$, $L_+ = A \otimes \Sigma^\infty_+$ and \[ \scr D^\sld_{\ge 0} = \{E \in \Mod_A(\SH(\B G)) \mid \text{ $E$ is $(0,G)$-connective}\}. \]
Using \cref{lem:equivariant:connectivity}(1), we have for all $n \ge $ that
\[\Spc(\B G)_{*,\ge n}^\sld \supseteq \{X \in \Spc(\B G)_* \mid \text{ $X$ is $(n,G)$-connective}\}. \]
Let us observe the following about the interaction of the two $t$-structures.
\begin{lemma} \label{lemm:2-t-structures}
Let $E \in \SH(\B G)_{\ge 0}$ and $F \in \SH(\B G)^\sld_{\ge 0}$.
Then $E \otimes F \in \SH(\B G)^\sld_{\ge 0}$.
Moreover, for $H \subset G$ we have \[ \pi_{l(H)}(E \otimes F)(G/H) \wequi \pi_0(\Phi^H E) \otimes \pi_{l(H)}(F(G/H)). \]
\end{lemma}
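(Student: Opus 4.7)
I would break the proof into the two claimed properties. For the connectivity statement, write $E$ as a colimit of the compact generators $\Sigma^n G/H_+$ of $\SH(\B G)_{\ge 0}$ (with $n \ge 0$ and $H \le G$). Since $\SH(\B G)^\sld_{\ge 0}$ is closed under colimits (being the non-negative part of a $t$-structure) and $-\otimes F$ preserves colimits, it suffices to treat $E = \Sigma^n G/H_+$. The double-coset decomposition $G/H \times G/K \wequi \coprod_{[g] \in K\backslash G/H} G/(K \cap gHg^{-1})$ gives
\[
(\Sigma^n G/H_+ \otimes F)(G/K) \wequi \bigoplus_{[g] \in K\backslash G/H} \Sigma^n F(G/(K \cap gHg^{-1})).
\]
Since $K \cap gHg^{-1} \le K$, every strictly ascending chain starting at $K$ also starts at $K \cap gHg^{-1}$, so $l(K \cap gHg^{-1}) \ge l(K)$; combined with $n \ge 0$ and the hypothesis that $F(G/L)$ is $l(L)$-connective, each summand is at least $l(K)$-connective, which gives $E \otimes F \in \SH(\B G)^\sld_{\ge 0}$.

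For the identification of $\pi_{l(H)}(E \otimes F)(G/H)$, the plan is to reduce to the case $H = G$. Using the inequality $l_G(K) \ge l_G(H) + l_H(K)$ for $K \le H$ (concatenate a chain in $H$ with a chain from $H$ up to $G$), one checks that $\Sigma^{-l_G(H)} \mathrm{res}^G_H F$ is $(0, H)$-slid-connective in $\SH(\B H)$. Since restriction to $H$ commutes with $\otimes$, with $\Phi^H$, and with evaluation on orbits of $H$, desuspending by $l_G(H)$ reduces the claim to showing that for any $E' \in \SH(\B G)_{\ge 0}$ and $F' \in \SH(\B G)^\sld_{\ge 0}$,
\[
\pi_0((E' \otimes F')(G/G)) \wequi \pi_0(\Phi^G E') \otimes \pi_0(F'(G/G)).
\]

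The main tool for this reduced claim is isotropy separation. Let $\mathcal{P}$ denote the family of proper subgroups of $G$ and consider the cofiber sequence $E\mathcal{P}_+ \to \1 \to \widetilde{E\mathcal{P}}$. Smashing with $E' \otimes F'$ and evaluating at $G/G$ yields a cofiber sequence
\[
(E\mathcal{P}_+ \otimes E' \otimes F')(G/G) \to (E' \otimes F')(G/G) \to \Phi^G E' \otimes \Phi^G F',
\]
using the monoidality $\Phi^G(E' \otimes F') \wequi \Phi^G E' \otimes \Phi^G F'$. By the first part, $(E' \otimes F')(G/K)$ is $l(K) \ge 1$-connective for each $K \in \mathcal{P}$; the cellular structure of $E\mathcal{P}_+$ (built from cells $\Sigma^n G/K_+$ with $n \ge 0$, $K \in \mathcal{P}$) combined with exactness of $(-)(G/G)$ then shows the left-hand term is $1$-connective, so the long exact sequence forces $\pi_0((E' \otimes F')(G/G)) \wequi \pi_0(\Phi^G E' \otimes \Phi^G F')$. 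An identical argument applied to $F'$ and $E'$ separately shows both $\Phi^G F'$ and $\Phi^G E'$ are connective and $\pi_0 \Phi^G F' \wequi \pi_0 F'(G/G)$. The K\"unneth identity $\pi_0(\Phi^G E' \otimes \Phi^G F') \wequi \pi_0 \Phi^G E' \otimes \pi_0 \Phi^G F'$ for connective spectra completes the proof.

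The main obstacle will be the reduction to $H = G$, which requires careful tracking of how the chain-length statistic $l_G$ interacts with restriction and suspension; this bookkeeping is easy to get wrong (in particular, one must avoid confusing $l_G(H)$ with $l_H(H) = 0$), and I would verify it before proceeding to the isotropy separation argument.
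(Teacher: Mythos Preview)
Your argument is correct and follows essentially the same strategy as the paper. For the connectivity statement you and the paper both reduce to $E = \Sigma^\infty_+ G/H$ and use the orbit decomposition of $G/H \times G/K$; for the identification of $\pi_{l(H)}$ you both reduce to $H = G$ and then use isotropy separation. The paper leaves the reduction to $H = G$ implicit, whereas you spell out the restriction argument via $\Sigma^{-l_G(H)}\mathrm{res}^G_H F$; conversely, for the $H = G$ step the paper packages isotropy separation as the smashing localization $L_\Phi$ and exploits that $\tau_{\le 0}F$ (in the standard $t$-structure) is $L_\Phi$-local (its underlying Mackey functor vanishes on proper orbits), while you work directly with the cofiber sequence $E\mathcal{P}_+ \to \1 \to \widetilde{E\mathcal{P}}$ and a connectivity estimate on $(E\mathcal{P}_+ \otimes E' \otimes F')^G$. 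These are two phrasings of the same mechanism, not genuinely different proofs.
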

\begin{proof}
The first statement being stable under colimits in $E$, we may assume $E = \Sigma^\infty_+ G/H$.
In this case $(\Sigma^\infty G/H_+ \otimes F)(G/H') \wequi F(G/H \times G/H')$, the orbit $\Sigma^\infty_+ G/H$ being self-dual.
Now $G/H \times G/H'$ decomposes into a finite sum of $G/H''$, for $H''$ subconjugate to $H'$.
Thus, $l(H'') \ge l(H')$ and so the connectivity of $F(G/H \times G/H')$ is at least as high as the connectivity of $F(G/H')$, as was to be shown.

For the final statement, we may assume $H=G$.
Let $L_\Phi\colon \SH(\B G) \to \SH(\B G)$ denote the localization annihilating $\Sigma^\infty_+ G/H$ for all proper subgroups $H$.
This is a smashing localization, $(\ph)^G$ induces an equivalence $L_\Phi \SH(\B G) \wequi \Sp$, and $\Phi^G(\ph) = L_\Phi(\ph)^G$, essentially by definition, see \cite[Corollary II.9.6]{LewisMaySteinbergerMcClure:EquivariantHomotopyTheory} and \cite[Theorem 6.11]{MathewNaumannNoel:Nilpotence-equivariant-descent}.
Thus, we find \[ \pi_0((E \otimes F)(*)) \wequi \ul\pi_0(E \otimes F)(*) \wequi \ul\pi_0(E \otimes \tau_{\le 0}F)(*) \wequi \ul\pi_0(L_\Phi(E) \otimes \tau_{\le 0}F)(*) \wequi \pi_0(\Phi^G E) \otimes \pi_0(F(*)), \] using that $\tau_{\le 0}F \wequi L_\Phi \tau_{\le 0}F$ (and $L_\Phi$ is smashing).
\end{proof}

\begin{lemma} \label{lem:equivariant:adjunction-axioms}
The axioms \axiomref{C}, \axiomref{M} and \axiomref{S} hold for the adjunction $L \dashv R$.
\end{lemma}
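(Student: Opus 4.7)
The plan is to verify the axioms \axiomref{C}, \axiomref{M}, \axiomref{S1}, \axiomref{S3} as routine consequences of the setup, and then prove \axiomref{S2} via a two-step factorization argument that essentially uses the idempotency of $\ul\pi_0(A)$. The functor $R$ decomposes as $\Omega^\infty \circ U$, where $U \colon \Mod_A(\SH(\B G)) \to \SH(\B G)$ is the forgetful. Since the monad $A \otimes -$ preserves all colimits, by monadicity so does $U$; and $U$ is obviously conservative. Noting that $(0,G)$-connectivity implies standard $0$-connectivity (take $H = G$, so $l(G) = 0$) and is closed under colimits, $R|_{\scr D^\sld_{\ge 0}}$ factors through $\Omega^\infty \colon \SH(\B G)_{\ge 0} \to \Spc(\B G)_*$, which preserves sifted colimits and is conservative (checked on Mackey-functor homotopy). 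This gives \axiomref{C} and \axiomref{S3}.

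For \axiomref{M}, since $LX = A \otimes \Sigma^\infty X$ is a free $A$-module, $LX \otimes_A E \wequi \Sigma^\infty X \otimes E$ in $\SH(\B G)$; with $\Sigma^\infty X \in \SH(\B G)_{\ge 0}$ and $E \in \SH(\B G)^\sld_{\ge 0}$, \cref{lemm:2-t-structures} closes the argument. For \axiomref{S1}, $LRE = A \otimes \Sigma^\infty \Omega^\infty E$; by \cref{lem:equivariant:connectivity}(2) and (1) the object $\Sigma^\infty \Omega^\infty E$ is $(0,G)$-connective, and a second application of \cref{lemm:2-t-structures} finishes the argument.

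For \axiomref{S2}, factor the counit $LRE \to E$ as
\[ A \otimes \Sigma^\infty \Omega^\infty E \xrightarrow{1 \otimes \epsilon} A \otimes E \xrightarrow{\mu} E, \]
where $\epsilon$ is the counit of $\Sigma^\infty \dashv \Omega^\infty$ and $\mu$ is the $A$-module action. It suffices to show that each factor has fiber in $\SH(\B G)^\sld_{\ge 2}$, since this implies the $\pi_1^\sld$-isomorphism. For the first, \cref{lem:equivariant:connectivity}(5) gives $\fib(\epsilon) \in \SH(\B G)^\sld_{\ge 2}$, and tensoring with $A \in \SH(\B G)_{\ge 0}$ preserves solid $n$-connectivity for $n \ge 0$ (a trivial suspension argument generalizing \cref{lemm:2-t-structures}). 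For the second, since $\mu$ is split by $\eta_A \otimes 1_E$, tensoring the fiber sequence $K \to A \otimes A \xrightarrow{\mu_A} A$ of right $A$-modules by $- \otimes_A E$ yields $\fib(\mu) \wequi K \otimes_A E$. The idempotency hypothesis, combined with the identification $\ul\pi_0(A \otimes A) \wequi \ul\pi_0(A) \otimes_{\ul A} \ul\pi_0(A)$, gives $K \in \SH(\B G)_{\ge 1}$ (standard). Expressing $K \otimes_A E$ as a bar-type geometric realization, each term $K \otimes A^{\otimes n} \otimes E$ lies in $\SH(\B G)^\sld_{\ge 2}$ (combining standard $1$-connectivity of $K$ with solid $1$-connectivity of $E$ through the aforementioned generalization), and solid connectivity is closed under geometric realizations.

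The hard part will be \axiomref{S2}: while the first factor is essentially handed to us by \cref{lem:equivariant:connectivity}(5), understanding $\fib(\mu) \wequi K \otimes_A E$ relies on the idempotency hypothesis to control the failure of $\mu_A$ to be an equivalence, and on the compatibility of the two $t$-structures under tensor products of mixed connectivity.
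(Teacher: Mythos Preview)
Your argument is correct. For \axiomref{C}, \axiomref{M}, \axiomref{S1}, \axiomref{S3} you argue essentially as the paper does (your S1 argument is in fact slightly more careful, explicitly noting the second application of \cref{lemm:2-t-structures} to handle the factor $A \otimes -$).

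For \axiomref{S2}, however, you take a genuinely different route. The paper computes $\pi_{1+l(H)}(LRE)(G/H)$ directly: using \cref{lem:equivariant:connectivity}(5) it replaces $LRE$ by $A \otimes E$ up to the relevant homotopy group, and then invokes the \emph{second} part of \cref{lemm:2-t-structures} (the geometric fixed points formula) to obtain $\pi_{1+l(H)}(A \otimes E)(G/H) \wequi \pi_0(\Phi^H A) \otimes \pi_{1+l(H)}(E)(G/H)$; idempotency of $\pi_0(\Phi^H A)$ together with the $\Phi^H A$-module structure on the right factor then yields the isomorphism. Your approach instead proves the stronger statement $\fib(LRE \to E) \in \scr D^\sld_{\ge 2}$ via the factorization through $A \otimes E$, handling the second step purely module-theoretically by writing $\fib(\mu) \wequi K \otimes_A E$ and using idempotency only at the Mackey-functor level (to get $K \in \SH(\B G)_{\ge 1}$), followed by a bar-resolution connectivity estimate. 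Your route avoids the geometric fixed points computation entirely, at the cost of the bar construction; the paper's route is more direct but relies on that explicit formula. Both are valid.

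One small omission: you implicitly use that $\scr D^\sld_{\ge 0}$ is the nonnegative part of a $t$-structure (so that ``$\pi_1^\sld$-isomorphism'' is meaningful). The paper records this at the outset via \cite[Proposition 1.4.4.11]{higheralgebra}; you should do the same.
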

\begin{proof}
Since $\scr D^\sld_{\ge 0}$ is accessible and closed under colimits and extensions, it is indeed the nonnegative part of a $t$-structure \cite[Proposition 1.4.4.11]{higheralgebra}.

Consider the commutative diagram
\begin{center}
   \begin{tikzcd}
     \SH^{S^1}(\B G)_{\ge 0} \ar[d, "\Omega^\infty"']  &\SH(\B G)_{\ge 0} \ar[l, "\omega^\infty"'] \ar[dl, "\Omega^\infty"] &\scr D^\sld_{\ge 0} \ar[l] \\
     \Spc(\B G) \rlap{.}
   \end{tikzcd}
 \end{center}
The functor $R$ is given by the composition form the top right to the bottom left.
For axioms \axiomref{C} and \axiomref{S3} it therefore suffices to show that all functors in the composition preserve sifted colimits and are conservative.
The first one is monadic and preserves finite colimits, $\omega^\infty$ is conservative and preserves all colimits by the discussion above, and the claim for the vertical $\Omega^\infty$ is contained in the proof of \cref{lem:sheaves:axioms}.

Axiom \axiomref{M} is immediate from \cref{lemm:2-t-structures}.

Axiom \axiomref{S1} is immediate from \cref{lem:equivariant:connectivity}(1,2).

For \axiomref{S2}, let $E \in \scr D^{\sld}_{\ge 1}$.
Using \cref{lem:equivariant:connectivity}(5) and \cref{lemm:2-t-structures} we compute \[ \pi_{1+l(H)}(LRE)(G/H) \wequi \pi_{1+l(H)}(A \otimes E)(G/H) \wequi \pi_0(\Phi^H A) \otimes \pi_{1+l(H)}E(G/H). \]
Since $E \in \SH(\B G)^\sld_{\ge 1}$ is an $A$-module, $\pi_{1+l(H)}E(G/H)$ is a $(\Phi^H A)$-module.
But $\ul\pi_0 A$ is idempotent, whence so is $\pi_0 \Phi^H A$, and so $\pi_0(\Phi^H A) \otimes \pi_{1+l(H)}E(G/H) \wequi \pi_{1+l(H)}E(G/H)$, as needed.
\end{proof}

We will study $\hat L_A$ and $\Tot T^\bullet_A$ for various choices of $A$ in the next few subsections.
They will be related to the usual unstable $S$-periodization and $n$-completion functors.
This is based on the following.

\begin{lemma} \label{lem:equivariant:unstable-stable-S-localization}
  Let $S \subset \Z$, and let $E \in \SH(\B G)$ be $1$-connective.
  Denote by $L_S \colon \Spc(\B G) \to \Spc(\B G)$ the unstable $S$-localization functor.
  Then $L_S \Omega^{\infty} E \wequi \Omega^\infty S^{-1} E$.

  Similarly denote by $L_n \colon \Spc(\B G) \to \Spc(\B G)$ the unstable $n$-completion functor from \cref{def:unstable-n:def-n-equiv}.
  Then $L_n \Omega^{\infty} E \wequi \Omega^\infty E^{\comp}_n$.
\end{lemma}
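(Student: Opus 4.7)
The approach is to deduce both claims from the sheaf-theoretic results \cref{thm:sheaves:main-thm} and \cref{lem:sheaves:functor-id}, by pulling back along the adjunction $\sigma^\infty \colon \SH^{S^1}(\B G) \adj \SH(\B G) \noloc \omega^\infty$. As noted in the excerpt, $\omega^\infty$ is conservative and preserves all limits and colimits; in particular $\omega^\infty S^{-1} E \wequi S^{-1}\omega^\infty E$ and $\omega^\infty E^\comp_n \wequi (\omega^\infty E)^\comp_n$. Moreover, evaluation at orbits $G/H$ shows that $\omega^\infty$ preserves connectivity (measured in the standard $t$-structures), so $\omega^\infty E$ is $1$-connective in $\SH^{S^1}(\B G) = \Sp(\Spc(\B G))$, and one has the factorization $\Omega^\infty = \Omega^\infty_s \circ \omega^\infty$, where $\Omega^\infty_s$ denotes the $\infty$-loop space functor for the naive stabilization.

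For the $S$-localization claim, I would apply \cref{thm:sheaves:main-thm} (part 2, in the $\ul\pi_0 A \subseteq \ul\Q$ case) to $\scr X = \Spc(\B G) = \PSh_\Sigma(\Fin_G)$, which admits a locally finite-dimensional cover as a presheaf $\infty$-topos (\cref{rmk:locally-finite-dimensional-examples}), with a choice of $A \in \CAlg(\Sp(\Spc(\B G))_{\ge 0})$ such that $\ul\pi_0 A = S^{-1}\ul\Z$ (e.g.\ the appropriate Moore ring spectrum). This gives
\[
  \hat L_A \Omega^\infty E \wequi \Omega^\infty_s(S^{-1}\omega^\infty E) \wequi \Omega^\infty S^{-1} E,
\]
without the $\tau_{\ge 1}$. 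Since $\Omega^\infty E$ is a connected infinite loop sheaf, hence nilpotent in the sense of \cite[Appendix A.2]{mattis2024unstablepcompletionmotivichomotopy}, \cref{lem:sheaves:functor-id} (in the $\ul\pi_0 A = S^{-1}\ul\Z$ case) then identifies $\hat L_A \Omega^\infty E \wequi L_S \Omega^\infty E$, completing the proof of the first claim.

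For the $n$-completion claim, proceed analogously with $\ul\pi_0 A = \ul\Z/n$: \cref{thm:sheaves:main-thm}(2) yields $\hat L_A \Omega^\infty E \wequi \tau_{\ge 1}\Omega^\infty E^\comp_n$, and \cref{lem:sheaves:functor-id} (in the $\ul\pi_0 A = \ul\Z/n$ case) identifies $\hat L_A \wequi L_n$ on nilpotent sheaves. The $\tau_{\ge 1}$ is redundant because $\Spc(\B G)$ has local homotopy dimension $0$, so limits preserve connectivity; each $\omega^\infty(E/n^k)$ is $1$-connective (as $E$ is, since $\pi_0(E/n^k)$ is a summand of $\pi_{-1}(E) = 0$), and hence so is $(\omega^\infty E)^\comp_n = \lim_k \omega^\infty(E/n^k)$, making $\Omega^\infty E^\comp_n$ connected. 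The main obstacle will be cleanly verifying the technical compatibilities of $\omega^\infty$ with the stable constructions $S^{-1}$ and $(\cdot)^\comp_n$ and with the $t$-structures, though each follows formally from the limit- and colimit-preservation already established in the excerpt together with the description of $\omega^\infty$ via evaluation at orbits.
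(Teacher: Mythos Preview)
Your proposal is correct and follows essentially the same strategy as the paper: reduce via $\omega^\infty$ to $\SH^{S^1}(\B G) = \Sp(\Spc(\B G))$, then invoke the sheaf-theoretic results. The paper's proof is slightly more direct, citing \cite[Lemma 3.18]{mattis2024unstablearithmeticfracturesquares} and \cref{lem:unstable-n:infinite-loop-space} immediately rather than passing through \cref{thm:sheaves:main-thm} and \cref{lem:sheaves:functor-id} (whose proofs of part (2) amount to exactly those citations), and it does not spell out why the $\tau_{\ge 1}$ can be dropped in the $n$-completion case---a point you handle more carefully. One small wording issue: $\pi_0(E/n^k)$ is not a \emph{summand} of $\pi_{-1}(E)$ but rather sits in an extension between a quotient of $\pi_0(E)$ and a subobject of $\pi_{-1}(E)$; since both vanish for $E$ $1$-connective, your conclusion stands.
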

\begin{proof}
Since $\omega^\infty \colon \SH(\B G) \to \SH^{S^1}(\B G)$ preserves limits, colimits and $1$-connective objects, we may replace $\SH(\B G)$ by $\SH^{S^1}(\B G)$ throughout.
This being the stabilization of a presheaf $\infty$-topos,
we conclude by \cite[Lemma 3.18]{mattis2024unstablearithmeticfracturesquares} and \cref{lem:unstable-n:infinite-loop-space}.
\end{proof}

Many of our arguments will rely on decomposing genuine $G$-spaces into infinite loop spaces of genuine $G$-spectra.
The usual Postnikov tower is not sufficient for this.
Indeed, if $X \in \Spc(\B G)_*$, then $\ul\pi_i (X)$ is (for $i \ge 2$) a \emph{coefficient system}, i.e., an object of $\Fun^\times(\Fin_G^\mathrm{op}, \mathrm{Ab})$.
The corresponding Eilenberg--Mac Lane space is the infinite loop space of a genuine $G$-spectrum if and only if the coefficient system extends to a Mackey functor, which is not true in general.
Luckily, we can actually build all coefficient systems out of Mackey functors, in an appropriate sense.

\begin{lemma} \label{lem:equivariant:resolution-of-coeff-sys}
  Let $G$ be a finite group.
  Every coefficient system $C \in \Fun^\times(\Fin_G^\mathrm{op}, \mathrm{Ab})$ admits a resolution by Mackey functors \[ 0 \to C \to C^0 \to C^1 \to \dots \to C^N \to 0, \] i.e.:
  \begin{itemize}
  \item Each $C^i$ is the coefficient system underlying a Mackey functor.
  \item The above sequence of coefficient systems is exact.
  \end{itemize}
  Moreover, the following hold:
  \begin{itemize}
  \item The resolution is functorial in $C$.
  \item The number $N$ is independent of $C$ (in fact $N \le |G|$).
  \item Let $H \le G$ such that whenever $K \le H$ we have $C(G/K) = 0$.
    Then also $C^i(G/H) = 0$.
  \end{itemize}
\end{lemma}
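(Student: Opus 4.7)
The plan is to resolve $C$ by iterating the unit of an adjunction between Mackey functors and coefficient systems. Let $U \colon \Fun^\times(\Span(\Fin_G)^\op, \mathrm{Ab}) \to \Fun^\times(\Fin_G^\op, \mathrm{Ab})$ denote restriction along the canonical inclusion $\Fin_G \hookrightarrow \Span(\Fin_G)$. Both sides are presentable abelian categories and $U$ is exact, so the adjoint functor theorem furnishes a left adjoint $L$ (the ``free Mackey functor'' construction), giving a natural unit map $\eta_C \colon C \to ULC$ whose target is the coefficient system underlying the Mackey functor $LC$.

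First I would unpack $L$ via its coend presentation over spans in order to establish two properties. (i) At each orbit $G/H$, the summand of $(ULC)(G/H)$ indexed by the identity span $G/H \xleftarrow{=} G/H \xrightarrow{=} G/H$ retracts naturally onto $C(G/H)$; in particular $\eta_C$ is a split monomorphism, naturally in $C$. (ii) The remaining summands of $(ULC)(G/H)$ are built (via transfers and the double coset relations) from $C(G/K)$ with $K \lneq H$ only, so the cokernel $\overline{C} := \mathrm{coker}(\eta_C)$ satisfies $\overline{C}(G/H) = 0$ whenever $C(G/K) = 0$ for every $K \lneq H$. A sanity check for $G = \Z/p$ gives $(ULC)(G/G) = C(G/G) \oplus C(G/1)_G$ and $(ULC)(G/1) = C(G/1)$, confirming the expected shape.

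Next I would iterate: set $C^0 := ULC$, $\overline{C}^0 := \mathrm{coker}(\eta_C)$, and for $i \ge 1$ let $C^i := UL(\overline{C}^{i-1})$ and $\overline{C}^i := \mathrm{coker}(\eta_{\overline{C}^{i-1}})$. Splicing the short exact sequences $\overline{C}^{i-1} \hookrightarrow C^i \twoheadrightarrow \overline{C}^i$ to $\eta_C$ produces an exact complex $0 \to C \to C^0 \to C^1 \to \cdots$ with each $C^i$ underlying a Mackey functor. Letting $d(D) := \min\{|H| : D(G/H) \ne 0\}$, property (ii) forces $d(\overline{C}^i) > d(\overline{C}^{i-1})$ at each step, so after at most $|G|$ iterations we have $\overline{C}^N = 0$ and the resolution terminates with $N \le |G|$. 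Functoriality in $C$ is immediate from the naturality of $\eta$; the support condition follows by induction on $i$, applying (ii) to $\overline{C}^{i-1}$ in place of $C$ and using that $K \le H$ implies $K' \le H$ for $K' \le K$.

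The main obstacle is the combinatorial content of (i) and (ii): writing $(ULC)(G/H)$ as an explicit direct sum indexed by conjugacy classes of subgroups of $H$, with the pieces given by appropriate Weyl-coinvariants of values $C(G/K)$, and isolating the identity-span summand naturally. Once this coend presentation is in hand, both the splitting of $\eta_C$ and the support behavior of the cokernel are direct, and the rest of the argument is formal homological algebra.
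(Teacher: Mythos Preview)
Your proposal is correct and takes essentially the same approach as the paper: iterate the unit of the free/forgetful adjunction between coefficient systems and Mackey functors, use the explicit decomposition $ULC(G/H) \cong \bigoplus_{(H')\le H} C(G/H')_{WH'}$ to see that the unit is a (naturally split) monomorphism, and observe that the cokernel has strictly smaller support. The only cosmetic differences are that the paper obtains this decomposition by invoking the tom Dieck splitting (rather than a coend computation) and measures termination via the chain-length function $l(H)$ rather than the minimal subgroup order; both yield $N\le |G|$.
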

\begin{proof}
  The adjunction $\sigma^\infty \dashv \omega^\infty$ can be restricted to an adjunction on the heart $F \colon\Fun^\times(\Fin_G^\mathrm{op}, \mathrm{Ab}) \adj \Fun^\times(\Span(\Fin_G)^\mathrm{op}, \mathrm{Ab})\noloc U$, see \cite[Proposition 1.3.17(iii)]{BeilinsonFaisceauxPerverse}.
  In particular, this is the free-forgetful adjunction between coefficient systems and Mackey functors.
  For $C\in \Fun^\times(\Fin_G^\mathrm{op}, \mathrm{Ab})$ define $C^i \coloneqq UF(\mathrm{coker}(C^{i-2} \to C^{i-1}))$ (set $C^{-1} \coloneqq C, C^0 \coloneqq 0$).
  This construction is clearly functorial in $C$.
  Exactness follows by construction from the fact that for any coefficient system $D$, the canonical map $D \to UFD$ is injective.
  In fact, by the tom Dieck splitting, we have \[ UFD(G/H) \wequi \bigoplus_{(H') \le H} D(G/H')_{WH'}. \]
  This also immediately proves the last bullet point.
  The only thing that remains to be done is to show that $C^N = 0$ for $N$ large enough.
  For this, note that the tom Dieck splitting formula from above implies that if $D(G/H') = 0$ for all $H'$ properly subconjugate to $H$, then $UFD(G/H) \wequi D(G/H)$ and so $(UFD/D)(G/H) = 0$.
  The construction of $C^*$ thus implies that $C^i(G/H) = 0$ as soon as $i > l(H)$.
\end{proof}

Now let $X \in \Spc(\B G)_*$ be $(1,G)$-connective.
Note that then $X(G/H)$ is simply connected for every proper subgroup $H$.
From this it follows that $X$ is nilpotent if and only if $X(*)$ is nilpotent.
Applying a functorial principalized Postnikov tower construction sectionwise, we can build a principalized Postnikov tower for $X$.
Thus, we obtain fiber sequences \[ X_{i+1} \to X_i \to K(A_i, n_i+1) \in \Spc(\B G)_*. \]
Here each $A_i$ is a coefficient system, $n_i \ge 1$, and there are compatible maps $X \to X_i$ of connectivity tending to infinity with $i$.
Moreover, $\ul\pi_* (X_i)$ is a subquotient of $\ul\pi_*(X)$, and $A_i$ is a subquotient of $\ul\pi_{n_i} (X)$.
This implies that each $X_i$ is $(1,G)$-connective and each $K(A_i, n_i+1)$ is $(2,G)$-connective.

\begin{construction} \label{cons:equiv-postnikov}
We refer to the data above as a $(1,G)$-connective principalized Postnikov tower for $X$.
\end{construction}

\begin{remark} \label{rmk:equivariant:Mackey-coeff}
Let $A$ be a coefficient system and $n \ge 1$ such that $K(A, n)$ is $(2,G)$-connective.
Rewriting the resolution of \cref{lem:equivariant:resolution-of-coeff-sys} as a system of short exact sequences, we obtain fiber sequences $K(A_i,n) \to K(M_i,n) \to K(A_{i+1},n)$ with $A_1=A$, $M_i$ a Mackey functor and $A_i = 0$ for $i$ sufficiently large.
Moreover, each $K(A_i, n)$ and $K(M_i, n)$ are $(2,G)$-connective, by the last claim of that lemma.
\end{remark}

Combining \cref{cons:equiv-postnikov} and \cref{rmk:equivariant:Mackey-coeff} yields our desired decomposition of nilpotent $(1,G)$-connective spaces into $(2,G)$-connective infinite loop spaces.

\subsubsection{Burnside Resolutions}
In this subsection, we assume that $\ul\pi_0(A)$ is either $S^{-1} \ul\pi_0(\1)$ for $S \subset \Z\setminus 0$ or $\ul\pi_0(\1)/n$ for $n \in \N$.

We can identify the associated localization functors.
\begin{lemma} \label{lem:equivariant:functor-id}
  Let $E \in \SH(\B G)_{\ge 0}$ and $X \in \Spc(\B G)_*$ nilpotent.
  If $\ul\pi_0(A) = S^{-1}\ul\pi_0(\1)$, then
    \begin{enumerate}
      \item $L_A^{st} E \wequi S^{-1} E$, and 
      \item $\hat{L}_A X\wequi L_S X$, where $L_S$ is the unstable $S$-localization functor, cf.\ \cite[§3]{mattis2024unstablearithmeticfracturesquares}.
    \end{enumerate}
    
    Analogously, if $\ul\pi_0(A) = \ul\pi_0(\1)/n$, then
    \begin{enumerate}
      \item $L_A^{st} E \wequi E^\comp_n = \lim_k E/n^k = \prod_{p|n} E^\comp_p$, and 
      \item $\hat{L}_A X \wequi L_n X \wequi \prod_{p|n} L_p X$, the product over the unstable $p$-completion functors, cf.\ \cite[§3]{mattis2024unstablepcompletionmotivichomotopy}.
    \end{enumerate}
\end{lemma}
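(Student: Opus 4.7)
The plan is to mirror the proof of \cref{lem:sheaves:functor-id}, with modifications to account for the fact that the $\infty$-category of genuine $G$-spectra is not simply $\Sp(\Spc(\B G))$. The stable identifications (part (1) in each case) will follow directly from the general stable Bousfield completion result \cref{lem:stable-bousfield}: since $A$ is connective with $\ul\pi_0(A)$ idempotent, equal to either $S^{-1}\ul\pi_0(\1)$ or $\ul\pi_0(\1)/n$, the reference applies to compute $L_A^{st} E$ on connective $E$ as either $S^{-1}E$ or $E_n^{\comp}$.

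For the unstable identification in the $S$-localization case, I would first construct a natural transformation $L_S \to \hat{L}_A$ by verifying that $\hat{L}_A$ inverts every generating $L_S$-equivalence, i.e., every map $f_{U,n,k} \colon U \wedge (S^n \wedge S^1) \to U \wedge (S^n \wedge S^1)$ with $U \in \Spc(\B G)_*$ and $k \in S$. Using the stable splitting \cref{lem:stable-splitting}, it suffices to show $L(f_{U,n,k})$ is an equivalence; this reduces to multiplication by $k$ on $A \otimes \Sigma^{n+1}\1$, invertible since $k \in \pi_0(A)^\times$.

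Next I would show that for $X$ nilpotent, $L_S X$ is $\hat{L}_A$-local. If $X$ is not already $(1,G)$-connective one first reduces to that case, and then one builds a $(1,G)$-connective principalized Postnikov tower via \cref{cons:equiv-postnikov}, whose layers are Eilenberg--Mac Lane spaces $K(C, n)$ for coefficient systems $C$ with $n \ge 2$. Applying \cref{rmk:equivariant:Mackey-coeff} to each such layer further resolves $C$ by Mackey functors, reducing everything to Eilenberg--Mac Lane spaces $K(M, n)$ of Mackey functors. For these, \cref{lem:equivariant:unstable-stable-S-localization} gives $L_S K(M,n) \wequi \Omega^\infty S^{-1}HM \wequi \Omega^\infty L_A^{st}(HM)$, which is $L$-local because any $\Omega^\infty$ of an $L_A^{st}$-local connective spectrum is $L$-local (as $L_+$ factors through $L_A^{st}\Sigma^\infty_+$). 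Closure of $L$-local objects under limits and highly connected towers, together with \cite[Lemma 3.13 and Proposition 6.9]{mattis2024unstablearithmeticfracturesquares} applied to the $\infty$-topos $\Spc(\B G)$ to see that $L_S$ preserves the relevant fiber sequences and towers, then gives the result.

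For the $n$-completion case, I would use the stable identification $L_A^{st} \wequi (\ph)_n^\comp$ on connective spectra together with \cref{lem:unstable-n:main-thm} from the appendix. Specifically, a morphism $f \in \Spc(\B G)$ is an $L$-equivalence iff $A \otimes \Sigma^\infty_+ f$ is an equivalence iff (by part (1) applied to the cofiber) $\Sigma^\infty_+ f$ is an $n$-equivalence in the sense of \cref{def:unstable-n:def-n-equiv}, which by unfolding definitions is equivalent to $f$ being an unstable $n$-equivalence; hence $\hat{L}_A \wequi L_n$ by the universal property of Bousfield localization, and \cref{lem:unstable-n:main-thm} identifies $L_n X$ for nilpotent $X$ with the stated product. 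The main obstacle is the verification that $L_S X$ is $L$-local on nilpotent spaces, since it requires the iterated reduction first through the principalized Postnikov tower and then through the Mackey resolution of coefficient systems, with careful bookkeeping of $(1,G)$-connectivity at each stage.
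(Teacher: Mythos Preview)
Your stable identifications are fine and match the paper. For the unstable parts, however, the paper's proof is much shorter and avoids the decomposition you attempt: the tom Dieck splitting implies that $\sigma^\infty \colon \SH^{S^1}(\B G) \to \SH(\B G)$ is \emph{conservative}, so a map $f$ of $G$-spaces is an $A$-equivalence if and only if $\Sigma^\infty_{S^1,+} f$ is an $S$-equivalence (resp.\ $n$-equivalence) in $\Sp(\Spc(\B G))$. This immediately reduces part (2) to the sheaves result \cref{lem:sheaves:functor-id}, since $\Spc(\B G)$ is a presheaf $\infty$-topos.

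Your $n$-completion argument actually has a gap exactly at this point. You write that ``$\Sigma^\infty_+ f$ is an $n$-equivalence in the sense of \cref{def:unstable-n:def-n-equiv}'', but that definition concerns $\Sigma^\infty_+ f$ in $\Sp(\scr X) = \SH^{S^1}(\B G)$, whereas part (1) only tells you about $\Sigma^\infty_+ f$ in $\SH(\B G)$. Bridging the two is precisely the conservativity of $\sigma^\infty$, which you never invoke; ``unfolding definitions'' does not do it.

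Your $S$-localization argument takes a much longer route than necessary, and the step ``if $X$ is not already $(1,G)$-connective one first reduces to that case'' is neither justified nor needed. In fact the Mackey resolution of \cref{lem:equivariant:resolution-of-coeff-sys} works for arbitrary coefficient systems, so one can run your argument for any nilpotent $X$ directly (the $(2,G)$-connectivity bookkeeping in \cref{rmk:equivariant:Mackey-coeff} is only required when applying \cref{thm:fiber-sequences}, not here). But once you notice the conservativity of $\sigma^\infty$, the whole detour through \cref{cons:equiv-postnikov} and \cref{rmk:equivariant:Mackey-coeff} becomes superfluous: the sheaves lemma already handles coefficient-system Eilenberg--Mac Lane layers via $\SH^{S^1}(\B G)$.
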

\begin{proof}
  The identification of $L_A^{st}$ is \cref{lem:stable-bousfield}.
The tom Dieck splitting theorem implies that $\sigma^\infty \colon \SH^{S^1}(\B G) \to \SH(\B G)$ is conservative.
Suppose first that $\ul\pi_0(A) = S^{-1}\ul\pi_0(\1)$.
Then a map $f \colon Y \to Z$ of $G$-spaces is an $A$-equivalence if and only if $\Sigma^\infty_+ f$ is an $S$-equivalence (by the stable identification), if and only if $\Sigma^\infty_{S^1}(f_+)$ is an $S$-equivalence (by conservativity).
Thus, it follows from \cref{lem:sheaves:functor-id} that for $X \in \Spc(\B G)_*$ nilpotent we have $\hat{L}_A X \wequi L_S X$.
The case $\ul\pi_0(A) = \ul\pi_0(\1)/n$ is treated similarly, additionally using \cref{lem:unstable-n:main-thm}.
\end{proof}

\begin{theorem} \label{thm:equivariant:main-thm}
  \begin{enumerate}
    \item Let $X \in \Spc(\B G)_*$ be nilpotent and $(1,G)$-connective. The canonical map $\hat{L}_A X \to \Tot T^\bullet_A X$ is an equivalence.
      In fact, if $\ul\pi_0(A) =  S^{-1}\ul\pi_0(\1)$, then already the canonical map $\tau_{<\infty} \hat{L}_A X \to \tau_{< \infty} T^\bullet_A X$
      is an equivalence.
    \item For $E \in \SH(\B G)_{\ge 1}$ the canonical map $\hat{L}_A \Omega^\infty E \to \Omega^\infty L^{st}_A E$
      is an equivalence.
    \item Let $F \to E \to B$ be a fiber sequence of pointed nilpotent $G$-spaces.
      Then the canonical map $\hat{L}_A F \to \fib{(\hat{L}_A E \to \hat{L}_A B)}$ is an equivalence.
    \item Let $X_\bullet$ be a tower of pointed, nilpotent $G$-spaces with $X = \lim_i X_i$.
      Assume that the tower is highly connected.
      Then the canonical map $\hat{L}_A X \to \lim_i \hat{L}_A X_i$ is an equivalence.
  \end{enumerate}
\end{theorem}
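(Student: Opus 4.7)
The proof will parallel those of \cref{thm:BK-spaces} and \cref{thm:sheaves:main-thm}, using the identifications $\hat L_A \wequi L_S$ or $L_n$ and $L^{st}_A \wequi S^{-1}(\ph)$ or $(\ph)^\comp_n$ provided by \cref{lem:equivariant:functor-id}. I would prove statements in the order (2), (3), (4), (1).

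Statement (2) is immediate from \cref{lem:equivariant:unstable-stable-S-localization} combined with \cref{lem:equivariant:functor-id}. Statements (3) and (4) follow by reducing to the presheaf $\infty$-topos $\Spc(\B G) \wequi \PSh_\Sigma(\Fin_G)$, which admits a locally finite-dimensional cover by \cref{rmk:locally-finite-dimensional-examples}, and invoking the fiber lemmas and tower statements for $L_S$ and $L_n$: namely \cref{lem:unstable-n:fiber-sequence,lem:unstable-n:tower} in the completion case, and \cite[Lemma 3.13 and Proposition 6.9]{mattis2024unstablearithmeticfracturesquares} in the $S$-localization case. Note that the stronger form of (3) (without needing to apply $\tau_{\ge 1}$) is available here because $\Spc(\B G)$ has homotopy dimension $0$ and hence $\hat L_A$ automatically preserves connectivity of nilpotent objects.

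For (1), use \cref{cons:equiv-postnikov} to present $X \wequi \lim_i X_i$ with fiber sequences $X_{i+1} \to X_i \to K(A_i, n_i+1)$ where each $K(A_i, n_i+1)$ is $(2,G)$-connective. Since $\hat L_A$ preserves these fiber sequences and this limit by (3) and (4), and $\tau_{<\infty} T^\bullet_A$ does the same by \cref{thm:fiber-sequences} together with \cref{prop:Tn-conn} (which ensures that the connectivity of $T^n X \to T^n X_i$ tends to infinity uniformly in $n$, so the pro-object $T^\bullet X$ agrees with $\lim_i T^\bullet X_i$ after pro-truncation), the assertion reduces to the case $X = K(A, n+1)$ for a coefficient system $A$ with $K(A,n+1)$ being $(2,G)$-connective. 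A second induction, using \cref{rmk:equivariant:Mackey-coeff} to resolve $A$ through a finite tower of fiber sequences whose fibers are of the form $K(M,n+1)$ for $M$ a Mackey functor, and applying (3) and \cref{thm:fiber-sequences} again, reduces further to the case $X = K(M, n+1) \wequi \Omega^\infty \Sigma^{n+1} HM$ with $M$ a Mackey functor. By (2), $\hat L_A X \wequi \Omega^\infty \Sigma^{n+1} L_A^{st}(HM)$, and the spectrum $\Sigma^{n+1} L_A^{st}(HM)$ is canonically an $A$-module (evident in the $S$-localization case since $L_A^{st}(HM) \wequi A \otimes HM$, and in the completion case by passing through the smashing localization at $A$ on connective objects). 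Thus $\hat L_A X$ lies in the essential image of $R$, so \cref{lem:adjunction:split} shows that $\CB^\bullet(\hat L_A X)$ is split, yielding $\hat L_A X \wequi \Tot T^\bullet_A X$. The refined pro-truncated statement in the $S$-localization case follows from the same splitting argument applied already at the level of $\Pro(\Spc(\B G)_{<\infty})$.

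The main obstacle is the interplay between coefficient systems and Mackey functors: unlike in the non-equivariant or sheaf settings, a coefficient system $A$ need not underlie a Mackey functor, so $K(A, n+1)$ is not directly in the essential image of $R$. This is precisely what \cref{lem:equivariant:resolution-of-coeff-sys} and \cref{rmk:equivariant:Mackey-coeff} are designed to fix, but they require an additional induction over a finite resolution. A secondary technical point is verifying that the connectivity estimates of \cref{prop:Tn-conn} interact well with the $(2,G)$-connectivity hypothesis at each stage, which relies crucially on the fact that $(1,G)$-connectivity of $X$ implies $(2,G)$-connectivity of all the fibers appearing in the principalized Postnikov tower.
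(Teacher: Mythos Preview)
Your approach to (2), (3), (4), and the reduction of (1) to the case $X = \Omega^\infty \Sigma^{k} HM$ with $M$ a Mackey functor, matches the paper's proof closely. The gap is in your final step for the completion case $\ul\pi_0(A) = \ul\pi_0(\1)/n$.

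You claim that $L_A^{st}(HM) = (HM)^\comp_n$ is an $A$-module ``by passing through the smashing localization at $A$ on connective objects,'' but $n$-completion is \emph{not} a smashing localization, and an $n$-complete spectrum need not admit an $A$-module structure. (The assertion $L_A^{st}(HM) \wequi A \otimes HM$ in the $S$-local case is also incorrect as stated---it is $S^{-1}HM$---though there the fix is easy and is what the paper does: $S^{-1}M$ is a $\ul\pi_0(A)$-module, hence an $A$-module via the map $A \to \ul\pi_0(A)$.)

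The paper's argument in the completion case is genuinely different and does not attempt to exhibit $\hat L_A X$ as lying in the image of $R$. Instead, writing $F = \Tot T^\bullet_A$, one assembles
\[
P(M) \coloneqq (F\Omega^\infty \Sigma^k M,\ F\Omega^\infty \Sigma^{k+1}M,\ \dots) \in \SH^{S^1}(\B G),
\]
using that $F$ preserves loops (\cref{thm:fiber-sequences}). The canonical map $\omega^\infty \Sigma^k M \to P(M)$ is an equivalence whenever $M$ is a finite extension of $(\ul\pi_0(\1)/n)$-modules (by \cref{lem:adjunction:split}), and since the source and target both preserve the relevant fiber sequences, the map is an $n$-equivalence for arbitrary $M$ (take the fiber of multiplication by $n$). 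One then deduces that $\Omega^\infty \Sigma^k M \to F\Omega^\infty \Sigma^k M$ is an $n$-equivalence with $n$-complete target, concluding the proof. This is the same trick used in the $\pi_0 A = \Z/n$ half of \cref{thm:BK-spaces} and in \cref{thm:sheaves:main-thm}; your proposal replaces it with an $A$-module argument that does not go through.
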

\begin{proof}[Proof of \cref{thm:equivariant:main-thm} if $\ul\pi_0 (A) = S^{-1} \ul\pi_0(\1)$ ]
  Under the identifications of $\hat{L}_A$ and $L^{st}_A$ from \cref{lem:equivariant:functor-id}, (3) and (4) follow from \cite[Lemma 3.13 and Proposition 6.9]{mattis2024unstablearithmeticfracturesquares}, respectively.
  Statement (2) then follows by combining \cref{lem:equivariant:unstable-stable-S-localization,lem:equivariant:functor-id}.

  We now prove (1). 
  Let $X$ be nilpotent and $(1,G)$-connective.
  Working through a principalized Postnikov tower, as in the proof of \cref{thm:sheaves:main-thm}, we can reduce to the case that $X = K(C, k)$ for a coefficient system $C$ and $k\ge 2$.
  Note that as pointed out in \cref{cons:equiv-postnikov}, the space $K(C,k)$ is $(2,G)$-connective.
  (A key point is that by \cref{cons:equiv-postnikov}, we never leave the world of $(1,G)$-connective $G$-spaces, and so our axiomatics apply.)
  Using \cref{lem:equivariant:resolution-of-coeff-sys,rmk:equivariant:Mackey-coeff,thm:fiber-sequences} (we can use the latter since in all the fiber sequences the base is $(2,G)$-connective), this reduces to the $(2,G)$-connective $G$-space $X = \Omega^\infty \Sigma^k M$, where $M$ is a Mackey functor. 
  Since both sides invert $L$-equivalences, we can replace $X$ by $\hat{L}_A X$, and we have to see that $X \to \tau_{< \infty}T^\bullet_A X$ is an equivalence.
  By (2) we know $X = \hat{L}_A \Omega^\infty \Sigma^k M \wequi \Omega^\infty \Sigma^k S^{-1}M$.
  The result follows from \cref{lem:adjunction:split}, since $S^{-1}M$ is an $(S^{-1}\ul\pi_0(\1) = \ul\pi_0(A))$-module, whence an $A$-module.
\end{proof}

\begin{proof}[Proof of \cref{thm:equivariant:main-thm} if $\ul\pi_0 (A) = \ul\pi_0(\1)/n$ ]
  We will use without mention the identifications of $L_A^{st}$ and $\hat{L}_A$ from \cref{lem:equivariant:functor-id}.
  Then (3) is \cref{lem:unstable-n:fiber-sequence},
  and (4) is \cref{lem:unstable-n:tower}.
  Moreover, (2) follows by combining \cref{lem:equivariant:unstable-stable-S-localization,lem:equivariant:functor-id}.

  We now show (1).
  As in the proof for $\ul\pi_0(A) = S^{-1} \ul\pi_0(\1)$, we reduce to the case that $X = \Omega^\infty \Sigma^k M$ is $(2,G)$-connective for a Mackey functor $M$.
  Write $F \coloneqq \Tot T^\bullet_A$.
  Then \[ P(M) := (F\Omega^\infty \Sigma^k M,  F \Omega^\infty \Sigma^{k+1} M, \dots) \in \SH^{S^1}(\B G), \] since $F$ preserves loops by \cref{thm:fiber-sequences}.
  The functor $P$ also preserves fiber sequences of $2$-connective spectra.
  There is a canonical map $\omega^\infty\Sigma^k M \to P(M)$, which is an equivalence whenever $M$ is a finite extension of $A$-modules, essentially by \cref{lem:adjunction:split}.
  It follows that $\Sigma^k M \to P(M)$ is an $n$-equivalence, i.e., induces an equivalence on fibers of multiplication by $n$.
  Hence, using \cref{lem:unstable-n:infinite-loop-space}, we see that \[ \Omega^\infty \Sigma^k M \to \Omega^\infty P(M) \wequi F(\Omega^\infty \Sigma^k M) \] is also an $n$-equivalence.
  Since the target is $n$-complete by construction, this concludes the proof.
\end{proof}

\subsubsection{$\underline{\Z}$ Resolutions: Localization}
Let $G$ be a finite group and $I$ the augmentation ideal of the Burnside ring Mackey functor $\ul{A}$, i.e., the kernel of the surjective rank map $\ul A \to \ul \Z$.
Throughout this subsection we assume that $\ul\pi_0 (A)\wequi S^{-1}(\ul\pi_0(\1)/I) \wequi S^{-1}\ul\Z$ for $S \subset \Z\setminus 0$ and $|G| \in S$.
Consider the element $e^G_e = [G/e]/|G| \in A(G)[1/|G|]$.
This is immediately checked to be an idempotent.
For $E \in S^{-1}\SH(\B G)$ we put \[ (e^G_e)^{-1} E = \colim E \xrightarrow{e^G_e} E \xrightarrow{e^G_e} \cdots. \]
We define $(1-e^G_e)^{-1} E$ similarly.
Then the canonical map $E \to (e^G_e)^{-1} E \oplus (1-e^G_e)^{-1} E$ is an equivalence, as is immediately verified on homotopy Mackey functors.

\begin{remark} \label{rmk:1+-equiv-locn}
Note that $(1-e^G_e)^{-1} E = 0$ if and only if $[G]\ul\pi_*E = 0$, i.e., $E(G/e) = 0$. \NB{$[G]$ acts on $\ul\pi_i(E)(G/e)$ by multiplication by $|G|$, so if this is zero then $E(G/e) = 0$. This implies that $[G] \ul\pi_i E = 0$ because multiplication by $[G]$ factors through restriction to $G/e$.}
\end{remark}

To identify the localization $\hat L_A$, it will be useful to consider the inclusion $i \colon \B G \to \Fin_G$ sending the unique object of the source to $G$.
This induces an adjunction \[ i^* \colon \Spc(\B G) \adj \Fun(\B G, \Spc) \noloc i_*. \]
Here $i^*X$ is just $X(G)$ viewed as a naive $G$-space, and $i_*$ turns a naive $G$-space into a genuine one by declaring the genuine fixed points to be the homotopy fixed points.
In particular, $i_*$ is fully faithful.
(The functor $i^*$ has a further left adjoint $i_!$, which does not concern us here.)
We also have \[ i^* \colon \SH(\B G) \adj \Fun(\B G, \Sp) \noloc i_*, \] with similar properties, e.g.\ \cite[Theorem II.2.7]{NikolausScholze:OnTC}.

\begin{lemma} \label{lem:equivariant:Z:localization:functor-id}
Let $E \in \SH(\B G)$ be bounded below and $X \in \Spc(\B G)_*$ be nilpotent.
\begin{enumerate}
\item We have $L_A^{st}E \wequi (e^G_e)^{-1} S^{-1}E$.
\item Alternatively, we have $L_A^{st}E \wequi i_*S^{-1} i^* E$.
\item We have $\hat L_A X \wequi i_* L_S i^* X$.
\end{enumerate}
\end{lemma}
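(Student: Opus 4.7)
The plan is to prove the three parts in sequence, using (1) and (2) as inputs for (3). The first two parts identify the stable Bousfield localization $L_A^{st}$ explicitly, leveraging the idempotent decomposition of the Burnside ring after inverting $|G|$, while the third transfers this identification to the unstable setting via the fully faithfulness of $i_*$.

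For (1), observe that $\ul\pi_0(A) = S^{-1}\ul\Z$ is obtained from $\ul\pi_0(\1) = \ul A$ by two smashing operations: inverting the elements of $S$, and then projecting onto the rank summand via the idempotent $e^G_e$ (which lives in $S^{-1}\ul A$ since $|G| \in S$). Indeed $(1-e^G_e)\cdot S^{-1}\ul A$ is precisely the $S^{-1}$-localized augmentation ideal, so $S^{-1}\ul A \cdot e^G_e \wequi S^{-1}\ul\Z$. Both operations lift to smashing localizations of $\SH(\B G)$ whose composite is again smashing with unit $(e^G_e)^{-1}S^{-1}\1$. By \cref{lem:stable-bousfield}, $L_A^{st}$ depends only on $\ul\pi_0(A)$ on bounded-below spectra, so we may replace $A$ by $(e^G_e)^{-1}S^{-1}\1$; for a smashing localization the stable Bousfield localization at the unit is the localization itself, giving $L_A^{st}E \wequi (e^G_e)^{-1}S^{-1}E$.

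For (2), the main input is the Burnside splitting: after inverting $|G|$, the full subcategory of $\SH(\B G)[1/|G|]$ on which multiplication by $(1-e^G_e)$ is null coincides with the essential image of the fully faithful functor $i_* \colon \Fun(\B G, \Sp)[1/|G|] \to \SH(\B G)[1/|G|]$, and the projection onto this summand is $F \mapsto i_* i^* F \wequi (e^G_e)^{-1}F$. Applying this to $F = S^{-1}E$ and using $i^* S^{-1} \wequi S^{-1} i^*$ (as $i^*$ preserves colimits) yields (2).

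For (3), combine (1) and (2) with the facts that $i^*$ commutes with $\Sigma^\infty_+$ and that $i_*$ is fully faithful. A morphism $f$ in $\Spc(\B G)_*$ is then an $L$-equivalence if and only if $i^* f$ is an $L_S$-equivalence in $\Fun(\B G, \Spc)_*$, applying \cref{lem:sheaves:functor-id} to the presheaf $\infty$-topos $\Fun(\B G, \Spc)$. The object $i_* L_S i^* X$ satisfies both defining conditions of $\hat L_A X$: the canonical map $X \to i_* L_S i^* X$ becomes the $L_S$-unit $i^* X \to L_S i^* X$ after applying $i^*$, hence is an $L$-equivalence; and for $L$-locality, if $f \colon Y \to Z$ is an $L$-equivalence, then $\Map_{\Spc(\B G)_*}(Z, i_* L_S i^* X) \wequi \Map_{\Fun(\B G, \Spc)_*}(i^* Z, L_S i^* X)$ is invariant under $f$ since $i^* f$ is an $L_S$-equivalence and $L_S i^* X$ is $L_S$-local. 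The main obstacle is the smashing identification in (1) (requiring an appropriate equivariant form of \cref{lem:stable-bousfield}), together with ensuring that $i^* X$ is nilpotent in $\Fun(\B G, \Spc)_*$ so that \cref{lem:sheaves:functor-id} applies in (3).
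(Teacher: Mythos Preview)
Your proposal is correct and follows essentially the same route as the paper. For (1), the paper proceeds slightly more directly: it first observes that $e^G_e$ becomes a unit in $\ul\pi_0(A)$ (via the section $1 \mapsto [G/e]$ of the rank map), so that $E \to (e^G_e)^{-1}S^{-1}E$ is an $A$-equivalence, and then shows the target is $A$-local by a Postnikov argument (each $\ul\pi_i$ is already a $\ul\pi_0(A)$-module). Your route through \cref{lem:stable-bousfield} also works once you note that $\ul\pi_0(A) \wequi S^{-1}\ul A/(x_1,\dots,x_r)$ with $x_i = [G/H_i]-[G{:}H_i] \in \pi_0(\1)$ generating the augmentation ideal (so the $L_i = \1$ are connective and self-dual), and that $I$-adic completion on $S^{-1}$-local objects collapses to projection onto the $e^G_e$-summand since $I$ is generated by the idempotent $1-e^G_e$. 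For (2) and (3) your argument is the paper's: both $(e^G_e)^{-1}S^{-1}(\ph)$ and $i_*S^{-1}i^*(\ph)$ are identified as the localization at underlying $S$-equivalences, and the unstable statement then follows from full faithfulness of $i_*$ together with \cref{lem:sheaves:functor-id} applied to $\Fun(\B G,\Spc)$. Your flagged concern that $i^*X$ be nilpotent is harmless: nilpotence in the presheaf topos $\Spc(\B G)$ is detected sectionwise, so $X(G/e)$ is nilpotent.
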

\begin{proof}
(1)
The morphism $\ul{A} \to \ul{A}/I$ admits a section given by $1 \mapsto [G/e]$,
implying that $S^{-1}\ul{A}[1/e^G_e] \xrightarrow{\wequi} S^{-1}\ul{A}/I \wequi \ul\pi_0 (A)$.

The previous sentence shows that $e^G_e$ maps to a unit in $\ul\pi_0(A)$ and so $L_A^{st} E \wequi L_A^{st} (e^G_e)^{-1} S^{-1} E$.
It will thus be enough to show that $E' := (e^G_e)^{-1} S^{-1} E$ is $A$-local.
This follows from a Postnikov tower argument, using that $E'$ is bounded below (since $E$ is) and each $\ul\pi_i(E')$ as a module over $(e^G_e)^{-1}S^{-1} \ul A \wequi \ul\pi_0(A)$.

(2) Since $i_*$ is fully faithful, $i_*S^{-1}i^*$ is a localization functor, namely at the underlying $S$-equivalences.
By \cref{rmk:1+-equiv-locn}, so is $(e^G_e)^{-1}S^{-1}$.
The claim thus follows from (1).

(3) By (2), a map $f$ in $\Spc(\B G)$ is an $L$-equivalence if and only if $S^{-1}\Sigma^\infty_+ i^*f$ is an equivalence.
In particular, $\hat L_A = \hat L_A i_* i^*$, the functor $i_*$ being fully faithful (i.e., part of a localization).
The subcategory of objects of the form $i_*(\ph)$ is equivalent to $\Fun(\B G, \Spc)$, and under this equivalence, $\hat L_A$ on this subcategory just coincides with localization at the homological $S$-equivalences.
For nilpotent objects, this is given by $L_S$ (\cref{lem:sheaves:functor-id}).
This concludes the proof.
\end{proof}

\begin{theorem} \label{thm:equivariant:Z:loc:main-thm}
  \begin{enumerate}
    \item Let $X \in \Spc(\B G)_*$ be nilpotent and $(1,G)$-connective. The canonical map $\tau_{<\infty} \hat{L}_A X \to \tau_{< \infty} T^\bullet_A X$
      is an equivalence.
    \item For $E \in \SH(\B G)_{\ge 1}$ the canonical map $\hat{L}_A \Omega^\infty E \to \Omega^\infty L^{st}_A E$
      is an equivalence.
    \item Let $F \to E \to B$ be a fiber sequence of pointed nilpotent $(1,G)$-connective $G$-spaces.
      Then $\hat{L}_A F \wequi \fib{(\hat{L}_A E \to \hat{L}_A B)}$.
    \item Let $X_\bullet$ be a tower of pointed nilpotent $(1,G)$-connective $G$-spaces with $X = \lim_i X_i$.
      Assume that the tower is highly connected.
      Then $\hat{L}_A X \wequi \lim_i \hat{L}_A X_i$.
  \end{enumerate}
\end{theorem}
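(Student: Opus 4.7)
My plan is to reduce each statement to the analogous sheaf-theoretic assertion of \cref{thm:sheaves:main-thm} applied to the presheaf $\infty$-topos $\Fun(\B G, \Spc)$, using the identifications $\hat L_A \wequi i_* L_S i^*$ and $L^{st}_A \wequi i_* S^{-1} i^*$ from \cref{lem:equivariant:Z:localization:functor-id}. Note that if $Y \in \Spc(\B G)_*$ is $(1,G)$-connective, then $i^* Y = Y(G)$ is $(1 + l(\{e\}))$-connected, hence simply connected and in particular nilpotent; moreover, $\Fun(\B G, \Spc)$ is a presheaf topos and thus admits a locally finite-dimensional cover (cf.\ \cref{rmk:locally-finite-dimensional-examples}), while $S^{-1}\ul\Z$ becomes a constant subsheaf of $\ul\Q$ there, so the hypotheses of \cref{thm:sheaves:main-thm} are met.

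For (2), since $i^*$ and $i_*$ both preserve limits (and in particular commute with $\Omega^\infty$), the chain
\[ \hat L_A \Omega^\infty E \wequi i_* L_S \Omega^\infty i^* E \wequi i_* \Omega^\infty S^{-1} i^* E \wequi \Omega^\infty i_* S^{-1} i^* E \wequi \Omega^\infty L^{st}_A E \]
finishes the job, where the middle equivalence is \cref{thm:sheaves:main-thm}(2) applied to the $1$-connective spectrum $i^* E \in \Fun(\B G, \Sp)$. Items (3) and (4) will follow by the same pattern: apply $i^*$ to convert the input into a fiber sequence (respectively highly connected tower) of simply connected nilpotent naive $G$-spaces, invoke \cref{thm:sheaves:main-thm}(3) (respectively (4)) in $\Fun(\B G, \Spc)$, and transport the conclusion back via $i_*$, which again preserves limits.

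For (1), I will follow the blueprint of the Burnside case of \cref{thm:equivariant:main-thm}. Choose a principalized Postnikov tower $X \wequi \lim_i X_i$ with fiber sequences $X_{i+1} \to X_i \to K(A_i, n_i+1)$ as in \cref{cons:equiv-postnikov}. Items (3) and (4), combined with \cref{thm:fiber-sequences} and \cref{prop:Tn-conn}, reduce the claim to the layers $K(A_i, n_i+1)$. The functorial resolution of coefficient systems by Mackey functors (\cref{lem:equivariant:resolution-of-coeff-sys,rmk:equivariant:Mackey-coeff}), combined again with \cref{thm:fiber-sequences} and (3), then reduces further to the case $X = \Omega^\infty \Sigma^k M$ for $M$ a Mackey functor and $k \ge 2$. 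For such $X$, by (2) we have $\hat L_A X \wequi \Omega^\infty L^{st}_A \Sigma^k M \wequi R(L^{st}_A \Sigma^k M)$, and since $A$ is a ring spectrum it is $A$-local, so $L^{st}_A \Sigma^k M$ canonically carries an $A$-module structure; hence, $\hat L_A X$ lies in the image of $R \colon \Mod_A \to \Spc(\B G)_*$ and \cref{lem:adjunction:split} shows the associated cosimplicial object is split, whence $\tau_{<\infty} T^\bullet_A \hat L_A X \wequi c\,\hat L_A X$. The main technical obstacle will be verifying the connectivity hypotheses of \cref{thm:fiber-sequences} at every stage---in particular, that the Mackey resolution of a coefficient system $A$ with $K(A, k)$ being $(2, G)$-connective produces layers $K(M_j, k)$ and cokernels $K(A_{j+1}, k)$ that remain $(2, G)$-connective, which follows from the last bullet of \cref{lem:equivariant:resolution-of-coeff-sys}.
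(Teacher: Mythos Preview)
Your overall structure is the same as the paper's: parts (2), (3), (4) are reduced via $\hat L_A \wequi i_* L_S i^*$ and $L^{st}_A \wequi i_* S^{-1} i^*$ to standard facts about $L_S$ in the presheaf topos $\Fun(\B G,\Spc)$, and part (1) is reduced via the principalized Postnikov tower and the Mackey resolution to the case $X=\Omega^\infty \Sigma^k M$ with $M$ a Mackey functor and $k\ge 2$. This matches the paper precisely.

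There is, however, a genuine gap in your final step for (1). You assert that ``since $A$ is a ring spectrum it is $A$-local, so $L^{st}_A \Sigma^k M$ canonically carries an $A$-module structure''. Being $A$-local does \emph{not} imply being an $A$-module: already for $\scr D=\Sp$ and $A=H\F_p$ the sphere $\mathbb S^\wedge_p$ is $A$-local but not an $H\F_p$-module. So this step, as written, fails.

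The fix is to use the explicit description of $L^{st}_A$ rather than abstract locality. The paper observes that after replacing $K(M,k)$ by the $L$-equivalent $K(M',k)$ with $M' \coloneqq i_* i^* S^{-1} M$, the Mackey functor $M'$ is an $S^{-1}\ul\Z = \ul\pi_0 A$-module, hence (via $A\to\ul\pi_0 A$) an $A$-module, and then \cref{lem:adjunction:split} applies. Equivalently, in your formulation: $L^{st}_A \Sigma^k M \wequi i_* S^{-1}\Sigma^k i^* M$ by \cref{lem:equivariant:Z:localization:functor-id}(2); the spectrum $S^{-1}\Sigma^k i^* M \in \Fun(\B G,\Sp)$ is an $H(S^{-1}\Z)$-module, and since $i_*$ is lax monoidal the image is a module over $i_* H(S^{-1}\Z)$, hence over $H(S^{-1}\ul\Z)=H\ul\pi_0 A$ via the unit map, hence over $A$. (The hypothesis $|G|\in S$ ensures that $i_* S^{-1}\Sigma^k i^* M$ is even concentrated in degree $k$, recovering exactly $\Sigma^k M'$.) With this correction your argument goes through.
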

\begin{proof}
  (2), (3) and (4) are immediate from \cref{lem:equivariant:Z:localization:functor-id},
  the fact that $i^*$ and $i_*$ preserve all limits (since they are right adjoints),
  and the corresponding facts about $L_S$, cf.\ \cite[Lemma 3.18, Lemma 3.13 and Proposition 6.9]{mattis2024unstablearithmeticfracturesquares}.

  For statement (1), arguing in the usual way (see e.g. \cref{thm:equivariant:main-thm}(1)), we reduce the case $X = K(M,i)$ where $i \ge 2$ and $M$ is Mackey functor with $M \wequi i_*i^* S^{-1} M$.
  But then $M$ is a module over $S^{-1} \ul\Z$, and so the claim follows from \cref{lem:adjunction:split}.
\end{proof}

\subsubsection{$\underline{\Z}$ Resolutions: Completion}

For this subsection let $G$ be a finite $p$-group.
We continue to write $I$ for the augmentation ideal of the Burnside ring Mackey functor $\ul A$.
Assume that $\ul\pi_0 (A) \wequi \ul{A}/(I,p) \wequi \ul{\F}_p$.

\begin{lemma} \label{lem:equivariant:Z:completion:(un)stable-id}
  Let $E \in \SH(\B G)_{\ge 0}$.
  Then $L^{st}_A E \wequi E^{\comp}_p$.

  Moreover, $\hat{L}_A X \wequi L_p X$ for $X \in \Spc(\B G)_*$ nilpotent.
\end{lemma}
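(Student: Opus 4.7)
The plan is to first establish the stable identification $L^{st}_A E \wequi E^\comp_p$ and then deduce the unstable statement by replaying the argument from the Burnside case of \cref{lem:equivariant:functor-id}. The key input for the stable part is that the hypothesis ``$G$ is a finite $p$-group'' pins down the Mackey-functor structure of $\ul\pi_0(A) = \ul A/(I,p)$: restrictions in $\ul A/(I,p)$ are identities (as $[H/H] \mapsto [K/K]$ modulo the augmentation), while the transfer from $G/K$ to $G/H$ is multiplication by the index $[H:K]$, a power of $p$, hence zero modulo $p$ unless $K = H$. This matches precisely the structure of the (upper-)constant Mackey functor $\F_p$, so $H\ul\F_p$ coincides with the constant $H\F_p$ in $\SH(\B G)$.

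With that in hand, I will show that $E^\comp_p$ is $A$-local and that $E \to E^\comp_p$ is an $A$-equivalence, which together yield $L^{st}_A E \wequi E^\comp_p$. Both assertions will be reduced, via a Postnikov-tower argument on $A$, to the corresponding statements about $H\F_p$-completion: each $\ul\pi_k(A)$ is a module over $\ul\pi_0(A) = \ul\F_p$, hence an $H\F_p$-module, so $A$ sits in the closure under extensions and inverse limits of $H\F_p$-modules in $\SH(\B G)_{\ge 0}$. Thus $\otimes A$ and $\otimes H\F_p$ have the same acyclics on bounded-below spectra, and the classical identification of $E^\comp_p$ with $H\F_p$-nilpotent completion on $\SH(\B G)_{\ge 0}$ concludes the stable part.

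For the unstable identification, I will mimic the template from \cref{lem:equivariant:functor-id}: a map $f \in \Spc(\B G)_*$ is an $\hat L_A$-equivalence iff $A \otimes \Sigma^\infty f$ is an equivalence in $\SH(\B G)$, iff (by the stable identification) $\Sigma^\infty f$ is a $p$-equivalence in $\SH(\B G)$, iff (using conservativity of $\omega^\infty \colon \SH(\B G) \to \SH^{S^1}(\B G)$ via the tom Dieck splitting, as in the proof of \cref{lem:equivariant:functor-id}) $\Sigma^\infty_{S^1}(f_+)$ is a $p$-equivalence in $\SH^{S^1}(\B G)$, iff $f$ is an $L_p$-equivalence in $\Spc(\B G)_*$. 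Uniqueness of Bousfield localization then gives $\hat L_A = L_p$, and \cref{lem:unstable-n:main-thm} identifies $L_p X$ with the unstable $p$-completion for nilpotent $X$. I expect the main obstacle to be the Postnikov-tower argument for $A$, namely concluding that $\otimes A$ preserves $H\F_p$-equivalences of bounded-below spectra; this requires interchanging $\otimes$ with the infinite limit defining $A$, and is the step where the $p$-group hypothesis is indispensable (without it, $H\ul\F_p$ would carry nontrivial transfers and the completion theory at $A$ would differ from classical $p$-completion).
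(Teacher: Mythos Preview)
Your reduction from $A$ to $H\ul\F_p$ via a Postnikov tower on $A$ is correct, and the limit interchange you flag as the ``main obstacle'' is in fact harmless: for $F$ bounded below, $\ul\pi_k(F \otimes A)$ depends only on a finite Postnikov truncation of $A$, so $F \otimes H\ul\F_p = 0$ does force $F \otimes A = 0$. But this reduction buys nothing. Since $\ul\pi_0(A) = \ul\pi_0(H\ul\F_p) = \ul\pi_0(\1)/(I,p)$, \cref{lem:stable-bousfield} already gives $L^{st}_A E \wequi E^\comp_{I,p} \wequi L^{st}_{H\ul\F_p} E$ directly.

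The genuine gap is your appeal to a ``classical identification of $E^\comp_p$ with $H\F_p$-nilpotent completion on $\SH(\B G)_{\ge 0}$''. In the genuine equivariant setting this is not a citable fact; it is the content of the lemma. What must be shown is that $E^\comp_p$ is already $I$-complete, so that the $(I,p)$-completion collapses to $p$-completion. This is precisely where the $p$-group hypothesis enters, and not at the Postnikov-on-$A$ step as you suggest: for $G$ a $p$-group one has $I^n \subset pA(G)$ for $n \gg 0$, equivalently every $p$-torsion Mackey functor is annihilated by some power of $I$ and is therefore a finite iterated extension of $\ul\F_p$-modules. The paper supplies this via Laitinen \cite[Proposition~1.12]{Laitinen:BurnsideRingStableCohomotopyFiniteGroup}, running a Postnikov tower on $E$ (not on $A$) to reduce to Eilenberg--Mac Lane spectra of $p$-torsion Mackey functors, then filtering by powers of $I$. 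Without this input your argument is circular: the ``classical'' step for $H\ul\F_p$ is exactly the statement you are trying to prove for $A$. (A minor point: in the unstable step you want conservativity of $\sigma^\infty$, not $\omega^\infty$; that is what the tom Dieck splitting provides and what is used in \cref{lem:equivariant:functor-id}.)
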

\begin{proof}
  To show the stable identification use that $L^{st}_A E \wequi E^{\comp}_{I,p}$ by \cref{lem:stable-bousfield}.
  Since $E^{\comp}_{I,p}$ is $p$-complete it suffices to show that $E^\comp_p$ is $\ul\F_p$-complete.
  Let $(E_{\le n})_n$ be a Postnikov tower with $E \wequi  \lim_n E_{\le n}$.
  It is enough to show that all $(E_{\le n})^\comp_p$ are $\ul\F_p$-complete as completion commutes with limits, e.g.\ \cite[Lemma 2.5]{mattis2024unstablepcompletionmotivichomotopy}.
  Using the fiber sequence $(E_{= n})^\comp_p \to (E_{\le n})^\comp_p \to (E_{< n})^\comp_p$ we reduce to the case of $\Sigma^k M$, where $M$ is a Mackey functor.
  We have that $M^\comp_p \wequi \lim_n M\sslash p^n$ and moreover a fiber sequence $M \sslash p^n \xrightarrow{p} M\sslash p^{n+1} \to M \sslash p$.
  Hence, it suffices to show that $M \sslash p$ is $\ul\F_p$-local.
  There is a fiber sequence $\Sigma (M[p]) \to M \sslash p \to M/p$.
  Therefore, we may reduce to the case where $M$ is $p$-torsion and thus a $(\ul\pi_0(\1)/p)$-module.
  It is even a $(\ul\pi_0(\1)/I^n)$-module for some $n \ge 0$ by \cite[Proposition 1.12]{Laitinen:BurnsideRingStableCohomotopyFiniteGroup} (it is here where we need $G$ to be a finite $p$-group).
  For $k \ge 0$ we obtain exact sequences $I^k M \to I^{k-1}M \to I^{k-1}M/I^{k}M$.
  Since each $I^{k-1}M/I^kM$ is an $\ul{A}/I$-module in addition to being an $\ul{A}/p$-module, it is in fact an $\ul\F_p$-module.
  Since $I^nM = 0$ it follows that $M$ is $\ul\F_p$-local.
  This shows the claim in the stable case.

  The unstable claim follows by the same argument as in \cref{lem:equivariant:functor-id}.
\end{proof}

\begin{theorem} \label{thm:equivariant:Z:completion:main-thm}
  \begin{enumerate}
    \item Let $X \in \Spc(\B G)_*$ be nilpotent and $(1,G)$-connective. The canonical map $\hat{L}_A X \to \Tot T^\bullet X$ is an equivalence.
    \item For $E \in \SH(\B G)_{\ge 1}$ the canonical map $\hat{L}_A \Omega^\infty E \to \Omega^\infty L^{st}_A E$
      is an equivalence.
    \item Let $F \to E \to B$ be a fiber sequence of pointed nilpotent $G$-spaces.
      Then the canonical map $\hat{L}_A F \to \fib{(\hat{L}_A E \to \hat{L}_A B)}$ is an equivalence.
    \item Let $X_\bullet$ be a tower of pointed, nilpotent $G$-spaces with $X = \lim_i X_i$.
      Assume that the tower is highly connected.
      Then the canonical map $\hat{L}_A X \to \lim_i \hat{L}_A X_i$ is an equivalence.
  \end{enumerate}
\end{theorem}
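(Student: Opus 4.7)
My plan is to reduce statement (1) to a base case via Postnikov decomposition, modeled on the proof of \cref{thm:equivariant:main-thm}(1) in the $\ul\pi_0(A) = \ul\pi_0(\1)/n$ situation. The identifications $\hat L_A \wequi L_p$ and $L^{st}_A \wequi (\ph)^\comp_p$ from \cref{lem:equivariant:Z:completion:(un)stable-id} make statement (2) immediate via \cref{lem:equivariant:unstable-stable-S-localization}, and reduce (3) and (4) directly to \cref{lem:unstable-n:fiber-sequence} and \cref{lem:unstable-n:tower} respectively (noting that the proofs in \cref{sec:unstable-n-comp} do not use primality of $p$).

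For (1), taking a $(1,G)$-connective principalized Postnikov tower for $X$ as in \cref{cons:equiv-postnikov} and using (3), (4), \cref{thm:fiber-sequences}, and \cref{prop:Tn-conn}, I would reduce to the case $X = \Omega^\infty \Sigma^k M$ for some Mackey functor $M$ and $k \ge 2$, applying \cref{lem:equivariant:resolution-of-coeff-sys} and \cref{rmk:equivariant:Mackey-coeff} to resolve coefficient systems by Mackey functors along the way; all fiber sequences stay $(2,G)$-connective. Setting $F \coloneqq \Tot T^\bullet_A$ and $P(M) := (F\Omega^\infty \Sigma^{k+i} M)_{i \ge 0}$, the fact that $F$ preserves loops (\cref{thm:fiber-sequences}) turns $P(M)$ into an $\Omega$-spectrum in $\SH^{S^1}(\B G)$. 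The map $\omega^\infty \Sigma^k M \to P(M)$ is an equivalence whenever $M$ is a finite extension of $A$-modules, by \cref{lem:adjunction:split}. My aim is to show it is a $p$-equivalence for arbitrary $M$; once this is established, \cref{lem:unstable-n:infinite-loop-space} yields a $p$-equivalence $\Omega^\infty \Sigma^k M \to F \Omega^\infty \Sigma^k M$ whose target is $p$-complete (as a limit of $\hat L_A$-local objects), closing the argument.

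The main new input compared to the previous theorems is that the $A$-modules here form the much smaller class of constant $\ul\F_p$-Mackey functors, rather than all $p$-torsion Mackey functors. The key observation unlocking the argument is that, since $G$ is a finite $p$-group, any $p$-torsion Mackey functor is annihilated by some power of the augmentation ideal $I$ (by \cite[Proposition 1.12]{Laitinen:BurnsideRingStableCohomotopyFiniteGroup}), so its $I$-adic filtration exhibits it as a finite extension of $\ul A/(I,p) = \ul\F_p$-modules, i.e., $A$-modules. This is precisely the observation already exploited in the stable identification inside the proof of \cref{lem:equivariant:Z:completion:(un)stable-id}. The delicate point I anticipate as the main obstacle is verifying the connectivity hypotheses of \cref{thm:fiber-sequences} when reducing modulo $p$: the cofiber $\Sigma^k M/p$ in $\SH(\B G)$ has nonzero homotopy in two adjacent degrees $k-1$ and $k$, so one will likely have to further Postnikov-decompose into $\Sigma^{k-1}M[p]$ and $\Sigma^k M/pM$ (both Eilenberg--MacLane spectra of $p$-torsion Mackey functors, hence finite extensions of $A$-modules) to stay within the $(2,G)$-connective range required to apply the Principal Fibration Lemma iteratively, especially for the boundary case $k = 2$.
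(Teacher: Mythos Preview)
Your proposal is correct and follows essentially the same route as the paper: statements (2)--(4) reduce via \cref{lem:equivariant:Z:completion:(un)stable-id} to the corresponding results for $L_p$, and (1) is proved exactly as in the $\ul\pi_0(\1)/p$ case of \cref{thm:equivariant:main-thm}, with the additional input (from Laitinen) that a $p$-torsion Mackey functor for a $p$-group is a finite extension of $\ul\F_p$-modules. Your anticipated obstacle is not a real difficulty: since $P(M)$ is an $\Omega$-spectrum, the $p$-equivalence $\omega^\infty\Sigma^k M \to P(M)$ can be checked in levels $i \ge 1$, where $\Sigma^{k+i-1}(M/pM)$ and all terms of its $I$-adic filtration are $(2,G)$-connective, so the hypotheses of \cref{thm:fiber-sequences} are met throughout.
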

\begin{proof}
  (3) and (4) where already shown in \cref{thm:equivariant:main-thm}.
  For (2) combine \cref{lem:equivariant:unstable-stable-S-localization,lem:equivariant:Z:completion:(un)stable-id}.

  For (1) we repeat the arguments from \cref{thm:equivariant:main-thm}(1) in the case $\ul\pi_0A = \ul\pi_0(\1)/p$.
  We must eventually show that $\omega^\infty \Sigma^k M \to G(M)$ is a $p$-equivalence for Mackey functors $M$, and we know this if $M$ is a module over $\ul\F_p$.
  As in that proof, one reduces to the case where $M$ is $p$-torsion.
  As in the proof of \cref{lem:equivariant:Z:completion:(un)stable-id}, this implies that $M$ is a finite extension of $\ul\F_p$-modules, from which we conclude.
\end{proof}

\subsection{Motivic spaces} \label{subsec:motivic}
Let $k$ be a perfect field.
Recall the category of motivic spaces $\Spc(k)$ and motivic spectra $\SH(k)$, e.g. from \cite[\S2.2 and \S4.1]{bachmann2021norms}.
Write $\Sigma^{p+q,q} \colon \SH(k) \to \SH(k)$ for the autoequivalence 
given by tensoring with the motivic sphere $\Sigma^p \Gm^{q}$.
Pick $A \in \CAlg(\SH(k)^\veff)$ (see \cref{rmk:t-structures-SH} for $\SH(k)^\veff$) and let $\scr D \coloneqq \Mod_A(\SH(k))$.
We will have to pick a category of solid objects, and there are some choices for this: for $\nu \ge 0$ put $\scr D^{\nu\sld}_{\ge 0} := \Sigma^{\nu,\nu}\Mod_A(\SH(k)^\veff)$.
For most purposes $\nu=1$ is the only relevant case.
For all $n\ge 0$ we have $\scr X_{*,\ge n}^{\nu\sld} \supseteq \Spc(k)_{*,\ge (\nu+n,\nu)}$,
cf.\ \cref{rmk:L-pres-eff}.

Since $\Spc(k)$ is famously \emph{not} an $\infty$-topos, we cannot directly work 
with the adjunction $\Spc(k) \adj \scr D$, but have to work with $\scr X \coloneqq \Shv_\Nis(\Sm_k)$,
the $\infty$-topos of Nisnevich sheaves on (quasi-compact) smooth $k$-schemes.
This $\infty$-topos admits a locally finite-dimensional cover $\scr U$ \cite[Proposition A.3]{mattis2024unstablearithmeticfracturesquares}.
Note that by construction there is an adjunction $L_{\A^1} \colon \scr X \adj \Spc(k) \noloc \iota_{\A^1}$.
Hence, we will work with the composed adjunction $L \colon \scr X \adj \Spc(k) \adj \scr D \noloc R$.
We get the associated completion functor $\Tot T^\bullet_A \colon \scr X_* \to \scr X_*$ (which in fact takes values in $\Spc(k)_* \subset \scr X$).

\begin{remark}\label{rmk:L-pres-eff}
Recall the notion of $\nu$-effective motivic spaces $\Spc(k)_{\ge (\nu,\nu)}$ from \cite[\S 3]{asok2024p1stabilizationunstablemotivichomotopy},
where the notation $O(S^{\nu,\nu})$ was used.
It follows from the definition of $O(S^{d+\nu,\nu})$ in the stable 
situation together with \cite[Proposition 3.2.4]{asok2024p1stabilizationunstablemotivichomotopy} that if $X \in \Spc(k)_*$ is $\nu$-effective and $d$-connective, then $LX \in \scr D^{\nu\sld}_{\ge d}$.
\end{remark}

\begin{remark}[t-structures on $\SH(k)$] \label{rmk:t-structures-SH}
  The stable $\infty$-category $\SH(k)$ admits multiple useful t-structures.
  The \emph{homotopy t-structure} was defined by Morel \cite[Theorem 5.2.3]{morel2003introduction}, its connective and coconnective parts 
  are given by vanishing conditions on the bigraded homotopy sheaves.
  We will write $\SH(k)^\heartsuit$ for the heart of the homotopy t-structure,
  which Morel identified as the abelian category of homotopy modules \cite[Theorem 5.2.6]{morel2003introduction}.
  As usual, we will write $\ul\pi_n(-)_*$ for the homotopy objects in this t-structure. This t-structure is left-complete,
  cf.\ \cite[Corollary 2.4]{Hoyois:algebraicCobordismtoMotivicCohomology}.

  One other choice of a t-structure is the \emph{effective t-structure},
  whose connective part is given by $\SH(k)^\veff$, i.e., by those motivic spectra 
  that are both connective in the homotopy t-structure and effective \cite[§3]{bachmann2017generalized}.
  This t-structure restricts to a t-structure on $\SH(k)^\eff$, the subcategory of effective motivic spectra.
  We will write $\SH(k)^{\eff\heartsuit}$ for the heart,
  and $\ul\pi_n^\eff(-)$ for the homotopy objects in the effective t-structure.
\end{remark}

\begin{remark}[Slice filtration] \label{rmk:slices}
  For every $n \in \Z$ we write $f_n \colon \SH(k) \to \SH(k)$ for the $n$-th effective cover functor,
  and $s_n \coloneqq \cof(f_{n+1} \to f_n)$ for the $n$-th slice functor, cf.\ \cite[§2]{voevodsky3open}.
\end{remark}

We will first verify the axioms.
\begin{lemma} \label{lem:motivic:omega-infty}
  The functors 
  \begin{equation*}
    \SH(k)^\eff \xrightarrow{\omega^\infty} \SH^{S^1}(k) \hookrightarrow \Sp(\Shv_\Nis(\Sm_k))
  \end{equation*}
  preserve limits and colimits and are conservative.
\end{lemma}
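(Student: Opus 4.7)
My plan is to split the statement into its two constituent functors and handle each via adjoint-functor calculus combined with compact-generator arguments.

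For the inclusion $\iota \colon \SH^{S^1}(k) \hookrightarrow \Sp(\Shv_\Nis(\Sm_k))$: this is the inclusion of $\A^1$-local objects, fully faithful and right adjoint to the $\A^1$-localization $L_{\A^1}$. Being fully faithful gives conservativity, and being a right adjoint gives limit preservation. The only nontrivial point is colimit preservation. The plan is to observe that $E \in \Sp(\Shv_\Nis(\Sm_k))$ is $\A^1$-local iff $\map(C_X, E) \wequi 0$ for all $X \in \Sm_k$, where $C_X \coloneqq \cof(\Sigma^\infty (\A^1 \times X)_+ \to \Sigma^\infty X_+)$. Each $C_X$ is a finite colimit of compact objects and hence compact, and in a stable presentable $\infty$-category the right-orthogonal to a set of compact objects is closed under all colimits. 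Thus $\iota$ preserves colimits (equivalently, $L_{\A^1}$ is smashing on $\Sp(\Shv_\Nis(\Sm_k))$).

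For $\omega^\infty \colon \SH(k)^\eff \to \SH^{S^1}(k)$: the key observation is that the left adjoint $\sigma^\infty = \Sigma^\infty_\Gm \colon \SH^{S^1}(k) \to \SH(k)$ factors through $\SH(k)^\eff$, since the latter is by definition the localizing subcategory of $\SH(k)$ generated by $\Sigma^\infty_{\P^1} X_+ = \sigma^\infty \Sigma^\infty_{S^1} X_+$ for $X \in \Sm_k$, and $\sigma^\infty$ preserves colimits and desuspensions. Using that $\SH(k)^\eff \subset \SH(k)$ is fully faithful, the hom-equivalence for the ambient adjunction restricts to an adjunction $\sigma^\infty \colon \SH^{S^1}(k) \adj \SH(k)^\eff \noloc \omega^\infty$. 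Being a right adjoint, $\omega^\infty$ preserves limits. Moreover $\sigma^\infty$ sends the standard set of compact generators $\{\Sigma^n \Sigma^\infty_{S^1} X_+\}_{X \in \Sm_k,\, n \in \Z}$ of $\SH^{S^1}(k)$ to the set $\{\Sigma^n \Sigma^\infty_{\P^1} X_+\}$, a compact generating set of $\SH(k)^\eff$ (cf.\ \cite{bachmann2021norms}). This implies conservativity of $\omega^\infty$ (any $f$ with $\omega^\infty f$ an equivalence is detected by the generators $\sigma^\infty Y$) and preservation of filtered colimits; combined with exactness (right adjoint between stable $\infty$-categories), this yields preservation of all colimits.

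The only genuinely subtle step is verifying that $\sigma^\infty$ factors through $\SH(k)^\eff$, which reduces to the closure properties of $\SH(k)^\eff$ under colimits and suspensions/desuspensions; everything else is formal manipulation of adjoint functors together with standard facts about compact generators in $\SH^{S^1}(k)$ and $\SH(k)^\eff$.
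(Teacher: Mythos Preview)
Your proposal is correct and follows essentially the same strategy as the paper's proof: both functors are right adjoints (giving limits), both are shown conservative via full faithfulness respectively via $\sigma^\infty$ hitting a generating set, and colimit preservation in each case comes down to a compact-generator argument. The only difference is cosmetic: for $\omega^\infty$ the paper simply cites \cite[Lemma 6.1(1)]{Bachmann2020TowardsConservativity} for conservativity and colimit preservation, whereas you spell out the underlying reason (that $\sigma^\infty$ takes the standard compact generating set of $\SH^{S^1}(k)$ onto a compact generating set of $\SH(k)^\eff$); and for the inclusion, the paper phrases the filtered-colimit argument in terms of compactness of $\Sigma^\infty_+ U$ in $\Sp(\Shv_\Nis(\Sm_k))$, which is equivalent to your formulation via the compactness of the cofibers $C_X$.
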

\begin{proof}
  Both functors are right adjoints by construction, and hence preserve limits.
  That $\omega^\infty$ is conservative and preserves colimits was shown in \cite[Lemma 6.1 (1)]{Bachmann2020TowardsConservativity}.
  The second functor is fully faithful and hence conservative.
  It is exact and thus preserves finite colimits.
  That it also preserves filtered colimits 
  follows immediately from the fact that the 
  Nisnevich sheaves of spectra $\Sigma^{\infty}_+ U$ 
  with $U \in \Sm_k$ are compact (see, e.g., \cite[Proposition A.3(2)]{bachmann2021norms}).
\end{proof}

\begin{lemma}
  The axioms \axiomref{C}, \axiomref{M}, \axiomref{S1} and \axiomref{S3} hold for the adjunction $L \dashv R$.
  If $\nu \ge 1$ and $\ul\pi_0(A)_*$ is idempotent in $\SH(k)^{\heartsuit}$,
  then also \axiomref{S2} holds.
\end{lemma}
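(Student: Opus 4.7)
The plan is to address the four easy axioms first and then isolate \axiomref{S2} as the only substantive point.

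For \axiomref{C} and \axiomref{S3}, the restricted functor $R\colon \scr D^{\nu\sld}_{\ge 0}\to \scr X$ factors as
\[
\Sigma^{\nu,\nu}\Mod_A(\SH(k)^\veff)
\xrightarrow{\Sigma^{-\nu,-\nu}}\Mod_A(\SH(k)^\veff)
\xrightarrow{U_A}\SH(k)^\eff
\xrightarrow{\omega^\infty}\SH^{S^1}(k)
\xrightarrow{\Omega^\infty}\Spc(k)\xrightarrow{\iota_{\A^1}}\scr X.
\]
The first functor is an equivalence; $U_A$ is monadic, so preserves sifted colimits and is conservative; the two middle functors are handled by \cref{lem:motivic:omega-infty}; $\Omega^\infty$ preserves sifted colimits and is conservative on connective spectra in any $\infty$-topos (essentially by \cite[Corollary 5.2.6.27]{higheralgebra} and the fact that we are restricting to connective objects); and $\iota_{\A^1}$, being a right adjoint into a full subcategory, preserves all limits and colimits that it can see and is conservative on its image. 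This gives \axiomref{C} and \axiomref{S3}.

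For \axiomref{M}, note that for any $X\in\scr X_*$ the image $LX = A\otimes\Sigma^\infty X$ lies in $\Mod_A(\SH(k)^\veff)$, since $A\in\CAlg(\SH(k)^\veff)$ and $\Sigma^\infty X$ is very effective, and $\SH(k)^\veff$ is closed under tensor products. Writing $E=\Sigma^{\nu,\nu}E'$ with $E'\in\Mod_A(\SH(k)^\veff)$, we obtain $LX\otimes E\wequi\Sigma^{\nu,\nu}(LX\otimes_A E')\in\scr D^{\nu\sld}_{\ge 0}$. For \axiomref{S1}, given $E\in\scr D^{\nu\sld}_{\ge 0}$ the underlying motivic spectrum is $\nu$-effective, so its $\Omega^\infty$ is a $\nu$-effective motivic space by \cite[Proposition 3.2.4]{asok2024p1stabilizationunstablemotivichomotopy}; hence $RE\in\Spc(k)_{*,\ge(\nu,\nu)}$, and \cref{rmk:L-pres-eff} gives $LRE\in\scr D^{\nu\sld}_{\ge 0}$.

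The main work is \axiomref{S2}, which is a Hurewicz-type statement. Write $E=\Sigma^{\nu+1,\nu}F$ with $F\in\Mod_A(\SH(k)^\veff)$. Under $\Sigma^{-\nu,-\nu}$, the heart of the solid $t$-structure on $\scr D$ is identified with the heart of the effective $t$-structure on $\Mod_A(\SH(k)^\eff)$, so that $\pi_1 E$ corresponds to $\ul\pi_0^\eff(F)$. The plan is to show that $\fib(LRE\to E)\in\scr D^{\nu\sld}_{\ge 2}$. Applying $\Sigma^{-\nu,-\nu}$ reduces this to proving that for a $1$-connective (in the effective $t$-structure) very effective $A$-module $G=\Sigma F$, the fiber of the counit $A\otimes\Sigma^\infty\Omega^\infty\omega^\infty G\to G$ has $\ul\pi_0^\eff=0$ and its $\ul\pi_1^\eff$ already vanishes upon tensoring with $\ul\pi_0(A)_*$. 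The first vanishing is the motivic Hurewicz principle: for $G$ $1$-connective in $\SH(k)^\eff$, the map $\Sigma^\infty\Omega^\infty\omega^\infty G\to G$ is an isomorphism on $\ul\pi_0^\eff$ (this follows by comparing with the stable case via the adjunction $\sigma^\infty\dashv\omega^\infty$ restricted to effective objects, together with the standard Hurewicz argument in $\SH^{S^1}(k)$, cf.\ \cite[Lemma 6.1]{Bachmann2020TowardsConservativity}). Tensoring the resulting cofiber sequence with $A$ and taking $\ul\pi_0^\eff$ then yields a surjection onto $\ul\pi_0^\eff(G)\otimes_{\ul\pi_0(\1)_*}\ul\pi_0(A)_*\wequi\ul\pi_0^\eff(G)$, the last identification because $\ul\pi_0^\eff(G)$ is already a $\ul\pi_0(A)_*$-module (as $G$ is an $A$-module) and $\ul\pi_0(A)_*$ is idempotent. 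The hard part is to make this last step rigorous: one must check that $\ul\pi_0(A)_*$ being idempotent in $\SH(k)^\heartsuit$ suffices to force idempotency at the level of the effective heart so that the tensor identification applies, and that the $\nu\ge 1$ hypothesis controls the bidegree so that no extra $\Gm$-effects contribute. Once this is done, the conclusion is immediate from the standard axiom-verification template.
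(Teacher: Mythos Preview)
Your treatment of \axiomref{C}, \axiomref{M}, \axiomref{S1}, \axiomref{S3} is essentially the paper's, with one slip: the factorization you write for $R$ contains an extraneous $\Sigma^{-\nu,-\nu}$. The right adjoint $R$ on $\scr D^{\nu\sld}_{\ge 0}$ is simply the forgetful functor followed by $\Omega^\infty$; since $\Sigma^{\nu,\nu}\SH(k)^\veff\subset\SH(k)^\veff$ for $\nu\ge 0$, the correct chain lands directly in $\SH(k)^\veff$ with no shift. As written, your composite computes $\Omega^\infty\Sigma^{-\nu,-\nu}E$, not $\Omega^\infty E$. This is easily repaired and does not affect the conclusion.

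For \axiomref{S2} there is a genuine gap. Your reduction ``apply $\Sigma^{-\nu,-\nu}$'' does not work: the counit $LRE\to E$ involves $\Omega^\infty E$, and $\Omega^\infty$ does not intertwine with $\Sigma^{-\nu,-\nu}$, so the problem for $E$ is not the same as the problem for $G=\Sigma^{-\nu,-\nu}E$. Consequently your Hurewicz statement for $G$, even if true, is not what is needed. You also correctly flag, but do not resolve, the mismatch between the hypothesis (idempotence of $\ul\pi_0(A)_*$ in the homotopy-module heart $\SH(k)^\heartsuit$) and your chosen arena (the effective heart).

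The paper avoids both issues by staying in the homotopy $t$-structure throughout. The translation step is \cite[Proposition~4]{bachmann2017generalized}: a map is an isomorphism on $\pi_1$ for the $\nu$-solid $t$-structure if and only if it is an isomorphism on $\ul\pi_1(\ph)_{-\nu}$. The Hurewicz input is that for $E\in\Sigma^{2,1}\SH(k)^\veff$ the counit $\Sigma^\infty\Omega^\infty E\to E$ is an isomorphism on $\ul\pi_1(\ph)_*$, which the paper deduces from full faithfulness of $\Sigma^{2,1}\SH(k)^{\eff\heartsuit}\to\Sigma\SH^{S^1}(k)(1)^\heartsuit\to\scr X_*$ via \cite[Theorem~6.9]{Bachmann2020TowardsConservativity} and \cite[Proposition~3.2.12]{asok2024p1stabilizationunstablemotivichomotopy}. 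The assumption $\nu\ge 1$ guarantees $E\in\scr D^{\nu\sld}_{\ge 1}\subset\Sigma^{2,1}\SH(k)^\veff$, so one gets $\ul\pi_1(LRE)_*\wequi\ul\pi_1(E)_*\otimes\ul\pi_0(A)_*$, and idempotence applies directly in $\SH(k)^\heartsuit$ with no passage to the effective heart required.
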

\begin{proof}
  For axioms \axiomref{C} and \axiomref{S3} we consider the following commutative diagram:
  \begin{center}
    \begin{tikzcd}
      \Sp(\Shv_\Nis(\Sm_k))_{\ge 0} \ar[d, "\Omega^\infty"] &\SH^{S^1}(k)_{\ge 0} \ar[d] \ar[l, hook] &\SH(k)^\veff \ar[l, "\omega^\infty"'] &\Mod_A(\SH(k)^\veff) \ar[l] \\
      \Shv_\Nis(\Sm_k) &\Spc(k) \ar[l, hook] \rlap{.}
    \end{tikzcd}
  \end{center}
  The functor $R$ is given by the composition from the top right to the bottom left.
  Hence, it suffices to show that any functor in this composition preserves 
  sifted colimits and is conservative. 
  For the forgetful functor $\Mod_A(\SH(k)^\veff) \to \SH(k)^\veff$,
  this follows from e.g.\ \cite[Proposition 4.8.5.8 (4) and Corollary 4.2.3.7 (2)]{higheralgebra}.
  See \cref{lem:motivic:omega-infty} for $\omega^\infty$ and the inclusion.
  In the proof of \cref{lem:sheaves:axioms} we showed the statement for $\Omega^\infty$.

  For axiom \axiomref{M}, it suffices to note that for any $X \in \scr X_*$, we have $LX \in \scr D^{0\sld}_{\ge 0}$ (see \cref{rmk:L-pres-eff}), and that \[ D^{a\sld}_{\ge b} \otimes D^{a'\sld}_{\ge b'} \subset D^{(a+a')\sld}_{\ge b+b'}. \]
  The same reasoning also implies axiom \axiomref{S1}, using that $\Omega^\infty$ preserves effectivity \cite[Proposition 3.2.12]{asok2024p1stabilizationunstablemotivichomotopy}.

  For \axiomref{S2}, let us first show that if $E \in \Sigma^{2,1}\SH(k)^\veff$ then $\Sigma^\infty \Omega^\infty E \to E$ induces an isomorphism on $\ul\pi_1(\ph)_*$. 
  Equivalently, using \cite[Proposition 5]{bachmann2017generalized}, the composite \[ \Sigma^{2,1}\SH(k)^{\eff\heartsuit} \to \Sigma \SH^{S^1}(k)(1)^\heartsuit \to \scr X_* \] should be fully faithful.
  The first functor is fully faithful by \cite[Theorem 6.9]{Bachmann2020TowardsConservativity}, and the second by the classical Hurewicz theorem together with \cite[Proposition 3.2.12]{asok2024p1stabilizationunstablemotivichomotopy}.
  Now let $E \in \scr D^{\nu\sld}_{\ge 1}$.
  The functor $\scr D^{0\sld} \to \SH(k)^\eff$ being $t$-exact (see e.g. \cite[Lemma 29]{bachmann-tambara}), in order to show that $LRE \to E$ induces an isomorphism on $\pi_1$ (with respect to the $t$-structure specified by $\scr D^{\nu\sld}$), it is by \cite[Proposition 4]{bachmann2017generalized} sufficient (and necessary) to show that $\ul\pi_1(LRE)_{-\nu} \wequi \ul\pi_1(E)_{-\nu}$.
  Since $\nu \ge 1$ we have (by what we first showed) that $\ul\pi_1(LRE)_* \wequi \ul\pi_1(E)_* \otimes \ul\pi_0(A)_*$.
  This implies what we want since $\ul\pi_0(A)_*$ is idempotent by assumption.
\end{proof}
In the cases discussed below, $\ul\pi_0(A)_*$ will always be either a quotient or a localization of the unit (or a combination of the two), whence idempotent,
so that the axioms hold.

\begin{corollary} \label{cor:motivic-preserve-effectivity}
Let $X \in \Spc(k)_*$ be $n$-effective and $d$-connected.
Then $T^\bullet_A(X)$ consists of $n$-effective $d$-connected spaces.
\end{corollary}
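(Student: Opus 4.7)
The plan is to mimic the proof of \cref{cor:connected}, tracking $\nu$-effectivity alongside connectivity. (I write $\nu$ in place of the $n$ in the statement to avoid clashing with the totalization superscript.) By \cref{rmk:L-pres-eff}, the hypothesis gives $X \in \scr X^{\nu\sld}_{*, \ge d}$, so the $d$-connectivity of each $T^n(X)$ already follows from \cref{cor:connected}.

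The essential new observation for $\nu$-effectivity is that $R$, when restricted to $\scr D^{\nu\sld}_{\ge 0}$, lands in the subcategory of $\nu$-effective motivic spaces: indeed $R$ factors as $\Mod_A(\SH(k)^{\veff}) \to \SH(k)^\eff \xrightarrow{\omega^\infty} \SH^{S^1}(k) \xrightarrow{\Omega^\infty} \Spc(k)_*$, and $\Omega^\infty$ preserves effectivity by \cite[Proposition 3.2.12]{asok2024p1stabilizationunstablemotivichomotopy} (the same ingredient used to verify \axiomref{S1}). In particular $T^0(X) = RL(X)$ is $\nu$-effective, and iterating shows that every cosimplicial term $(RL)^{j+1}(X)$ is $\nu$-effective. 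For the inductive step along the totalization degree, I use the fiber sequence $\Omega^n C^n X \to T^n X \to T^{n-1} X$ of \eqref{eq:C-T}: combining \eqref{eq:CRSL} with the stability of $\scr D^{\nu\sld}_{\ge d}$ under iterated $LR$ (by \axiomref{S1} and \cref{lemm:first-consequences}(2)) and under finite limits in $\scr D$ (as the nonnegative part of a $t$-structure on a stable $\infty$-category), one obtains $\Omega^n S^n L X \in \scr D^{\nu\sld}_{\ge d}$, whence $\Omega^n C^n X \simeq R(\Omega^n S^n L X)$ is $\nu$-effective and $d$-connective.

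The main remaining obstacle is the closure step: given that both $\Omega^n C^n X$ and $T^{n-1} X$ are $\nu$-effective, one needs $T^n X$ (sitting in the above fiber sequence) to be $\nu$-effective as well. I would handle this along the lines of \cref{prop:stability-consequence}, writing $T^n X \simeq \Omega^n C^n X \sslash \Omega T^{n-1} X$ and expressing the right-hand side as a colimit of products $\Omega^n C^n X \times (\Omega T^{n-1} X)^k$. Since $\nu$-effective motivic spaces are closed under colimits by definition and under smash products with pointed motivic spaces, this reduces the problem to controlling the effectivity of $\Omega T^{n-1} X$, which one can extract from the corresponding stable statement applied to $L T^{n-1} X \in \scr D^{\nu\sld}$.
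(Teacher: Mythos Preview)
The paper's proof is a single sentence: set the solid-structure parameter $\nu$ equal to the given effectivity level and invoke \cref{cor:connected}. The whole point of allowing $\nu$ to vary in the motivic setup is precisely that \cref{cor:connected} then delivers, in one stroke, that every term of $T^\bullet(X)$ lies in $\scr X^{\nu\sld}_{*,\ge d}$, which the paper reads as ``$\nu$-effective and $d$-connected''. You do invoke \cref{cor:connected} with this choice of $\nu$ for the connectivity, but then mount a separate induction for the effectivity; this amounts to rerunning the proof of \cref{prop:Tn-conn} by hand while manually tracking a datum that the $\nu$-parametrized framework is designed to carry automatically.

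Your explicit sketch also has two gaps. First, the connective part of a $t$-structure is \emph{not} closed under finite limits (consider $\Omega$), so you cannot obtain $\Omega^n S^n LX \in \scr D^{\nu\sld}_{\ge d}$ from closure under iterated $LR$ and finite limits alone; the correct input is \cref{lemm:Sn-conn}, which gives the needed connectivity gain for $S^n$. Second, in the closure step you propose to extract the effectivity of $\Omega T^{n-1} X$ ``from the corresponding stable statement applied to $L T^{n-1} X \in \scr D^{\nu\sld}$'', but this runs in the wrong direction: the inclusion in \cref{rmk:L-pres-eff} goes from $\nu$-effective to $\scr X^{\nu\sld}$, not back. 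What you actually want here is the inductive hypothesis that $T^{n-1} X$ is itself $\nu$-effective and $d$-connective, together with preservation of $\nu$-effectivity under $\Omega$ and finite products, so that each bar term $\Omega^n C^n X \times (\Omega T^{n-1} X)^k$ is $\nu$-effective.
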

\begin{proof}
This follows from \cref{cor:connected} by taking $\nu=n$.
\end{proof}

Before stating the main theorems of this section, we need some preliminaries.

\begin{definition} \label{def:motivic:resolvable}
We call $X \in \Spc(k)_*$ \emph{resolvable} if there exists a tower $X \to X_\bullet$ as well as fiber sequences $X_{i+1} \to X_i \to \Omega^\infty B_i$, such that:
\begin{enumerate}
\item The connectivity of $X \to X_i$ tends to infinity with $i$.
\item The connectivity of $B_i$ tends to infinity with $i$.
\item $B_i \in \Sigma^{3,1} \SH(k)^\veff$.
\item $X_0 = *$.
\end{enumerate}
\end{definition}
\begin{remark} \label{rmk:motivic:resolvable}
It is proved in \cite[Construction 4.1.7, Remark 4.1.13]{asok2024p1stabilizationunstablemotivichomotopy}\todolater{add ref for the $1$-effective case once freudenthal paper gets updated} that $X \in \Spc(k)_*$ is resolvable as soon as $X$ is nilpotent and either
\begin{itemize}
\item $X$ is $2$-effective, or
\item $X$ is $1$-effective and $e$-periodic, where $e$ is the exponential characteristic of $k$.
\end{itemize}
\end{remark}

\begin{remark} \label{rmk:generalized-tower}
It follows that $X_i \in \scr X^{1\sld}_{*,\ge 1}$ and $\Omega^\infty B_i \in \scr X^{1\sld}_{*,\ge d_i}$ for some sequence $d_i$ tending to infinity with $i$.
Indeed, the second claim is (essentially) \cref{lemm:first-consequences} (2), combined with the assumption 
that the connectivity of the $B_i$ tends to infinity with $i$.
The first claim is proven inductively, 
using the second claim, \cite[Theorem 4.2.3]{asok2024p1stabilizationunstablemotivichomotopy}
and the definition of $\scr D^{1\sld}$.
\end{remark}

\begin{remark}
  If $X$ is resolvable and $n$-effective,
  one may choose the $X_i$ and $B_i$ such that they are all $n$-effective,
  again by \cite[Construction 4.1.7 and Remark 4.1.13]{asok2024p1stabilizationunstablemotivichomotopy}.
\end{remark}

The next result concerns the interaction between the $\infty$-topos-theoretic $S$-localization in $\scr X$ 
and the stable $S$-localization in $\SH(k)$.
\begin{lemma} \label{lem:motivic:unstable-stable-S-localization}
  Let $S \subset \Z$, and let $E \in \SH(k)$ be $1$-connective.
  Denote by $L_S \colon \scr X \to \scr X$ the unstable $S$-localization functor.
  Then $L_S \Omega^{\infty} E \wequi \Omega^\infty S^{-1} E$.
\end{lemma}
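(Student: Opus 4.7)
The plan is to reduce the assertion to the analogous unstable $S$-localization statement in the Nisnevich $\infty$-topos $\scr X = \Shv_\Nis(\Sm_k)$, which was already established in the sheaves subsection.

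First, I would factor $\Omega^\infty_{\SH(k)} \colon \SH(k) \to \scr X$ through the adjunction $\sigma^\infty \dashv \omega^\infty$ and the fully faithful inclusion $\SH^{S^1}(k) \hookrightarrow \Sp(\scr X)$, yielding $\Omega^\infty_{\SH(k)} E \wequi \Omega^\infty_{\scr X}(\omega^\infty E)$. Morel's homotopy $t$-structure on $\SH(k)$ satisfies $\ul\pi_p(\omega^\infty E) \wequi \ul\pi_p(E)_0$, which vanishes for $p < 1$ when $E \in \SH(k)_{\ge 1}$, so $\omega^\infty E \in \Sp(\scr X)_{\ge 1}$ (note that for objects already in $\SH^{S^1}(k)$, the Nisnevich homotopy sheaves computed in $\Sp(\scr X)$ agree with those computed in $\SH^{S^1}(k)$).

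Second, I would pick $A = S^{-1}\1 \in \CAlg(\Sp(\scr X)_{\ge 0})$, so that $\ul\pi_0(A) = S^{-1}\ul\Z \subseteq \ul\Q$. Since $\scr X$ admits a locally finite-dimensional cover (\cref{rmk:locally-finite-dimensional-examples}), \cref{thm:sheaves:main-thm}(2) (in the rational form, applicable since $\ul\pi_0(A) \subseteq \ul\Q$) provides an equivalence
\begin{equation*}
  \hat L_A \Omega^\infty_{\scr X}(\omega^\infty E) \wequi \Omega^\infty_{\scr X}\bigl(L_A^{st}(\omega^\infty E)\bigr) \wequi \Omega^\infty_{\scr X}\bigl(S^{-1} \omega^\infty E\bigr),
\end{equation*}
the second equivalence by the stable identification in \cref{lem:sheaves:functor-id}(1). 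Moreover, $\Omega^\infty_{\scr X}(\omega^\infty E)$ is the infinite loop sheaf of a $1$-connective spectrum, so it is simple (trivial action of $\pi_1$ on the abelian higher homotopy sheaves), hence nilpotent in the sense of \cite[Appendix A.2]{mattis2024unstablepcompletionmotivichomotopy}; this lets \cref{lem:sheaves:functor-id}(2) identify $\hat L_A$ with $L_S$ on this object.

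Finally, since $\omega^\infty$ preserves filtered colimits by \cref{lem:motivic:omega-infty} and $S^{-1}$ is built as a filtered colimit of multiplication-by-$k$ maps, we obtain $S^{-1}(\omega^\infty E) \wequi \omega^\infty(S^{-1} E)$. Stringing the three steps together,
\begin{equation*}
  L_S \Omega^\infty_{\SH(k)} E \wequi L_S \Omega^\infty_{\scr X}(\omega^\infty E) \wequi \Omega^\infty_{\scr X}\bigl(\omega^\infty S^{-1} E\bigr) \wequi \Omega^\infty_{\SH(k)} S^{-1} E.
\end{equation*}
The main obstacle I anticipate is the verification that $\Omega^\infty_{\scr X}(\omega^\infty E)$ qualifies as nilpotent in the precise sense of the cited appendix, so that the identification $\hat L_A \wequi L_S$ can be invoked; however, this should be standard for infinite loop sheaves. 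A secondary delicate point is the $t$-structural compatibility at the forgetful step $\SH^{S^1}(k) \hookrightarrow \Sp(\scr X)$, which is handled by noting that the Nisnevich homotopy sheaves coincide on objects of $\SH^{S^1}(k)$ since such objects are already $\A^1$-local.
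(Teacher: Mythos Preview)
Your proposal is correct and follows essentially the same outline as the paper: factor $\Omega^\infty_{\SH(k)}$ through $\omega^\infty$ and the inclusion $\SH^{S^1}(k) \hookrightarrow \Sp(\scr X)$, use that these preserve filtered colimits (hence commute with $S^{-1}$) and $1$-connectivity, and then invoke the known sheaf-topos statement. The only difference is packaging: the paper cites \cite[Lemma 3.18]{mattis2024unstablearithmeticfracturesquares} directly for the last step, whereas you route through \cref{thm:sheaves:main-thm}(2) and \cref{lem:sheaves:functor-id}(2), which themselves rest on that same external lemma---so your argument is marginally more circuitous but not substantively different. One small point: \cref{lem:motivic:omega-infty} is stated for $\omega^\infty$ restricted to $\SH(k)^\eff$, so you should note (as the paper's diagram makes explicit) that $E \in \SH(k)_{\ge 1} \subset \SH(k)^\eff$ before citing it.
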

\begin{proof}
  Consider the commutative diagram 
  \begin{center}
    \begin{tikzcd}
      \Sp(\Shv_\Nis(\Sm_k)) \ar[d, "\Omega^\infty"] &\SH^{S^1}(k) \ar[d] \ar[l, hook] &\SH(k)^\eff \ar[l, "\omega^\infty"']\\
      \Shv_\Nis(\Sm_k) &\Spc(k) \ar[l, hook]\rlap{.}
    \end{tikzcd}
  \end{center}
  It suffices to show that the composition on the top and left commutes with $S$-localization.
  Since both $\omega^\infty$ and the inclusion
  $\SH^{S^1}(k) \hookrightarrow \Sp(\Shv_\Nis(\Sm_k))$ preserve colimits by \cref{lem:motivic:omega-infty}, 
  it follows from e.g.\ \cite[Corollary 2.7]{mattis2024unstablearithmeticfracturesquares} that they commute with $S$-localization.
  Moreover, both of these functors preserve $1$-connective objects.
  Hence, the result follows from \cite[Lemma 3.18]{mattis2024unstablearithmeticfracturesquares}.
\end{proof}
We also need a version of the last lemma for the $n$-completion functors:
\begin{lemma} \label{lem:motivic:unstable-stable-n-completion}
  Let $n \in \Z$, and let $E \in \SH(k)$ be $1$-connective.
  Denote by $L_n \colon \scr X \to \scr X$ the unstable $n$-completion functor from \cref{def:unstable-n:def-n-equiv}.
  Then $L_n \Omega^{\infty} E \wequi \tau_{\ge 1} \Omega^\infty E^{\comp}_n$.
\end{lemma}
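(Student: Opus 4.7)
The plan is to mirror the proof of \cref{lem:motivic:unstable-stable-S-localization}, reducing the motivic claim to the corresponding sheaf-level statement \cref{lem:unstable-n:infinite-loop-space} applied in the $\infty$-topos $\scr X = \Shv_\Nis(\Sm_k)$. To this end, I would introduce the composite
\[ \iota \colon \SH(k) \xrightarrow{\omega^\infty} \SH^{S^1}(k) \hookrightarrow \Sp(\scr X), \]
arising from the commutative diagram used in the previous lemma, and record the two key properties: (a) $\iota$ preserves limits, since both factors are right adjoints by \cref{lem:motivic:omega-infty}; (b) $\iota$ preserves $1$-connective objects, since the $t$-structure on $\SH^{S^1}(k) \subseteq \Sp(\scr X)$ is the induced one, and the relationship between the Morel homotopy $t$-structure on $\SH(k)$ and the $S^1$-stable $t$-structure on $\SH^{S^1}(k)$ implies that $\omega^\infty$ takes $1$-connective objects to $1$-connective objects.

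From (a), since $(-)^\comp_n = \lim_k (-)/n^k$ is built from limits, $n$-completion is compatible with $\iota$, i.e., $\iota(E^\comp_n) \wequi (\iota E)^\comp_n$. From (b), $\iota E \in \Sp(\scr X)_{\ge 1}$. Moreover, by construction of the adjunctions, there is a canonical identification $\Omega^\infty_\scr X \iota \wequi \Omega^\infty$, where the right-hand side has target in $\scr X$ via the fully faithful embedding $\Spc(k) \hookrightarrow \scr X$. Applying \cref{lem:unstable-n:infinite-loop-space} to $\iota E$ in the $\infty$-topos $\scr X$ then yields
\[ L_n \Omega^\infty E \wequi L_n \Omega^\infty_\scr X (\iota E) \wequi \tau_{\ge 1} \Omega^\infty_\scr X (\iota E)^\comp_n \wequi \tau_{\ge 1} \Omega^\infty_\scr X \iota(E^\comp_n) \wequi \tau_{\ge 1} \Omega^\infty E^\comp_n, \]
which is the desired equivalence.

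I do not anticipate any serious obstacle in executing this plan: the only verification of substance is item (b), namely the compatibility of $\omega^\infty$ with connectivity in the relevant $t$-structures, which is standard and can be checked on the level of Nisnevich homotopy sheaves. All the real content of the lemma has already been absorbed into the sheaf-level input \cref{lem:unstable-n:infinite-loop-space}, and the $\scr X$-topos $\Shv_\Nis(\Sm_k)$ admits a locally finite-dimensional cover, so that lemma applies without further hypotheses.
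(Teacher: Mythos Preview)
Your proposal is correct and follows essentially the same route as the paper: pass along $\omega^\infty$ and the inclusion into $\Sp(\scr X)$, use that these preserve limits and $1$-connective objects, and then invoke the sheaf-level result. The only difference is cosmetic: the paper first reduces to a single prime $\ell$ via \cref{lem:unstable-n:main-thm} and \cref{lem:stable-bousfield:n-completion} and then cites the $p$-completion statement \cite[Lemma 3.17]{mattis2024unstablepcompletionmotivichomotopy}, whereas you apply the paper's own \cref{lem:unstable-n:infinite-loop-space} directly for general $n$, which is slightly cleaner.
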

\begin{proof}
  Since $\Omega^\infty E$ is nilpotent (as $E$ is $1$-connective),
  we have that $L_n \Omega^\infty E \wequi \prod_{\ell | n} L_\ell \Omega^\infty E$,
  cf.\ \cref{lem:unstable-n:main-thm}.
  Similarly, from \cref{lem:stable-bousfield:n-completion} we have $E^\comp_n \wequi \prod_{\ell|n} E^\comp_\ell$.
  Since $\Omega^\infty$ and $\tau_{\ge 1}$ preserve finite products, we may assume that $n = \ell$.
  Consider again the commutative diagram
  \begin{center}
    \begin{tikzcd}
      \Sp(\Shv_\Nis(\Sm_k)) \ar[d, "\Omega^\infty"] &\SH^{S^1}(k) \ar[d] \ar[l, hook] &\SH(k)^\eff \ar[l, "\omega^\infty"']\\
      \Shv_\Nis(\Sm_k) &\Spc(k) \ar[l, hook]\rlap{.}
    \end{tikzcd}
  \end{center}
  It suffices to show that the composition on the top and left commutes with $\ell$-completion,
  up to a connected cover.
  The functors on the top both preserve limits by \cref{lem:motivic:omega-infty},
  and hence commute with $\ell$-completion, see e.g.\ \cite[Lemma 2.32]{mattis2024unstablepcompletionmotivichomotopy}.
  Moreover, they preserve $1$-connective objects.
  Hence, the result follows from \cite[Lemma 3.17]{mattis2024unstablepcompletionmotivichomotopy}.
\end{proof}

We now discuss Bousfield--Kan completions.
We now have four different situations: First, we can look at those $A$ 
such that $\ul\pi_0(A)_* = S^{-1}\ul\pi_0(\1)_*$ or $\ul\pi_0(A)_* = \ul\pi_0(\1)_* / n$;
this is similar to the case of sheaves discussed above.
But these two cases do not cover two important situations:
One wants to be able to complete at $A = H\Z$, the motivic cohomology spectrum,
or $A = \mathrm{MGL}$, the algebraic cobordism spectrum.
Here, we do not have that $\ul\pi_0(A)_* = \ul\pi_0(\1)_*$,
but instead $\ul\pi_0(A)_* = \ul\pi_0(\1)_*/\eta$.
Thus, in the second half of this section, we will look at completions 
at those $A$ such that either $\ul\pi_0(A)_* = S^{-1}\ul\pi_0(\1)_*/\eta$,
or $\ul\pi_0(A)_* = \ul\pi_0(\1)_*/(\eta,n)$. This will complicate things slightly,
as it will also include an additional $\eta$-completion.
To circumvent this problem, we will restrict ourselves to fields of finite $2$-étale 
cohomological dimension, so that all very effective motivic spectra
are already $\eta$-complete.

\subsubsection{Milnor--Witt Resolutions: Localization}
In this subsection, we assume that $\ul\pi_0(A)_* \wequi S^{-1} \ul\pi_0(\1)_*$
with $S \subset \Z \setminus 0$.
Thus, $\ul\pi_0(A)_* \otimes_{\ul\pi_0(\1)_*} \ul\pi_0(A)_* \wequi \ul\pi_0(A)_*$,
and hence all axioms are satisfied.
In this situation, the Bousfield--Kan completion is related to unstable $S$-localization.
We write $L_S$ for the unstable $S$-localization functor in $\scr X$, cf.\ \cite[\S 3]{mattis2024unstablearithmeticfracturesquares}.

\begin{example}
  Possible examples for $A$ are the rings $S^{-1} \1$ 
  and $f_0 S^{-1} \ul{K}^{MW}$.
\end{example}

\begin{theorem} \label{thm:motivic:milnor-witt-localization}
  \begin{enumerate}
    \item Let $X \in \Spc(k)_*$ be nilpotent. Then $L_S X \in \Spc(k)_*$ (i.e., $L_S X$ is again $\A^1$-invariant).
    \item Let $X \in \Spc(k)_*$ be resolvable. The canonical maps $\tau_{<\infty} L_S X \to \tau_{<\infty} \hat L_A X \to \tau_{< \infty} T^\bullet X$ are equivalences.
      In particular, $L_S X \to \hat L_A X \to \Tot T^\bullet X$ are equivalences.
    \item For $E \in \SH(k)_{\ge 1}$ the canonical map $L_S \Omega^\infty E \to \Omega^\infty L^{st}_A E$
      is an equivalence.
    \item Let $F \to E \to B$ be a fiber sequence of pointed nilpotent Nisnevich sheaves.
      Then $L_S F \to L_S E \to L_S B$ is a fiber sequence.
    \item Let $X_\bullet$ be a tower of pointed nilpotent Nisnevich sheaves with $X = \lim_i X_i$.
      Assume that the tower is locally highly connected subordinate to $\scr U$, e.g.\ it is highly connected.
      Then $L_S X \wequi \lim_i L_S X_i$.
  \end{enumerate}
\end{theorem}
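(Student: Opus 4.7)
The plan is to establish the easier parts (3), (4), (5) first, then use them together with the principal fibration lemma to prove parts (1) and (2).

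Part (3) should be an essentially formal consequence of \cref{lem:motivic:unstable-stable-S-localization} combined with an identification of $L^{st}_A E \wequi S^{-1}E$ in the style of \cref{lem:stable-bousfield}: since $A$ is connective and $\ul\pi_0(A)_* = S^{-1}\ul\pi_0(\1)_*$, the unit $\1 \to A$ factors through $S^{-1}\1$, and the usual Postnikov tower argument shows that $A$-equivalences of $1$-connective spectra coincide with $S$-equivalences. Part (4) is immediate from the general fiber lemma for $S$-localizations in $\infty$-topoi, \cite[Lemma 3.13]{mattis2024unstablearithmeticfracturesquares}, and part (5) from \cite[Proposition 6.9]{mattis2024unstablearithmeticfracturesquares}.

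For part (1), I would build a principalized Postnikov tower for the nilpotent motivic space $X$ whose fibers are of the form $\Omega^\infty B$ with $B$ a shifted homotopy module in $\SH(k)_{\ge 1}$; these exist since $X$ is nilpotent. Each such layer lies in $\Spc(k)_*$, and by part (3) its $L_S$ is $\Omega^\infty S^{-1} B$, still an $\A^1$-invariant motivic space. Now apply (4) layerwise and (5) to the resulting tower, which shows $L_S X$ is a limit of $\A^1$-invariant sheaves and hence $\A^1$-invariant.

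The main content is part (2). Let $X$ be resolvable with a tower $X_\bullet$ and fiber sequences $X_{i+1} \to X_i \to \Omega^\infty B_i$ with $B_i \in \Sigma^{3,1}\SH(k)^\veff$. By \cref{rmk:generalized-tower}, both $X_{i+1}$ and $\Omega(\Omega^\infty B_i) \wequi \Omega^\infty \Omega B_i$ lie in $\scr X^{1\sld}_{*,\ge 1}$, and $\Omega^\infty B_i$ is simply connected, so the hypotheses of \cref{thm:fiber-sequences} hold for each fiber sequence. Hence $\tau_{<\infty} T^\bullet$ preserves them. By (4) so does $L_S$, and by the identification we will obtain below also $\hat L_A$. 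The connectivity assumption on the tower combined with \cref{cor:connected,prop:Tn-conn} ensures $\tau_{<\infty}T^\bullet X \wequi \lim_i \tau_{<\infty}T^\bullet X_i$, and analogously for $L_S$ and $\hat L_A$ via (5). Thus we reduce to the layers $X = \Omega^\infty B$ with $B \in \Sigma^{3,1}\SH(k)^\veff$. For such $B$, part (3) gives $L_S \Omega^\infty B \wequi \Omega^\infty S^{-1}B$; since $S^{-1}B$ is an $A$-module (the factorization $\1 \to S^{-1}\1 \to A$ makes $S^{-1}B \wequi S^{-1}\1 \otimes B$ an $A$-module), the augmented cosimplicial object $\CB^\bullet_+(\Omega^\infty S^{-1}B)$ splits by \cref{lem:adjunction:split}, so $T^\bullet(\Omega^\infty S^{-1}B) \wequi c(\Omega^\infty S^{-1}B)$, and moreover $\Omega^\infty S^{-1}B$ is $L$-local so coincides with $\hat L_A X$. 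This identifies all three terms at the layer level.

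The main obstacle I anticipate is the first equivalence $\tau_{<\infty} L_S X \to \tau_{<\infty} \hat L_A X$, since it requires knowing that $L_S X$ is $L$-local for resolvable $X$. The plan is to extract this from the layer-by-layer analysis above: $\Omega^\infty S^{-1}B_i$ is $L$-local, and $L$-local objects are stable under highly connected limits and the appropriate fiber sequences (since $L$-locality is a limit condition). A secondary technical point is ensuring that the identification $\hat L_A \wequi L_S$ behaves functorially through the tower, which should follow from the fact that $L_S$-equivalences and $L$-equivalences coincide on $1$-connective resolvable sheaves, this being verifiable on the split layers.
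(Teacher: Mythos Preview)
Your outline for (3), (4), (5) matches the paper's proof (citations to \cite{mattis2024unstablearithmeticfracturesquares} and \cref{lem:motivic:unstable-stable-S-localization,lem:stable-bousfield}). For (1) the paper simply cites \cite[Proposition 4.3.8 and Theorem 4.3.9]{asok2022localizationnilpotentspacesa1homotopy}; your proposed argument is plausible but note that the layers of a principalized Postnikov tower of a nilpotent motivic space are infinite loop spaces of $S^1$-spectra, not of genuine motivic spectra in $\SH(k)$---you should work with $\SH^{S^1}(k)$ there.

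There is a genuine gap in your argument for (2). You claim that for $B \in \Sigma^{3,1}\SH(k)^\veff$ the spectrum $S^{-1}B$ is an $A$-module because ``the factorization $\1 \to S^{-1}\1 \to A$ makes $S^{-1}B \wequi S^{-1}\1 \otimes B$ an $A$-module''. This is backwards: that factorization makes $A$ an $S^{-1}\1$-algebra, hence $S^{-1}B$ an $S^{-1}\1$-module, but it does \emph{not} produce a map $A \to S^{-1}\1$ and hence does not give $S^{-1}B$ an $A$-module structure. For a general $A$ with $\ul\pi_0(A)_* \wequi S^{-1}\ul\pi_0(\1)_*$ (e.g.\ $A = f_0 S^{-1}\ul K^{MW}$), an arbitrary $S$-local very effective spectrum has no reason to be an $A$-module, so \cref{lem:adjunction:split} does not apply and you cannot conclude that the cobar resolution of $\Omega^\infty S^{-1}B$ splits.

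The paper's fix is an additional reduction step: after arriving at $X = \Omega^\infty E$ with $E \in \Sigma^{3,1}\SH(k)^\veff$, run a further Postnikov tower for $E$ in the $1$-effective $t$-structure on $\SH(k)(1)$ (using \cref{thm:fiber-sequences} and \cref{prop:Tn-conn} again) to reduce to $E \in \Sigma^{m+1,1}\SH(k)^{\eff\heartsuit}$ with $m \ge 2$. Now $S^{-1}E$ lies in the heart and is a module over $S^{-1}\ul\pi_0^\eff(\1)$. The key computation is
\[
  S^{-1}\ul\pi_0^\eff(\1) \wequi f_0\, S^{-1}\ul\pi_0(\1)_* \wequi f_0\, \ul\pi_0(A)_* \wequi \ul\pi_0^\eff(A),
\]
using $t$-exactness of $f_0$ from the homotopy to the effective $t$-structure. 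Thus $S^{-1}E$ is a $\ul\pi_0^\eff(A)$-module, and the truncation map $A \to \ul\pi_0^\eff(A)$ (which goes in the right direction) endows it with an $A$-module structure, so \cref{lem:adjunction:split} applies. Without this reduction to the heart your splitting argument does not go through.
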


\begin{proof}
  Statement (1) is proven in \cite[Proposition 4.3.8 and Theorem 4.3.9]{asok2022localizationnilpotentspacesa1homotopy},
  statement (4) is \cite[Lemma 3.13]{mattis2024unstablearithmeticfracturesquares},
  and statement (5) is \cite[Proposition 6.9]{mattis2024unstablearithmeticfracturesquares}.

  For (3), since $\Omega^\infty E$ is nilpotent, we have 
  \begin{equation*}
    L_S \Omega^\infty E \wequi \Omega^\infty S^{-1}E \wequi \Omega^\infty L_A^{st} E,
  \end{equation*}
  where we used \cref{lem:motivic:unstable-stable-S-localization} for the first,
  and \cref{lem:stable-bousfield} for the second equivalence.

  We now prove (2).
  Note that $L$ inverts $S$-equivalences, and so in order to show that $L_S X \wequi \hat L_A X$, it suffices to show that $L_S X$ is $L$-local.
  This is clearly true for $\Tot T^\bullet X$, and so, it is enough to show that $\tau_{<\infty} L_S X \wequi \tau_{< \infty} T^\bullet X$.
  Write $F \coloneqq T^\bullet$.
  Let $X$ be resolvable, and consider the generalized Postnikov tower.
  The functor $L_S$ preserves all fiber sequences and the limit appearing in the tower by (4) and (5),
  and similarly $F$ preserves all those fiber sequences by \cref{thm:fiber-sequences}.
  To see that $F$ preserves the limit, 
  note that $(X_i)_i$ is a highly connected tower (under $X$),
  and we conclude by \cref{lem:sheaves:highly-connected} that
  the connectivity of $X \to X_i$ tends to $\infty$.
  Thus, by \cref{prop:Tn-conn} also the connectivity of $FX \to FX_i$ tends to $\infty$,
  even uniformly in the levels of the pro-object. Hence, by definition of $\tau_{<\infty}$,
  it follows that $\tau_{<\infty} FX \wequi \tau_{<\infty} \lim_i FX_i$.
  Hence, we reduced to the case that $X = \Omega^\infty E$ for some motivic spectrum $E \in \Sigma^{3,1} \SH(k)^{\veff}$.
  Consider now the Postnikov tower of $E$ in the $1$-effective t-structure on $\SH(k)(1)$. With exactly the same arguments as above,
  we can thus further reduce to the case that $E \in \Sigma^{m+1,1} \SH(k)^{\eff\heartsuit}$ for $m \ge 2$.  
  Since both sides invert $L_S$-equivalences (for $F$ note that already $L$ inverts them, since any $m \in S$ is invertible on 
  $A$, for this note that it is enough to check that $m$ is invertible on $\ul\pi_*(A)_*$,
  but $\ul\pi_*(A)_*$ is a $\ul\pi_0(A)_*$-algebra, and $m$ is invertible on $\ul\pi_0(A)_* = S^{-1}\ul\pi_0(\1)_*$), we can replace $X$ by $L_S X$,
  and we have to see that $\tau_{<\infty}X \to \tau_{<\infty}FX$ is an equivalence.
  By (3) we know $X = L_S \Omega^\infty E \wequi \Omega^\infty S^{-1} E$.
  Note that $S^{-1} E$ has the structure of a $S^{-1}\ul\pi_0^\eff(\1)$-module.
  We have
  \begin{equation*}
    S^{-1}\ul\pi_0^\eff(\1) \wequi S^{-1} f_0 \ul\pi_0(\1)_* \wequi f_0 S^{-1} \ul\pi_0(\1)_* \wequi f_0 \ul\pi_0(A)_* \wequi \ul\pi_0^\eff(A),
  \end{equation*}
  where we used that $\1$ and $A$ are effective, that $f_0$ preserves colimits, and that $f_0$ is t-exact from the homotopy t-structure 
  to the effective t-structure, see \cite[Lemma 6.2 (2)]{Bachmann2020TowardsConservativity}.
  In particular, $S^{-1} E$ acquires the structure of a $\ul\pi_0^\eff(A)$-module,
  and hence also has the structure of an $A$-module (via the canonical ring map $A \to \ul\pi_0^\eff(A)$).
  Hence, $X$ is in the essential image of $R$,
  and hence $\tau_{<\infty}X \to \tau_{<\infty}FX$ is an equivalence by \cref{lem:adjunction:split} (even before applying $\tau_{<\infty}$).
\end{proof}

\subsubsection{Milnor--Witt Resolutions: Completion}
We assume now that $\ul\pi_0(A)_* \wequi \ul\pi_0(\1)_* / n$
with $n \in \N$.
Thus, $\ul\pi_0(A)_* \otimes_{\ul\pi_0(\1)_*} \ul\pi_0(A)_* \wequi \ul\pi_0(A)_*$,
and hence all axioms are satisfied.
In this situation, the Bousfield--Kan completion is related to unstable $n$-completion.
We write $L_n$ for the unstable $n$-completion functors in $\scr X$, cf.\ \cref{def:unstable-n:def-n-equiv}.

\begin{example}
  A possible example for $A$ is the ring $f_0 (\ul{K}^{MW} / n)$.
\end{example}

\begin{theorem} \label{thm:motivic:milnor-witt-completion}
  \begin{enumerate}
    \item Let $X \in \Spc(k)_*$ be nilpotent. Then $L_n X \in \Spc(k)_*$ (i.e., $L_n X$ is again $\A^1$-invariant).
    \item Let $X \in \Spc(k)_*$ be resolvable. There is a canonical map $L_n X \to \tau_{\ge 1} \Tot T^\bullet X$, which is an equivalence.
    \item For $E \in \SH(k)_{\ge 1}$ the canonical map $L_n \Omega^\infty E \to \tau_{\ge 1} \Omega^\infty L^{st}_A E$
      is an equivalence.
    \item Let $F \to E \to B$ be a fiber sequence of pointed nilpotent Nisnevich sheaves.
      Then the canonical map $L_n F \to \tau_{\ge 1} \fib(L_n E \to L_n B)$ is an equivalence.
    \item Let $X_\bullet$ be a highly connected tower of pointed nilpotent Nisnevich sheaves with $X = \lim_i X_i$.
      Then the canonical map $L_n X \to \lim_i L_n X_i$ is an equivalence.
  \end{enumerate}
\end{theorem}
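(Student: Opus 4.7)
The plan is to closely mirror the proof of \cref{thm:motivic:milnor-witt-localization} with $L_n$ in place of $L_S$. Statement (3) follows by combining the identifications $L_n \Omega^\infty E \wequi \tau_{\ge 1} \Omega^\infty E^\comp_n$ (from \cref{lem:motivic:unstable-stable-n-completion}) and $L_A^{st} E \wequi E^\comp_n$ (from \cref{lem:stable-bousfield:n-completion}). Statements (4) and (5) are the fiber lemma and tower convergence for unstable $n$-completion recorded in \cref{sec:unstable-n-comp} (the $n$-analogs of \cite[Proposition 3.19 and Lemma 3.17]{mattis2024unstablepcompletionmotivichomotopy}), specialized to the $\infty$-topos $\scr X = \Shv_\Nis(\Sm_k)$. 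For (1), the decomposition $L_n \wequi \prod_{p \mid n} L_p$ via \cref{lem:unstable-n:main-thm} reduces to the prime case, where $\A^1$-invariance of $L_p$ on nilpotent motivic spaces was established in \cite{mattis2024unstablepcompletionmotivichomotopy}.

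The main work is in proving (2). Write $F \coloneqq \tau_{\ge 1} \Tot T^\bullet_A$. The same reduction as in the proof of \cref{thm:motivic:milnor-witt-localization}(2) --- using (4), (5), \cref{thm:fiber-sequences}, and \cref{prop:Tn-conn} to work through the generalized Postnikov tower of a resolvable space, and then through the Postnikov tower of the resulting motivic spectrum in the effective $t$-structure on $\SH(k)(1)$ --- reduces the claim to the case $X = \Omega^\infty E$ with $E \in \Sigma^{m+1, 1} \SH(k)^{\eff\heartsuit}$ and $m \ge 2$. I would then construct the $\Omega$-spectrum
\begin{equation*}
  G(E) \coloneqq \bigl(F\Omega^\infty E,\ F\Omega^\infty \Sigma^{1,0} E,\ F\Omega^\infty \Sigma^{2,0} E,\ \dots\bigr) \in \SH^{S^1}(k),
\end{equation*}
which is well-defined because $F$ preserves loops by \cref{thm:fiber-sequences}, equipped with the canonical comparison map $\omega^\infty E \to G(E)$. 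Moreover, $G$ preserves fiber sequences of sufficiently connected, $1$-effective motivic spectra.

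The key observation is that whenever $E$ admits an $A$-module structure, the comparison map $\omega^\infty E \to G(E)$ is an equivalence by \cref{lem:adjunction:split}. Since $\ul\pi_0(A)_* \wequi \ul\pi_0(\1)_*/n$ yields $\ul\pi_0^\eff(A) \wequi \ul\pi_0^\eff(\1)/n$ (using $t$-exactness of $f_0$, as in the proof of \cref{thm:motivic:milnor-witt-localization}(2)), the motivic spectrum $\fib(n \colon E \to E)$ is a finite extension of $\ul\pi_0^\eff(A)$-modules and so carries an $A$-module structure via the unit $A \to \ul\pi_0^\eff(A)$. Combining this with $G$-preservation of the fiber sequence $\fib(n) \to E \xrightarrow{n} E$ forces $\omega^\infty E \to G(E)$ to be an $n$-equivalence; as $E$ is connected, the same holds for $\omega^\infty E \to \tau_{\ge 1} G(E)$. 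Applying $\Omega^\infty$ and invoking \cref{lem:motivic:unstable-stable-n-completion} identifies this with the desired map $L_n X \to FX$, which is then an $n$-equivalence between $n$-complete objects, hence an equivalence.

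The main obstacle I expect is verifying the coherent $A$-module structure on $\fib(n \colon E \to E)$ in the motivic effective setting --- lifting the identification $\ul\pi_0^\eff(A) \wequi \ul\pi_0^\eff(\1)/n$ at the level of effective homotopy objects to a genuine $A$-module structure on the relevant motivic spectra, and then applying \cref{lem:adjunction:split} level by level on $G(E)$. A secondary subtlety, explaining both the $\tau_{\ge 1}$ on the right of (2) and the absence of any direct comparison to $\hat L_A X$, is that --- in contrast with the localization case --- discrete motivic sheaves are not known to be $\hat L_A$-local, as noted in the remark in \cref{subsec:proof-ingredients}.
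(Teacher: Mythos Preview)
Your proposal is correct and follows essentially the same approach as the paper. The paper's proof of (1), (3), (4), (5) cites exactly the results you indicate (modulo one citation: for $L_A^{st} E \wequi E^\comp_n$ the paper uses \cref{lem:stable-bousfield}, not \cref{lem:stable-bousfield:n-completion}), and for (2) the paper also constructs the $\Omega$-spectrum $G$, shows the comparison $\omega^\infty \Sigma^k M \to G(M)$ is an equivalence for $M$ a finite extension of $\ul\pi_0^\eff(\1)/n \wequi \ul\pi_0^\eff(A)$-modules, and deduces that it is an $n$-equivalence in general.

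One small imprecision worth flagging: you write that $\fib(n\colon E \to E)$ ``carries an $A$-module structure'' because it is a finite extension of $\ul\pi_0^\eff(A)$-modules. A finite extension of $A$-modules need not itself be an $A$-module, so \cref{lem:adjunction:split} does not apply to $\fib(n)$ directly. What actually works (and what the paper does) is: use that $G$ preserves fiber sequences once more to decompose $\fib(n)$ into its effective Postnikov pieces $M[n]$ and $M/n$, each of which \emph{is} a $\ul\pi_0^\eff(A)$-module and hence an $A$-module via $A \to \ul\pi_0^\eff(A)$; then \cref{lem:adjunction:split} applies to each piece. Your worry about ``lifting the identification at the level of effective homotopy objects'' is thus resolved simply by working one Postnikov layer at a time rather than with $\fib(n)$ as a whole.
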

\begin{proof}
  For (1) note that $L_n X \wequi \prod_{\ell | n} L_\ell X$ by \cref{lem:unstable-n:main-thm}.
  Thus, since $\A^1$-invariant sheaves are stable under limits,
  the claim follows from \cite[Proposition 5.22]{mattis2024unstablepcompletionmotivichomotopy}.
  Moreover, (3) is \cref{lem:motivic:unstable-stable-n-completion} combined with \cref{lem:stable-bousfield},
  (4) is \cref{lem:unstable-n:fiber-sequence} 
  and (5) is \cref{lem:unstable-n:tower}.

  We now show (2).
  Since $\tau_{\ge 1}\colon \scr X_* \to \scr X_{*,\ge 1}$ is a right adjoint, it preserves limits.
  Similarly, the functor $L_n$ restricts to $\scr X_{*,\ge 1}$ (this follows from \cite[Lemma 3.12]{mattis2024unstablepcompletionmotivichomotopy}), 
  and the restricted functor preserves appropriate fiber sequences (by \cref{lem:unstable-n:fiber-sequence}).
  As in the proof for $\ul\pi_0(A)_* = S^{-1} \ul\pi_0(\1)_*$, we reduce to the case 
  that $X = \Omega^\infty \Sigma^k M$ for $M \in \Sigma^{1,1} \SH(k)^{\eff\heartsuit}$ with $k \ge 2$.
  Write $F \coloneqq \tau_{\ge 1} \Tot T_A^\bullet = \tau_{\ge 1} \mat \tau_{<\infty} T_A^\bullet$.
  Then
  \begin{equation*}
    G(M) \coloneqq (F \Omega^\infty \Sigma^k M, F \Omega^\infty \Sigma^{k+1} M, \dots) \in \Sp(\Spc(k)_{*,\ge1}) \wequi \SH^{S^1}(k),
  \end{equation*}
  since $F$ preserves loops by \cref{thm:fiber-sequences}.
  The functor $G$ also preserves fiber sequences of $2$-connective spectra.
  There is a canonical map $\omega^\infty \Sigma^k M \to G(M)$. 
  Assume for now that this is an equivalence whenever $M$ is a finite extension of $\ul\pi_0^\eff(\1)/n$-modules.
  It follows that $\Sigma^k M \to G(M)$ is an $n$-equivalence,
  i.e., induces an equivalence on fibers of multiplication by $n$.
  Since $\omega^\infty \Sigma^k M$ is connected, this implies that also $\omega^\infty \Sigma^k M \to \tau_{\ge 1}G(M)$ is an $n$-equivalence
  (this follows easily from e.g.\ \cite[Lemma 2.9 and Corollary 2.11]{mattis2024unstablepcompletionmotivichomotopy};
  note that in the proof there it is never used that $p$ is a prime).
  Hence, using \cref{lem:unstable-n:infinite-loop-space}, we see that \[ \Omega^\infty \Sigma^k M \to \Omega_{S^1}^\infty \tau_{\ge 1}G(M) \wequi F(\Omega^\infty \Sigma^k M) \] is also an $n$-equivalence.
  Since the target is $n$-complete by construction, we conclude.

  We end the proof by showing that $\Sigma^k M \to G(M)$ is an equivalence if $M$ 
  is a finite extension of $\ul\pi_0^\eff(\1)/n$-modules.
  By preservation of fiber sequences, we immediately reduce to the case where $M$ itself is such a module.
  For this, we will show that in fact $M$ is even an $A$-module, whence the claim follows from \cref{lem:adjunction:split}.
  We have
  \begin{equation*}
    \ul\pi_0^\eff(\1)/n \wequi f_0 (\ul\pi_0(\1)_* /n) \wequi f_0 \ul\pi_0(A)_* \wequi \ul\pi_0^\eff(A),
  \end{equation*}
  where we used that $\1$ and $A$ are effective, and that $f_0$ is t-exact from the homotopy t-structure 
  to the effective t-structure, see \cite[Lemma 6.2 (2)]{Bachmann2020TowardsConservativity}.
  Thus, $M$ acquires the structure of a $\ul\pi_0^\eff(A)$-module,
  and hence also has the structure of an $A$-module (via the canonical ring map $A \to \ul\pi_0^\eff(A)$).
  This finishes the proof.
\end{proof}

\subsubsection{Milnor Resolutions: Generalities}
From now on, we will assume that $\mathrm{cd}_2(k) < \infty$.
We will now study the situation that $\ul\pi_0(A)_* \wequi S^{-1} (\ul\pi_0(\1)_*/\eta)$,
or that $\ul\pi_0(A)_* \wequi \ul\pi_0(\1)_* /(\eta,n)$.

\begin{remark}
  We quickly explain why we need the assumption that $k$ has finite $2$-étale cohomological dimension.
  In topology, going from the sphere $\mathbb{S}_{\mathrm{top}}$ to its $0$-truncation $\pi_0(\mathbb{S}_{\mathrm{top}})$
  kills off only nilpotent elements, hence the associated completion functors agree (at least on bounded below objects).
  In contrast, going from the motivic sphere $\1$ to $\ul\pi_0(\1)_*/ \eta$
  kills the motivic Hopf map $\eta \colon \Gm \to \1$, which is known to be non-nilpotent \cite[Corollary 6.4.5]{morel2004pi0}.
  Hence, homological localization with respect to $H\Z$ introduces a completion operation with respect to $\eta$.
  Unstably, the associated Bousfield localization is currently poorly understood.
  The situation gets much better if one requires the $2$-étale cohomological dimension to be finite,
  as then every very effective motivic spectrum is automatically $\eta$-complete,
  and hence the associated unstable Bousfield localization is just the identity.
\end{remark}

\begin{lemma} \label{lem:motivic:eta-complete}
  Assume that $\mathrm{cd}_2(k) < \infty$,
  and let $E \in \SH(k)^\veff$ be a very effective motivic spectrum.
  Then $E$ is $\eta$-complete.
  If $E$ is $2$-power-torsion, then $E$ is slice-complete.
\end{lemma}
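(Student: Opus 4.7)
My plan is to deduce both claims from standard properties of the homotopy and slice $t$-structures on $\SH(k)$, combined with the slice-convergence result established in \cref{sec:slice-convergence} for the second claim.

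For the first claim, I consider the fiber sequence
\[ \lim_n \Sigma^{n,n} E \to E \to E^\wedge_\eta \coloneqq \lim_n E/\eta^n, \]
obtained as the limit over $n$ of the cofiber sequences $\Sigma^{n,n} E \xrightarrow{\eta^n} E \to E/\eta^n$; the transitions in the left-hand tower are multiplication by $\eta$. Since $E$ is very effective we have $E \in \SH(k)_{\ge 0}$ (see \cref{rmk:t-structures-SH}). The bigraded suspension $\Sigma^{n,n} = \Gm^{\wedge n} \otimes (\ph)$ raises homotopy-$t$-structure connectivity by $n$, so $\Sigma^{n,n} E \in \SH(k)_{\ge n}$, equivalently $\ul\pi_p(\Sigma^{n,n} E)_q = 0$ for $p < n$. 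For any fixed bidegree $(p,q)$, the Milnor short exact sequence
\[ 0 \to \mathrm{lim}^1_n\, \ul\pi_{p+1}(\Sigma^{n,n} E)_q \to \ul\pi_p(\lim_n \Sigma^{n,n} E)_q \to \lim_n \ul\pi_p(\Sigma^{n,n} E)_q \to 0 \]
then has both outer towers eventually vanishing (for $n > p$), forcing $\ul\pi_p(\lim_n \Sigma^{n,n} E)_q = 0$ for every $(p,q)$. By left-completeness of the homotopy $t$-structure \cite[Corollary 2.4]{Hoyois:algebraicCobordismtoMotivicCohomology}, we conclude $\lim_n \Sigma^{n,n} E = 0$, and hence $E \wequi E^\wedge_\eta$, i.e.\ $E$ is $\eta$-complete. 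Observe that this argument uses only $E \in \SH(k)_{\ge 0}$; neither effectivity nor the hypothesis $\mathrm{cd}_2(k) < \infty$ is actually needed here.

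For the second claim, I would directly invoke the extension of Levine's slice convergence theorem from \cref{sec:slice-convergence}: under $\mathrm{cd}_2(k) < \infty$, the slice tower converges on every bounded-below $2$-power-torsion very effective motivic spectrum. Applied to our $E$, this yields $\lim_n f_n E = 0$, equivalently $E \wequi \lim_n E/f_n E$, which is the claimed slice-completeness. The $2$-power-torsion hypothesis reduces slice convergence to the single prime $2$, and this is precisely what makes $\mathrm{cd}_2(k) < \infty$ sufficient; at odd primes there is nothing to control.

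The main obstacle, such as it is, lies only in the second claim and is handled by the cited appendix; the first claim is a transparent consequence of the Milnor sequence together with left-completeness. The remaining work amounts to verifying that the hypotheses on $E$ here match those required by the appendix's theorem, which is routine.
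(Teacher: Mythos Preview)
Your argument for the first claim contains a fatal error: the assertion that $\Sigma^{n,n}E \in \SH(k)_{\ge n}$ is false. In the paper's conventions, $\Sigma^{n,n} = \Gm^{\wedge n}\otimes(\ph)$ is \emph{$t$-exact} for Morel's homotopy $t$-structure, not $n$-fold connective-raising. Concretely, $\ul\pi_i(\Sigma^{1,1}E)_j \cong \ul\pi_i(E)_{j-1}$: the $\Gm$-suspension shifts the weight index, not the simplicial index $i$ that governs connectivity. Hence the tower $(\Sigma^{n,n}E)_n$ need not go to zero in the limit, and your Milnor-sequence argument collapses. Your own parenthetical remark that neither effectivity nor $\mathrm{cd}_2(k)<\infty$ is needed should have been a warning sign: if your proof were correct, every object of $\SH(k)_{\ge 0}$ over every perfect field would be $\eta$-complete, contradicting the non-nilpotence of $\eta$ (for instance $\1[\eta^{-1}]\ne 0$ over $\mathbb{R}$, so $\1$ is not $\eta$-complete there).

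The paper's proof genuinely uses $\mathrm{cd}_2(k)<\infty$. It first treats $E[1/2]$ via the idempotent splitting $E[1/2]\simeq E[1/2]^+\oplus E[1/2]^-$: the plus part is always $\eta$-complete, and $\mathrm{cd}_2(k)<\infty$ forces $W(k)[1/2]=0$, hence $E[1/2]^-=0$. Replacing $E$ by the cofiber of $E\to E[1/2]$ then reduces the $\eta$-completeness claim to the $2$-power-torsion case, where it is deduced from slice-completeness (since slices, being $H\Z$-modules, are $\eta$-complete).

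For the second claim you have the right idea, but you misstate what \cref{sec:slice-convergence} provides. \Cref{cor:levine-convergence} is a result about $p$-torsion spectra where $p=\operatorname{char}(k)$; it is invoked only when $\operatorname{char}(k)=2$. When $\operatorname{char}(k)\ne 2$ the paper instead appeals to Levine's original convergence theorem \cite[Theorem 7.3]{levine2013convergence}. You should split into these two cases rather than attributing a single blanket statement to the appendix.
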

\begin{proof}
Since $\mathrm{cd}_2(k) < \infty$ we know that $E[1/2]$ is $\eta$-complete.
Indeed, there is a canonical splitting $E[1/2] \wequi E[1/2]^+ \oplus E[1/2]^-$ 
with $E[1/2]^+$ an $\eta$-complete motivic spectrum by 
e.g.\ \cite[§2.7.3]{bachmann2021etaperiodicmotivicstablehomotopy},
and under our assumption on the cohomological dimension, we have 
$E[1/2]^- = 0$: Indeed, the splitting of $E[1/2]$ is induced by a splitting of 
the whole $\infty$-category $\SH(k)[1/2]$, since there is a splitting of the 
endomorphisms of the unit $GW(k)[1/2] \wequi \Z[1/2] \times W(k)[1/2]$.
Hence, to show that $E[1/2]^- = 0$, it suffices to show that the Witt ring $W(k)$
has $2^N$ torsion for some $N \gg 0$. Write $I \subset W(k)$ for the fundamental ideal.
Then, by \cite[the discussion after Theorem 5.1]{milnor1973symmetric}, all the quotients 
$I^n / I^{n+1}$ are $\F_2$-vectorspaces, hence $2$-torsion.
It thus is enough to show that $I^N = 0$ for $N \gg 0$,
for this one can argue as in the proof of \cite[Theorem 16]{BachmannConservativityOfTheFunctorToDM}, using that $\mathrm{cd}_2(k) < \infty$.

Using that $\eta$-complete spectra are stable under fiber sequences, replacing $E$ by the cofiber of $E \to E[1/2]$ we may assume that $E$ is $2$-power-torsion.
To prove that $E$ is $\eta$-complete it now suffices to show that it is slice-complete (indeed, since 
$E$ is effective, the slice completion is a limit of 
finite extensions of slices, and slices are $\eta$-complete as $s_0(\1) = H\Z$).
This follows from Levine's convergence theorem.
If $k$ has characteristic $\ne 2$, see \cite[Theorem 7.3]{levine2013convergence} (also recalled in \cite[Theorem 5.3]{bachmann-bott}).
If $k$ has characteristic $2$, see our variant in \cref{cor:levine-convergence}.
\end{proof}

We need the following ad-hoc definition:
\begin{definition}
  Let $X \in \Spc(k)_*$ be resolvable.
  We say that $X$ is $2$\emph{-power-torsion} if one can choose the $X_i$ and $B_i$ 
  appearing in \cref{def:motivic:resolvable} so that $B_i$ is $2$-power-torsion (i.e.\ $B_i[1/2] = 0$) for all $i$.
\end{definition}

\begin{lemma} \label{lem:motivic:infty-effective-vanishing}
  Suppose that $\mathrm{cd}_2(k) < \infty$, and let $X \in \Pro(\Spc(k)_*)$ be nilpotent, $2$-power-torsion and $\infty$-effective, i.e., $X \wequi \lim_i cX_i$, where $X_i$ is nilpotent, $2$-power-torsion and $n_i$-effective, with $\lim_i n_i = \infty$.
  Then $\mat X = *$.
\end{lemma}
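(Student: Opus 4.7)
The plan is to reduce $\mat X = \lim_i X_i$ to a statement about motivic spectra and then invoke the left-completeness of the homotopy $t$-structure on $\SH(k)$. Since $\iota_{\A^1}\colon \Spc(k) \hookrightarrow \Shv_\Nis(\Sm_k)$ is a fully faithful right adjoint, it is equivalent to show $\lim_i X_i \wequi *$ in $\Shv_\Nis(\Sm_k)_*$. By Postnikov-completeness of $\Shv_\Nis(\Sm_k)$ (guaranteed by the locally finite-dimensional cover assumption, see \cite[Lemma 5.3]{mattis2024unstablearithmeticfracturesquares}), this in turn reduces to verifying that $\ul\pi_k(\lim_i X_i) = 0$ as a Nisnevich sheaf for every $k \ge 0$.

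The main reduction step is to decompose each nilpotent $X_i$ via a principalized Postnikov tower. Since $X_i$ is nilpotent and $2$-power-torsion (in particular resolvable in the sense of \cref{def:motivic:resolvable}), one can—using the effective-aware refinement of the resolvable structure from \cite[Construction 4.1.7 and Remark 4.1.13]{asok2024p1stabilizationunstablemotivichomotopy}—write $X_i$ as an inverse limit of partial towers whose layers have the form $\Omega^\infty B_{i,j}$ with $B_{i,j} \in \Sigma^{n_i, n_i}\SH(k)^{\veff}$ a $2$-power-torsion motivic spectrum. Refining the pro-structure of $X$ by incorporating all these Postnikov layers yields a cofiltered presentation of $\mat X$ whose "building blocks'' are all of this form.

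The crucial stable input is then the observation that $\Sigma^{n,n}\SH(k)^{\veff} \subset \SH(k)_{\ge n}$ in the homotopy $t$-structure, since $\SH(k)^{\veff} \subset \SH(k)_{\ge 0}$ by definition and the bigraded suspension $\Sigma^{n,n}$ shifts homotopy $t$-degree by exactly $n$ (because $\pi_{s,w}(\Sigma^{n,n}E) = \pi_{s-n,w-n}(E)$ while the $t$-structure is defined on the simplicial index $s$ alone). Since the homotopy $t$-structure on $\SH(k)$ is left-complete (cf.\ \cref{rmk:t-structures-SH}), we conclude $\bigcap_n \SH(k)_{\ge n} = 0$, so any cofiltered limit involving $B_{i,j}$'s with $n_i \to \infty$ vanishes in $\SH(k)$. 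Because $\Omega^\infty$ preserves limits and sends $0$ to $*$, the layers $\Omega^\infty B_{i,j}$ collectively contribute nothing to $\mat X$, so propagating through the fiber sequences in the principalized Postnikov tower shows $\ul\pi_k(\mat X) = 0$ for all $k$.

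The main obstacle is the refinement step: carefully producing a pro-presentation of $X$ whose principalized Postnikov layers are $\Omega^\infty$ of spectra in $\Sigma^{n_i, n_i}\SH(k)^{\veff}$ (and are $2$-power-torsion), with $n_i \to \infty$. This requires arguing that the effectivity $n_i$ of the space $X_i$ is inherited by the stable invariants of its Postnikov layers; this is where the hypothesis $\mathrm{cd}_2(k) < \infty$ enters indirectly, ensuring (via \cref{lem:motivic:eta-complete}) that the $2$-power-torsion very effective spectra that appear are well-behaved, and this is also why one needs the strengthened resolvability of \emph{loc.~cit.} rather than merely the principalized Postnikov tower of an abstract nilpotent space.
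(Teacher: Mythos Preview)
Your central claim---that $\Sigma^{n,n}\SH(k)^{\veff} \subset \SH(k)_{\ge n}$ in the homotopy $t$-structure---is false. In the paper's convention $\Sigma^{n,n}$ is smashing with $\Gm^{\wedge n}$, and this functor is $t$-exact for Morel's homotopy $t$-structure (the heart consists of homotopy modules, and $\Gm$-suspension merely shifts the weight grading). Concretely, $\ul\pi_0(\Sigma^{n,n}\1)_0 \wequi \ul K^{MW}_n \ne 0$, so $\Sigma^{n,n}\1$ is not $1$-connective in the homotopy $t$-structure no matter how large $n$ is. Your remark that ``the $t$-structure is defined on the simplicial index $s$ alone'' is the source of the error: the simplicial degree relevant to the homotopy $t$-structure is $s-w$, not $s$, and $\Sigma^{n,n}$ leaves $s-w$ unchanged. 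Consequently, left-completeness of the homotopy $t$-structure gives you nothing here.

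This is not a cosmetic issue: the vanishing you need is genuinely nontrivial and is precisely the content of Levine's convergence theorem (see \cref{cor:levine-convergence}). The paper's proof restricts to a small Nisnevich site $U_{\Nis}$ of bounded dimension $d$, uses the generalized Postnikov tower to reduce to $X = \Omega^\infty E$ with $E \in \Sigma^{n,n}\SH(k)^\veff$ and $E[1/2]=0$, and then invokes Levine's theorem to get that $X|_{U_\Nis}$ is $N(d,n)$-connective with $N(d,n)\to\infty$. The hypotheses $\mathrm{cd}_2(k)<\infty$ and $2$-power-torsion are not incidental: they enter directly through Levine's theorem (via the Bloch--Kato--Gabber theorem and the Arason--Pfister Hauptsatz in the appendix), not merely ``indirectly'' through $\eta$-completeness as you suggest. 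Without these hypotheses the statement is simply false.
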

\begin{proof}
It suffices to show that if $X \in \Spc(k)_*$ is nilpotent, $2$-power-torsion and $n$-effective (without loss of generality we may assume that $n \ge 2$, so that $X$ is resolvable by \cref{rmk:motivic:resolvable}) and $U \in \Sm_k$ has dimension $\le d$, then $X|_{U_\Nis}$ is $N(d,n)$-connective, where $N(d,n)$ is a function (independent of $X$ and $U$) with $\lim_n N(d,n) = \infty$.
(Indeed, the restrictions to the small sites $U_{\Nis}$ preserve all limits and are jointly conservative,
and moreover $\lim_i (X_i|_{U_{\Nis}}) = *$ by e.g.\ \cite[Lemma 2.4]{mattis2025etale} and the increasing connectivity of the $X_i|_{U_{\Nis}}$.)
By an induction on the generalized Postnikov tower of \cref{def:motivic:resolvable}, we may assume that $X = \Omega^\infty E$ for some $E \in \Sigma^{n,n}\SH(k)^\veff$ with $E[1/2]=0$.
The claim now follows from Levine's convergence theorem; see  \cite[Theorem 7.3]{levine2013convergence} (also recalled in \cite[Theorem 5.3]{bachmann-bott}) if $k$ has characteristic $\ne 2$, and \cref{cor:levine-convergence} else.
\end{proof}

\subsubsection{Milnor Resolutions: Localization}
In this subsection, we assume that $\ul\pi_0(A)_* \wequi S^{-1} (\ul\pi_0(\1)_*/\eta)$
with $S \subset \Z \setminus 0$.
In this situation, the Bousfield--Kan completion is related to unstable $S$-localization.
We write $L_S$ for the unstable $S$-localization functor in $\scr X$, cf.\ \cite[\S 3]{mattis2024unstablearithmeticfracturesquares}.
\begin{example}
  Possible examples for $A$ are the rings $S^{-1} H\Z$ 
  and $S^{-1} \mathrm{MGL}$.
  Of course, we may take $S=\{1\}$.
\end{example}
Before stating the main theorem in this situation, let us identify the stable localization functor.
\begin{lemma} \label{lem:motivic:milnor-localization-stable}
  Assume that $\mathrm{cd}_2(k) < \infty$.
  Let $E \in \SH(k)^\veff$. Then $L^{st}_A(E) \wequi S^{-1} E$.
\end{lemma}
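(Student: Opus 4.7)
The plan is to verify that $S^{-1}E$ satisfies the two defining properties of the Bousfield localization $L^{st}_A E$: namely, that the canonical map $E \to S^{-1}E$ is an $A$-equivalence (property (i)), and that $S^{-1}E$ is $A$-local (property (ii)).

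For (i), I would first note that $\ul\pi_*(A)_*$ is a module over $\ul\pi_0(A)_* = S^{-1}(\ul\pi_0(\1)_*/\eta)$, so every $s \in S$ acts invertibly on $\ul\pi_*(A)_*$, and hence on $A$ itself by left-completeness of the homotopy $t$-structure (\cref{rmk:t-structures-SH}). Thus $A$ would be $S$-local and $A \otimes C \wequi 0$ for any $S$-torsion $C$; since the fiber of $E \to S^{-1}E$ is $S$-torsion, this map is an $A$-equivalence. This reduces matters to (ii), which I plan to prove in the stronger form that every $F \in \SH(k)^\veff$ is $A$-local.

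To prove this stronger claim, I use that the class of $A$-local objects is closed under limits and extensions and contains every $A$-module (since for an $A$-acyclic $C$ and $A$-module $N$ one has $\Map(C, N) \wequi \Map_A(A \otimes C, N) = 0$). I would then apply the slice filtration $F \wequi \lim_n F/f_n F$, whose successive fibers are the slices $s_n F$. By a theorem of Pelaez/Voevodsky, each slice is canonically a module over $s_0(\1) \wequi H\Z$. Since the effective $t$-structure on $\SH(k)$ is compatible with the tensor product, the effective truncation $A \to \ul\pi_0^\eff(A) \wequi f_0(\ul\pi_0(\1)_*/\eta) \wequi H\Z$ is a map of ring spectra, so that every $H\Z$-module---in particular each slice $s_n F$---inherits an $A$-module structure and is $A$-local. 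Granted convergence of the slice tower, $F$ is then $A$-local, as it is built from $A$-modules by limits and extensions.

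The main remaining obstacle is convergence of the slice filtration for arbitrary very effective $F$. By \cref{lem:motivic:eta-complete}, the $2$-power-torsion part $F_{(2)}$ is already slice-complete. For the $2$-inverted part $F[1/2]$, the same lemma shows (using $\mathrm{cd}_2(k)<\infty$ to force the minus-part $F[1/2]^-$ to vanish) that $F[1/2]$ agrees with its $\eta$-complete plus-part; a parallel application of Levine's convergence theorem in this $\eta$-complete setting (as in the proof of \cref{lem:motivic:eta-complete}) then supplies slice-completeness here as well. Assembling these pieces yields $F$ as a limit of iterated extensions of $A$-modules, and the proof is complete.
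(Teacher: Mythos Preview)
Your argument contains a genuine error in the identification of $\ul\pi_0^\eff(A)$. You write $\ul\pi_0^\eff(A) \wequi f_0(\ul\pi_0(\1)_*/\eta) \wequi H\Z$, but you have dropped the localization: since $\ul\pi_0(A)_* = S^{-1}(\ul\pi_0(\1)_*/\eta)$ and $f_0$ commutes with $S$-localization, one gets $\ul\pi_0^\eff(A) \wequi S^{-1}H\Z$, not $H\Z$. Consequently the slices $s_nF$, which are $H\Z$-modules, are \emph{not} automatically $A$-modules via the truncation map $A \to \ul\pi_0^\eff(A)$. In fact the ``stronger form'' you propose---that every $F \in \SH(k)^\veff$ is $A$-local---is simply false when $S$ is nontrivial: your own step (i) shows that $A$-local objects are $S$-local, but a very effective spectrum need not be $S$-local.

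There is a second, independent gap: slice completeness of an arbitrary very effective $F$ (or of $F[1/2]$) is not available. Levine's convergence theorem, as invoked in \cref{lem:motivic:eta-complete} and \cref{cor:levine-convergence}, applies to torsion spectra; the $\eta$-completeness of $F[1/2]$ is obtained there by a different route (vanishing of the minus-part), not via slice completeness. Your ``parallel application'' does not go through.

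The paper's proof sidesteps both problems by a sandwich argument: the ring maps $S^{-1}\1 \to A \to S^{-1}(\ul\pi_0(\1)_*/\eta)$ yield a retract diagram $S^{-1}E \to L_A^{st}E \to S^{-1}E$, using \cref{lem:stable-bousfield} to identify the outer localizations and \cref{lem:motivic:eta-complete} to absorb the $\eta$-completion. This avoids any appeal to slice convergence for non-torsion spectra.
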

\begin{proof}
  Let $m \in S$. We first show that $A \otimes (E \xrightarrow{m} E)$ is invertible,
  for which it suffices to show that $m$ is invertible on $A$,
  which holds since $m$ is invertible on $\ul\pi_0(A)_* = S^{-1} \ul\pi_0(\1)_*/\eta$.
  Thus, we see that the canonical map $E \to L^{st}_A(E)$ factors 
  through $E \to S^{-1}E$, and that the resulting map $S^{-1} E \to E$ is inverted by $L^{st}_A(\ph)$.
  Hence, it suffices to show that $S^{-1} E$ is $L^{st}_A$-local.
  We have morphisms $S^{-1}\1 \to A \to \ul\pi_0(A)_* = S^{-1} (\ul\pi_0(\1)_* / \eta)$, and hence, get a diagram
  \begin{equation*}
    L^{st}_{S^{-1}\1} E \to L^{st}_{A} E \to L^{st}_{S^{-1} (\ul\pi_0(\1)_* / \eta)} E.
  \end{equation*}
  By \cref{lem:stable-bousfield} we see that
  \begin{equation*}
    L^{st}_{S^{-1}\1} E \wequi S^{-1} E
  \end{equation*}
  and
  \begin{equation*}
    L^{st}_{S^{-1} (\ul\pi_0(\1)_* / \eta)} E \wequi L^{st}_{S^{-1} \1 / \eta} E \wequi (S^{-1} E)^{\comp}_\eta \wequi S^{-1} E,
  \end{equation*}
  where the last equivalence is \cref{lem:motivic:eta-complete}.
  Thus, the above sequence of morphisms is equivalent to a retract diagram $S^{-1} E \to L_{A}^{st} E \to S^{-1} E$.
  Since $L_A^{st} E$ is $L_A^{st}$-local, the same is true for the retract $S^{-1} E$,
  which is what we wanted to show.
\end{proof}

\begin{theorem}\label{thm:motivic:milnor-localization}
  Assume that $\mathrm{cd}_2(k) < \infty$.
  \begin{enumerate}
    \item Let $X \in \Spc(k)_*$ be nilpotent. Then $L_S X \in \Spc(k)_*$ (i.e., $L_S X$ is again $\A^1$-invariant).
    \item Let $X \in \Spc(k)_*$ be resolvable. The canonical maps $L_S X \to \hat L_A X \to \Tot T_A^\bullet X$ are equivalences.
    \item For $E \in \SH(k)^\eff_{\ge 1}$ the canonical map $L_S \Omega^\infty E \to \Omega^\infty L^{st}_A E$
      is an equivalence.
    \item Let $F \to E \to B$ be a fiber sequence of pointed nilpotent Nisnevich sheaves.
      Then $L_S F \to L_S E \to L_S B$ is a fiber sequence.
    \item Let $X_\bullet$ be a tower of pointed nilpotent Nisnevich sheaves with $X = \lim_i X_i$.
      Assume that the tower is locally highly connected subordinate to $\scr U$, e.g.\ it is highly connected.
      Then $L_S X \wequi \lim_i L_S X_i$.
  \end{enumerate}
\end{theorem}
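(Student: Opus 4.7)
My plan is to closely follow the structure of the proof of \cref{thm:motivic:milnor-witt-localization}, with the hypothesis $\mathrm{cd}_2(k) < \infty$ used to handle the additional $\eta$-quotient in $\ul\pi_0(A)_*$.  Statements (1), (4), and (5) do not involve $A$ and will follow verbatim from the Milnor--Witt case: from \cite[Proposition 4.3.8 and Theorem 4.3.9]{asok2022localizationnilpotentspacesa1homotopy}, \cite[Lemma 3.13]{mattis2024unstablearithmeticfracturesquares}, and \cite[Proposition 6.9]{mattis2024unstablearithmeticfracturesquares}, respectively.

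For (3), I would combine \cref{lem:motivic:unstable-stable-S-localization}, giving $L_S \Omega^\infty E \wequi \Omega^\infty S^{-1} E$ for $1$-connective $E$, with \cref{lem:motivic:milnor-localization-stable}, giving $L^{st}_A E \wequi S^{-1} E$ for very effective $E$.  The proof of the latter is where $\mathrm{cd}_2(k) < \infty$ enters, via the $\eta$-completeness of very effective spectra (\cref{lem:motivic:eta-complete}).

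For (2), I would first establish $L_S X \wequi \hat{L}_A X$.  Since multiplication by any $s \in S$ is invertible on $\ul\pi_0(A)_* \wequi S^{-1}(\ul\pi_0(\1)_*/\eta)$, and $A$ is a module over $\ul\pi_0(A)_*$, the ring spectrum $A$ is $S$-local; hence every $L_S$-equivalence is an $L$-equivalence and $\hat{L}_A X$ is automatically $L_S$-local.  Conversely, on nilpotent resolvable $X$ one reduces via the generalized Postnikov tower and statement (3) to seeing that $L_S X$ is $L$-local at each layer.  It remains to show $\hat{L}_A X \wequi \Tot T_A^\bullet X$, for which I would follow the recipe of the proof of \cref{thm:motivic:milnor-witt-localization}(2): apply the generalized Postnikov tower of \cref{def:motivic:resolvable}, the principal fibration lemma (\cref{thm:fiber-sequences}), connectivity preservation of $T^\bullet$ (\cref{prop:Tn-conn}), and the highly-connected-tower reduction (\cref{lem:sheaves:highly-connected}) to reduce to the case $X = \Omega^\infty E$ with $E \in \Sigma^{m+1,1}\SH(k)^{\eff\heartsuit}$ for $m \ge 2$, and then use \cref{lem:adjunction:split}.

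The key new obstacle arises in this last step.  In the Milnor--Witt case, the object $S^{-1} E$ automatically acquired a $\ul\pi_0^\eff(A) \wequi S^{-1}\ul\pi_0^\eff(\1)$-module structure, hence an $A$-module structure.  Here, however, $\ul\pi_0^\eff(A) \wequi S^{-1}(\ul\pi_0^\eff(\1)/\eta) \wequi S^{-1}\ul\Z$ (using $\ul{K}^{MW}_0/\eta \wequi \ul{K}^{M}_0 \wequi \ul\Z$), so one additionally needs $\eta$ to act trivially on $S^{-1} E$ in order to promote its natural $S^{-1}\ul\pi_0^\eff(\1)$-module structure, and this is not automatic for an arbitrary $E$ in the effective heart.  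I expect to handle it by refining the reduction one more step: further filter $E$ along the $I$-adic filtration, where $I = \ker(\ul{GW} \to \ul\Z)$ is the fundamental ideal, whose successive quotients are $\F_2$-vector spaces (and in particular $\ul\Z$-modules on which $\eta$ acts trivially), and whose termination is guaranteed by $\mathrm{cd}_2(k)<\infty$ via essentially the $I^N = 0$ argument recalled in the proof of \cref{lem:motivic:eta-complete}.  Applying \cref{lem:adjunction:split} to each such atomic layer and reassembling via preservation of fiber sequences (by both $L_S$ and $\tau_{<\infty}T^\bullet$) will conclude the argument.
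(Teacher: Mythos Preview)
Your treatment of (1), (3), (4), and (5) is correct and matches the paper. For (2) you correctly locate the new obstacle: once one has reduced to $E$ in the $1$-effective heart, the object $S^{-1}E$ need not be an $A$-module because $\eta$ may act nontrivially.

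Your proposed $I$-adic filtration does not close this gap. The core problem is the assertion that the layers $I^jE/I^{j+1}E$ are ``$\ul\Z$-modules on which $\eta$ acts trivially''. Having the $\ul{GW}$-action factor through $\ul{GW}/I = \ul\Z$ is a statement about the degree-$0$ part of $\ul{K}^{MW}_*$; the element $\eta$ lives in degree $-1$. Since $I = \eta\cdot\ul{K}^{MW}_1$, the condition $I\cdot m = 0$ only yields $[u]\cdot(\eta m) = 0$ for all units $u$, not $\eta m = 0$. Hence the layers are not visibly $H\ul\Z$-modules in $\SH(k)^{\eff\heartsuit}$ and \cref{lem:adjunction:split} does not apply. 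There is also a termination issue: the vanishing $I^N = 0$ invoked from \cref{lem:motivic:eta-complete} is in $W(L)$ and depends on $\mathrm{cd}_2(L)$, which grows with $\mathrm{trdeg}(L/k)$; thus the sheaf $\ul{I}^N E$ on $\Sm_k$ need not vanish for any fixed $N$, and one would anyway be forced into a pro-argument with an infinite tail to control.

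The paper takes a genuinely different route at this point. After reducing only to $X = \Omega^\infty E$ with $E \in \Sigma^{3,1}\SH(k)^\veff$ and $E = S^{-1}E$, it splits along the fiber sequence $E \to E[1/2] \to C$. On the $2$-periodic piece, $\mathrm{cd}_2(k) < \infty$ forces $E^- = 0$, and $\eta$ acts by zero on $E^+$ by \cite[Lemma~39]{bachmann2018realetale}; one may then descend to the effective heart exactly as in the Milnor--Witt case and apply \cref{lem:adjunction:split}. On the $2$-power-torsion piece one does \emph{not} pass to the heart. Instead one uses the slice tower: each $E/f_nE$ is a finite extension of slices, and slices are $H\Z$-modules (hence $\ul\pi_0^\eff(A)$-modules since $S^{-1}H\Z \wequi \ul\pi_0^\eff(A)$), so \cref{lem:adjunction:split} handles these; the remaining pro-system $(\Tot^m T_A^\bullet\,\Omega^\infty f_nE)_{n,m}$ is infinitely effective and $2$-power-torsion, and its materialization vanishes by \cref{lem:motivic:infty-effective-vanishing} (resting on Levine's convergence theorem). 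This slice-filtration-plus-convergence step is the main missing ingredient in your proposal.
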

\begin{proof}
  (1), (4) and (5) were already shown in \cref{thm:motivic:milnor-witt-localization}.

  For (3) we combine \cref{lem:motivic:unstable-stable-S-localization} and \cref{lem:motivic:milnor-localization-stable}.
  
  Hence, the only thing left is (2).
  As before, since $L$ inverts $S$-equivalences, in order to prove that $L_S X \wequi \hat L_A X$, it suffices to show that $L_S X \wequi \Tot T_A^\bullet X$ (the latter being $L$-local).
  Write $F \coloneqq \Tot T_A^\bullet$.
  It is clear that $F$ inverts $S$-equivalences and hence the canonical map $X \to FX$ factors through $L_S X$.
  Let $X$ be resolvable, and consider its generalized Postnikov tower (\cref{def:motivic:resolvable}).
  The functor $L_S$ preserves all fiber sequences and the limit appearing in the tower by (4) and (5),
  and similarly $F$ preserves all those fiber sequences by \cref{thm:fiber-sequences} and \cref{rmk:generalized-tower}.
  To see that $F$ preserves the limit, 
  note that $(X_i)_i$ is a highly connected tower (under $X$),
  and we conclude by \cref{lem:sheaves:highly-connected} that
  the connectivity of $X \to X_i$ tends to $\infty$.
  Thus, by \cref{prop:Tn-conn} also the connectivity of $FX \to FX_i$ tends to $\infty$.
  Moreover, the same proposition shows that $FX_i$ is a highly connected tower.
  In particular, it follows again from \cref{lem:sheaves:highly-connected}
  that $FX \wequi \lim_i FX_i$.
  We have thus reduced to the case $X = \Omega^\infty E$ for some motivic spectrum $E \in \Sigma^{3,1} \SH(k)^\veff$.
  In fact, since both sides invert $S$-equivalences, we may assume that $E \wequi S^{-1}E$, and we must show that $X \to FX$ is an equivalence.
  Consider the fiber sequence \[ E \to E[1/2] \to C. \]
  Applying $\Omega^\infty$, we obtain a fiber sequence preserved by $F$ (by \cref{thm:fiber-sequences}, 
  as all involved motivic spaces are $1$-effective and $2$-connective) and $L_S$; it thus suffices to treat $E[1/2]$ and $C$, i.e., $2$-periodic spectra and $2$-power-torsion spectra.

  Let $E$ be $2$-periodic.
  Then since $\mathrm{cd}_2(k) < \infty$, $\eta$ acts by zero on $E$: 
  Indeed, as in the proof of \cref{lem:motivic:eta-complete}, we see that $E \wequi E^+$ (i.e.\ the minus-part vanishes),
  and we know from \cite[Lemma 39]{bachmann2018realetale} that $\eta$ acts by zero on $E^+$.
  Consider now the Postnikov tower of $E$ in the $1$-effective t-structure on $\SH(k)(1)$.
  By an analogous argument as in the beginning of the proof we are thus allowed to reduce to the 
  case where $E \in \Sigma^{k+1,1} \SH(k)^{\eff\heartsuit}$ for some $k \ge 2$,
  which is still $S$- and $2$-periodic.
  Consider $\Sigma^k\ul\pi_k(E)_*$, which again is $S$- and $2$-periodic, and hence $\eta$ acts by zero on $E$.
  In particular, $\Sigma^k\ul\pi_k(E)_*$ is a $(S^{-1} \ul\pi_0(\1)_*/\eta = \ul\pi_0(A)_*)$-module.
  Applying $f_1$, which is t-exact from the homotopy t-structure to the $1$-effective t-structure 
  by \cite[Lemma 6.2 (2)]{Bachmann2020TowardsConservativity}, we get 
  that $E = \Sigma^k \ul\pi_k^{1\eff}(E) \wequi \Sigma^k f_1 \ul\pi_k(E)_* \wequi \Sigma^{k,1} f_0 \Sigma^{0,-1} \ul\pi_k(E)_*$ 
  is an $(f_0 \ul\pi_0(A)_* = \ul\pi_0^\eff(A))$-module.
  In particular, it is also an $A$-module, 
  and thus the canonical map $\Omega^\infty E \to F \Omega^\infty E$ 
  is an equivalence by \cref{lem:adjunction:split}.

  Now let $E$ be $2$-power-torsion.
  Consider the fiber sequences $f_n E \to E \to E/f_n E$, which are preserved by $F\Omega^\infty$.
  We have $F\Omega^\infty(E/f_nE) \wequi \Omega^\infty(E/f_nE)$: Indeed, $E / f_nE$ is a finite extension of slices 
  (since $E$ is effective). Thus, as above we can reduce to the case of $s_n(E)$.
  Slices are $H\Z$-modules, hence $s_n(E)$ is an $S^{-1} H\Z$-module. We have
  \begin{equation*}
    S^{-1} H\Z \wequi S^{-1} f_0 (\ul\pi_0(\1)_*/\eta) \wequi f_0 \ul\pi_0(A)_* \wequi \pi_0^\eff(A),  
  \end{equation*}
  using \cite[Lemma 6.2 (2)]{Bachmann2020TowardsConservativity} and \cite[Lemma 12]{bachmann2017generalized}. In particular,
  we see that $s_n(E)$ is an $\ul\pi_0^\eff(A)$-module and hence an $A$-module, and thus 
  $F\Omega^\infty(s_n E) \wequi \Omega^\infty(s_nE)$ by \cref{lem:adjunction:split}.
  Taking the limit over $n$ we obtain a fiber sequence \[ \mat(\lim_{n,m} c\Tot_m T_A^\bullet \Omega^\infty(f_nE)) \to F\Omega^\infty E \to \lim_n \Omega^\infty E/f_n E \wequi \Omega^\infty E, \] where the equivalence is by \cref{lem:motivic:eta-complete}.
  We finish the proof by showing that the second map is an equivalence.
  Since the base is connected, for this it will be enough to show that the fiber is contractible.
  This follows from \cref{cor:motivic-preserve-effectivity} and \cref{lem:motivic:infty-effective-vanishing}.
\end{proof}

\subsubsection{Milnor Resolutions: Completion}
In this subsection, we assume that $\ul\pi_0(A)_* \wequi \ul\pi_0(\1)_* / (\eta, n)$
with $n \in \N$.
In this situation, the Bousfield--Kan completion is related to unstable $n$-completion.
We write $L_n$ for the unstable $n$-completion functor in $\scr X$, cf.\ \cref{def:unstable-n:def-n-equiv}.
\begin{example}
  A possible example for $A$ is the ring $H\Z/n$.
\end{example}
Before stating the main theorem in this situation, let us identify the stable localization functor.
\begin{lemma} \label{lem:motivic:milnor-completion-stable}
  Assume that $\mathrm{cd}_2(k) < \infty$.
  Let $E \in \SH(k)^\veff$. Then $L^{st}_A(E) \wequi E^{\comp}_n$.
\end{lemma}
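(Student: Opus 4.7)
The strategy mirrors the proof of \cref{lem:motivic:milnor-localization-stable}, exhibiting $E^\comp_n$ as a retract of $L^{st}_A E$ via a chain of Bousfield localizations that pinches it between the $n$-completion side and the $(\eta,n)$-completion side.

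First I would observe that the canonical map $E \to E^\comp_n$ is an $L^{st}_A$-equivalence. Since $n$ vanishes in $\ul\pi_0(A)_*$, multiplication by $n$ is nullhomotopic on $A$ (being given by $n \cdot 1_A \in \pi_0(A)(k) = 0$), so $A \wequi A/n$ and consequently $X \otimes A \wequi (X/n) \otimes A$ for every $X$. In particular, since $E$ is bounded below, the fiber of $E \to E^\comp_n$ is annihilated by $\otimes \1/n$, hence by $\otimes A$. This yields a canonical factorization $E \to E^\comp_n \to L^{st}_A E$.

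Second, the ring maps $\1/n \to A \to \ul\pi_0(\1)_*/(\eta, n) \wequi \ul\pi_0(A)_*$ induce, by \cref{lem:stable-bousfield}, a sequence
\begin{equation*}
  E^\comp_n \wequi L^{st}_{\1/n} E \to L^{st}_A E \to L^{st}_{\ul\pi_0(\1)_*/(\eta, n)} E \wequi (E^\comp_n)^\comp_\eta.
\end{equation*}
Crucially, $E^\comp_n$ is already $\eta$-complete: each $E/n^k$ is the cofiber of a self-map of the very effective spectrum $E$, hence very effective (as $\SH(k)^\veff$ is closed under cofibers), hence $\eta$-complete by \cref{lem:motivic:eta-complete}; and $\eta$-completeness is preserved under limits. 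So the displayed composite becomes a sequence $E^\comp_n \to L^{st}_A E \to E^\comp_n$. As in the proof of \cref{lem:motivic:milnor-localization-stable}, both this composite and the identity on $E^\comp_n$, when precomposed with $E \to E^\comp_n$, agree with the canonical unit map; since $E^\comp_n$ is $n$-complete and $E \to E^\comp_n$ exhibits it as the $n$-completion, such an extension is unique and the composite equals the identity.

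This exhibits $E^\comp_n$ as a retract of the $L^{st}_A$-local object $L^{st}_A E$, hence $E^\comp_n$ is itself $L^{st}_A$-local, forcing $E^\comp_n \wequi L^{st}_A E$. The main subtlety is the $\eta$-completeness verification for $E^\comp_n$ without assuming $E^\comp_n$ is itself very effective (since $\SH(k)^\veff$ is generally not closed under infinite limits); this is handled by applying \cref{lem:motivic:eta-complete} levelwise to the tower $(E/n^k)_k$ and then using the stability of $\eta$-completeness under limits. The finite $\mathrm{cd}_2(k)$ hypothesis enters precisely through this use of \cref{lem:motivic:eta-complete}.
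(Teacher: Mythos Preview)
Your approach is essentially the same as the paper's: sandwich $L^{st}_A$ between $L^{st}_{\1/n}$ and a finer localization, identify both ends with $E^\comp_n$, and conclude by the retract argument. The only cosmetic difference is that you use $\ul\pi_0(A)_* = \ul\pi_0(\1)_*/(\eta,n)$ as the finer ring and invoke \cref{lem:stable-bousfield} directly (exactly as the paper does in the parallel localization proof, \cref{lem:motivic:milnor-localization-stable}), whereas here the paper instead passes to $\ul\pi_0^\eff(A) = H\Z/n$ and cites Mantovani for the identification $L^{st}_{H\Z/n} = L^{st}_{\1/(\eta,n)}$. Your explicit justification that $E^\comp_n$ is $\eta$-complete (levelwise via $E/n^k \in \SH(k)^\veff$) is exactly what underlies the paper's terse appeal to \cref{lem:motivic:eta-complete}.

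One small slip: the assertion ``$A \wequi A/n$'' is false as stated --- since $n$ is nullhomotopic on $A$ one gets $A/n \wequi A \oplus \Sigma A$, not $A$. What you actually need (and what your argument in fact uses) is that $A$ is a \emph{retract} of $A \otimes \1/n$, which suffices to conclude that $\1/n$-acyclics are $A$-acyclic. Likewise ``$X \otimes A \wequi (X/n) \otimes A$'' is not literally true, but the inference ``annihilated by $\otimes\, \1/n$, hence by $\otimes\, A$'' is correct for the reason just given. These are wording issues only; the logical skeleton is sound.
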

\begin{proof}
  By assumption, we get morphisms 
  $\1/n \to A \to \ul\pi^\eff_0(A) = H\Z/n$, and thus a diagram
  \begin{equation*}
    L^{st}_{\1/n} E \to L^{st}_{A} E \to L^{st}_{H\Z/n} E,
  \end{equation*}
  where the first map is inverted by $L^{st}_A(-)$.
  By definition $L^{st}_{\1/n} E = E^\comp_n$.
  Hence, it suffices to show that $L^{st}_{\1/n} E$ is $L^{st}_A$-local.
  By \cite[\S 5.2]{mantovani2021localizationscompletionsstableinftycategories} we get
  \begin{equation*}
    L^{st}_{H\Z/n} E \wequi L^{st}_{\1 / (\eta, n)} E = E^{\comp}_{\eta,n} \wequi E^\comp_n,
  \end{equation*}
  where the last equivalence is \cref{lem:motivic:eta-complete}.
  Thus, the above sequence of morphisms is equivalent to a retract diagram $E^\comp_n \to L_{A}^{st} E \to E^\comp_n$.
  Since $L_A^{st} E$ is $L_A^{st}$-local, the same is true for the retract $E^\comp_n$,
  which is what we wanted to show.
\end{proof}

\begin{theorem} \label{thm:motivic:milnor-completion}
  For this theorem, assume that $\mathrm{cd}_2(k) < \infty$.
  \begin{enumerate}
    \item Let $X \in \Spc(k)_*$ be nilpotent. Then $L_n X \in \Spc(k)_*$ (i.e., $L_n X$ is again $\A^1$-invariant).
    \item Let $X \in \Spc(k)_*$ be resolvable. There is a canonical map $L_n X \to \tau_{\ge 1} \Tot T^\bullet X$, which is an equivalence.
    \item For $E \in \SH(k)_{\ge 1}$ the canonical map $L_n \Omega^\infty E \to \tau_{\ge 1} \Omega^\infty L^{st}_A E$
      is an equivalence.
    \item Let $F \to E \to B$ be a fiber sequence of pointed nilpotent Nisnevich sheaves.
      Then $L_S F \wequi \tau_{\ge 1} \fib(L_S E \to L_S B)$.
    \item Let $X_\bullet$ be a tower of pointed nilpotent Nisnevich sheaves with $X = \lim_i X_i$.
      Assume that the tower is locally highly connected subordinate to $\scr U$, e.g.\ it is highly connected.
      Then $L_n X \wequi \lim_i L_n X_i$.
  \end{enumerate}
\end{theorem}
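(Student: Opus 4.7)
Parts (1), (4), and (5) were already established in \cref{thm:motivic:milnor-witt-completion}, as they only involve the functor $L_n$ and the underlying Nisnevich $\infty$-topos, independently of the choice of $A$. Part (3) follows immediately by combining \cref{lem:motivic:unstable-stable-n-completion}, which gives $L_n\Omega^\infty E \wequi \tau_{\ge 1}\Omega^\infty E^{\comp}_n$, with \cref{lem:motivic:milnor-completion-stable}, which identifies $L_A^{st}E \wequi E^{\comp}_n$.

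Turning to (2), set $F \coloneqq \tau_{\ge 1}\Tot T_A^\bullet$. Since $\tau_{\ge 1}$ is a right adjoint, $L_n$ restricts to $\scr X_{*,\ge 1}$, and both functors preserve the relevant fiber sequences and highly connected towers (by (4), (5), \cref{thm:fiber-sequences}, and \cref{rmk:generalized-tower}), the standard reduction through the generalized Postnikov tower of \cref{def:motivic:resolvable} brings the problem to showing that $L_n\Omega^\infty E \to F\Omega^\infty E$ is an equivalence for $E \in \Sigma^{3,1}\SH(k)^\veff$. Applying both functors to the fiber sequence $E \to E[1/2] \to C$ with $C = \cof(E \to E[1/2])$, preserved since every term is $2$-connective, it suffices to handle separately the $2$-periodic case ($E \wequi E[1/2]$) and the $2$-power-torsion case ($E \wequi C$).

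In the $2$-periodic case, the argument in the proof of \cref{thm:motivic:milnor-localization}(2) gives $\eta E = 0$ under $\mathrm{cd}_2(k) < \infty$. A further Postnikov reduction in the $1$-effective $t$-structure brings us to $E = \Sigma^{k+1,1}N$ with $N \in \SH(k)^{\eff\heartsuit}$, $\eta N = 0$, and $k \ge 2$. Following the $G(-)$ construction of \cref{thm:motivic:milnor-witt-completion}, I would assemble $G(N) \coloneqq (F\Omega^\infty\Sigma^iN)_{i \ge k+1} \in \SH^{S^1}(k)$ and aim to show that $\omega^\infty\Sigma^{k+1}N \to G(N)$ is an $n$-equivalence. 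Via the mod-$n$ fiber sequence on $N$, this reduces to the case in which $N$ is a module over
\[ \ul\pi_0^\eff(A) = f_0(\ul\pi_0(\1)_*/(n,\eta)) = H\Z/n, \]
using $t$-exactness of $f_0$, \cite[Lemma 6.2(2)]{Bachmann2020TowardsConservativity}; for such $N$, $\Sigma^{k+1,1}N$ acquires an $A$-module structure via $A \to \ul\pi_0^\eff(A)$, and \cref{lem:adjunction:split} applies.

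In the $2$-power-torsion case, if $n$ is odd then multiplication by $n$ is invertible on $E$, whence $E^{\comp}_n = 0$, while simultaneously $L\Omega^\infty E = A \otimes \Sigma^\infty\Omega^\infty E = 0$ (as $n$ acts as zero on $A$ while being invertible on $\Sigma^\infty\Omega^\infty E$), so $F\Omega^\infty E = *$ as well; thus we reduce to $n = 2^k$. For this case I would imitate the last step of the proof of \cref{thm:motivic:milnor-localization}(2): apply the slice filtration $f_m E \to E \to E/f_m E$, preserved by both $L_n\Omega^\infty$ and $F\Omega^\infty$; handle each $E/f_m E$ (a finite extension of slices) by combining the slice-wise $H\Z$-module structure with the $G(-)$-construction of the $2$-periodic step, noting $H\Z/n \wequi \ul\pi_0^\eff(A)$; and take the limit as $m \to \infty$ using \cref{lem:motivic:eta-complete} (giving $\lim_m E/f_m E \wequi E$ in the slice-complete situation), \cref{cor:motivic-preserve-effectivity}, and \cref{lem:motivic:infty-effective-vanishing} to dispose of the residual infinitely-effective error term. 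The main obstacle I anticipate is precisely this last synthesis: in contrast to the localization case treated in \cref{thm:motivic:milnor-localization}, where $F\Omega^\infty$ was the identity on infinite loop spaces of slices, here $F\Omega^\infty$ is the $n$-completion, and the $G(-)$-style argument must be threaded through the slice tower while controlling the resulting spectral sequence in order to produce the desired $n$-equivalence after passing to the limit.
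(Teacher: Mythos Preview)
Your reductions for (1), (3), (4), (5) are correct and match the paper exactly. For (2), your overall strategy is right, but you invert the order of two key steps, and this is precisely what creates the obstacle you flag at the end.

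The paper follows the proof of \cref{thm:motivic:milnor-witt-completion} \emph{all the way through the $G(\ph)$-construction first}: one reduces to $X=\Omega^\infty\Sigma^k M$ with $M\in\Sigma^{1,1}\SH(k)^{\eff\heartsuit}$ and $nM=0$, and \emph{only then} splits $M$ along $M\to M[1/2]\to C$ into a $2$-periodic and a $2$-power-torsion part (possibly further decomposing $C$ into two heart objects). With $nM=0$ already in hand, both cases become verbatim instances of the argument from \cref{thm:motivic:milnor-localization}. In the $2$-periodic case, $\eta M=0$ (from $\mathrm{cd}_2(k)<\infty$) together with $nM=0$ makes $M$ a $\ul\pi_0^\eff(A)$-module. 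In the $2$-power-torsion case, the slices of $\Sigma^k M$ are $H\Z$-modules on which $n$ acts by zero, hence $H\Z/n\wequi\ul\pi_0^\eff(A)$-modules, so \cref{lem:adjunction:split} applies and $F\Omega^\infty$ is literally the identity on each slice truncation $\Sigma^k M/f_m$. The limit over $m$ then runs exactly as in \cref{thm:motivic:milnor-localization}, with the residual $\infty$-effective term killed by \cref{lem:motivic:infty-effective-vanishing}; no second $G(\ph)$-argument is needed inside the slice tower.

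Your proposed order (split $E$ first, then run $G(\ph)$ slice by slice) forces you to compare $L_n\Omega^\infty(E/f_mE)$ rather than $\Omega^\infty(E/f_mE)$ in the limit, which is the synthesis problem you anticipate; the paper's ordering sidesteps it entirely. As a minor separate point, your reduction to $n=2^k$ via the claim that odd $n$ is invertible on $\Sigma^\infty\Omega^\infty E$ for $E$ $2$-power-torsion is plausible but not justified here, and is in any case unnecessary once one reorders as above.
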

\begin{proof}
  (1), (4) and (5) were already shown in \cref{thm:motivic:milnor-witt-completion}.

  For (3) we combine \cref{lem:motivic:unstable-stable-n-completion} and \cref{lem:motivic:milnor-completion-stable}.

  Hence, the only thing left is (2). Exactly as in the proof of \cref{thm:motivic:milnor-witt-completion}
  we reduce to the case that $X = \Omega^\infty \Sigma^k M$ for $M \in \Sigma^{1,1} \SH(k)^{\eff\heartsuit}$ 
  such that $n$ acts by zero on $M$.

  Considering the fiber sequence $M \to M[1/2] \to C$ (and potentially further splitting up $C$ into two objects in the heart),
  we may furthermore assume that either $M$ is $2$-periodic or $2$-power-torsion.
  One now argues exactly as in \cref{thm:motivic:milnor-localization} to show that in both cases 
  the canonical map $\Omega^\infty \Sigma^k M \to F\Omega^\infty \Sigma^k M$ is an equivalence.
\end{proof}

\appendix
\section{Coherence data from adjunctions} \label{sec:appendix:coherence-data}
In this section, let $L \colon \scr C \adj \scr D \noloc R$ 
be an adjunction between $\infty$-categories.
Alternatively, by work of Riehl--Verity \cite{riehl20152ccategorytheoryofquasicats} and Haugseng \cite[Theorem 1.1]{haugseng2021laxtransformationsadjunctionsmonads}, 
this adjunction is classified 
by a functor $\scr M \colon \ADJ \to \mathrm{Cat}_\infty$ of $(\infty, 2)$-categories,
where $\mathrm{Cat}_\infty$ is the $(\infty,2)$-category of $\infty$-categories,
and $\ADJ$ is the free living adjunction \cite{schanuel1986free,Auderset:Adjunctions-and-monads},
which is a $(2,2)$-category.
In particular, recall that $\ADJ$ has two objects $0$ and $1$,
and there are morphisms $l \colon 0 \to 1$ and $r \colon 1 \to 0$,
such that $\scr M(0) = \scr C$, $\scr M(1) = \scr D$, $\scr M(l) = L$ 
and $\scr M(r) = R$.
Moreover, there are $2$-morphisms $u \colon \id_0 \to rl$ 
and $c \colon lr \to \id_1$ that map under $\scr M$ to the unit and counit 
of the adjunction $L \dashv R$.

In this section, we explain how one can use this point of view to 
extract simplicial data out of this adjunction,
such as, for example, the cobar resolution or the codegeneracy cube.
As far as we are aware, some of the results presented in this appendix 
have not appeared in the literature in the setting of $\infty$-categories, even though they are well-known to experts.

In the following, $\MAP_{\mathbb E}(-, -)$ denotes the mapping $\infty$-category 
in an $(\infty, 2)$-category $\mathbb E$. This appendix is the only part 
of the article where we make (light) use of the theory of $(\infty, 2)$-categories.

\begin{construction}[Cobar resolution] \label{lem:adjunction:cobar-construction}
  We have $\MAP_{\ADJ}(0, 0) \wequi \Delta_+$ \cite[Corollary 2.8]{Auderset:Adjunctions-and-monads}.
  This lets us define an augmented cosimplical object $(\CB)^\bullet_+ \colon \Delta_+ \to \Fun(\scr C, \scr C)$ 
  as the composition
  \begin{equation*}
    \Delta_+ \wequi \MAP_{\ADJ}(0, 0) \xrightarrow{\scr M} \MAP_{\mathrm{Cat}_\infty}(\scr C, \scr C) = \Fun(\scr C, \scr C).
  \end{equation*}
\end{construction}

\begin{remark}
  By \cite[Corollary 2.8]{Auderset:Adjunctions-and-monads}, the object $[n] \in \Delta_+$
  corresponds to the morphism $(rl)^n \in \MAP_{\ADJ}(0, 0)$.
  Therefore, $(\CB)^n_+ = \scr M((rl)^n) = (RL)^n$.
  Similarly, one sees that the transition morphisms in the cosimplical diagram are given by (compositions of) 
  the co/unit of the adjunction.
\end{remark}

Recall the categories $\Delta_{+\infty}$ and $\Delta_{-\infty}$ from \cite[Lemma 6.1.3.16]{highertopoi} and \cite[Definition 4.7.2.1]{higheralgebra}.
An augmented cosimplicial object $X^\bullet \colon \Delta_+ \to \scr C$
is called \emph{split} if it extends along the inclusion $\Delta_+ \subset \Delta_{+\infty}$ \cite[Definition 4.7.2.2]{higheralgebra}, 
or equivalently along the inclusion $\Delta_+ \subset \Delta_{-\infty}$
\cite[Remark 4.7.2.3]{higheralgebra}. 
\begin{remark} \label{rmk:adjunction:split-limit}
  Split objects have the useful property 
  that they are automatically limit diagrams \cite[dual of Lemma 6.1.3.16]{highertopoi}, and the limit is preserved by any functor (since any 
  functor preserves the extension and thus split objects).
\end{remark}

\begin{proposition} \label{lem:adjunction:split}
  Both $L (\CB)^\bullet_+$ and $(\CB)^\bullet_+ R$ are split cosimplicial objects.
  In particular, they are limit diagrams which are preserved by any functor.
\end{proposition}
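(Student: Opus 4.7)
The plan is to promote Construction \ref{lem:adjunction:cobar-construction} one step in the $(\infty,2)$-category $\ADJ$: the functors $L (\CB)^\bullet_+$ and $(\CB)^\bullet_+ R$ are obtained by post- respectively pre-composing the cobar functor with $l$ and $r$ inside $\ADJ$, and so they factor through the mapping $\infty$-categories $\MAP_{\ADJ}(0,1)$ and $\MAP_{\ADJ}(1,0)$. Concretely, writing
\begin{equation*}
L (\CB)^\bullet_+ \colon \Delta_+ \wequi \MAP_{\ADJ}(0,0) \xrightarrow{l \circ -} \MAP_{\ADJ}(0,1) \xrightarrow{\scr M} \Fun(\scr C, \scr D)
\end{equation*}
and analogously for $(\CB)^\bullet_+ R$ via $- \circ r \colon \MAP_{\ADJ}(0,0) \to \MAP_{\ADJ}(1,0)$, the task reduces to identifying these larger hom-categories.

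Next I would invoke Schanuel--Street / Auderset's combinatorial description of $\ADJ$ \cite{schanuel1986free,Auderset:Adjunctions-and-monads}, which identifies $\MAP_{\ADJ}(0,1)$ and $\MAP_{\ADJ}(1,0)$ with (opposites of) the categories of nonempty finite linearly ordered sets equipped with a distinguished endpoint — precisely the shapes $\Delta_{-\infty}$ respectively $\Delta_{+\infty}$ of \cite[Lemma 6.1.3.16]{highertopoi} and \cite[Definition 4.7.2.1]{higheralgebra}. Under these identifications, the functor $l \circ - \colon \Delta_+ \to \MAP_{\ADJ}(0,1)$ becomes the canonical fully faithful inclusion $\Delta_+ \hookrightarrow \Delta_{-\infty}$, and similarly $- \circ r$ realizes $\Delta_+ \hookrightarrow \Delta_{+\infty}$. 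Applying $\scr M$ then yields extensions of $L(\CB)^\bullet_+$ and $(\CB)^\bullet_+ R$ to $\Delta_{-\infty}$ and $\Delta_{+\infty}$, which is by definition a splitting \cite[Definition 4.7.2.2, Remark 4.7.2.3]{higheralgebra}. The final sentence of the statement is then an immediate application of \cref{rmk:adjunction:split-limit}.

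The main obstacle is the precise combinatorial identification of the mapping categories in $\ADJ$ with $\Delta_{\pm\infty}$; while this is classical for 2-categories, a careful citation to the $(\infty,2)$-categorical version (via Riehl--Verity or Haugseng, already cited in the setup of the appendix) is required. Once this identification is in hand, one only needs to check that the composition functors $l\circ -$ and $-\circ r$ are precisely the standard inclusions of $\Delta_+$ into $\Delta_{-\infty}$ and $\Delta_{+\infty}$, which follows by inspecting the generating $2$-cells $u$ and $c$: the extra zeroth coface/codegeneracy in the extended diagrams corresponds to $lu$ and $cl$ (respectively $ur$ and $rc$), which furnish the required retractions witnessing the splitting.
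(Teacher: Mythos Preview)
Your proposal is correct and follows essentially the same route as the paper: both identify $\MAP_{\ADJ}(0,1) \wequi \Delta_{-\infty}$ and $\MAP_{\ADJ}(1,0) \wequi \Delta_{+\infty}$ via \cite[Corollary 2.8]{Auderset:Adjunctions-and-monads}, observe that $l\circ(-)$ and $(-)\circ r$ correspond to the standard inclusions of $\Delta_+$, and then apply $\scr M$. Your worry about the $(\infty,2)$-categorical upgrade is unnecessary here, since $\ADJ$ is a $(2,2)$-category and so the classical $2$-categorical computation of its hom-categories applies verbatim.
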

\begin{proof}
  We start with the statement about $L (\CB)^\bullet_+$.
  Consider the following diagram:
  \begin{center}
      \begin{tikzcd}
          \Delta_+ \ar[d, hook] \ar[r, equal] \ar[rr, dashed, bend left, "{(\CB)^\bullet_+}"] &\Map_{\ADJ}(0,0) \ar[d, "l \circ (-)"] \ar[r, "\scr M"] &\Fun(\scr C, \scr C) \ar[d, "L \circ (-)"] \\
          \Delta_{-\infty} \ar[r, equal] &\Map_\ADJ(0,1) \ar[r, "\scr M"] &\Fun(\scr C, \scr D)\rlap{.}
      \end{tikzcd}
  \end{center}
  The squares commute essentially by \cite[Corollary 2.8]{Auderset:Adjunctions-and-monads} and since $\scr M$ is a $2$-functor.
  The composition on the top and then the right is $L (\CB)^\bullet_+$. Since the diagram commutes,
  we see that this has an extension to $\Delta_{-\infty}$, thus is split.
  For the other statement, consider instead the following diagram:
  \begin{center}
      \begin{tikzcd}
          \Delta_+ \ar[d, hook] \ar[r, equal] \ar[rr, dashed, bend left, "{(\CB)^\bullet_+}"] &\Map_{\ADJ}(0,0) \ar[d, "(-) \circ r"] \ar[r, "\scr M"] &\Fun(\scr C,\scr C) \ar[d, "(-) \circ R"] \\
          \Delta_{+\infty} \ar[r, equal] &\Map_{\ADJ}(1,0) \ar[r, "\scr M"] &\Fun(\scr D,\scr C)\rlap{.}
      \end{tikzcd}
  \end{center}
  The squares commute essentially by (the dual of) \cite[Corollary 2.8]{Auderset:Adjunctions-and-monads} and since $\scr M$ is a $2$-functor.
  The composition on the top and then the right is $(\CB)^\bullet_+ R$. Since the diagram commutes,
  we see that this has an extension to $\Delta_{+\infty}$, thus is split.
\end{proof}

\begin{construction}
  Write $\Delta^{\pm} \subset \Delta$ for the wide subcategory 
  spanned by those maps that preserve both the minimal and maximal element.
  Moreover, write $\Delta_+^\inj \subset \Delta_+$ and $\Delta^\surj \subset \Delta$ 
  for the wide subcategories of injective, respectively surjective maps.
  In particular, $\Delta^\surj \subset \Delta^\pm$.
  Observe that there is a canonical equivalence $\Phi \colon \Delta^\pm \xrightarrow{\simeq} (\Delta_+)^\op$
  \cite[\S 3.11]{Street1980}.
  This equivalence restricts to an equivalence $\Phi \colon \Delta^\surj \xrightarrow{\simeq} (\Delta_+^\inj)^\op$.
  For $n \ge 0$, slicing over $[n]$ induces an equivalence 
  $\Phi_n \colon \Delta^\surj_{[n]/} \xrightarrow{\simeq} (\Delta_{+,/[n-1]}^{\inj})^{\mathrm{op}}$.
\end{construction}

\begin{definition}
  For $n \ge 0$ and $\scr E$ an $\infty$-category,
  write $\mathrm{Cube}^n(\scr E) \coloneqq \Fun(\Delta^\surj_{[n]/}, \scr E)$
  for the $\infty$-category of \emph{cubes} in $\scr E$.
\end{definition}

\begin{definition} \label{def:adjunction:codegen-cube}
  Let $n \ge 0$. We define two cubes as the following compositions:
  \begin{equation*}
    (\CB)^\bullet_{\surj, n} \colon \Delta^{\surj}_{[n]/} \to \Delta^{\surj} \subset \Delta_+ = \MAP_{\ADJ}(0,0) \xrightarrow{\scr M} \Fun(\scr C, \scr C),
  \end{equation*}
  so that $(\CB)^\bullet_{\surj, n} \in \Cube^n(\Fun(\scr C, \scr C))$, and 
  \begin{equation*}
    (\CB)^\bullet_{+,\inj, n} \colon \Delta^{\surj}_{[n]/} \xrightarrow[\simeq]{\Phi_n} (\Delta_{+,/[n-1]}^{\inj})^{\op} \to (\Delta_{+}^{\inj})^{\op} \subset (\Delta_+)^\op = \MAP_\ADJ(1,1) \xrightarrow{\scr M} \Fun(\scr D, \scr D),
  \end{equation*}
  so that $(\CB)^\bullet_{+,\inj, n} \in \Cube^n(\Fun(\scr D, \scr D))$.
  We call $(\CB)^\bullet_{\surj, n}$ the $n$-th \emph{codegeneracy cube}
  and $(\CB)^\bullet_{+,\inj, n}$ the $n$-th \emph{face cube} of the adjunction $L \dashv R$.
\end{definition}

\begin{definition}
  Let $\scr E$ be an $\infty$-category and $n \ge 0$. We define a functor 
  $\iota_\mathrm{top} \colon \Delta^{\surj}_{[n]/} \to \Delta^{\surj}_{[n+1]/}$
  via 
  \begin{equation*}
    (f \colon [n] \twoheadrightarrow [k]) \mapsto ([n+1] = [n] \star [0] \xrightarrow{f \star \id} [k] \star [0] = [k+1]),
  \end{equation*}
  where $\star$ is the join operation.
  Precomposition with $\iota_\mathrm{top}$ gives a functor
  \begin{equation*}
    (\ph)^\mathrm{top} \colon \Cube^{n+1}(\scr E) \to \Cube^n(\scr E),
  \end{equation*}
  the \emph{top face} of a cube.

  Similarly, we define $\iota_\mathrm{bot} \colon \Delta^{\surj}_{[n]/} \to \Delta^{\surj}_{[n+1]/}$
  via precomposition with the surjective function $[n+1] \to [n]$ that is the identity on $0 \le k \le n$ 
  and sends $n+1$ to $n$.
  Precomposition with $\iota_\mathrm{bot}$ gives a functor
  \begin{equation*}
    (\ph)^\mathrm{bot} \colon \Cube^{n+1}(\scr E) \to \Cube^n(\scr E),
  \end{equation*}
  the \emph{bottom face} of a cube.

  There is a natural transformation $\iota_\mathrm{top} \to \iota_\mathrm{bot}$
  given on $f \colon [n] \twoheadrightarrow [k]$ by the square 
  \begin{center}
    \begin{tikzcd}
      {[n + 1]} \ar[d, "\id"] \ar[r, "{\iota_{top}(f)}"] &{[k+1]} \ar[d, "{i \mapsto i, k+1 \mapsto k}"] \\
      {[n + 1]} \ar[r, "{\iota_{bot}(f)}"] &{[k]}\rlap{,}
    \end{tikzcd}
  \end{center}
  and hence an induced natural transformation 
  \begin{equation*}
    \psi \colon (\ph)^\mathrm{top} \to (\ph)^\mathrm{bot}.
  \end{equation*}
\end{definition}

This lets us inductively define the total fiber of a cube:
\begin{definition}
  Let $\scr E$ be an $\infty$-category with finite limits, and $n \ge 0$.
  We inductively define a functor $\mathrm{ToFib} \colon \Cube^n(\scr E) \to \scr E$ as follows:
  If $n = 0$, we use the equivalence $\mathrm{ToFib} \colon \Cube^n(\scr E) \wequi \scr E$ 
  given by evaluating at $[0]$.
  If $n \ge 1$, then we define 
  \begin{equation*}
    \mathrm{ToFib}(\ph) \coloneqq \fib(\mathrm{ToFib}((\ph)^\mathrm{top}) \xrightarrow{\psi} \mathrm{ToFib}((\ph)^\mathrm{bot})).
  \end{equation*}
\end{definition}

We write $C^n \coloneqq \mathrm{ToFib}((\CB)^\bullet_{\surj, n}) \colon \scr C \to \scr C$  
and $S^n \coloneqq \mathrm{ToFib}((\CB)^\bullet_{+,\inj,n}) \colon \scr D \to \scr D$ 
for the total fibers of the codegeneracy and face cubes.
  
\begin{proposition} \label{lem:adjunction:cube-relation}
  There is a canonical equivalence of cubes $(\CB)^\bullet_{\surj, n} \wequi R (\CB)^\bullet_{+,\inj,n} L$,
  and hence also of their total fibers $C^n \wequi RS^nL$.
\end{proposition}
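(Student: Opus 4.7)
The plan is to exploit the 2-functoriality of $\scr M \colon \ADJ \to \mathrm{Cat}_\infty$. For any 2-cell $\alpha \colon (lr)^k \Rightarrow (lr)^m$ in $\MAP_\ADJ(1,1)$, horizontal composition with $r$ on the left and $l$ on the right yields a 2-cell $r\cdot\alpha\cdot l \colon (rl)^{k+1} \Rightarrow (rl)^{m+1}$ in $\MAP_\ADJ(0,0)$, and $\scr M$ being a 2-functor gives a canonical equivalence $R\circ \scr M(\alpha) \circ L \wequi \scr M(r\cdot\alpha\cdot l)$, natural in $\alpha$. Writing $(\CB)^\bullet_{\surj,n} = \scr M \circ \beta_\surj$ with $\beta_\surj \colon \Delta^\surj_{[n]/} \hookrightarrow \Delta^\surj \hookrightarrow \Delta_+ \wequi \MAP_\ADJ(0,0)$, and $(\CB)^\bullet_{+,\inj,n} = \scr M \circ \beta_\inj$ with $\beta_\inj$ the composite involving $\Phi_n$, naturality gives $R(\CB)^\bullet_{+,\inj,n}L \wequi \scr M \circ \big((r(\ph)l)\circ \beta_\inj\big)$. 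The proof thus reduces to producing a canonical equivalence $(r(\ph)l)\circ \beta_\inj \wequi \beta_\surj$ of functors $\Delta^\surj_{[n]/} \to \MAP_\ADJ(0,0)$.

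To establish this combinatorial equivalence, I would work in Schanuel--Street's explicit presentation of $\ADJ$ \cite{schanuel1986free,Street1980,Auderset:Adjunctions-and-monads}. Under the identifications $\MAP_\ADJ(0,0) \wequi \Delta_+$ and $\MAP_\ADJ(1,1) \wequi \Delta_+^\op$, horizontal composition $\alpha \mapsto r\cdot\alpha\cdot l$ corresponds to an explicit shift functor $\Delta_+^\op \to \Delta_+$ (since $r\cdot(lr)^k\cdot l = (rl)^{k+1}$). Street's duality $\Phi \colon \Delta^\pm \wequi \Delta_+^\op$ records a surjection $[n] \twoheadrightarrow [k]$ via its fibers, viewed dually as an injection into the set $[n-1]$ of ``gaps''. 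I would then verify that the composition of $\Phi_n$ with the inclusion $(\Delta_+^\inj)^\op \hookrightarrow \Delta_+^\op$ followed by the shift recovers the inclusion $\Delta^\surj_{[n]/} \to \Delta^\surj \hookrightarrow \Delta_+$ defining $\beta_\surj$; this amounts to checking that the face maps indexing the face cube match the degeneracy maps indexing the codegeneracy cube under the fibers/gaps dictionary.

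Finally, the assertion on total fibers is essentially formal: limits in $\Fun(\scr C, \scr C)$ and $\Fun(\scr D, \scr D)$ are computed pointwise, postcomposition with $R$ preserves them (being a right adjoint), and precomposition with $L$ preserves all limits trivially. Hence $\mathrm{ToFib}(R \circ (\CB)^\bullet_{+,\inj,n} \circ L) \wequi R \circ \mathrm{ToFib}((\CB)^\bullet_{+,\inj,n}) \circ L = R S^n L$, which together with the first part gives $C^n \wequi R S^n L$. The main obstacle is the combinatorial step: carefully matching Street's duality with the shift produced by horizontal composition requires patient tracking of orderings and face/degeneracy labels, and is the kind of bookkeeping where off-by-one discrepancies can easily arise; choosing a sufficiently functorial presentation of $\Phi_n$ is essential so that the identification is coherent rather than merely levelwise.
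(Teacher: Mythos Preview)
Your proposal is correct and follows essentially the same approach as the paper: both reduce, via the 2-functoriality of $\scr M$, to the combinatorial identification of $r\circ(\ph)\circ l$ (viewed through the equivalences $\MAP_\ADJ(0,0)\wequi\Delta_+$ and $\MAP_\ADJ(1,1)\wequi\Delta_+^\op$) with Street's duality $\Phi$, citing Auderset for the explicit description. The paper packages this as a single commutative diagram rather than your more narrative decomposition, and leaves the total-fiber consequence implicit, whereas you spell out why $R\circ(\ph)\circ L$ commutes with the total fiber; but the substance is the same.
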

\begin{proof}
  We have the following diagram:
  \begin{center}
      \begin{tikzcd}
          \Delta^{\surj}_{[n]/} \ar[d, "\wequi", "\Phi_{n}"'] \ar[rrrrr, bend left=10, dashed, "(\CB)^\bullet_{\surj, n}"] \ar[r] &\Delta^{\surj} \ar[d, "\wequi", "\Phi"'] \ar[r, hook] &\Delta^{\pm} \ar[d, "\wequi", "\Phi"']  \ar[r, hook] &\Delta_+ \ar[r, equal] &\Map_{\ADJ}(0,0) \ar[r, "\scr M"'] &\Fun(\scr C,\scr C) \\
          (\Delta_{+,/[n-1]}^{\inj})^{\mathrm{op}} \ar[r] \ar[rrrrr, bend right=10, dashed, "(\CB)^\bullet_{+,\inj,n} \circ \Phi_{n}^{-1}"'] &(\Delta_{+}^{\inj})^\op \ar[r, hook] &(\Delta_+)^\op \ar[rr, equal] && \Map_{\ADJ}(1,1) \ar[r, "\scr M"] \ar[u, "r \circ (-) \circ l"] &\Fun(\scr D,\scr D) \ar[u, "R \circ (-) \circ L"] \rlap{.}
      \end{tikzcd}
  \end{center}
  Here, the left two squares commute by the definitions of $\Phi_n$ and $\Phi$.
  That the rectangle commutes can be seen by comparing the definition of $\Phi$ 
  with the explicit description of the morphism $r \circ (\ph) \circ l$, cf.\ \cite[Corollary 2.8]{Auderset:Adjunctions-and-monads}.
  The right square commutes since $\scr M$ is a $2$-functor.
  Thus, the composition on the top agrees with the composition on the left, bottom and right.
  This is exactly what we want to prove.
\end{proof}

\begin{proposition} \label{lem:adjunction:cube-tofib-relation}
  There is a fiber sequence 
  \begin{equation*}
    S^{n+1} \to S^n LR \to S^n,
  \end{equation*}
  where the second map is induced by the counit of the adjunction $L \dashv R$.
\end{proposition}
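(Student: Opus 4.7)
The plan is to identify the inductive ingredients in the definition of $S^{n+1} = \mathrm{ToFib}((\CB)^\bullet_{+,\inj,n+1})$, namely its top face, bottom face, and the natural transformation $\psi$ between them, in terms of $(\CB)^\bullet_{+,\inj,n}$ and the counit of the adjunction $L \dashv R$, and then take total fibers.

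I will proceed combinatorially. Given $f\colon [n] \twoheadrightarrow [k]$, the surjection $\iota_{\mathrm{bot}}(f)\colon [n+1] \twoheadrightarrow [k]$ (obtained by collapsing the top edge of $[n+1]$ and then applying $f$) has the same preserved edges as $f$, viewed in $[0,n-1] \subset [0,n]$. Therefore $\Phi_{n+1}(\iota_{\mathrm{bot}}(f))$ is $\Phi_n(f)$ post-composed with the last face inclusion $[n-1]\hookrightarrow [n]$ in $\Delta_+$. Similarly, $\iota_{\mathrm{top}}(f)=f\star \id$ preserves one extra top edge, so $\Phi_{n+1}(\iota_{\mathrm{top}}(f)) = \Phi_n(f) \star \id_{[0]}\colon [k]\to [n]$. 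Since $\scr M$ is a $2$-functor and, under the equivalence $\Delta_+ \wequi \MAP_\ADJ(0,0)$, the join corresponds to composition of endomorphisms in $\ADJ$, we have $\scr M(\phi \star \id_{[0]}) \wequi \scr M(\phi)\, LR$ for any $\phi$ in $\Delta_+$. Combining these observations yields equivalences of cubes
\begin{equation*}
((\CB)^\bullet_{+,\inj,n+1})^{\mathrm{bot}} \wequi (\CB)^\bullet_{+,\inj,n} \quad\text{and}\quad ((\CB)^\bullet_{+,\inj,n+1})^{\mathrm{top}} \wequi (\CB)^\bullet_{+,\inj,n}\, LR.
\end{equation*}

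Next, the defining square for $\iota_{\mathrm{top}}(f) \to \iota_{\mathrm{bot}}(f)$ translates under $\Phi_{n+1}$ into the last face inclusion $[k-1]\hookrightarrow [k]$ in $\Delta_+$, compatibly whiskered with the identity on $[n]$. Under $\scr M$, this last face yields the outermost counit $(LR)^{k+1} \to (LR)^k$ in the bar resolution of the comonad $LR$. Naturality in $f$ therefore identifies $\psi$ with the natural transformation $(\CB)^\bullet_{+,\inj,n}\, LR \to (\CB)^\bullet_{+,\inj,n}$ obtained by whiskering the counit $LR\to\id$ with $(\CB)^\bullet_{+,\inj,n}$ on the left. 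Applying $\mathrm{ToFib}$ to $\psi$, and using that limits in $\Fun(\scr D,\scr D)$ are computed pointwise (so that $\mathrm{ToFib}$ commutes both with taking fibers and with post-composition by $LR$), we deduce
\begin{equation*}
S^{n+1} \wequi \fib\bigl(S^n\, LR \to S^n\bigr),
\end{equation*}
the map being $S^n$ whiskered with the counit, as claimed.

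The main obstacle is the combinatorial bookkeeping in the identification of the top and bottom face cubes and of $\psi$: one must carefully track $\Phi_{n+1}$ through the face inclusions $\iota_{\mathrm{top}}$ and $\iota_{\mathrm{bot}}$, verify the compatibility of $\scr M$ with the join operation in $\Delta_+$, and recognize the image of the slicing square as the outermost counit in the bar construction of the comonad $LR$. Once these identifications are in place, the conclusion follows formally from the pointwise nature of limits in $\Fun(\scr D,\scr D)$.
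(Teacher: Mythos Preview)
Your proof is correct and follows essentially the same strategy as the paper: identify the top and bottom faces of the $(n+1)$st face cube with $(\CB)^\bullet_{+,\inj,n}\,LR$ and $(\CB)^\bullet_{+,\inj,n}$ respectively via the combinatorics of $\iota_{\mathrm{top}}$, $\iota_{\mathrm{bot}}$ and $\Phi$, then take total fibers. One small slip: the face cube factors through $\MAP_\ADJ(1,1) \wequi \Delta_+^{\op}$, not $\MAP_\ADJ(0,0)$, so your appeal to ``the equivalence $\Delta_+ \wequi \MAP_\ADJ(0,0)$'' should instead invoke the analogous identification for $\MAP_\ADJ(1,1)$ (under which the join still corresponds to composition, now with $lr$); otherwise $\scr M(\phi)\,LR$ would not even typecheck in $\Fun(\scr D,\scr D)$. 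Your explicit identification of $\psi$ with the whiskered counit is a detail the paper leaves implicit.
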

\begin{proof}
  By definition there is a fiber sequence $S^{n+1} \to \mathrm{ToFib} (\CB)^{\bullet, \mathrm{top}}_{+,\inj,n} \to \mathrm{ToFib} (\CB)^{\bullet, \mathrm{bot}}_{+,\inj,n}$.
  Therefore, it suffices to identify $(\CB)^{\bullet, \mathrm{top}}_{+,\inj,n} \wequi (\CB)^{\bullet}_{+,\inj,n-1} LR$
  and $(\CB)^{\bullet, \mathrm{bot}}_{+,\inj,n} \wequi (\CB)^{\bullet}_{+,\inj,n-1}$.
  We start with the first claim, for which we have to show that the following diagram commutes:
  \begin{center}
    \begin{tikzcd}
        \Delta^{\surj}_{[n-1]/} \ar[d, "\iota_{\mathrm{top}}"] \ar[r, "\Phi_{n-1}"]& \Delta^{\inj,\op}_{+,/[n-2]} \ar[r]& \Delta_{+}^\op \ar[r, equal] & \MAP_\ADJ(1,1) \ar[d, "lr \circ -"] \ar[r, "\scr M"] &\Fun(\scr{D},\scr{D})  \ar[d, "LR \circ -"] \\
        \Delta^{\surj}_{[n]/} \ar[r, "\Phi_{n}"]  &\Delta^{\inj, \op}_{+,/[n-1]} \ar[r] &\Delta_{+}^\op \ar[r, equal] & \MAP_\ADJ(1,1) \ar[r, "\scr M"] &\Fun(\scr{D}, \scr{D})\rlap{.}
    \end{tikzcd}
  \end{center}
  The left rectangle commutes by the explicit description of the morphism $lr \circ -$,
  cf.\ \cite[Corollary 2.8]{Auderset:Adjunctions-and-monads}.
  The right square commutes since $\scr M$ is a $2$-functor.
  The second claim follows from the commutativity of the diagram 
  \begin{center}
    \begin{tikzcd}
        \Delta^{\surj}_{[n-1]/}\ar[d, "\iota_{\mathrm{bot}}"] \ar[r, "\Phi_{n-1}"]& \Delta^{\inj,\op}_{+,/[n-2]} \ar[dr]&\\
        \Delta^{\surj}_{[n]/} \ar[r, "\Phi_{n}"]  &\Delta^{\inj,\op}_{+,/[n-1]} \ar[r] &\Delta^{\inj,\op}_{+}\rlap{,}
    \end{tikzcd}
  \end{center}
  which is immediate from the definition.
\end{proof}

\section{Bousfield localizations in stable categories at Moore objects} \label{sec:stable-localization}
In this section, we prove the following proposition:
\begin{proposition} \label{lem:stable-bousfield}
  Let $\scr D$ be a stable presentably symmetric monoidal $\infty$-category with unit $\1$, and $\scr D_{\ge 0} \subset \scr D$ 
  the connective part of a t-structure that is left-complete (cf.\ the discussion before \cite[Remark 1.2.1.18]{higheralgebra})
  and compatible with the symmetric monoidal structure and filtered colimits.
  Suppose that $A \in \CAlg(\scr D_{\ge 0})$. Write $L$ for the Bousfield
  localization at those morphisms $f$ such that $f \otimes A$ is an equivalence.
  Suppose moreover that there are maps $x_i \colon L_i \to \1$ such that $L_i \in \scr D_{\ge 0}$ 
  is dualizable, with dual $DL_i \in \scr D_{\ge 0}$. Let $S \subset \Z \setminus 0$ 
  be a set of nonzero integers.
  If $\pi_0(A) \wequi S^{-1} \pi_0(\1 \sslash (x_1, \dots, x_n))$,
  then $LE \wequi (S^{-1} E)^\comp_{x_1, \dots, x_n}$ for all $E \in \scr D_{\ge 0}$.
\end{proposition}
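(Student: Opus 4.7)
The plan is to verify the two defining properties of the Bousfield localization: that the proposed target $(S^{-1}E)^\comp_{x_1,\ldots,x_n}$ is $A$-local, and that the canonical map $E \to (S^{-1}E)^\comp_{x_1,\ldots,x_n}$ is an $A$-equivalence. Both steps proceed by reducing to basic calculations involving the individual $x_i$'s and the structure of $\pi_0(A)$, exploiting the connectivity and left-completeness assumptions.

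First I would observe that $A$ is automatically $S^{-1}\1$-local. Indeed, the hypothesis says each $s \in S$ is a unit in $\pi_0(A)$; since $\pi_*(A)$ is a graded $\pi_0(A)$-module, $s$ acts invertibly on every $\pi_i(A)$, so $\1 \to S^{-1}\1$ is an equivalence after tensoring with $A$. Using compatibility of the t-structure with filtered colimits to keep $S^{-1}E$ connective, this lets me factor the $A$-localization through $E \to S^{-1}E$ and reduce to the case $S = \{1\}$, where the target statement becomes $L E \wequi E^\comp_{x_1,\ldots,x_n}$.

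Next, I would show the map $E \to E^\comp_{x_1,\ldots,x_n}$ is an $A$-equivalence. By iterating the one-variable completion, this reduces to showing $F \to F^\comp_{x_i}$ is an $A$-equivalence for each $i$. The cofiber is built from the $x_i$-periodization of $F$, on which $x_i$ acts invertibly. On the other hand, the hypothesis implies $x_i$ is zero in $\pi_0(A)$, and a Postnikov-tower analysis of $A$ (available by left-completeness and compatibility of the t-structure with $\otimes$) shows that after tensoring with $A$, $x_i$ acts nilpotently on each $\pi_j$. An $x_i$-invertible and $x_i$-nilpotent object vanishes, so the cofiber is $A$-acyclic.

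Then I would verify that $E^\comp_{x_1,\ldots,x_n}$ is $A$-local. Since $A$-local objects are closed under limits, it suffices to treat each finite stage $E \otimes \1\sslash(x_1^{k_1}) \otimes \cdots \otimes \1\sslash(x_n^{k_n})$. A Postnikov argument (again using left-completeness) reduces $A$-locality to the heart, where the relevant homotopy objects are naturally modules over $\pi_0(\1)/(x_1^{k_1},\ldots,x_n^{k_n})$, which maps to $\pi_0(A)$; this $\pi_0(A)$-module structure is what makes the corresponding Eilenberg--Mac Lane objects $A$-local, closing the argument.

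The main obstacle is the $A$-locality verification in the last paragraph. The naive expectation that ``$\pi_0(A)$-module homotopy groups implies $A$-local'' is false for general connective ring spectra $A$ (e.g. $\1$ and $H\Z$ share $\pi_0 = \Z$ over classical spectra but give very different Bousfield localizations), so the argument must genuinely exploit the special form $\pi_0(A) = S^{-1}\pi_0(\1)/(x_1,\ldots,x_n)$ together with the dualizability and connectivity of the $L_i$ to pass from $\pi_0(A)$-module structure on homotopy to honest $A$-locality, presumably via an inductive argument along Postnikov towers combined with the extension-closure of $A$-local objects.
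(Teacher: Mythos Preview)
Your reduction to $S=\{1\}$ and your argument that $E \to E^\comp_{x_1,\ldots,x_n}$ is an $A$-equivalence are essentially correct and parallel the paper. The genuine gap is exactly where you locate it: the $A$-locality of $E^\comp_{x_1,\ldots,x_n}$. Your proposed route---reducing to finite stages $E \otimes \1\sslash(x_1^{k_1},\ldots,x_n^{k_n})$, then to the heart via Postnikov towers, then appealing to a $\pi_0(A)$-module structure---does not close, and your own counterexample ($\1$ versus $H\Z$ in $\Sp$) shows why: a $\pi_0(A)$-module structure on homotopy groups does not in general force $A$-locality. You would need a separate argument that objects of the heart with this module structure are actually $A$-local, and you have not supplied one.

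The paper sidesteps this difficulty entirely. Rather than verifying $A$-locality of the completion directly, it invokes \cite[Theorems 2.1 and 2.2]{bachmann2022topologicalmodels} to identify $(S^{-1}E)^\comp_{x_1,\ldots,x_n}$ with the $A$-nilpotent completion $\lim_\Delta (A^{\otimes\bullet+1}\otimes S^{-1}E)$. The latter is a limit of $A$-modules and hence manifestly $A$-local. The cited result is precisely the nontrivial input that exploits the dualizability of the $L_i$, the connectivity of $L_i$ and $DL_i$, and the specific form of $\pi_0(A)$; it is doing the work you gestured at but did not carry out. So the paper's proof is short only because the hard step has been outsourced.
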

\begin{proof}
  As the t-structure on $\scr D$ is compatible with filtered colimits,
  it restricts to a t-structure on $\scr D[S^{-1}]$ such that the localization functor 
  $S^{-1}(-)$ is t-exact. It is moreover symmetric monoidal,
  so that $S^{-1} L_i \in \scr D[S^{-1}]_{\ge 0}$ 
  is dualizable, with dual $S^{-1} DL_i \in \scr D[S^{-1}]_{\ge 0}$.
  Note that $A \in \scr D[S^{-1}]$ since $S$ is invertible on $\pi_0(A)$ by assumption.
  Let $E \in \scr D_{\ge 0}$. 
  Since $\pi_0(A) = S^{-1} \pi_0(\1 \sslash (x_1, \dots, x_n)) = \pi_0((S^{-1} \1) \sslash (x_1, \dots, x_n))$,
  we conclude from \cite[Theorems 2.1 and 2.2]{bachmann2022topologicalmodels} that 
  $(S^{-1} E)^\comp_{x_1, \dots, x_n} \wequi \lim (A \otimes S^{-1} E \rightrightarrows A \otimes A \otimes S^{-1} E \cdots)$
  is given by the $A$-nilpotent completion.
  Consider the canonical map $E \to S^{-1} E \to (S^{-1} E)^\comp_{x_1, \dots, x_n}$.
  Since the nilpotent completion is $L$-local (as a limit of $A$-modules),
  the right-hand side is $L$-local.
  Thus, as the map is an $(- \otimes A)$-equivalence, 
  we get an equivalence $LE \wequi (S^{-1} E)^\comp_{x_1, \dots, x_n}$.
\end{proof}

\section{Unstable \texorpdfstring{$n$}{n}-completion} \label{sec:unstable-n-comp}
In this appendix we discuss unstable completion at a (possibly infinite) set of primes.
\begin{lemma} \label{lem:stable-bousfield:n-completion}
  Let $\scr D$ be a stable presentable $\infty$-category, and $n \in \Z \setminus 0$.
  Then $E^\comp_n \wequi \bigoplus_{\ell | n} E^\comp_\ell$,
  where the product is over all primes $\ell$ dividing $n$.
  In particular, $E^\comp_\ell$ is $n$-complete.
\end{lemma}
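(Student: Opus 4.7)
The strategy is to reduce the claim to a pointwise splitting $\1/n^k \wequi \bigoplus_i \1/p_i^{k a_i}$ in $\scr D$ (for the prime factorization $n = \prod_i p_i^{a_i}$), which itself can be proved purely in $\Sp$ and transferred via the canonical $\Sp$-tensoring on the presentable stable category $\scr D$. Taking the limit over $k$ and using that finite direct sums commute with limits will then yield the decomposition of $E^\comp_n$.

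The main step is the coprime splitting $\mathbb{S}/ab \wequi \mathbb{S}/a \oplus \mathbb{S}/b$ in $\Sp$ whenever $\gcd(a,b) = 1$. The octahedral axiom applied to $\mathbb{S} \xrightarrow{a} \mathbb{S} \xrightarrow{b} \mathbb{S}$ yields a cofiber sequence $\mathbb{S}/a \to \mathbb{S}/ab \to \mathbb{S}/b$, and splitting it amounts to showing the boundary map $\mathbb{S}/b \to \Sigma \mathbb{S}/a$ is null. Since $\mathrm{Map}(\mathbb{S}/b, Y) \wequi \fib(b \colon Y \to Y)$, this reduces to checking that multiplication by $b$ is an equivalence on $\mathbb{S}/a$; this follows from the long exact sequence in $\pi_\ast$, which shows that $\pi_\ast(\mathbb{S}/a)$ is annihilated by $a^2$ while $b$ is a unit modulo $a^2$ by coprimality. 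Iterating on the prime factorization $n^k = \prod_i p_i^{k a_i}$ yields $\mathbb{S}/n^k \wequi \bigoplus_i \mathbb{S}/p_i^{k a_i}$, and tensoring with $\1_\scr{D}$ promotes this to $\1/n^k \wequi \bigoplus_i \1/p_i^{k a_i}$ in $\scr D$. Tensoring with $E$ and taking the limit over $k$ (using that finite direct sums are finite products in $\scr D$ and hence commute with limits, and that $\{k a_i\}_k$ is cofinal in $\mathbb{N}$) produces the equivalence $E^\comp_n \wequi \bigoplus_{\ell \mid n} E^\comp_\ell$.

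For the ``in particular'' statement I would note that the vanishing $\1/\ell^k \otimes \1/p = 0$ from the splitting forces multiplication by $p$ to be an equivalence on each $E/\ell^k$, hence on $E^\comp_\ell$, for every prime $p \ne \ell$. Applying the decomposition to $E^\comp_\ell$ itself therefore collapses to the $p = \ell$ summand, and idempotence of $\ell$-adic completion identifies $(E^\comp_\ell)^\comp_n$ with $E^\comp_\ell$. The main obstacle is the absence of a $t$-structure or tractable homotopy groups on the arbitrary stable presentable $\scr D$, precluding a direct check of vanishing at the level of $\pi_\ast$; this is resolved by performing the sensitive splitting computation in $\Sp$ once and for all and propagating it to $\scr D$ via the canonical $\Sp$-module structure.
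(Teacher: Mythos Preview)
Your proof is correct, but it takes a genuinely different route from the paper's. You work with the explicit formula $E^\comp_n = \lim_k E/n^k$ and establish a Chinese-Remainder splitting $\mathbb{S}/n^k \wequi \bigoplus_i \mathbb{S}/p_i^{ka_i}$ in $\Sp$, transport it via the $\Sp$-tensoring, and pass to the limit. The paper instead argues entirely at the level of Bousfield localizations: it first shows (by d\'evissage on the cofiber sequence $f\sslash\ell \to f\sslash n \to f\sslash(n/\ell)$) that $n$-equivalences coincide with simultaneous $\ell$-equivalences for $\ell\mid n$, and then checks directly that $\bigoplus_\ell E^\comp_\ell$ is $n$-complete (each summand being $\ell$-local, hence $n$-local by the first step) and that the canonical map from $E$ is an $n$-equivalence (using that $(E^\comp_\ell)^\comp_p = 0$ for $\ell \ne p$). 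The paper's argument never touches the tower and yields the ``in particular'' essentially for free; your approach is more explicit and has the pleasant feature of exhibiting the splitting already at each finite stage.

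One point you should tighten: to pass the splitting through the limit you need the equivalences $\mathbb{S}/n^k \wequi \bigoplus_i \mathbb{S}/p_i^{ka_i}$ to assemble into an equivalence of \emph{towers}, whereas your construction via a choice of null-homotopy of the boundary map is a priori non-canonical in $k$. A quick fix is to take the projections $\mathbb{S}/n^k \to \mathbb{S}/p_i^{ka_i}$ to be the canonical reduction maps (equivalently, localization away from $p_i$, which already inverts the other prime powers by your coprimality argument); these are manifestly natural in $k$, and your computation shows their sum is an equivalence.
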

\begin{proof}
  First, $f$ is an $n$-equivalence if and only if $f$ is an $\ell$-equivalence 
  for all $\ell | n$.
  Indeed, consider the devissage fiber sequence 
  $f \sslash \ell \to f \sslash n \to f \sslash \frac{n}{\ell}$,
  and conclude by induction.
  In particular, $(-)^\comp_n$ inverts all $\ell$-equivalences,
  therefore there is a canonical functor $(-)^\comp_n \to \bigoplus_{\ell|n}(-)^\comp_\ell$.

  We show that this map is an $n$-equivalence.
  Indeed, since the map $E \to E^\comp_n$ is an $n$-equivalence,
  it suffices to show that $E \to \bigoplus_{\ell|n} E^\comp_\ell$ is an $n$-equivalence,
  or equivalently an $p$-equivalence for all $p | n$.
  This now follows since $(E^\comp_p)^\comp_p \wequi E^\comp_p$,
  and $(E^\comp_\ell)^\comp_p = 0$ for all $\ell \neq p$ 
  since $p$ is invertible on $E^\comp_\ell$.

  We finish the proof by showing that any object of the form $\bigoplus_{\ell|n}E^\comp_\ell$
  is $n$-complete. Indeed, it suffices to show that $E^\comp_\ell$ is $n$-complete 
  for all $\ell | n$, which is clear since it is local for all $\ell$-equivalences,
  and every $n$-equivalence is in particular an $\ell$-equivalence.

  For the last claim, note that $n$-complete objects are stable under retracts.
\end{proof}

Let $\scr P$ be a (not necessarily finite) set of primes and $\scr X$ be an $\infty$-topos.
Suppose that $\scr X$ admits a locally finite-dimensional cover, 
as in \cite[Definition 5.1]{mattis2024unstablearithmeticfracturesquares}.

\begin{definition} \label{def:unstable-n:def-n-equiv}
  Write $L_{\scr P} \colon \scr X \to \scr X$ 
  for the Bousfield-localization at $\scr P$-equivalences,
  i.e., at morphisms $f \colon X \to Y$ 
  such that $\Sigma^\infty_+ f \sslash \ell$ is an equivalence for all $\ell \in \scr P$.
  We call this functor the \emph{unstable $\scr P$-completion functor}.

  If $n \in \Z \setminus 0$ is a nonzero integer, 
  we write $\scr P_n$ for the set of prime divisors of $n$, and $L_n \coloneqq L_{\scr P_n}$ 
  for the \emph{unstable $n$-completion functor}.
\end{definition}

In particular, we have the following:
\begin{example}
  If $n = \ell^k$ is a prime power (for $k > 0$),
  then $L_n = L_\ell$ is just unstable $\ell$-completion.
  If $n = 1$, then $L_n = *$, as then any morphism is an $n$-equivalence.
  Similarly, $L_\emptyset = *$, since $\scr P_1 = \emptyset$.
\end{example}

The following is straightforward:
\begin{lemma} \label{lem:unstable-n:n-equivalence}
  Let $n \in \Z \setminus 0$ and $f \colon X \to Y \in \scr X$.
  Then $f$ is an $n$-equivalence (i.e.\ $\Sigma^\infty_+ f \sslash n$ is an equivalence) 
  if and only if $f$ is an $\ell$-equivalence for all $\ell|n$ (i.e.\ $L_{\scr P_n} f$ is an equivalence).
\end{lemma}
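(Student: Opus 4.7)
The plan is to reduce the statement to the analogous claim in the stable $\infty$-category $\Sp(\scr X)$ and then carry it out by induction on the number of prime factors of $n$. Concretely, since by \cref{def:unstable-n:def-n-equiv} the notion of $n$-equivalence (respectively $\ell$-equivalence) for a map $f$ in $\scr X$ is defined as $\Sigma^\infty_+ f \sslash n$ (respectively $\Sigma^\infty_+ f \sslash \ell$) being an equivalence, it suffices to prove the following stable statement: in any stable presentable $\infty$-category, for a morphism $g$ and a nonzero integer $n$, the cofiber $g \sslash n$ vanishes if and only if $g \sslash \ell$ vanishes for every prime $\ell \mid n$. This is precisely the claim asserted in the first paragraph of the proof of \cref{lem:stable-bousfield:n-completion}, and I would simply spell out the induction sketched there.

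The key input is the devissage fiber sequence
\begin{equation*}
    g \sslash \ell \to g \sslash n \to g \sslash (n/\ell),
\end{equation*}
obtained (for any prime $\ell \mid n$) from the octahedral axiom applied to the factorization $\cdot n = (\cdot (n/\ell)) \circ (\cdot \ell) \colon g \to g$. For the ``if'' direction I would induct on the number of prime factors of $n$ counted with multiplicity, the base case being immediate. Given the hypothesis that $g \sslash \ell' = 0$ for every prime divisor $\ell'$ of $n$, pick any prime $\ell \mid n$; then the prime divisors of $n/\ell$ form a subset of those of $n$, so the inductive hypothesis yields $g \sslash (n/\ell) = 0$, and combined with $g \sslash \ell = 0$ the fiber sequence forces $g \sslash n = 0$.

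For the ``only if'' direction the argument is purely algebraic: if $g \sslash n = 0$ then $\cdot n$ is an equivalence on $g$, and since the two commuting endomorphisms $\cdot \ell$ and $\cdot (n/\ell)$ of $g$ compose to an automorphism, each is individually an automorphism in the commutative endomorphism subring they generate, so $g \sslash \ell = 0$. Applying this to every prime divisor of $n$ gives the desired vanishing. I do not anticipate a substantive obstacle; the only mild bookkeeping item is confirming that passing through $\Sigma^\infty_+$ faithfully transports both hypotheses and conclusions between $\scr X$ and $\Sp(\scr X)$, which is immediate from the definitions.
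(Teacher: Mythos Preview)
Your proposal is correct and follows essentially the same route as the paper: both reduce to the stable statement and invoke the devissage fiber sequence $g \sslash \ell \to g \sslash n \to g \sslash (n/\ell)$ exactly as in the first paragraph of the proof of \cref{lem:stable-bousfield:n-completion}. You spell out the two directions of the induction more explicitly than the paper does (in particular your algebraic argument for the ``only if'' direction, using that $\cdot \ell$ commutes with the inverse of $\cdot n$, is a clean way to handle that half), but there is no substantive divergence.
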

\begin{proof}
  This follows since it is true stably, cf.\ \cref{lem:stable-bousfield:n-completion}.
\end{proof}

Our main goal is to show that if $X$ is nilpotent,
then $L_{\scr P} X$ splits into the product of $L_\ell X$.
First, we get a natural transformation between the two functors in question:
\begin{lemma}
  There is a canonical natural transformation 
  $\varphi \colon L_{\scr P} \to \prod_{\ell \in \scr P} L_\ell$.

  If $X \in \scr X_*$ is connected,
  then so is $L_{\scr P} X$.
  In particular, we obtain by adjunction a natural morphism 
  $\widetilde{\varphi}_X \colon L_{\scr P} X \to \tau_{\ge 1} \prod_{\ell \in \scr P} L_\ell X$.
\end{lemma}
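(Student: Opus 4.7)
The plan is as follows.

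\textbf{Construction of $\varphi$.} For each $\ell \in \scr P$, I observe that since $\{\ell\}\subseteq \scr P$, every $\scr P$-equivalence is in particular an $\ell$-equivalence: if $\Sigma^\infty_+ f \sslash \ell'$ is an equivalence for every $\ell' \in \scr P$, this holds in particular for $\ell' = \ell$. Consequently, every $L_\ell$-local object is $L_{\scr P}$-local, so $L_\ell X$ is $L_{\scr P}$-local. By the universal property of the Bousfield localization $L_{\scr P}$, the composite $X \to L_\ell X$ (whose target is $L_{\scr P}$-local) factors uniquely through the unit $X \to L_{\scr P} X$. This yields a natural transformation $L_{\scr P} \to L_\ell$, and taking the product over $\ell \in \scr P$ produces $\varphi \colon L_{\scr P} \to \prod_{\ell \in \scr P} L_\ell$.

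\textbf{Connectivity of $L_{\scr P} X$.} Let $X \in \scr X_*$ be connected. By the small object argument, $L_{\scr P} X$ is built from $X$ by a transfinite iteration of pushouts along coproducts of generating $\scr P$-equivalences. Following the recipe of \cite[§3]{mattis2024unstablepcompletionmotivichomotopy}, these generators can be taken to be maps of the form $U_+ \wedge (S^n \wedge S^1) \to U_+ \wedge (S^n \wedge S^1)$ induced by $\ell \colon S^1 \to S^1$, for $\ell \in \scr P$, $n \ge 0$, and $U$ ranging over a small generating set of $\scr X$. The fiber of each such generator is $1$-connective, so the pushouts in $\scr X_*$ preserve connectivity, and the iteration stays within $\scr X_{*,\ge 1}$. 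Thus $L_{\scr P} X$ is connected. Equivalently, this is a routine adaptation of \cite[Lemma 3.12]{mattis2024unstablepcompletionmotivichomotopy} (single-prime case) carried out uniformly over $\ell \in \scr P$; the argument there goes through verbatim since the set of generators only enlarges but keeps the same connectivity properties.

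\textbf{Construction of $\widetilde\varphi_X$.} In a topos, colimits of pointed connected objects are pointed connected, so the inclusion $i \colon \scr X_{*,\ge 1} \hookrightarrow \scr X_*$ preserves colimits and hence admits a right adjoint $\tau_{\ge 1} \colon \scr X_* \to \scr X_{*,\ge 1}$, namely the connective cover functor. Since $L_{\scr P} X$ is connected, the map $\varphi_X \colon L_{\scr P} X \to \prod_{\ell \in \scr P} L_\ell X$ factors uniquely through $\tau_{\ge 1} \prod_{\ell \in \scr P} L_\ell X$ by the adjunction, giving the desired $\widetilde\varphi_X$.

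\textbf{Main obstacle.} The only non-formal point is the connectivity assertion; everything else is immediate from universal properties. The hard part is isolating a set of generating $\scr P$-equivalences all of whose cells are $1$-connective, which is the key technical input that makes the small object argument respect the subcategory $\scr X_{*,\ge 1}$. This is essentially the content of the single-prime version cited above.
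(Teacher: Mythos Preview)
Your construction of $\varphi$ and of $\widetilde\varphi_X$ is correct and matches the paper. The issue is entirely in the connectivity argument.

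The generating set you write down is wrong: the maps $U_+ \wedge (S^n \wedge S^1) \xrightarrow{\id \wedge \ell} U_+ \wedge (S^n \wedge S^1)$ are the generators for unstable $\ell$-\emph{periodization} $L_S$ (i.e.\ inverting $\ell$), not for $\ell$-\emph{completion}. In fact these maps are not $\ell$-equivalences at all: after $\Sigma^\infty$ the degree $\ell$ map becomes multiplication by $\ell$, and modding out by $\ell$ gives multiplication by $\ell$ on $\1/\ell$, which is zero on $\pi_0$ rather than an isomorphism. So your small-object argument never gets off the ground, and there is no obvious replacement generating set with the connectivity property you need. (The Bousfield-style generators for $\ell$-completion are $\ell$-equivalences between $\kappa$-small objects, with no a priori connectivity control.)

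The paper's argument is both shorter and avoids this trap. It handles $\scr P=\emptyset$ separately (then $L_{\scr P}X=*$), and otherwise picks any $\ell\in\scr P$, notes that the unit $X\to L_{\scr P}X$ is in particular an $\ell$-equivalence, and invokes \cite[Lemma 3.12]{mattis2024unstablepcompletionmotivichomotopy}, which says that $\ell$-equivalences induce isomorphisms on $\ul\pi_0$. Thus $\ul\pi_0(L_{\scr P}X)\wequi\ul\pi_0(X)=*$. This uses Lemma~3.12 as a black box about $\ell$-equivalences, not as a statement about any particular generating set.
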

\begin{proof}
  For the first statement, it suffices to give natural transformations $L_{\scr P} \to L_\ell$ 
  if $\ell \in \scr P$.
  Since by definition any $\scr P$-equivalence 
  is also an $\ell$-equivalence, we see that $L_\ell$ inverts 
  all $\scr P$-equivalences,
  and hence the canonical map $\id{} \to L_\ell$ factors over $L_{\scr P}$.

  Now to prove the second statement, let $X \in \scr X_*$ be connected.
  We must show the same is true for $L_{\scr P} X$.
  If $\scr P = \emptyset$ then $L_{\scr P} X = *$, which is certainly connected.
  Otherwise, there exists $\ell \in \scr P$.
  By definition, $X \to L_{\scr P} X$ is an $\ell$-equivalence,
  and thus an equivalence on $\pi_0$ by \cite[Lemma 3.12]{mattis2024unstablepcompletionmotivichomotopy}.
  Hence, again $L_{\scr P} X$ is connected.
\end{proof}

We can now state the main result:
\begin{theorem} \label{lem:unstable-n:main-thm}
  Let $X \in \scr X_*$ be nilpotent.
  Then $\widetilde{\varphi}_X$ is an equivalence.

  If $\scr P$ is finite, then $\prod_{\ell \in \scr P} L_\ell X$ is connected 
  and in particular $\varphi_X$ is an equivalence.

  As a special case, if $n \in \Z \setminus 0$,
  then $L_n X \wequi \prod_{\ell | n} L_\ell X$.
\end{theorem}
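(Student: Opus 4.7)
The plan is to reduce the statement to the case of an Eilenberg--Mac Lane sheaf via a principalized Postnikov tower and then verify it by comparing with the stable identification from \cref{lem:stable-bousfield:n-completion}. First, I would observe that the target $\tau_{\ge 1}\prod_{\ell \in \scr P} L_\ell X$ is $L_{\scr P}$-local: an object is $L_{\scr P}$-local iff it is $L_\ell$-local for every $\ell \in \scr P$, products preserve locality, and the connected cover $\tau_{\ge 1}$ preserves $L_{\scr P}$-locality prime-by-prime as in \cite[\S 3]{mattis2024unstablepcompletionmotivichomotopy}. Thus $\widetilde{\varphi}_X$ is an equivalence if and only if the natural map $X \to \tau_{\ge 1}\prod_\ell L_\ell X$ is an $L_{\scr P}$-equivalence, i.e., an $\ell$-equivalence for every $\ell \in \scr P$.

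Next, since $X$ is nilpotent and $\scr X$ admits a locally finite-dimensional cover, one can pick a principalized Postnikov tower $X \wequi \lim_i X_i$ with fiber sequences $X_{i+1} \to X_i \to \Omega^\infty \Sigma^{k_i+1} M_i$ whose connectivities tend to infinity (cf.\ \cite[Lemma A.15]{mattis2024unstablepcompletionmotivichomotopy}). I would then argue that both functors in $\widetilde{\varphi}$ preserve these principal fibrations and the limit along the tower: on the left, these are the lemmas \cref{lem:unstable-n:fiber-sequence,lem:unstable-n:tower} proven later in this section (whose proofs are direct adaptations of the single-prime arguments in \emph{loc.\ cit.}); on the right, the claim follows prime-by-prime, using that products commute with limits and with $\tau_{\ge 1}$-truncations of connected fibers. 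This reduces the question to the base case $X = \Omega^\infty E$ for $E \in \Sp(\scr X)_{\ge 2}$.

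In the base case, I would apply \cref{lem:unstable-n:infinite-loop-space} prime-by-prime to rewrite
\begin{equation*}
\tau_{\ge 1}\!\prod_{\ell \in \scr P}\! L_\ell \Omega^\infty E \;\wequi\; \tau_{\ge 1}\!\prod_{\ell \in \scr P} \tau_{\ge 1}\Omega^\infty E^\comp_\ell \;\wequi\; \tau_{\ge 1}\Omega^\infty\!\prod_{\ell \in \scr P} E^\comp_\ell,
\end{equation*}
using that $\Omega^\infty$ preserves products. The same lemma applied to $L_{\scr P}$ reduces the left-hand side to $\tau_{\ge 1}\Omega^\infty L^{\mathrm{st}}_{\scr P} E$, where $L^{\mathrm{st}}_{\scr P}$ is the stable $\scr P$-completion on $\Sp(\scr X)$. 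The comparison then follows from the stable identification $L^{\mathrm{st}}_{\scr P} E \wequi \prod_{\ell \in \scr P} E^\comp_\ell$, which is \cref{lem:stable-bousfield:n-completion} when $\scr P$ is finite and follows from the same argument (each factor $E^\comp_\ell$ is $L^{\mathrm{st}}_{\scr P}$-local, and the product maps are $\ell$-equivalences for every $\ell \in \scr P$) in general. Finally, for the second claim each $L_\ell X$ is connected when $X$ is, and a finite product of connected sheaves is connected, so when $\scr P$ is finite the $\tau_{\ge 1}$ is redundant and $\varphi_X$ itself is an equivalence; the third claim is the special case $\scr P = \scr P_n$. The main obstacle will be verifying the stable identification for infinite $\scr P$ and ensuring that the various $\tau_{\ge 1}$-truncations interact coherently with the (possibly infinite) products in the Postnikov reduction; the latter is where the prior results on highly connected towers in $\scr X$ and the locally finite-dimensional cover assumption are essential.
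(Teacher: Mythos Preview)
Your argument is circular. You invoke \cref{lem:unstable-n:fiber-sequence}, \cref{lem:unstable-n:tower}, and \cref{lem:unstable-n:infinite-loop-space} to carry out the Postnikov reduction for $L_{\scr P}$, but in this paper each of those lemmas is \emph{deduced from} \cref{lem:unstable-n:main-thm}: their proofs begin by writing $L_{\scr P}(\ph) \wequi \tau_{\ge 1}\prod_\ell L_\ell(\ph)$ and then applying the single-prime results factorwise. Your parenthetical that they are ``direct adaptations of the single-prime arguments'' is not a substitute: the single-prime proofs of the fiber and tower lemmas in \cite{mattis2024unstablepcompletionmotivichomotopy} rely on structural facts about $L_\ell$ (e.g.\ that $\ell$-complete objects are closed under $\tau_{\ge 1}$ and that $L_\ell$ is controlled by a single Moore-type spectrum) which have no obvious analogue for an arbitrary set $\scr P$ without first establishing the splitting. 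In particular, to even state the infinite-loop-space identification $L_{\scr P}\Omega^\infty E \wequi \tau_{\ge 1}\Omega^\infty L^{\mathrm{st}}_{\scr P}E$ you need to know what the unstable $\scr P$-completion does to infinite loop spaces, and that is exactly the content you are trying to prove.

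The paper avoids this by a different route. Instead of reducing $L_{\scr P}X$ through its Postnikov tower, it fixes a prime $p \in \scr P$ and shows directly that the canonical map $X \to \tau_{\ge 1}\prod_{\ell}L_\ell X$ is a $p$-equivalence, by proving $L_p\bigl(\tau_{\ge 1}\prod_{\ell \ne p}L_\ell X\bigr) = *$. This vanishing is established by an induction along a Postnikov refinement using only the already-known single-prime results (\cref{lem:unstable-n:nilpotent-refinement-completion}--\cref{lem:unstable-n:canonical-map-n-equiv}), together with the connectivity estimates for infinite products in \cref{lem:unstable-n:infinite-product-connectivity,lem:unstable-n:product-highly-connected} that handle the case of infinite $\scr P$. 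Only after the main theorem is in hand are the fiber-sequence, tower, and infinite-loop-space lemmas for $L_{\scr P}$ derived as corollaries. (A minor point: your claimed biconditional ``$L_{\scr P}$-local iff $L_\ell$-local for every $\ell$'' is false in the ``only if'' direction, though the direction you actually need---that each $L_\ell X$ is $L_{\scr P}$-local---is correct.)
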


For the proof we need some preparations. The following definition will be useful.
\begin{definition}
  Let $X \in \scr X_*$ be nilpotent. A \emph{Postnikov refinement} $(X_n, E_n)_n$
  is a tower of nilpotent sheaves $(X_n)_n$ under $X$,
  that fit into fiber sequences $X_{n+1} \to X_n \to \Omega^\infty_* E_n$ 
  for some $E_n \in \Sigma^{k_i} \Sp(\scr X)^{\heartsuit}$ with $k_i \ge 2$,
  such that moreover $X_0 = *$,
  $X \wequi \lim_n X_n$, and $k_i \to \infty$ as $i \to \infty$.
\end{definition}

\begin{remark}
  If $X \in \scr X_*$ is nilpotent, then there always exists a Postnikov refinement,
  cf.\ \cite[Lemma A.15]{mattis2024unstablepcompletionmotivichomotopy}.
  Moreover, if $(X_n, E_n)_n$ is a Postnikov refinement of $X$,
  then $(X_n)_n$ is a highly connected tower.
\end{remark}

\begin{lemma} \label{lem:unstable-n:nilpotent-refinement-completion}
  Let $X \in \scr X_*$ be nilpotent, and let $(X_n, E_n)_n$ be a Postnikov refinement of $X$.
  Then the following holds:
  \begin{enumerate}
    \item $\tau_{\ge 1} \prod_{\ell \in \scr P} L_\ell \Omega^\infty E_n \wequi \tau_{\ge 1} \Omega^\infty \prod_{\ell \in \scr P} (E_n)^\comp_\ell$.
    \item $\tau_{\ge 1} \prod_{\ell \in \scr P} L_\ell X_{n+1} \wequi \tau_{\ge 1} \fib(\tau_{\ge 1} \prod_{\ell \in \scr P} L_\ell X_{n} \to \tau_{\ge 1} \prod_{\ell \in \scr P} L_\ell \Omega^\infty E_n)$ for every $n$.
  \end{enumerate}
\end{lemma}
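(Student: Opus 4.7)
The plan is to bootstrap from the known single-prime versions of both statements, which are established in \cite{mattis2024unstablepcompletionmotivichomotopy}.

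For (1), I would use the single-prime identification $L_\ell \Omega^\infty E \wequi \tau_{\ge 1}\Omega^\infty E^\comp_\ell$ valid for each prime $\ell$ and each $1$-connective $E \in \Sp(\scr X)$ (e.g.\ \cite[Lemma 3.17]{mattis2024unstablepcompletionmotivichomotopy}), applied to $E_n$ (which is $k_n$-connective with $k_n \ge 2$). Since both $\tau_{\ge 1}$ and $\Omega^\infty$ are right adjoints and therefore commute with arbitrary products, one obtains
\begin{align*}
\tau_{\ge 1}\prod_{\ell\in\scr P} L_\ell\Omega^\infty E_n
 &\wequi \tau_{\ge 1}\prod_{\ell\in\scr P} \tau_{\ge 1}\Omega^\infty (E_n)^\comp_\ell \\
 &\wequi \tau_{\ge 1}\Omega^\infty \prod_{\ell\in\scr P}(E_n)^\comp_\ell,
\end{align*}
using the idempotency of $\tau_{\ge 1}$.

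For (2), I would apply the fiber-sequence lemma for single-prime unstable $\ell$-completion on nilpotent sheaves (e.g.\ \cite[Lemma 3.13]{mattis2024unstablepcompletionmotivichomotopy}) to the fiber sequence $X_{n+1} \to X_n \to \Omega^\infty E_n$; this is legitimate since each $X_{n+1}$, $X_n$, and $\Omega^\infty E_n$ is nilpotent. One obtains
\[L_\ell X_{n+1} \wequi \tau_{\ge 1}\fib(L_\ell X_n \to L_\ell\Omega^\infty E_n)\]
for every $\ell\in\scr P$. Taking products and exploiting the commutation of $\tau_{\ge 1}$, $\fib$, and products (again all right adjoints), then applying $\tau_{\ge 1}$ once more, produces
\[\tau_{\ge 1}\prod_{\ell\in\scr P} L_\ell X_{n+1} \wequi \tau_{\ge 1}\fib\Bigl(\prod_{\ell\in\scr P} L_\ell X_n \to \prod_{\ell\in\scr P} L_\ell\Omega^\infty E_n\Bigr).\]

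To finish, I must insert the inner truncations on the right-hand side, for which I invoke the elementary fact that for any map $f\colon A \to B$ in $\scr X_*$, the canonical map $\tau_{\ge 1}\fib(\tau_{\ge 1} f) \to \tau_{\ge 1}\fib(f)$ is an equivalence. This follows by comparing the truncation fiber sequences for $A$ and $B$ via a $3\times 3$ pasting argument, which produces the fiber sequence
\[\fib(\tau_{\ge 1} f) \to \fib(f) \to \fib(\tau_{\le 0} A \to \tau_{\le 0} B),\]
whose third term is $0$-truncated; the long exact sequence of homotopy sheaves then forces the first map to induce isomorphisms on $\ul\pi_i$ for all $i \ge 1$. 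Beyond these formal manipulations, I do not anticipate any substantive obstacle: all the real homotopical content is packaged into the cited single-prime results, and the remainder is an assembly of commuting right adjoints together with the elementary truncation argument above.
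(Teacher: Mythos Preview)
Your proposal is correct and follows essentially the same route as the paper: apply the single-prime identifications (\cite[Lemma 3.17]{mattis2024unstablepcompletionmotivichomotopy} for (1) and the fiber lemma for (2)), take products, and then reinsert the inner $\tau_{\ge 1}$ via the elementary truncation argument you sketch. The paper packages that last step as a citation to \cite[Lemma 4.2]{mattis2024unstablearithmeticfracturesquares} rather than proving it inline, and for (2) cites \cite[Proposition 3.20]{mattis2024unstablepcompletionmotivichomotopy} (your ``Lemma 3.13'' appears to be a misnumbering), but the mathematical content is identical.
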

\begin{proof}
  For (1) we have
  \begin{equation*}
    \tau_{\ge 1} \prod_{\ell \in \scr P} L_\ell \Omega^\infty E_n \wequi \tau_{\ge 1} \prod_{\ell \in \scr P} \tau_{\ge 1} \Omega^\infty (E_n)^\comp_{\ell} \wequi \tau_{\ge 1} \Omega^\infty \prod_{\ell \in \scr P} (E_n)^\comp_{\ell},
  \end{equation*}
  using \cite[Lemma 3.17]{mattis2024unstablepcompletionmotivichomotopy} and \cite[Lemma 4.2]{mattis2024unstablearithmeticfracturesquares}.
  
  For (2) we get 
  \begin{equation*}
    \tau_{\ge 1} \prod_{\ell \in \scr P} L_\ell X_{n+1} \wequi \tau_{\ge 1} \prod_{\ell \in \scr P} \tau_{\ge 1} \fib(L_\ell X_n \to L_\ell \Omega^\infty E_n) \wequi \tau_{\ge 1} \fib(\tau_{\ge 1} \prod_{\ell \in \scr P} L_\ell X_n \to \tau_{\ge 1} \prod_{\ell \in \scr P} L_\ell \Omega^\infty E_n),
  \end{equation*}
  using \cite[Proposition 3.20]{mattis2024unstablepcompletionmotivichomotopy} and \cite[Lemma 4.2]{mattis2024unstablearithmeticfracturesquares}.
\end{proof}

\begin{lemma} \label{lem:unstable-n:nilpotence-of-product}
  Let $X \in \scr X_*$ be nilpotent, and let $(X_n, E_n)_n$ be a Postnikov refinement of $X$.
  Then for every $n \ge 0$ the sheaf $\tau_{\ge 1} \prod_{\ell \in \scr P} L_\ell X_n$ is nilpotent.
\end{lemma}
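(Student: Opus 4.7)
The argument will proceed by induction on $n$. The base case $n = 0$ is immediate: $X_0 = *$ yields $\tau_{\ge 1}\prod_{\ell\in\scr P}L_\ell X_0 \wequi *$, which is trivially nilpotent.

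For the inductive step, set $P_n \coloneqq \tau_{\ge 1}\prod_{\ell\in\scr P}L_\ell X_n$ and $Q_n \coloneqq \tau_{\ge 1}\prod_{\ell\in\scr P}L_\ell \Omega^\infty E_n$, and suppose $P_n$ is nilpotent. By part (2) of the preceding lemma, $P_{n+1}\wequi \tau_{\ge 1}\fib(P_n \to Q_n)$. By part (1), $Q_n\wequi \tau_{\ge 1}\Omega^\infty\prod_{\ell\in\scr P}(E_n)^\comp_\ell$ is the connected cover of an infinite loop sheaf, hence nilpotent. The crux of the proof is the fiber lemma for nilpotent sheaves: if $F\to E\to B$ is a fiber sequence of pointed sheaves with $E$ nilpotent and $B$ a nilpotent (sufficiently connected) infinite loop sheaf, then $\tau_{\ge 1}F$ is again nilpotent. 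Applied to $P_{n+1}\to P_n \to Q_n$, this closes the induction.

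The main technical obstacle is handling the connectivity of $Q_n$: although $E_n$ is $k_n$-connective with $k_n\ge 2$, the $\ell$-completion $(E_n)^\comp_\ell$ can drop connectivity by one, so $Q_n$ is a priori only connected (not necessarily simply connected) when $k_n = 2$. The classical fiber lemma for nilpotent spaces typically demands a simply-connected base, so one must either refine it to allow connected infinite-loop bases (exploiting the abelian $\pi_1$ with trivial action on higher homotopy sheaves) or, alternatively, decompose $Q_n$ using its own Postnikov refinement into simply-connected Eilenberg--Mac Lane stages and apply the classical lemma iteratively to obtain the nilpotence of $P_{n+1}$. Either approach should be accessible within the framework developed in \cite{mattis2024unstablepcompletionmotivichomotopy}.
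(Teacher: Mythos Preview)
Your argument is essentially identical to the paper's: induct on $n$, use part (2) of the preceding lemma to write $P_{n+1} \wequi \tau_{\ge 1}\fib(P_n \to Q_n)$, use part (1) together with \cite[Lemma A.11]{mattis2024unstablepcompletionmotivichomotopy} to see that $Q_n$ is nilpotent, and then conclude by the fiber-nilpotence lemma \cite[Lemma A.12]{mattis2024unstablepcompletionmotivichomotopy}. The paper invokes exactly these two lemmas and does not pause over connectivity.

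Your final paragraph's concern is unnecessary. The relevant fiber-nilpotence result does not require the base to be simply connected: if $E$ and $B$ are nilpotent (hence connected), then $\tau_{\ge 1}\fib(E \to B)$ is nilpotent. The point is that the long exact sequence of homotopy sheaves is $\ul\pi_1(E)$-equivariant, with $\ul\pi_1(E)$ acting nilpotently on $\ul\pi_*(E)$ and (via $\ul\pi_1(E) \to \ul\pi_1(B)$) on $\ul\pi_*(B)$; hence it acts nilpotently on $\ul\pi_*(\fib)$, and the intrinsic $\ul\pi_1(\fib)$-action factors through this one. So a merely connected infinite loop base---which is all $Q_n$ is guaranteed to be---already suffices, and no further decomposition of $Q_n$ is needed.
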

\begin{proof}
  We show the claim by induction on $n$, the case $n = 0$ holds trivially.
  For any $n \ge 0$ we get 
  $\tau_{\ge 1} \prod_{\ell \in \scr P} L_\ell X_{n+1} \wequi \tau_{\ge 1} \fib(\tau_{\ge 1} \prod_{\ell \in \scr P} L_\ell X_n \to \tau_{\ge 1} \prod_{\ell \in \scr P} L_\ell \Omega^\infty E_n)$
  from \cref{lem:unstable-n:nilpotent-refinement-completion} (2).
  The first term on the right is nilpotent by induction, whereas the second is by combining
  \cref{lem:unstable-n:nilpotent-refinement-completion} (1) with \cite[Lemma A.11]{mattis2024unstablepcompletionmotivichomotopy}.
  Hence, we conclude using \cite[Lemma A.12]{mattis2024unstablepcompletionmotivichomotopy}. 
\end{proof}

\begin{lemma} \label{lem:unstable-n:colimit-generation-n-connective}
  Let $\scr U$ be an $\infty$-topos generated under colimits by a set $W \subset \scr U$.
  Let $X \in \scr U_*$ be an object and $n \in \N$ such that $X(w)$ is an $n$-connective space for every $w \in W$.
  Then $X$ is $n$-connective in $\scr U_*$.
\end{lemma}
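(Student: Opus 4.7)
The plan is to reduce the statement to the corresponding fact in the presheaf $\infty$-topos $\PSh(W)$, where truncation is computed levelwise. Since $\scr U$ is an $\infty$-topos and $W$ generates it under colimits, $W$ is dense in $\scr U$, so the restriction functor
\begin{equation*}
  i \colon \scr U \to \PSh(W), \qquad X \mapsto \bigl(w \mapsto \Map(w, X)\bigr),
\end{equation*}
is fully faithful. Moreover, $\scr U$ being an $\infty$-topos ensures that $i$ is part of an accessible left-exact localization, with left-exact left adjoint $L \colon \PSh(W) \to \scr U$ satisfying $Li \wequi \id$.

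First I would observe that the hypothesis translates to $iX$ being $n$-connective in $\PSh(W)$: truncation in a presheaf $\infty$-category is computed levelwise, so the condition that each $X(w) = (iX)(w)$ is $n$-connective is equivalent to $\tau_{\le n-1}(iX) \wequi \ast$ in $\PSh(W)$.

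Next I would invoke that any left-exact left adjoint between presentable $\infty$-categories commutes with truncation, cf.\ \cite[Proposition 5.5.6.28]{highertopoi}. Applied to $L$, together with $Li \wequi \id$ and $L$ preserving the terminal object (from left-exactness), this yields
\begin{equation*}
  \tau_{\le n-1}(X) \wequi \tau_{\le n-1} L(iX) \wequi L\tau_{\le n-1}(iX) \wequi L(\ast) \wequi \ast,
\end{equation*}
so $X$ is $n$-connective in $\scr U$, as desired.

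The main subtlety is the initial reduction, which uses that the given generating family $W$ realizes $\scr U$ as a left-exact localization of $\PSh(W)$: density of $W$ in $\scr U$ gives fully faithfulness of $i$, while the left-exactness of $L$ is automatic since $\scr U$ is an $\infty$-topos. Once this is in place, the remainder of the argument is a purely formal manipulation.
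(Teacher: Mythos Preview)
Your proof is correct and uses the same overall reduction as the paper: present $\scr U$ as a reflective localization of $\PSh(W)$ via the restricted Yoneda $i$, note that $iX$ is $n$-connective there (since truncation in presheaves is computed pointwise), and transport this back along $L$. The difference is in how you show that $L$ preserves $n$-connectivity. You appeal to left exactness of $L$ so that $L$ commutes with truncation via \cite[Proposition 5.5.6.28]{highertopoi}; the paper instead uses only that $L$ preserves colimits and the terminal object, observing that any such functor preserves suspensions (these being pushouts of the form $* \leftarrow V \to *$) and hence preserves $n$-connective objects, since those are generated under colimits by $n$-fold suspensions.

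Your route is slicker once left exactness is granted, but be aware that the assertion ``left-exactness of $L$ is automatic since $\scr U$ is an $\infty$-topos'' is not a tautology: the definition of $\infty$-topos only guarantees that $\scr U$ is a left exact localization of \emph{some} presheaf category, and the claim that \emph{every} generating set $W$ yields a left exact localization $\PSh(W) \to \scr U$ is essentially the hard direction of Giraud's theorem (HTT \S6.1.5). The paper's argument sidesteps this by demanding strictly less of $L$, and in fact proves the more general statement that any colimit-preserving, terminal-object-preserving functor between $\infty$-topoi preserves $n$-connective objects.
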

\begin{proof}
  Consider the adjunction $L \colon \scr P(W) \adj \scr U \noloc i$,
  where $L$ is the left Kan extension of $W \hookrightarrow \scr U$,
  and $i$ is the restricted Yoneda.
  Since $W$ generates $\scr U$ under colimits, it follows that $L$ is essentially surjective,
  and $i$ is fully faithful, so that $Li(X) \wequi X$.
  Moreover, note that since $i$ preserves the terminal object (as a right adjoint),
  we also get $L(*) = Li(*) = *$.
  By assumption on $X$, $i(X)$ is an $n$-connective object of $\scr P(W)_*$ (note that in a presheaf 
  topos $n$-connective objects are exactly those presheaves such that they are $n$-connective on sections).
  
  It now suffices to show that any functor $F \colon \scr V \to \scr U$ between $\infty$-topoi 
  that preserves colimits and the final object also preserves $n$-connective objects.
  Indeed, for this note first that $F$ preserves suspensions, i.e.\ $\Sigma^n F V \wequi F \Sigma^n V$,
  since suspensions are iterated pushouts of the form $* \leftarrow V \rightarrow *$.
  Thus, the claim follows since any $n$-connective object in $\scr V_*$ can be written as a colimit 
  of objects of the form $\Sigma^n V$, cf.\ \cite[Proposition 2.38]{mattis2025etale}.
\end{proof}

\begin{lemma} \label{lem:unstable-n:infinite-product-connectivity}
  Let $\scr U$ be an $\infty$-topos locally of homotopy dimension $\le N$ for some $N \in \N$.
  Let $k > N$ and $X_\ell \in \scr U_*$ be a $k$-connective object for every $\ell \in \scr P$.
  Then $\prod_{\ell \in \scr P} X_\ell$ is $(k - N)$-connective.
\end{lemma}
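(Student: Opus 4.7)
The plan is to check the connectivity of the infinite product locally on a generating cover, and then reduce to a standard fact about global sections in an $\infty$-topos of bounded homotopy dimension. Let $W \subset \scr U$ be a generating cover witnessing that $\scr U$ is locally of homotopy dimension $\le N$, so that $W$ generates $\scr U$ under colimits and each slice $\scr U_{/w}$ has homotopy dimension $\le N$. By \cref{lem:unstable-n:colimit-generation-n-connective}, it suffices to show that $(\prod_{\ell \in \scr P} X_\ell)(w) \in \Spc$ is $(k-N)$-connective for every $w \in W$.

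Since $\Map(w,-)$ preserves arbitrary products, $(\prod_\ell X_\ell)(w) \simeq \prod_\ell X_\ell(w)$. Using that $\pi_i$ commutes with products in $\Spc$ (together with $k-N \ge 1$), it is enough to show that each individual factor $X_\ell(w)$ is $(k-N)$-connective. Pullback $w^*\colon \scr U \to \scr U_{/w}$ is left exact, so $w^*X_\ell$ is still $k$-connective in $\scr U_{/w}$; then $X_\ell(w) \simeq \Gamma_{\scr U_{/w}}(w^*X_\ell)$, reducing everything to the following standard claim about $\scr U_{/w}$.

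Claim: if $\scr V$ is an $\infty$-topos of homotopy dimension $\le N$ and $Y \in \scr V$ is $k$-connective with $k \ge N$, then $\Gamma Y \in \Spc$ is $(k-N)$-connective. We prove this by induction on $k - N$. The base case $k = N$ is the definition of homotopy dimension. For the inductive step, given $Y$ that is $(k+1)$-connective, the key point is to show that $\Gamma Y$ is connected: the diagonal $Y \to Y\times Y$ is $k$-connective, so for any two sections $p, q \colon * \to Y$, the fiber $P_{p,q} \coloneqq Y \times_{Y\times Y,(p,q)} *$ is $k$-connective and hence (since $k \ge N$) admits a global section, giving a path from $p$ to $q$ in $\Gamma Y$. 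The higher vanishing follows from $\pi_i(\Gamma Y, p) \cong \pi_{i-1}(\Gamma \Omega_p Y)$ by applying the induction hypothesis to $\Omega_p Y$, which is $k$-connective. The main step (and the one requiring care) is precisely this connectedness argument, which bridges the gap from ``inhabited'' to ``$1$-connective'' in the induction; once it is in place, the remaining reductions are formal.
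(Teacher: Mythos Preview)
Your proof is correct and follows essentially the same route as the paper: reduce to sections over a generating family via \cref{lem:unstable-n:colimit-generation-n-connective}, pass the product through evaluation, and then use the homotopy-dimension bound to get $(k-N)$-connectivity of each factor. The only difference is that the paper cites \cite[Lemma 7.2.1.7]{highertopoi} for your Claim, whereas you supply a direct inductive proof of it.
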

\begin{proof}
  It suffices to show that $(\prod_{\ell \in \scr P} X_\ell)(U)$ is $(k-N)$-connective 
  for every $U \in \scr U$ of homotopy dimension $\le N$ (by \cref{lem:unstable-n:colimit-generation-n-connective}, 
  since they generate the topos under colimits).
  By \cite[Lemma 7.2.1.7]{highertopoi} (applied to the slice topos $\scr U_{/U}$), 
  we have that $X_\ell(U)$ is $(k-N)$-connective.
  This immediately implies the lemma since evaluation of sheaves commutes with limits,
  and arbitrary products of $(k-N)$-connective spaces are $(k-N)$-connective.
\end{proof}

\begin{remark}
  In the last lemma, if $\scr P$ is finite, then of course 
  $\prod_{\ell \in \scr P} X_\ell$ is even $k$-connective.
\end{remark}

\begin{lemma} \label{lem:unstable-n:product-highly-connected}
  Let $X \in \scr X_*$ be nilpotent, and let $(X_n, E_n)_n$ be a Postnikov refinement of $X$.
  The tower $(\tau_{\ge 1} \prod_{\ell \in \scr P} L_\ell X_n)_n$ is locally highly connected (subordinate to any highly connected cover of $\scr X$), 
  with limit $\tau_{\ge 1} \prod_{\ell \in \scr P} L_\ell X$.
\end{lemma}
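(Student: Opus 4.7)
The plan is to establish both assertions by induction on $n$, using the fiber sequence description of the tower and the connectivity of each ``layer'' $\tau_{\ge 1}\prod_{\ell\in\scr P}L_\ell\Omega^\infty E_n$.

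First, to show the tower is locally highly connected subordinate to a highly connected cover $\scr U$ of $\scr X$, I would invoke \cref{lem:unstable-n:nilpotent-refinement-completion}(2) to obtain the fiber sequence
\[
\tau_{\ge 1}\prod_{\ell\in\scr P}L_\ell X_{n+1}\longrightarrow \tau_{\ge 1}\prod_{\ell\in\scr P}L_\ell X_{n}\longrightarrow \tau_{\ge 1}\prod_{\ell\in\scr P}L_\ell \Omega^\infty E_n.
\]
Thus it suffices to control the connectivity of the base. By \cref{lem:unstable-n:nilpotent-refinement-completion}(1), this base equals $\tau_{\ge 1}\Omega^\infty\prod_{\ell\in\scr P}(E_n)^\comp_\ell$. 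Since $E_n\in\Sigma^{k_n}\Sp(\scr X)^\heartsuit$ and the $\ell$-completion of a connective spectrum drops connectivity by at most one, each $(E_n)^\comp_\ell$ is $(k_n-1)$-connective, hence so is $\Omega^\infty(E_n)^\comp_\ell$. Restricting to any $U\in\scr U$ of finite homotopy dimension $\le N$, \cref{lem:unstable-n:infinite-product-connectivity} applied to the slice topos gives that the (possibly infinite) product is locally $(k_n-1-N)$-connective. Taking $\tau_{\ge 1}$ preserves this connectivity bound, and since $k_n\to\infty$, the connectivity of the maps $\tau_{\ge 1}\prod L_\ell X_{n+1}\to\tau_{\ge 1}\prod L_\ell X_{n}$ also tends to infinity locally over $\scr U$.

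Next, to identify the limit of the tower with $\tau_{\ge 1}\prod_{\ell\in\scr P}L_\ell X$, I would commute the various functors in turn. The functor $\tau_{\ge 1}\colon \scr X_*\to\scr X_{*,\ge 1}$ is right adjoint to the inclusion, so preserves limits; likewise the product $\prod_{\ell\in\scr P}$ is a right adjoint and preserves limits. It therefore remains to show $L_\ell X\wequi\lim_n L_\ell X_n$ for each individual prime $\ell\in\scr P$. Since $(X_n)_n$ is a highly connected tower under $X$ with limit $X$ (because the refinement is highly connected), the result \cite[Proposition 3.20]{mattis2024unstablepcompletionmotivichomotopy} (already used in the proof of \cref{lem:unstable-n:nilpotent-refinement-completion}) provides exactly this compatibility of $L_\ell$ with highly connected towers. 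Combining these three commutation facts yields
\[
\lim_n\tau_{\ge 1}\prod_{\ell\in\scr P}L_\ell X_n\wequi \tau_{\ge 1}\prod_{\ell\in\scr P}\lim_n L_\ell X_n\wequi \tau_{\ge 1}\prod_{\ell\in\scr P}L_\ell X,
\]
as required.

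The main subtlety to watch for is the fact that $\scr P$ may be infinite, which is precisely why the connectivity bound must be taken locally over a cover of finite-dimensional pieces: an infinite product of $k$-connective objects in a general $\infty$-topos is typically not $k$-connective, but \cref{lem:unstable-n:infinite-product-connectivity} recovers $(k-N)$-connectivity after pulling back to a slice of homotopy dimension $\le N$. This is exactly the reason the statement is formulated as \emph{local} high connectivity subordinate to the cover, rather than global high connectivity.
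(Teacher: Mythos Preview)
Your approach to the first claim (local high connectivity) is a reasonable alternative to the paper's, but one step is inaccurate as stated. You assert that ``the $\ell$-completion of a connective spectrum drops connectivity by at most one'', and use this to say $(E_n)^\comp_\ell$ is $(k_n-1)$-connective \emph{globally} in $\Sp(\scr X)$. That is only true in an $\infty$-topos of homotopy dimension $0$; in general the sequential limit defining $(\ph)^\comp_\ell$ can drop connectivity by the homotopy dimension. The fix is simply to pull back to $\scr U_i$ \emph{before} estimating connectivity (pullback commutes with $\ell$-completion by \cite[Lemma 6.10]{mattis2024unstablearithmeticfracturesquares}), so that both the completion and the infinite product lose only a bounded amount depending on $N$. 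Since $k_n\to\infty$, this still yields local high connectivity. The paper avoids this bookkeeping by directly bounding the connectivity of $\fib(L_\ell X_m\to L_\ell X_M)$ via \cite[Lemma 2.11]{mattis2025etale} and then invoking \cref{lem:unstable-n:infinite-product-connectivity}; your layer-by-layer argument via \cref{lem:unstable-n:nilpotent-refinement-completion} is a legitimate variant once the estimate is corrected.

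The identification of the limit, however, has a genuine gap. You argue that $\tau_{\ge 1}$ ``is right adjoint to the inclusion, so preserves limits'' and conclude
\[
\lim_n\tau_{\ge 1}\prod_{\ell\in\scr P}L_\ell X_n \wequi \tau_{\ge 1}\prod_{\ell\in\scr P}\lim_n L_\ell X_n.
\]
But the limit on the left is taken in $\scr X_*$, and a limit in $\scr X_*$ of connected objects need not be connected. What the right-adjoint property gives you is only $\tau_{\ge 1}\bigl(\lim_n \tau_{\ge 1}\prod_\ell L_\ell X_n\bigr)\wequi \tau_{\ge 1}\prod_\ell L_\ell X$ (after also using \cite[Lemma 4.2]{mattis2024unstablearithmeticfracturesquares} to drop the inner $\tau_{\ge 1}$). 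To remove the outer $\tau_{\ge 1}$ you must separately check that $\lim_n \tau_{\ge 1}\prod_\ell L_\ell X_n$ is already connected; this is exactly where the local high connectivity from the first part is used (via \cite[Lemma 2.4]{mattis2025etale} or \cite[Corollary 6.6]{mattis2024unstablearithmeticfracturesquares}). Finally, a minor point: the compatibility of $L_\ell$ with the tower is \cite[Proposition 2.13]{mattis2025etale}, not \cite[Proposition 3.20]{mattis2024unstablepcompletionmotivichomotopy} (which concerns fiber sequences).
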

\begin{proof}
  Choose a highly connected cover $\scr U = \{ p_i^* \colon \scr X \to \scr U_i \}$.
  For the first statement, it suffices to show that $(\prod_{\ell \in \scr P} L_\ell X_n)_n$ is locally highly connected.
  Since the $p_i^*$ commute with both $p$-completion and limits by \cite[Lemma 6.10 and Lemma 5.2]{mattis2024unstablearithmeticfracturesquares},
  we may assume that $\scr X$ itself is of homotopy dimension $\le N$ for some $N \in \N$,
  and that $(X_n)_n$ is a highly connected tower, 
  and our goal is to show that also $(\prod_{\ell \in \scr P} L_\ell X_n)_n$ is highly connected.
  So let $k \in \N$. Since $(X_n)_n$ is highly connected,
  there exists $L \ge 0$ such that for all $m \ge L$ the fiber $\fib(X_m \to X_L)$ 
  is $(k + 2N + 3)$-connective. We have to find $M \ge 0$ such that for all $m \ge M$ 
  the object $\prod_{\ell \in \scr P} \fib(L_\ell X_m \to L_\ell X_M)$ is $(k+1)$-connective
  (since we can commute the product with the fiber).
  Setting $M = L$, it follows from \cite[Lemma 2.11]{mattis2025etale} that $\fib(L_\ell X_m \to L_\ell X_M)$ is 
  $(k+1+N)$-connective for every $\ell \in \scr P$, and hence from \cref{lem:unstable-n:infinite-product-connectivity} that $\prod_{\ell \in \scr P} \fib(L_\ell X_m \to L_\ell X_M)$ 
  is $(k+1)$-connective, proving the claim.

  It is left to compute the limit of the tower, where we can again go to the highly connected cover.
  We have $\lim_n L_\ell X_n \wequi L_\ell X$ by \cite[Proposition 2.13]{mattis2025etale},
  since $(X_n)_n$ is highly connected.
  Hence, we get $\tau_{\ge 1} \lim_n \tau_{\ge 1} \prod_{\ell \in \scr P} L_\ell X_n \wequi \tau_{\ge 1} \lim_n \prod_{\ell \in \scr P} L_\ell X_n \wequi \tau_{\ge 1} \prod_{\ell \in \scr P} L_\ell X$,
  using \cite[Lemma 4.2]{mattis2024unstablearithmeticfracturesquares}.
  To conclude, we show that $\lim_n \tau_{\ge 1} \prod_{\ell \in \scr P} L_\ell X_n$ 
  is already connected.
  This holds since $(\tau_{\ge 1} \prod_{\ell \in \scr P} L_\ell X_n)_n$ is highly connected by the first part 
  of the proof,
  and thus $\pi_0(\lim_n \tau_{\ge 1} \prod_{\ell \in \scr P} L_\ell X_n) = *$ 
  by \cite[Lemma 2.4]{mattis2025etale}.
\end{proof}

\begin{lemma} \label{lem:unstable-n:canonical-map-n-equiv}
  Let $X \in \scr X_*$ be nilpotent.
  Then $\widetilde{\varphi}_X \colon L_{\scr P} X \to \tau_{\ge 1} \prod_{\ell \in \scr P} L_\ell X$
  is a $\scr P$-equivalence.
\end{lemma}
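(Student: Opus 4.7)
The goal is to show that for each $\ell \in \scr P$, the map $L_\ell \widetilde{\varphi}_X$ is an equivalence. The source simplifies immediately: since $X \to L_{\scr P} X$ is a $\scr P$-equivalence, hence in particular an $\ell$-equivalence, we have $L_\ell L_{\scr P} X \wequi L_\ell X$. Thus the task reduces to identifying $L_\ell \tau_{\ge 1} \prod_{\ell'} L_{\ell'} X$ with $L_\ell X$.

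My plan is to proceed by induction on a Postnikov refinement $(X_n, E_n)$ of $X$. The reduction to the layers is enabled by \cref{lem:unstable-n:product-highly-connected}, which tells us that the tower $(\tau_{\ge 1} \prod_{\ell'} L_{\ell'} X_n)$ is locally highly connected with limit $\tau_{\ge 1} \prod_{\ell'} L_{\ell'} X$; combined with the commutation of $L_\ell$ with limits of locally highly connected towers (\cite[Proposition 2.13]{mattis2025etale}) and with the fact that $L_\ell X \wequi \lim_n L_\ell X_n$, this reduces the question to proving $L_\ell X_n \wequi L_\ell \tau_{\ge 1} \prod_{\ell'} L_{\ell'} X_n$ at each finite level. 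The base case $X_0 = *$ is trivial.

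For the inductive step, consider the fiber sequence $X_{n+1} \to X_n \to \Omega^\infty E_n$. On the left, the Bousfield--Kan fiber lemma for $L_\ell$ (\cite[Proposition 3.20]{mattis2024unstablepcompletionmotivichomotopy}) turns this into a fiber sequence after $L_\ell$. On the right, \cref{lem:unstable-n:nilpotent-refinement-completion}(2) provides the analogous fiber sequence for $\tau_{\ge 1} \prod_{\ell'} L_{\ell'}(-)$, and applying $L_\ell$ again preserves it, invoking nilpotence from \cref{lem:unstable-n:nilpotence-of-product}. So the inductive step reduces to checking the statement for the layer $\Omega^\infty E_n$, where $E_n$ lies in $\Sigma^{k_n}\Sp(\scr X)^\heartsuit$ for some $k_n \ge 2$.

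For the infinite-loop layer, \cref{lem:unstable-n:nilpotent-refinement-completion}(1) identifies the right-hand side as $\tau_{\ge 1}\Omega^\infty \prod_{\ell'} E^\comp_{\ell'}$. Splitting off the $\ell$-factor, $\prod_{\ell'} E^\comp_{\ell'} \wequi E^\comp_\ell \times \prod_{\ell' \ne \ell} E^\comp_{\ell'}$, and noting that both $\Omega^\infty$ and $L_\ell$ preserve finite products, we get
\begin{equation*}
L_\ell \tau_{\ge 1} \Omega^\infty \textstyle\prod_{\ell'} E^\comp_{\ell'} \wequi L_\ell \Omega^\infty E^\comp_\ell \times L_\ell \tau_{\ge 1}\Omega^\infty \textstyle\prod_{\ell' \ne \ell} E^\comp_{\ell'}.
\end{equation*}
The first factor is $L_\ell \Omega^\infty E$ by \cite[Lemma 3.17]{mattis2024unstablepcompletionmotivichomotopy}. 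The second factor vanishes: stably, $\ell$ acts invertibly on each $E^\comp_{\ell'}$ for $\ell' \ne \ell$, and since $\ell$-completion commutes with products (being a sequential limit), $\ell$ is invertible on $\prod_{\ell' \ne \ell} E^\comp_{\ell'}$, so its $\ell$-completion is zero, and another application of \cite[Lemma 3.17]{mattis2024unstablepcompletionmotivichomotopy} yields $*$. This closes the induction. The main obstacle I anticipate is the careful bookkeeping at the level of $\tau_{\ge 1}$---in particular verifying that the fiber sequences on the product side interact well with $L_\ell$, and checking the hypotheses of the principal fibration lemmas (connectivity of the relevant layers, which is why we pass to $\tau_{\ge 1}$ everywhere)---but each of these is covered by the auxiliary lemmas already in place.
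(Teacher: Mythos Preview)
The proposal is correct and follows essentially the same route as the paper. The only organizational difference is that the paper splits off the $\ell$-factor from the product at the very start (reducing the problem to showing $L_p\,\tau_{\ge 1}\prod_{\ell\ne p} L_\ell X = *$ and then running the Postnikov induction on that vanishing statement), whereas you carry the full product through the induction and perform the splitting only at the Eilenberg--Mac Lane layer; the ingredients invoked---\cref{lem:unstable-n:nilpotent-refinement-completion}, \cref{lem:unstable-n:nilpotence-of-product}, \cref{lem:unstable-n:product-highly-connected}, the fiber lemma for $L_\ell$, and the stable vanishing of the cross-$\ell'$ completions---are the same.
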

\begin{proof}
  We have to see that $\widetilde{\varphi}_X$ is an $p$-equivalence for every $p \in \scr P$.
  By definition, we know that $X \to L_{\scr P} X$ is a $p$-equivalence,
  hence, by $2$-out-of-$3$ it suffices to show that $X \to \tau_{\ge 1} \prod_{\ell \in \scr P} L_\ell X$ is a $p$-equivalence.
  Note that $\tau_{\ge 1} \prod_{\ell \in \scr P} L_\ell X \wequi L_p X \times \tau_{\ge 1} \prod_{p \neq \ell \in \scr P} L_\ell X$.
  Since $L_p$ preserves finite products \cite[Lemma 3.15]{mattis2024unstablepcompletionmotivichomotopy}
  and since $L_p L_p \wequi L_p$,
  it suffices to show that $L_p \tau_{\ge 1} \prod_{p \neq \ell \in \scr P} L_\ell X = *$.

  Choose a Postnikov refinement $(X_n, E_n)_n$ of $X$.
  First, for every $n$ we have
  \begin{equation*}
    \tau_{\ge 1} \prod_{p \neq \ell \in \scr P} L_\ell \Omega^\infty E_n \wequi \Omega^\infty \tau_{\ge 1} \prod_{p \neq \ell \in \scr P} (E_n)^\comp_{\ell},
  \end{equation*}
  using \cref{lem:unstable-n:nilpotent-refinement-completion} (1).
  In order to see that the $p$-completion of this vanishes, 
  using \cite[Lemma 3.16]{mattis2024unstablepcompletionmotivichomotopy} it is enough to show that $(\tau_{\ge 1} \prod_{p \neq \ell \in \scr P} (E_n)^\comp_{\ell}) \sslash p = 0$.
  This holds since $p$ is invertible on $(E_n)^\comp_{\ell}$ for all $\ell \neq p$.
  
  We now show inductively that $L_p \tau_{\ge 1} \prod_{p \neq \ell \in \scr P} L_\ell X_n = *$, the case $n = 0$ holds trivially.
  This follows immediately from \cref{lem:unstable-n:nilpotent-refinement-completion} (2), the inductive hypothesis 
  and the case for $E_n$ discussed above.

  We finish the proof by showing that $L_p \tau_{\ge 1} \prod_{p \neq \ell \in \scr P} L_\ell X = *$.
  The tower
  $(\tau_{\ge 1} \prod_{p \neq \ell \in \scr P} L_\ell X_n)_n$ is locally highly connected,
  with limit $\tau_{\ge 1} \prod_{p \neq \ell \in \scr P} L_\ell X$, cf.\ \cref{lem:unstable-n:product-highly-connected}.
  Moreover, for every $n \ge 0$ the sheaf $\tau_{\ge 1} \prod_{p \neq \ell \in \scr P} L_\ell X_n$
  is nilpotent by \cref{lem:unstable-n:nilpotence-of-product}.
  Hence, using \cite[Proposition 2.13]{mattis2025etale} and the case for $X_n$ discussed above, we get 
  \begin{equation*}
    L_p \tau_{\ge 1} \prod_{p \neq \ell \in \scr P} L_\ell X \wequi \lim_n L_p \tau_{\ge 1} \prod_{p \neq \ell \in \scr P} L_\ell X_n = *. \qedhere
  \end{equation*}
\end{proof}

We are now ready to prove the main theorem of this section.
\begin{proof}[Proof of \cref{lem:unstable-n:main-thm}]
  In \cref{lem:unstable-n:canonical-map-n-equiv} we have seen that $\widetilde{\varphi}_X \colon L_{\scr P} X \to \tau_{\ge 1} \prod_{\ell \in \scr P} L_\ell X$ 
  is a $\scr P$-equivalence.
  Since the left-hand side is clearly $\scr P$-complete,
  it suffices to show that the right-hand side is $\scr P$-complete.
  Since $\scr P$-equivalences are $\ell$-equivalences, limits of $\ell$-complete objects (for varying $\ell$) are $\scr P$-complete.
  Writing $\tau_{\ge 1} = \fib(\id \to \pi_0)$, this reduces to showing that $L_\ell X$ is $\ell$-complete (which is obvious) and that discrete sheaves are $\ell$-complete (which holds by \cite[Lemma 3.13]{mattis2024unstablepcompletionmotivichomotopy}).

  For the second statement, i.e. the case where $\scr P$ is finite, we just note that $\prod_{\ell \in \scr P} L_\ell X$ 
  is connected, since each of the finitely many factors is (see \cite[Lemma 3.12]{mattis2024unstablepcompletionmotivichomotopy} 
  for a proof that $L_\ell X$ is connected).
\end{proof}

Using the main theorem, we can now extend the results about unstable 
$p$-completion from \cite{mattis2024unstablepcompletionmotivichomotopy,mattis2024unstablearithmeticfracturesquares,mattis2025etale} 
to the unstable $\scr P$-completion functor.
For $n \in \Z \setminus 0$, specializing to $\scr P = \scr P_n$ we also 
obtain analogous results about the unstable $n$-completion functor.

\begin{lemma} [Infinite loop spaces] \label{lem:unstable-n:infinite-loop-space}
  Let $E \in \Sp(\scr X)_{\ge 1}$.
  Then $L_{\scr P} \Omega^\infty E \wequi \tau_{\ge 1} \Omega^\infty \prod_{\ell \in \scr P} E^\comp_\ell$.
  As a special case, we have $L_n \Omega^\infty E \wequi \tau_{\ge 1} \Omega^\infty E^\comp_n$.
\end{lemma}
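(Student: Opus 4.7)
The plan is to reduce the claim directly to \cref{lem:unstable-n:main-thm} by noting that $\Omega^\infty E$ is a connected (hence nilpotent) pointed sheaf when $E \in \Sp(\scr X)_{\ge 1}$, so that
\[ L_{\scr P}\Omega^\infty E \wequi \tau_{\ge 1} \prod_{\ell \in \scr P} L_\ell \Omega^\infty E. \]
First, I would apply the already-established identification for a single prime: for each $\ell \in \scr P$, \cite[Lemma 3.17]{mattis2024unstablepcompletionmotivichomotopy} (as used in the proof of \cref{lem:unstable-n:nilpotent-refinement-completion}(1)) yields $L_\ell \Omega^\infty E \wequi \tau_{\ge 1} \Omega^\infty E^{\comp}_\ell$, so
\[ L_{\scr P}\Omega^\infty E \wequi \tau_{\ge 1} \prod_{\ell \in \scr P} \tau_{\ge 1} \Omega^\infty E^{\comp}_\ell. \]

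Next, I want to move the inner $\tau_{\ge 1}$ and the product past $\Omega^\infty$. Writing $Y_\ell = \Omega^\infty E^{\comp}_\ell$, the map $\tau_{\ge 1} Y_\ell \to Y_\ell$ is an isomorphism on $\ul\pi_i$ for $i\ge 1$, so the induced map $\prod_\ell \tau_{\ge 1} Y_\ell \to \prod_\ell Y_\ell$ is likewise an isomorphism on $\ul\pi_i$ for $i \ge 1$; after applying $\tau_{\ge 1}$ the two sides become equivalent. Hence
\[ \tau_{\ge 1} \prod_{\ell \in \scr P} \tau_{\ge 1} \Omega^\infty E^{\comp}_\ell \wequi \tau_{\ge 1} \prod_{\ell \in \scr P} \Omega^\infty E^{\comp}_\ell \wequi \tau_{\ge 1} \Omega^\infty \prod_{\ell \in \scr P} E^{\comp}_\ell, \]
using that $\Omega^\infty$ is a right adjoint and therefore preserves products. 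This proves the general statement.

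For the special case $\scr P = \scr P_n$, I would finish by invoking \cref{lem:stable-bousfield:n-completion}, which gives $\prod_{\ell \mid n} E^{\comp}_\ell \wequi E^{\comp}_n$, to conclude $L_n \Omega^\infty E \wequi \tau_{\ge 1} \Omega^\infty E^{\comp}_n$. I do not foresee any real obstacle here: the only mildly subtle point is checking that $\Omega^\infty E$ satisfies the nilpotence hypothesis of \cref{lem:unstable-n:main-thm}, but this is immediate since infinite loop sheaves are the basic building blocks of nilpotent sheaves in the sense of \cite[Appendix A.2]{mattis2024unstablepcompletionmotivichomotopy}, and for $E \in \Sp(\scr X)_{\ge 1}$ the sheaf $\Omega^\infty E$ is automatically connected.
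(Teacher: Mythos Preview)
Your proposal is correct and follows essentially the same route as the paper: apply \cref{lem:unstable-n:main-thm} to the nilpotent sheaf $\Omega^\infty E$, then use \cite[Lemma 3.17]{mattis2024unstablepcompletionmotivichomotopy} for each prime, then commute $\Omega^\infty$ with the product and absorb the inner $\tau_{\ge 1}$; the special case is deduced from \cref{lem:stable-bousfield:n-completion}. The only minor remark is that your justification for dropping the inner $\tau_{\ge 1}$ via ``isomorphism on $\ul\pi_i$ for $i\ge 1$'' is slightly informal for infinite products (since $\ul\pi_i$ need not commute with them); the cleaner way---which the paper uses elsewhere---is to invoke \cite[Lemma 4.2]{mattis2024unstablearithmeticfracturesquares}, or equivalently note that $\tau_{\ge 1}$ is right adjoint to the inclusion of connected pointed objects and hence $\tau_{\ge 1}\prod_\ell Y_\ell \wequi \tau_{\ge 1}\prod_\ell \tau_{\ge 1} Y_\ell$.
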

\begin{proof}
  We calculate 
  \begin{equation*}
    L_{\scr P} \Omega^\infty E \wequi \tau_{\ge 1} \prod_{\ell \in \scr P} L_\ell \Omega^\infty E \wequi \tau_{\ge 1} \prod_{\ell \in \scr P} \tau_{\ge 1} \Omega^\infty E^\comp_\ell \wequi \tau_{\ge 1} \Omega^\infty \prod_{\ell \in \scr P} E^\comp_\ell,
  \end{equation*}
  where we used \cref{lem:unstable-n:main-thm} in the first equivalence,
  \cite[Lemma 3.17]{mattis2024unstablepcompletionmotivichomotopy} in the second equivalence,
  and that products commute with $\Omega^\infty$ in the third equivalence.

  The last claim holds since $\prod_{\ell \in \scr P_n} E^\comp_\ell = \bigoplus_{\ell \in \scr P} E^\comp_\ell \wequi E^\comp_n$
  by \cref{lem:stable-bousfield:n-completion}.
\end{proof}

\begin{lemma} [Fiber sequences] \label{lem:unstable-n:fiber-sequence}
  Let $F \to X \to Y$ be a fiber sequence of nilpotent sheaves in $\scr X_*$.
  Then $L_{\scr P} F \wequi \tau_{\ge 1} \fib(L_{\scr P} X \to L_{\scr P} Y)$.
  As a special case, we have $L_n F \wequi \tau_{\ge 1} \fib(L_n X \to L_n Y)$.
\end{lemma}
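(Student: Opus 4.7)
The plan is to reduce to the already-understood single-prime case using \cref{lem:unstable-n:main-thm}. Since $F$, $X$, and $Y$ are all nilpotent, that theorem yields $L_{\scr P} Z \wequi \tau_{\ge 1} \prod_{\ell \in \scr P} L_\ell Z$ for each $Z \in \{F, X, Y\}$. On the other hand, for every individual prime $\ell$, the functor $L_\ell$ is known to preserve fiber sequences of nilpotent sheaves up to connected cover by \cite[Proposition 3.20]{mattis2024unstablepcompletionmotivichomotopy}, giving $L_\ell F \wequi \tau_{\ge 1} \fib(L_\ell X \to L_\ell Y)$.

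Combining these, and using \cite[Lemma 4.2]{mattis2024unstablearithmeticfracturesquares} to commute $\tau_{\ge 1}$ past the (a priori infinite) product, I would compute
\begin{align*}
  L_{\scr P} F
  &\wequi \tau_{\ge 1} \prod_{\ell \in \scr P} L_\ell F
  \wequi \tau_{\ge 1} \prod_{\ell \in \scr P} \tau_{\ge 1} \fib(L_\ell X \to L_\ell Y) \\
  &\wequi \tau_{\ge 1} \fib\Bigl(\tau_{\ge 1} \prod_{\ell \in \scr P} L_\ell X \to \tau_{\ge 1} \prod_{\ell \in \scr P} L_\ell Y\Bigr)
  \wequi \tau_{\ge 1} \fib(L_{\scr P} X \to L_{\scr P} Y),
\end{align*}
where the final step is \cref{lem:unstable-n:main-thm} once more. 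This is exactly the desired identification, and the special case $L_n F \wequi \tau_{\ge 1} \fib(L_n X \to L_n Y)$ follows by taking $\scr P = \scr P_n$.

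This mirrors verbatim the computational pattern already employed in the proof of \cref{lem:unstable-n:nilpotent-refinement-completion}(2), so I do not foresee any serious obstacle. The only points requiring small verification are that the fiber commutes with the product (automatic, as both are limits) and that the commutation of $\tau_{\ge 1}$ with infinite products is legitimate for the connected outputs of the $L_\ell$'s; both are handled by the already-cited lemmas.
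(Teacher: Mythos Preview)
Your proposal is correct and essentially identical to the paper's own proof: the same chain of equivalences via \cref{lem:unstable-n:main-thm}, the single-prime fiber lemma, and commuting products with fibers. The only cosmetic difference is that the paper cites \cite[Proposition 2.8]{mattis2025etale} for the single-prime step where you cite \cite[Proposition 3.20]{mattis2024unstablepcompletionmotivichomotopy}; both references establish the needed $L_\ell F \wequi \tau_{\ge 1}\fib(L_\ell X \to L_\ell Y)$.
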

\begin{proof}
  We calculate 
  \begin{align*}
    L_{\scr P} F & \wequi \tau_{\ge 1} \prod_{\ell \in \scr P} L_\ell F  \wequi \tau_{\ge 1} \prod_{\ell \in \scr P} \tau_{\ge 1} \fib(L_\ell X \to L_\ell Y) \\
    & \wequi \tau_{\ge 1} \fib(\tau_{\ge 1} \prod_{\ell \in \scr P} L_\ell X \to \tau_{\ge 1} \prod_{\ell \in \scr P} L_\ell Y)   \wequi \tau_{\ge 1} \fib(L_{\scr P} X \to L_{\scr P} Y), 
  \end{align*}
  where we used \cref{lem:unstable-n:main-thm} in the first and last equivalence,
  \cite[Proposition 2.8]{mattis2025etale} in the second equivalence,
  and that products commute with fibers in the third equivalence.
\end{proof}

\begin{lemma} [Highly connected towers] \label{lem:unstable-n:tower}
  Let $(X_i)_i$ be a highly connected tower,
  where every $X_i \in \scr X_*$ is nilpotent.
  Then $L_{\scr P} \lim_i X_i \wequi \lim_i L_{\scr P} X_i$.
  As a special case, we have $L_n \lim_i X_i \wequi \lim_i L_n X_i$.
\end{lemma}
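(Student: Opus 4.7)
The plan is to reduce via the main theorem, \cref{lem:unstable-n:main-thm}, to the already established commutation of each individual $L_\ell$ with highly connected limits. First I would verify that $X \coloneqq \lim_i X_i$ is itself nilpotent; this follows since nilpotent sheaves are preserved under limits of highly connected towers, analogously to \cite[Lemma A.12]{mattis2024unstablepcompletionmotivichomotopy}. Thus \cref{lem:unstable-n:main-thm} applies both to $X$ and to each $X_i$.

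Applying the main theorem and commuting limits,
\begin{equation*}
  L_{\scr P} X \wequi \tau_{\ge 1} \prod_{\ell \in \scr P} L_\ell X \wequi \tau_{\ge 1} \prod_{\ell \in \scr P} \lim_i L_\ell X_i \wequi \tau_{\ge 1} \lim_i \prod_{\ell \in \scr P} L_\ell X_i,
\end{equation*}
where the second equivalence uses \cite[Proposition 2.13]{mattis2025etale} (each $L_\ell$ commutes with limits of highly connected towers of nilpotent sheaves) and the third uses that $\prod$ and $\lim$ are both limits. Meanwhile,
\begin{equation*}
  \lim_i L_{\scr P} X_i \wequi \lim_i \tau_{\ge 1} \prod_{\ell \in \scr P} L_\ell X_i,
\end{equation*}
so the statement reduces to interchanging $\tau_{\ge 1}$ with $\lim_i$ on the tower $Y_i \coloneqq \prod_{\ell \in \scr P} L_\ell X_i$.

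To achieve this interchange, I would adapt the final paragraph of the proof of \cref{lem:unstable-n:product-highly-connected}. After passing to a locally finite-dimensional cover (on which limits of locally highly connected towers are controlled by \cref{lem:sheaves:highly-connected}), each tower $(L_\ell X_i)_i$ is highly connected by \cite[Lemma 2.11]{mattis2025etale}, so by \cref{lem:unstable-n:infinite-product-connectivity} the tower $(Y_i)_i$ is locally highly connected, and the same persists after applying $\tau_{\ge 1}$. Consequently $\lim_i \tau_{\ge 1} Y_i$ is connected (its $\pi_0$ vanishes by \cite[Lemma 2.4]{mattis2025etale} applied to a locally highly connected tower), which together with the obvious map $\lim_i \tau_{\ge 1} Y_i \to \lim_i Y_i$ identifies $\lim_i \tau_{\ge 1} Y_i$ with $\tau_{\ge 1} \lim_i Y_i$, as desired. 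The main technical obstacle is controlling connectivity of the infinite product $\prod_{\ell \in \scr P} L_\ell X_i$; this is precisely where the locally finite-dimensional cover hypothesis on $\scr X$ and \cref{lem:unstable-n:infinite-product-connectivity} enter in an essential way.
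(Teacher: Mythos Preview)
Your proposal is correct and follows essentially the same approach as the paper: reduce via \cref{lem:unstable-n:main-thm} to the known commutation of each $L_\ell$ with highly connected limits \cite[Proposition 2.13]{mattis2025etale}, then show the remaining $\tau_{\ge 1}$ can be dropped by arguing that $\lim_i L_{\scr P} X_i$ is already connected, exactly as at the end of \cref{lem:unstable-n:product-highly-connected}. The only cosmetic difference is that the paper inserts $\tau_{\ge 1}$ inside the limit first (using $\tau_{\ge 1}\lim_i Y_i \wequi \tau_{\ge 1}\lim_i \tau_{\ge 1} Y_i$) and then applies \cref{lem:unstable-n:main-thm} once more, whereas you compare the two sides directly; and your citation for nilpotence of $\lim_i X_i$ should point to \cref{lem:sheaves:highly-connected} rather than \cite[Lemma A.12]{mattis2024unstablepcompletionmotivichomotopy}.
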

\begin{proof}
  First note that $\lim_i X_i$ is still nilpotent.
  This follows basically from \cref{lem:sheaves:highly-connected},
  as then $\ul\pi_1(X) \wequi \ul\pi_1(X_N)$ for some $N \gg 0$, and hence is nilpotent by assumption,
  and similarly, the action of $\ul\pi_1(X)$ on $\ul\pi_n(X)$ is equivalent 
  to the action of $\ul\pi_1(X_M)$ on $\ul\pi_n(X_M)$ for some $M \gg 0$, which is nilpotent by assumption.
  We calculate 
  \begin{equation*}
    L_{\scr P} \lim_i X_i \wequi \tau_{\ge 1} \prod_{\ell \in \scr P} L_\ell \lim_i X_i \wequi \tau_{\ge 1} \prod_{\ell \in \scr P} \lim_i L_\ell X_i \wequi \tau_{\ge 1} \lim_i \tau_{\ge 1} \prod_{\ell \in \scr P} L_\ell X_i \wequi \tau_{\ge 1} \lim_i L_{\scr P} X_i,
  \end{equation*}
  where we used \cref{lem:unstable-n:main-thm} in the first and last equivalence,
  \cite[Proposition 2.13]{mattis2025etale} in the second equivalence, 
  and that products commute with limits in the third equivalence.
  The lemma follows since $\lim_i \tau_{\ge 1} \prod_{\ell \in \scr P} L_\ell X_i \wequi \lim_i L_{\scr P} X_i$ is already connected,
  the proof of which is similar to the end of the proof of \cref{lem:unstable-n:product-highly-connected}.
\end{proof}

\section{Slice convergence} \label{sec:slice-convergence}
In this section we explain a minor extension of Levine's convergence theorem, allowing us to treat imperfect extensions of perfect base fields.
The argument mainly consists of observing that \cite[Proposition 6.7]{Bachmann2020TowardsConservativity} can be used as a slot-in replacement for a certain part of Levine's proof, and with this replacement, the original argument goes through in the more general setting.

\begin{proposition} \label{prop:eff-pi0-vanishing}
Let $k$ be a perfect field, $E \in \Sigma^{n,n}\SH(k)^\veff$, and $K/k$ a finitely generated field extension.
Then $\ul\pi_0(E)_*(K)$ for $* \ge -n$ is generated by transfers of elements in $\ul K_*^{MW}(L)\cdot \ul\pi_0(E)_{-n}(L)$, where $L/K$ runs through finite field extensions.
\end{proposition}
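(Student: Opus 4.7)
The plan is to reduce the statement to \cite[Proposition 6.7]{Bachmann2020TowardsConservativity}, which handles the case $n=0$ (i.e., the analogous generation statement for effective homotopy modules). First I would observe that $\ul\pi_0(E)_*$ depends on $E$ only through its image in the heart of the homotopy t-structure, so we may replace $E$ by $\tau_{\le 0}\tau_{\ge 0}E$. Since $\Sigma^{n,n}\SH(k)^\veff \subset \SH(k)$ is the nonnegative part of a t-structure that interacts well with the homotopy t-structure (by the very-effective analog of \cite[Proposition 4]{bachmann2017generalized}), this truncation remains in $\Sigma^{n,n}\SH(k)^\veff$. We may thus assume $E \in \Sigma^{n,n}\SH(k)^\veff \cap \SH(k)^\heartsuit$ is a homotopy module.

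Second, I would desuspend to translate to the effective case: set $F := \Omega^{n,n}E$, which then lies in $\SH(k)^\veff \cap \SH(k)^\heartsuit$, i.e., is an effective homotopy module. Under the natural identification $\ul\pi_0(F)_j \cong \ul\pi_0(E)_{j-n}$ (so the range $j \ge 0$ corresponds exactly to the range $* := j-n \ge -n$ in the statement), the desired conclusion for $E$ becomes precisely \cite[Proposition 6.7]{Bachmann2020TowardsConservativity} applied to $F$: namely, that $\ul\pi_0(F)_j(K)$ for $j \ge 0$ is generated by transfers of products $\ul K^{MW}_j(L)\cdot\ul\pi_0(F)_0(L)$ with $L/K$ finite. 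The $\ul K^{MW}_*$-module structure on $\ul\pi_0(F)_*$ is compatible with the one on $\ul\pi_0(E)_*$ under the desuspension equivalence (both being induced from the $\Gm$-action on homotopy modules), so the generation statement transports faithfully across.

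The main obstacle is bookkeeping: one needs to confirm carefully the compatibility of the very-effective t-structure with the homotopy t-structure (so that the reduction to the heart preserves $n$-effectivity), and to verify that the equivalence $\Omega^{n,n}\colon \Sigma^{n,n}\SH(k)^{\veff\heartsuit} \xrightarrow{\simeq} \SH(k)^{\veff\heartsuit}$ intertwines multiplication by elements of $\ul K^{MW}_*$ and transfer maps $\mathrm{tr}_{L/K}$ on both sides. Once these coherences are in place, the result is immediate from the referenced proposition.
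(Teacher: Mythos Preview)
Your approach is essentially the paper's: both desuspend to reduce to the effective case (the paper phrases this as ``we may as well assume $n=0$'') and then invoke \cite[Proposition~6.7]{Bachmann2020TowardsConservativity}. One caveat: according to the paper, that reference only supplies the \emph{one-step} case ($n=1$, $*=0$), namely that for $E' \in \Sigma^{1,1}\SH(k)^\veff$ the group $\ul\pi_0(E')_0(K)$ is generated by transfers of $\ul K_1^{MW}(L)\cdot\ul\pi_0(E')_{-1}(L)$. Getting the full range $j \ge 0$ from this requires iterating (apply the one-step result to $\Sigma^{r+1,r+1}F$ for each $r \ge 0$) and composing the steps via the projection formula $a\cdot\tr_{L'/L}(x)=\tr_{L'/L}(a|_{L'}\cdot x)$; this ingredient is absent from your sketch. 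Your preliminary reduction to the heart is harmless but unnecessary, since $\Omega^{n,n}E \in \SH(k)^\veff$ already at the spectrum level and $\ul\pi_0(\Omega^{n,n}E)_j \cong \ul\pi_0(E)_{j-n}$ holds without any truncation.
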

\begin{proof}
If $*=0$ and $n=1$ this is proved in \cite[Proposition 6.7]{Bachmann2020TowardsConservativity}.
By shifting, this also proves $n=1-*$ for any value of $*$.
The general case follows from this and the projection formula.
(Indeed we may as well assume that $n=0$, then apply the previous case to $\Sigma^{r+1,r+1} E$ to learn that $\ul\pi_0(\Sigma^{r+1,r+1}E)_0 \wequi \ul\pi_0(E)_{r+1}$ is generated under transfers and $\ul{K}_1^{MW}$ by $\ul\pi_0(\Sigma^{r+1,r+1}E)_{-1} \wequi \ul\pi_0(E)_{r}$; now iterate.)
\end{proof}

\begin{corollary} \label{cor:levine-convergence}
There exists a function $N\colon \N^2 \to \N$ with the following property:
For $k$ a perfect field of characteristic $p$, $m, n \ge 0$, $E \in \Sigma^{N(m,n),N(m,n)}\SH(k)^\veff$ $p$-torsion (i.e. $E[1/p]=0$), $K/k$ of transcendence degree at most $m$, $i \le n$ and $j \ge 0$, we have \[ \ul\pi_i(E)_j(K) = 0. \]
\end{corollary}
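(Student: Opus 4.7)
The plan is to follow the inductive strategy of Levine's convergence theorem \cite[Theorem 7.3]{levine2013convergence}, using \cref{prop:eff-pi0-vanishing} as a drop-in replacement for the characteristic-$\ne 2$ structure theorem on generic stalks of homotopy sheaves that Levine invokes. After filtering $E$ along powers of $p$, I may assume $pE = 0$ throughout.

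For the base case $n = 0$, I would apply \cref{prop:eff-pi0-vanishing} with $N \coloneqq N(m, 0)$: for $j \ge 0 \ge -N$, any element of $\ul\pi_0(E)_j(K)$ is a transfer, along a finite extension $L/K$, of a product $\alpha \cdot s$ with $\alpha \in \ul K^{MW}_{j+N}(L)$ and $s \in \ul\pi_0(E)_{-N}(L)$. Since $s$ is $p$-torsion, the product lies in the image of $\ul K^{MW}_{j+N}(L)/p$, so it suffices to choose $N$ large enough that $\ul K^{MW}_r(L)/p = 0$ for all $r \ge N$ and all finitely generated $L/k$ of transcendence degree $\le m$. Morel's extension $0 \to \ul I^{r+1} \to \ul K^{MW}_r \to \ul K^M_r \to 0$ splits the task into a Milnor piece and a Witt piece. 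The Milnor quotient vanishes for $r > m$ by Geisser--Levine: for $k$ perfect, $\Omega^1_{k/\F_p} = 0$ (since Frobenius is surjective), so $\Omega^r_{L/\F_p} \cong \Omega^r_{L/k}$, which vanishes for $r > m$, whence $\ul K^M_r(L)/p \cong \Omega^r_{L,\log} = 0$. The Witt piece $\ul I^{r+1}(L)/p$ vanishes once $r$ exceeds the $u$-invariant of $L$, which is uniformly bounded across finitely generated extensions of a perfect field of transcendence degree $\le m$.

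For the inductive step from $n - 1$ to $n$, I would use the Postnikov fiber sequence in the homotopy $t$-structure,
\[
\tau_{\ge N+1} E \longrightarrow E \longrightarrow \Sigma^{N,0} H \ul\pi_N(E)_\ast,
\]
with both outer terms still very effective and $p$-torsion. The Eilenberg--Mac Lane layer is a single-degree homotopy sheaf and is handled by the base case (after absorbing the Tate-untwisting into the counting), while the connective cover $\tau_{\ge N+1} E$ gains one degree of connectivity and falls under the inductive hypothesis, provided $N(m, n)$ is chosen large enough relative to $N(m, n-1)$. A growth rate like $N(m, n) = 2m + n + 1$ (or similar) closes the induction.

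The hard part will be arranging the bound on the Witt-theoretic piece $\ul I^{r+1}(L)/p$ uniformly in $L$, particularly when $p = 2$. In characteristic $2$ one must switch from the symmetric-bilinear Witt ring to Kato's quadratic-form theory, and control of the relevant generalized $u$-invariant for function fields of bounded transcendence degree over perfect fields of characteristic $2$ requires careful invocation of the work of Arason--Baeza and Kato--Milne. This is precisely the ingredient that Levine's original argument sidesteps by assuming $\mathrm{char}(k) \ne 2$, and making it uniform is the main technical content of the present extension.
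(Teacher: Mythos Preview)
Your overall strategy matches the paper's: follow Levine's argument, replace his structure theorem by \cref{prop:eff-pi0-vanishing}, and supply an argument for $\ul K^{MW}_r(L)/p = 0$ when $r > m$. Your treatment of the Milnor piece via Bloch--Kato--Gabber/Geisser--Levine is the same as the paper's.

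Where you diverge is the Witt piece $I^{r+1}(L)/p$, and here you are making your life much harder than necessary. You propose bounding the $u$-invariant uniformly over finitely generated extensions of transcendence degree $\le m$, and flag this as the hard part requiring Arason--Baeza or Kato--Milne in characteristic $2$. The paper avoids all of this:
\begin{itemize}
\item For $p \ne 2$: you only need $I^{r+1}(L)/p = 0$, not $I^{r+1}(L) = 0$. Since $W(L)$ is a $W(\F_p)$-module and $W(\F_p)$ is $2$-power torsion (explicit computation), $p$ is invertible on $W(L)$ and hence on $I^{r+1}(L)$. Done, with no bound on $r$ needed at all.
\item For $p = 2$: the Milnor conjecture on quadratic forms (valid in all characteristics) gives $I^r(L)/I^{r+1}(L) \cong \ul K^M_r(L)/2$, which you have already shown vanishes for $r > m$. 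Hence $I^{m+1}(L) = I^{m+2}(L) = \cdots$, and the Arason--Pfister Hauptsatz forces this common value to be $0$.
\end{itemize}
So the step you identify as ``the main technical content'' is in fact a two-line observation, and no $u$-invariant bounds or char-$2$ quadratic form machinery beyond Arason--Pfister and the Milnor conjecture are required. Your $u$-invariant approach would also need a uniform bound over \emph{all} perfect base fields $k$ of characteristic $p$, which is not obviously available and in any case unnecessary.
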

\begin{proof}
The proof is essentially the same as in \cite[Theorem 7.3]{levine2013convergence} (also recalled in \cite[Theorem 5.3]{bachmann-bott}), with two key differences.
Firstly, \cite[Theorem 5.3]{levine2013convergence} is replaced by \cref{prop:eff-pi0-vanishing}.
Secondly, in Levine's proof the Bloch--Kato conjecture is used to show that if $L/k$ has transcendence degree $\le m$, then $\ul K_i^{MW}(L)/p=0$ for $i>m$.
To conclude the proof, we supply a different argument to show that this still holds in our situation.
Let us first show that $\ul K_i^{M}(L) / p = 0$ for all $i > m$.
This can be seen as follows: There is an inclusion $\ul K_i^{M}(L) / p \hookrightarrow \Omega_L^i$ 
by the Bloch--Kato--Gabber theorem \cite[Theorem 2.1]{bloch1986padicetale}. But for $i > m$ 
we have $\Omega_L^i = \bigwedge_{l=1}^i \Omega_L^1 = 0$,
since $\Omega_L^1$ has dimension $m$ \cite[Theorem 59]{MatsumuraCA}.
We now note that modulo $p$ the powers of the fundamental ideal of $L$ 
vanish, i.e. $I^i(L) / p = 0$, for $i > m$.
Indeed, if $p \neq 2$, then this holds since $W(L)$ has $2$-power torsion (indeed, $W(L)$ is a module over $W(\F_p)$, which is of $2$-power torsion,
by the explicit calculation done in e.g.\ \cite[Example 2.1.10]{déglise2023notesmilnorwittktheory}),
and hence $p$ is invertible on $W(L)$ and therefore also on $I^i(L)$.
On the other hand, if $p = 2$, we have 
$I^{i+1}(L)/I^i(L) \wequi \ul{K}^M_i(L)/2 = 0$ for $i > m$ by the Milnor Conjecture (see e.g.\ \cite[Theorem 2.2.3]{déglise2023notesmilnorwittktheory}) and the above vanishing result for Milnor K-theory.
In particular, $I^{m+1}(L) = I^{m+2}(L) = \dots$, which implies that all of these powers of the fundamental ideal vanish 
(since their intersection is $0$ by the Arason--Pfister Hauptsatz \cite[Chapter 10, Corollary 3.2]{lam2005introduction}).
In view of the short exact sequence $0 \to I(L)^{i+1} \to \ul K_i^{MW}(L) \to \ul K_i^{M}(L) \to 0$ 
from e.g.\ \cite[Corollary 2.3.10 (1)]{déglise2023notesmilnorwittktheory}, the snake lemma, and 
the above calculations,
we thus get $\ul K_i^{MW}(L) / p \wequi \ul K_i^{M}(L) / p = 0$ for all $i > m$.
\end{proof}

\bibliographystyle{alpha}
\bibliography{bibliography}

\end{document}